\newtheorem{theorem}{Theorem}
\newtheorem{corollary}[theorem]{Corollary}
\newtheorem{proposition}[theorem]{Proposition}
\newtheorem{lemma}[theorem]{Lemma}
\theoremstyle{definition}
\newtheorem{remark}[theorem]{Remark}
\newtheorem{definition}[theorem]{Definition}
\numberwithin{theorem}{section}
\numberwithin{equation}{section}
\newcommand{\beq}{\begin{equation}}
\newcommand{\eeq}{\end{equation}}
\newcommand{\R}{\mathbb{R}}
\newcommand{\N}{\mathbb{N}}
\newcommand{\mf}[1]{\mathbf{#1}}
\newcommand{\bs}[1]{\boldsymbol{#1}}
\newcommand{\pa}{\partial}
\newcommand{\cC}{{\mathcal C}}
\newcommand{\cF}{{\mathcal F}}   
\newcommand{\cH}{{\mathcal H}}
\newcommand{\dist}{{\rm dist}}
\newcommand{\supp}{{\rm supp}}
\newcommand{\eps}{\varepsilon}
\newcommand{\ssubset}{\subset\joinrel\subset}
\DeclareMathOperator{\loc}{loc}
\renewcommand{\epsilon}{\varepsilon}
\author[N. Soave and S. Terracini]{Nicola Soave and Susanna Terracini}\thanks{}
\address{Nicola Soave and Susanna Terracini\newline \indent
 Dipartimento di Matematica ``Giuseppe Peano'', Universit\`a di Torino, \newline \indent
Via Carlo Alberto 10,
10123 Torino, Italy}
\email{nicola.soave@unito.it, susanna.terracini@unito.it}
\title[On partially segregated harmonic maps with free boundary]{On partially segregated harmonic maps: optimal regularity and structure of the free boundary}
\keywords{Harmonic maps with free-boundary; singularly perturbed elliptic systems; strong competition; partial segregation; free-boundary; nodal set; optimal regularity}
\subjclass[2020]{35R35 (35B25; 35J57; 49Q10; 49Q20)}
\thanks{N.S. is partially supported by the PRIN Project no. 2022R537CS \emph{Nodal Optimization, NOnlinear elliptic equations, NOnlocal geometric problems, with a focus on regularity} ($NO^3$ - CUP D53D23005940006), and S. T. is partially supported by the PRIN 2022 project 20227HX33Z -- \emph{Pattern formation in nonlinear phenomena} (CUP D53D23005680006), funded by European Union - Next Generation EU within the PRIN 2022 program (D.D. 104 - 02$/$02$/$2022 Ministero dell'Universit\`a e della Ricerca, Italy). Both authors are affiliated to the INDAM - GNAMPA group.\\
Declarations of interest: none.\\
Data availability: Data sharing not applicable to this article as no datasets were generated or analysed during the current study.}
\begin{document}
\begin{abstract}
We consider triplets of densities $(u_1,u_2,u_3)$ minimizing the Dirichlet energy
\[\sum_{j=1}^3 \int_{\Omega} |\nabla u_j|^2\,dx
\]
over a bounded domain $\Omega\subset \R^N$, subject to the partial segregation condition:
\[
u_1\,u_2\,u_3 \equiv 0 \ \text{in $\Omega$.}
\]
We prove optimal regularity of the minimizers in spaces of H\"older continuous functions of exponent $3/4$; furthermore we prove that the free boundary is a collection of a locally finite number of smooth codimension one manifolds up to a residual set of Hausdorff dimension at most $N-2$. Finally we prove uniform-in-$\beta$ a priori bounds for minimal solutions to the penalized energy:
\[
J_\beta(\mf{u}, \Omega) = \int_{\Omega} \sum_{i=1}^3 |\nabla u_i|^2 \,dx+ \beta \int_{\Omega} \prod_{j=1}^3 u_j^2\,dx,
\]
in spaces of H\"older continuous functions of exponent less than $3/4$. The proofs make use of an Almgren-type monotonicity formula, blow-up analysis together with some new Liouville-type theorems.

\end{abstract} 

\maketitle

\section{Introduction}

The goal of this paper is to describe the optimal regularity properties of the solutions, as well as the properties of the free boundary, for the minimizers of the problem
\beq\label{min fix traces intro 1}
\min\left\{ \sum_{j=1}^3 \int_{\Omega} |\nabla u_j|^2\,dx\left|\begin{array}{l}  \mf{u}-\boldsymbol{\psi} \in H_0^1(\Omega,\R^3) \\ u_1\,u_2\,u_3 \equiv 0 \ \text{in $\Omega$}\end{array}\right.\right\},
\eeq
where $\Omega$ is a bounded domain of $\R^N$ and $\boldsymbol{\psi}=(\psi_1, \psi_2, \psi_3)$ is a triplet of nonnegative Lipschitz continuous functions in $\overline{\Omega}$ satisfying the \emph{partial segregation condition} 
\beq\label{PSC}
\psi_1\,\psi_2\,\psi_3 \equiv 0 \quad \text{in $\overline{\Omega}$};
\eeq
namely, $\psi_1$, $\psi_2$ and $\psi_3$ cannot be positive at the same point $x_0$, but their positivity sets can overlap pairwise. Minimizers of \eqref{min fix traces intro 1} are \emph{harmonic maps} into the singular target represented by the union of the three coordinate hyperplanes in $\R^3$.

This article is the second part of our study on partial segregation models with variational interaction. More precisely, in our previous paper \cite{ST24p1}, we showed that any such minimizer is in the H\"older space $C^{0,\alpha}(\overline{\Omega})$ for any positive exponent $\alpha$ up to some universal threshold $\bar \nu \in (0,1)$, and obtained some preliminary extremality conditions. The exact value of $\bar \nu$ is not explicitly determined at this stage, but can be characterized in terms of an ``overlapping partition problem" on the sphere, see \cite[Remark 1.3]{ST24p1}; this characterization gives the estimate $\bar \nu \le 2/3$, which, as we will see, is suboptimal for the regularity of solutions to \eqref{min fix traces intro 1}. In this paper we prove indeed the \emph{optimal} $C^{0,3/4}$ regularity of minimizers and investigate the structure of the associated free boundary. To this end, it is worthwhile noticing that we have essentially two reasonable ways to introduce the concept of a \emph{free boundary} for a continuous function $\mf{v}$ satisfying the partial segregation condition. At first, we define the \emph{multiplicity} of any $x\in\Omega$ as
\beq\label{def: multiplicity}
m(x):= \# \left\{i \in \{1,2,3\}: \ v_i(x) = 0\right\}
\eeq
(number of components vanishing at $x$), and then we set
\beq\label{def: gamma k}
\Gamma_{\mf{v}}^{k} := \left\{x \in \Omega: \ m(x) = k\right\}, \quad k=0, 1, 2, 3.
\eeq
By the partial segregation condition $v_1 \,v_2\,v_3 \equiv 0$, the set $\Gamma_{\mf{v}}^0$ is empty. If $x \in \Gamma_{\mf{v}}^1$, then two components of $\mf{v}$ are positive at $x$, and hence in a neighborhood of $x$ (by continuity), and the third one vanishes identically in such neighborhood. Therefore  we can define the \emph{nodal set} of $\mf{v}$ as 
\beq\label{def fb}
\Gamma_{\mf{v}}:= \Gamma_{\mf{v}}^{2} \cup \Gamma_{\mf{v}}^{3},
\eeq
the set of points with multiplicity $2$ (\emph{double points}) or $3$ (\emph{triple points}). 
On the other hand, it is also natural to define the \emph{free boundary} as
\beq\label{def fb2}
\Gamma_{\mf{v}}':= \bigcup_{i=1}^3 \partial \{v_i>0\}.
\eeq

The main results of this paper can be summarized as follows:

\begin{theorem}\label{thm: main fixed traces}
Let $\mf{v}$ be any minimizer of \eqref{min fix traces intro 1}. Then:
\begin{enumerate}[(i)]
\item $\mf{v} \in C^{0,3/4}(\Omega)$, and the exponent $3/4$ is  optimal for regularity in H\"older spaces.
\item The sets $\Gamma_{\mf{v}}$ and $\Gamma_{\mf{v}}'$ coincide; $\Gamma_{\mf{v}}$ has Hausdorff dimension $N-1$, and more precisely $\Gamma_{\mf{v}}= \mathcal{R}_{\mf{v}} \cup \Sigma_{\mf{v}}$, where:
\begin{itemize}
\item[($a$)] $\mathcal{R}_{\mf{v}} \subset \Gamma_{\mf{v}}^2$ is the union of smooth hypersurfaces of dimension $N-1$. 
\item[($b$)] $\Sigma_{\mf{v}}$ has Hausdorff dimension at most $N-2$, and is a discrete set in dimension $2$; the set of triple points $\Gamma_{\mf{v}}^3$ is locally finite in $\Omega$; moreover, $\{v_i=0\}= \overline{\mathrm{int}\{v_i=0\}}$ for every $i$.
\end{itemize}
\end{enumerate} 
\end{theorem}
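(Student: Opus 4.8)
\emph{Strategy.} The plan is to run the blow-up scheme standard for segregation-type problems — Almgren monotonicity, compactness of rescalings, Liouville classification of blow-up profiles — using the $C^{0,\alpha}$ regularity of \cite{ST24p1} (valid for $\alpha<\bar\nu$) as the compactness input. The structural starting point is that each $v_i$ is harmonic on $\{v_i>0\}$ (interior competitors keeping $v_i>0$ do not touch the constraint), is subharmonic on $\Omega$, and satisfies $v_i\,\Delta v_i=0$ as a measure (since $\Delta v_i$ is carried by $\partial\{v_i>0\}$, where $v_i=0$); hence $|\mathbf{v}|^2=\sum_i v_i^2$ is subharmonic with $\Delta(\tfrac12|\mathbf{v}|^2)=|\nabla\mathbf{v}|^2$. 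Together with a Pohozaev identity obtained from inner variations — which preserve the product constraint — this yields monotonicity of the frequency $N(x_0,r)=\frac{r\int_{B_r(x_0)}|\nabla\mathbf{v}|^2}{\int_{\partial B_r(x_0)}|\mathbf{v}|^2}$, so that $N(x_0,0^+)$ exists everywhere. One should note the dichotomy at once: at triple points ($\mathbf{v}(x_0)=0$) this vector frequency is the governing object, while at double points it degenerates and the correct local model is a two-phase segregated pair together with a harmonic component bounded away from zero.

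\emph{Proof of (i).} At $x_0\in\Gamma_{\mathbf{v}}^3$ the rescalings $\mathbf{v}_{x_0,r}$ are precompact in $C^0_{\mathrm{loc}}\cap H^1_{\mathrm{loc}}$ and converge, along subsequences, to a nontrivial entire homogeneous minimal map $\mathbf{v}_{x_0}$ of degree $N(x_0,0^+)$, where strong convergence and the minimality of the limit need a cut-off/gluing argument adapted so that the product constraint survives. The core is then a Liouville-type classification: every nontrivial entire minimal partially segregated map, \emph{in any dimension}, is homogeneous of degree $\ge 3/4$, with equality only for the explicit planar profile $\mathbf{w}_*=r^{3/4}(\varphi_1,\varphi_2,\varphi_3)$ — together with its rotations and its trivial extensions in $N-2$ variables — where the $\varphi_i$ are first Dirichlet eigenfunctions on three arcs of length $4\pi/3$ at mutual angle $2\pi/3$. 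The value $3/4$ is the ratio $\frac{3\pi/2}{2\pi}$ between the length of the link of the singular target (three coordinate quarter-planes in $\mathbb{R}^3$, whose intersection with $\mathbb{S}^2$ is a spherical triangle of great-circle arcs of total length $3\pi/2$) and that of $\mathbb{S}^1$; the lower bound is an ``overlapping optimal partition'' eigenvalue inequality on $\mathbb{S}^{N-1}$: the condition $\omega_1\cap\omega_2\cap\omega_3=\emptyset$ forces the complements to cover $\mathbb{S}^{N-1}$, so $\sum_i|\omega_i|\le 2|\mathbb{S}^{N-1}|$, and a symmetrization/dimension-reduction argument then gives $\max_i\lambda_1(\omega_i)\ge\tfrac34(\tfrac34+N-2)$, i.e. homogeneity $\ge 3/4$. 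Feeding $N(x_0,0^+)\ge 3/4$ into the monotonicity formula yields $\sup_{B_r(x_0)}|\mathbf{v}|\lesssim r^{3/4}$ near triple points, while away from them each $v_i$ and each difference $v_i-v_j$ (harmonic in the two-phase region) is smooth; interior gradient estimates interpolate these into $\mathbf{v}\in C^{0,3/4}_{\mathrm{loc}}(\Omega)$. Optimality: with $N=2$ and boundary datum $\mathbf{w}_*|_{\partial B_1}$, the rigidity of $\mathbf{w}_*$ forces the minimizer to be exactly $\mathbf{w}_*\notin C^{0,\alpha}$ for any $\alpha>3/4$.

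\emph{Proof of (ii).} The inclusion $\Gamma'_{\mathbf{v}}\subseteq\Gamma_{\mathbf{v}}$ is immediate (a point of $\partial\{v_i>0\}$ of multiplicity $1$ would lie in $\mathrm{int}\{v_i=0\}$), and the reverse follows once we prove $\{v_i=0\}=\overline{\mathrm{int}\{v_i=0\}}$: on any neighbourhood where $\{v_i=0\}$ had empty interior the constraint on $v_i$ would be vacuous, so $v_i$ would be harmonic and nonnegative with an interior zero, hence identically zero there — a contradiction. Around a double point with, say, $v_1(x_0)>0$, $v_1$ is harmonic, $(v_2,v_3)$ is an energy-minimizing fully segregated pair and $h:=v_2-v_3$ is harmonic, so locally $\Gamma_{\mathbf{v}}$ equals the nodal set $\{h=0\}$; its regular stratum is a smooth $(N-1)$-hypersurface (blow-up of homogeneity one, flat), which gives $\mathcal{R}_{\mathbf{v}}\subset\Gamma_{\mathbf{v}}^2$, while $\{h=0,\ \nabla h=0\}$ together with $\Gamma_{\mathbf{v}}^3$ forms $\Sigma_{\mathbf{v}}$. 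The bound (Hausdorff dimension of $\Sigma_{\mathbf{v}}$ at most $N-2$), its discreteness when $N=2$, and the local finiteness of $\Gamma_{\mathbf{v}}^3$ then follow from the classical structure of nodal sets and a Federer dimension reduction, whose base case for the triple stratum is that in dimension one a minimal map with a triple point is trivial (excluding blow-ups invariant in $N-1$ directions), combined with the rigidity of $\mathbf{w}_*$ — whose triple set is just $\{0\}$ in the plane — which prevents accumulation of triple points.

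\emph{Main obstacle.} The crux is the Liouville step: proving that $3/4$ is the \emph{sharp} order in \emph{every} dimension and that the extremal profile is rigid. Because the positivity sets overlap, the blow-up components are not individually maximal, so the eigenvalue estimate must be run with the weak constraint $\omega_1\cap\omega_2\cap\omega_3=\emptyset$; a crude Faber--Krahn bound is not tight for $N\ge 3$, and one needs a genuine symmetrization/dimension-reduction (or a direct argument exploiting the triangular link of the target) to single out the cylindrical profiles as optimal and to establish their uniqueness. A secondary difficulty is the strong $H^1_{\mathrm{loc}}$ convergence of blow-ups (``no energy loss on the free boundary'') and the constraint-compatible construction of competitors, on which the minimality of blow-up limits rests.
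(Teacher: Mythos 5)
Your proof follows the same high-level architecture as the paper: Almgren frequency and Pohozaev via inner variations, compactness of rescalings at triple points, a Liouville-type classification of homogeneous entire minimal maps, a Federer reduction for the triple stratum, and, at double points, reduction to the nodal set of a harmonic function $v_i-v_j$ with the third component bounded away from zero. All of that matches Sections 2--6 of the paper. The gap is exactly where you yourself flag it, in the Liouville step, and your proposed repair does not work.

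You propose to obtain the lower bound $N(x_0,0^+)\ge 3/4$ in every dimension from an ``overlapping optimal partition'' eigenvalue inequality on $\mathbb{S}^{N-1}$: use $\omega_1\cap\omega_2\cap\omega_3=\emptyset$ to get $\sum_i|\omega_i|\le 2|\mathbb{S}^{N-1}|$ and then symmetrize to deduce $\max_i\lambda_1(\omega_i)\ge\tfrac34(\tfrac34+N-2)$. This is precisely the route that the paper says is \emph{suboptimal}: the overlapping-partition characterization of the sphere controls only the exponent $\bar\nu$ of \cite{ST24p1}, and it gives $\bar\nu\le 2/3$, not $3/4$. The reason Faber--Krahn / rearrangement cannot close the gap for $N\ge 3$ is that the extremal domain for the segregation geometry is a lune (product of an arc with a great $(N-3)$-sphere), \emph{not} a spherical cap, and the cap of the same measure has a strictly smaller first Dirichlet eigenvalue; so the bound delivered by measure-plus-rearrangement is below $\tfrac34(\tfrac34+N-2)$. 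You acknowledge one needs ``a genuine symmetrization/dimension-reduction'' here but do not supply it, and no spectral argument in the spirit you describe is known to give the sharp constant.

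The paper's actual mechanism to force homogeneity $\ge 3/4$ is not spectral but topological and inductive, and that is the missing ingredient in your sketch. Concretely, for a globally H\"older, $\nu$-homogeneous $\mf{v}\in\mathcal{L}_{\loc}(\R^N)$ with all components nontrivial: (a) each positivity set $\{v_i>0\}$ is connected (a consequence of the two-component Liouville theorem of \cite{CTV05}, Lemma~\ref{lem: connect}); (b) $\Gamma_{\mf{v}}^3\neq\emptyset$ (Lemma~\ref{lem: trip non emp}, a winding-number/homotopy argument on $\R^N$); (c) $\Gamma_{\mf{v}}^3$ is an affine space of dimension $\le N-2$ and $\mf{v}$ is $\nu$-homogeneous with respect to every point of it (Lemma~\ref{lem: homog}); (d) the sphere $S_1(x_0)$ around any triple point contains another triple point (Lemma~\ref{lem: triple on sphere}, the spherical version of (b)). Steps (c)--(d) force a full line of triple points, hence $\mf{v}$ is cylindrical, and one reduces by induction to $N=2$, where the exact $3/4$ is read off the three arcs of opening $4\pi/3$ (Lemma~\ref{lem: 3/4}). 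This topological route is what actually delivers the constant $3/4$ in all dimensions; your sketch omits (a), (b), (d) entirely.

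A second, smaller issue: for optimality you assert that ``rigidity of $\mf{w}_*$ forces the minimizer to be exactly $\mf{w}_*$''. The paper does not prove, nor need, any such uniqueness; Theorem~\ref{prop: mer min} is established by a direct comparison argument (Sections~\ref{sec: fb dim 2}--\ref{sec: mer}): first a finiteness theorem for the $2$D free boundary under a simple boundary datum, then a Hartman--Wintner expansion at a simple triple point of a putative lower-energy minimizer, then a cut-and-paste competitor that contradicts $c_\infty<\tilde c$. So optimality needs the structure theory in dimension two, not a rigidity statement for $\mf{w}_*$. Finally, be aware that the statement ``$\{v_i=0\}=\overline{\mathrm{int}\{v_i=0\}}$'' uses, in the step ``the constraint on $v_i$ is vacuous hence $v_i$ is harmonic'', a competitor construction (Lemma~\ref{lem: no |x|}) and not a pointwise algebraic observation; your sentence glosses over that but the direction is right.

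Summary: the overall plan is the right one and matches the paper, but the Liouville step is not closed by the spherical-eigenvalue mechanism you propose, and you are missing the connectedness/winding-number/dimension-reduction chain (Lemmas~\ref{lem: connect}--\ref{lem: 3/4}) that is the paper's key contribution for part (i).
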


Here optimal means  that there exists a boundary datum $\boldsymbol{\psi}$ and an associated minimizer $\mf{v}$ for \eqref{min fix traces intro 1} such that $\mf{v} \not \in C^{0,3/4+\eps}(\Omega)$, for every $\eps>0$.

The previous theorem will be derived by combining three main results, obtained in a more general setting than the one previously outlined. 

\begin{remark}
The existence of a minimizer for problem \eqref{min fix traces intro 1} is easy to show, we refer to \cite[Theorem 1.2]{ST24p1} for the details.
\end{remark}

\subsection{Main results in full generality} 

To begin, let us precisely recall what we proved in the previous paper \cite{ST24p1}. There, we did consider the minimization with fixed traces described above, but we also considered the following (more flexible) penalised problem: let $\Omega \subset \R^N$ be a domain (not necessarily bounded), and let $\{\mf{u}_\beta=(u_{1,\beta}, u_{2,\beta}, u_{3,\beta})\}_{\beta >1} \subset H^1_{\loc}(\Omega)$ be a family of weak solutions to  
\begin{equation}\label{P beta}
\Delta u_{i} = \beta u_i \prod_{\substack{j \neq i}} u_{j}^2, \quad u_{i}>0 \quad \text{in $\Omega$, for $i=1,2,3$},
\end{equation}
under the following assumptions:
\begin{itemize}
\item[(h1)] $\{\mf{u}_\beta\}$ is uniformly bounded in $L^\infty(\Omega)$, namely $\|\mf{u}_\beta\|_{L^\infty(\Omega)} \le C$ for a positive constant $C>0$ independent of $\beta$.
\item[(h2)] $\mf{u}_\beta$ is a minimizer of \eqref{P beta} with respect to variations with compact support, in the sense for every $\Omega' \ssubset \Omega$
\[
J_\beta(\mf{u}_{\beta}, \Omega') \le J_\beta(\mf{u}_\beta + \bs{\varphi}, \Omega') \qquad \forall \bs{\varphi} \in H_0^1(\Omega', \R^3),
\]
where 
\beq\label{def functional}
J_\beta(\mf{u}, \Omega') = \int_{\Omega'} \sum_{i=1}^3 |\nabla u_i|^2 \,dx+ \beta \int_{\Omega'} \prod_{j=1}^3 u_j^2\,dx.
\eeq
\end{itemize}
Such ternary interactions arise in mean field approximations of mixtures when triple collisions between atoms of the three species are taken into account (we refer to \cite{LQZ24, Pet14} for physical models where ternary interactions play a relevant role). In \cite[Theorem 1.1]{ST24p1}, we showed the existence of some universal exponent  $\bar\nu\in(0,2/3]$ such that that $\{\mf{u}_\beta\}$ is uniformly bounded in $C^{0,\alpha}_{\loc}(\Omega)$ for every $\alpha \in (0,\bar \nu)$, and thus, up to a subsequence, as $\beta \to +\infty$
\begin{gather*}
\mf{u}_\beta \to \tilde{\mf{u}} \quad \text{in $H^1_{\loc}(\Omega)$ and in $C^{0,\alpha}_{\loc}(\Omega)$, for every $\alpha \in (0, \bar \nu)$, and} \\
M_\beta \int_{\omega} \prod_{j=1}^3 u_{j,\beta}^2\,dx \to 0 \quad \text{for every $\omega \ssubset \Omega$}.
\end{gather*}
Regarding the limit function $\tilde{\mf{u}}$, at first it is convenient to adopt the following definition from \cite{ST24p1}.

\begin{definition}\label{def: L}
Let $\Omega \subset \R^N$ be a domain. We denote by $\mathcal{L}(\Omega)$ the set of nonnegative functions $\mf{v} \in H^1(\Omega,\R^3) \cap C(\Omega,\R^3)$, $\mf{v} \not \equiv 0$, satisfying the following conditions: there exist sequences $M_n \to +\infty$ and $\{\mf{v}_n\} \subset H^1(\Omega,\R^3)$, with $v_{i,n}>0$ in $\Omega$, such that
\begin{itemize}
\item[($i$)] $\mf{v}_n$ is a minimizer of $J_{M_n}$ with respect to variations with fixed trace in $\Omega$, namely
\[
J_{M_n}(\mf{v}_n, \Omega) \le J_{M_n}(\mf{v}_n + \bs{\varphi}, \Omega) \qquad \forall \bs{\varphi} \in H_0^1(\Omega,\R^3).
\]
\item[($ii$)] $\mf{v}_n \to \mf{v}$ strongly in $H^1(\Omega)$, and in $C^{0}(\overline\Omega)$.
\item[($iii$)] As $n \to \infty$
\[
\int_\Omega M_n \prod_{j=1}^3 v_{j,n}^2\,dx \to 0.
\]
\end{itemize}
We denote by $\mathcal{L}_{\loc}(\Omega)$ the set of functions $\mf{v}$ such that $\mf{v} \in \mathcal{L}(\omega)$ for every $\omega \ssubset \Omega$.
\end{definition}

Theorem 1.1 of \cite{ST24p1} establishes that any limit function $\tilde{\mf{u}}$ of systems \eqref{P beta} belongs to the class $\mathcal{L}_{\loc}(\Omega)$. Furthermore, as discussed in Remark 1.8 in \cite{ST24p1}, any minimizer for \eqref{min fix traces intro 1} also belongs to $\mathcal{L}_{\loc}(\Omega)$, subject to a penalization procedure that does not affect the subsequent results (though it introduces technical complexities, which we have chosen to bypass). As a consequence, the properties proven for functions in $\mathcal{L}_{\loc}(\Omega)$ also apply to minimizers of \eqref{min fix traces intro 1}.

In Theorem 1.5 of \cite{ST24p1}, we demonstrated that if $\mf{v} \in \mathcal{L}_{\loc}(\Omega)$, then $\mf{v} \in C^{0,\alpha}(\Omega)$ for every $\alpha \in (0, \bar \nu)$, each component $v_i$ is harmonic when positive, and the partial segregation condition \eqref{PSC} holds. Additionally, we derived the following domain variation formula:
\beq\label{loc Poh}
\int_{S_r(x_0)} \sum_{i=1}^3 |\nabla \tilde u_i|^2\,d\sigma = \frac{N-2}{r} \int_{B_r(x_0)} \sum_{i=1}^3 |\nabla \tilde u_i|^2\,dx + 2 \int_{S_r(x_0)} \sum_{i=1}^3 (\pa_\nu \tilde u_i)^2\,d\sigma,
\eeq
which holds whenever $B_r(x_0) \subset \Omega$.

\medskip

As a first goal, we focus on the structure of the \emph{nodal set} $\Gamma_{\mf{v}}$ and of the \emph{free boundary} $\Gamma_{\mf{v}}'$ defined by \eqref{def fb} and \eqref{def fb2}, respectively. 

\begin{theorem}\label{thm: nodal set}
Let $\mf{v} \in \mathcal{L}_{\loc}(\Omega)$. Then either $\Gamma_{\mf{v}}=\Omega$ (and in this case two components of $\mf{v}$ vanish identically, and the remaining one is a positive harmonic function in $\Omega$), or the following properties hold: 
\begin{enumerate}[(i)]
\item $\Gamma_{\mf{v}} = \Gamma_{\mf{v}}'$;
\item $\Gamma_{\mf{v}}$ has Hausdorff dimension at most $N-1$, and more precisely $\Gamma_{\mf{v}}= \mathcal{R}_{\mf{v}} \cup \Sigma_{\mf{v}}$, where:
\begin{itemize}
\item[($a$)] $\mathcal{R}_{\mf{v}} \subset \Gamma_{\mf{v}}^2$ is the union of smooth hypersurfaces of dimension $N-1$. 
\item[($b$)] $\Sigma_{\mf{v}}$ has Hausdorff dimension at most $N-2$, and is a discrete set in dimension $2$. Moreover, the set of triple points $\Gamma_{\mf{v}}^3$ is locally finite in $\Omega$.
\end{itemize}
\item $\{v_i=0\}= \overline{\mathrm{int}\{v_i=0\}}$ for every $i$.
\end{enumerate}
\end{theorem}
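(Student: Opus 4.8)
The plan is to analyze the blow-up limits of $\mf{v}$ at each point of $\Gamma_{\mf{v}}$, classify them via Liouville-type theorems, and then transfer the structure of the model configurations back to $\mf{v}$ by standard epiperimetric/stratification machinery. First I would establish an Almgren-type monotonicity formula: using the domain variation identity \eqref{loc Poh}, for $x_0 \in \Omega$ set $E(x_0,r) = r^{2-N}\int_{B_r(x_0)} \sum_i |\nabla v_i|^2$, $H(x_0,r) = r^{1-N}\int_{S_r(x_0)} \sum_i v_i^2\,d\sigma$, and show $N(x_0,r):=E/H$ is monotone nondecreasing in $r$; this yields a well-defined \emph{Almgren frequency} $N(x_0,0^+)$ at every point, upper semicontinuity of $x_0 \mapsto N(x_0,0^+)$, and — via the usual Federer dimension-reduction argument applied to blow-up sequences $\mf{v}_{x_0,r}(x) = \mf{v}(x_0+rx)/H(x_0,r)^{1/2}$ — compactness of blow-ups in $H^1_{\loc}\cap C^0_{\loc}$ to homogeneous limits $\mf{w}$ of degree $d = N(x_0,0^+)$, each lying in the appropriate model class (nonnegative, harmonic where positive, partially segregated, entire).

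The core step is the classification of homogeneous entire solutions. A degree-$d$ homogeneous member of $\mathcal{L}_{\loc}(\R^N)$ is determined by its restriction to $\S^{N-1}$, which must be an eigenfunction-type object for an ``overlapping partition'' spherical problem; one shows by the Liouville-type theorems (these are the ``new Liouville-type theorems'' announced in the abstract) that the only possible homogeneity degrees are $d=1$ (corresponding to a single nonvanishing harmonic component, i.e.\ $\mf{v}$ locally a half-space configuration with one component $= x_N^+$ — giving regular double points), and $d = 3/4$ — the degree forced by the optimal regularity, arising from the genuinely singular triple-point model where the positivity sets of all three components meet. Crucially no degree in the open interval $(3/4,1)$ can occur, and any blow-up of frequency $3/4$ in dimension $2$ is (up to rotation) the explicit planar solution with three $120^\circ$-type sectors, which is \emph{rigid}; in dimension $N$, the frequency-$3/4$ stratum has the cone structure $\R^{N-2}\times(\text{planar }3/4\text{ cone})$, hence dimension $N-2$, and in dimension $2$ reduces to isolated points — this is exactly the statement that $\Gamma_{\mf{v}}^3$ is locally finite and $\Sigma_{\mf{v}}$ is discrete in the plane.

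With the classification in hand, the structure theorem follows the now-classical scheme. The set $\mathcal{R}_{\mf{v}}$ is the subset of $\Gamma_{\mf{v}}$ where the frequency equals $1$ and $m(x)=2$: near such a point two components are positive and the third vanishes on one side, so the free boundary is the nodal set of a harmonic function (the difference/appropriate combination), hence — by the regularity theory for nodal sets of harmonic functions, or more directly since the blow-up is a half-space — a smooth hypersurface in a neighborhood; one must check the frequency-$1$ points with $m=2$ form a relatively open subset of $\Gamma_{\mf{v}}$ on which the hypersurface structure is stable, using upper semicontinuity of the frequency together with the gap below $1$. The residual set $\Sigma_{\mf{v}}$ collects everything else: points of frequency $3/4$ (double or triple) and any higher-frequency points; Federer's reduction bounds its Hausdorff dimension by $N-2$, because the model cones at such points split off at most an $\R^{N-2}$ factor. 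The identification $\Gamma_{\mf{v}} = \Gamma_{\mf{v}}'$ and $\{v_i=0\} = \overline{\mathrm{int}\{v_i=0\}}$ are consequences of the local structure: wherever $v_i$ vanishes it does so on a set with nonempty interior (either an open region, from the definition of $\Gamma^1$, or on one side of $\mathcal{R}_{\mf{v}}$, or at a singular cone), ruling out ``spurious'' nodal points where a single component touches zero without its zero set being fat, and giving the coincidence of the two notions of free boundary. The dichotomy ``$\Gamma_{\mf{v}} = \Omega$ or the above'' is handled at the outset: if $\Gamma_{\mf{v}}$ has nonempty interior then on that open set all frequency values degenerate and a unique continuation / connectedness argument forces two components to vanish identically.

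The main obstacle I anticipate is the \emph{classification of the homogeneous Liouville solutions}, specifically proving the frequency gap — that no homogeneity degree lies strictly between $3/4$ and $1$, and that $3/4$ is attained only by the rigid planar triple-sector cone (and its $\R^{N-2}$-invariant extensions). This requires a delicate spectral analysis of the overlapping-partition problem on $\S^{N-1}$ (which, unlike the full-segregation case, has partition cells allowed to overlap pairwise, so the usual convexity/rearrangement tools do not apply directly), combined with the minimality (not just stationarity) of $\mf{v}$ to exclude unstable configurations; the domain-variation formula \eqref{loc Poh} and an epiperimetric inequality at frequency $3/4$ are the tools I would use to close this gap and to get the uniqueness of the blow-up (needed for the smoothness, not merely $C^1$-rectifiability, of $\mathcal{R}_{\mf{v}}$).
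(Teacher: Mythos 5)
Your high-level scaffolding (Almgren monotonicity, blow-up compactness, Federer's dimension reduction, harmonic nodal-set theory near double points) matches the paper's, but the mechanics you propose contain a genuine error that would derail the argument: you assign frequency $N(\mf{v},x_0,0^+)=1$ to double points and then try to read off the local structure from a classification of the corresponding homogeneous blow-ups. This is wrong. The Almgren quantity in the paper (and in your own definition) normalizes by $H(x_0,r)=r^{1-N}\int_{S_r(x_0)}\sum_i v_i^2$, i.e.\ by the $L^2$-norm of the \emph{whole} triplet. At a point of multiplicity $2$ one component $v_k$ is strictly positive, so $H(x_0,r)\to c\,v_k(x_0)^2>0$ while $E(x_0,r)\to 0$, giving $N(\mf{v},x_0,0^+)=0$, not $1$ (this is exactly the paper's Lemma~\ref{cor: on N m12} and Remark~\ref{rem: on N}). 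Consequently the normalized blow-up of $\mf{v}$ at a double point is a constant, not a half-space configuration; it contains no information about the interface. The paper instead obtains the structure of $\Gamma_{\mf{v}}^2$ from a minimality argument (Lemma~\ref{prop: diff arm}): near any $x_0\in\Gamma_{\mf{v}}^2$ the difference $v_i-v_j$ of the two vanishing components is \emph{exactly} harmonic, so $\Gamma_{\mf{v}}^2$ locally is the nodal set of a harmonic function, and the regular/singular splitting $\mathcal{R}_{\mf{v}}^2\cup\Sigma_{\mf{v}}^2$ with $\dim_{\mathcal H}\Sigma_{\mf{v}}^2\le N-2$ follows from the classical theory; no epiperimetric inequality, frequency gap, or blow-up uniqueness is needed for smoothness of $\mathcal{R}_{\mf{v}}$.

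You also overload the triple-point step. Proving that $\Gamma_{\mf{v}}^3$ has dimension $\le N-2$ does \emph{not} require the classification of homogeneous Liouville solutions, the gap $(3/4,1)$ being empty, or rigidity of the planar $3/4$-cone. In the paper (Proposition~\ref{prop: H dim sing}), after Federer reduction one only has to rule out $d=N-1$: if the top-dimensional invariant blow-up existed it would be a function of one variable $\mf{w}(x_1)$ with $\Gamma_{\mf{w}}^3=\{x_1=0\}$, partial segregation forces two of the one-dimensional components to be mutually segregated, and then Lemma~\ref{lem: no |x|} makes the third a nonnegative entire harmonic function on $\R$ vanishing at the origin, hence trivial — contradiction. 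The fine classification of $3/4$-homogeneous blow-ups that you flag as the main obstacle is in fact deferred by the paper to Section~\ref{sec: ihb} (optimal regularity) and Section~\ref{sec: fb dim 2}, and is not part of the proof of Theorem~\ref{thm: nodal set}. In short: your proposal, as written, would get stuck both because the double-point frequency value you assume is incorrect and because you are attempting a much harder Liouville classification than the structure theorem actually requires, while missing the two minimality lemmas (Lemma~\ref{prop: diff arm} and Lemma~\ref{lem: no |x|}) that make the paper's argument close.
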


In the proof of point ($ii$), we need to treat in a different way points of multiplicity $2$ and $3$: in order to study the latter ones, we exploit an Almgren-type monotonicity formula for triplets in $\mathcal{L}_{\loc}(\Omega)$. This is a fundamental ingredient for demonstrating the existence of homogeneous blow-ups. As a subsequent step, the classification of such blow-ups and Federer's reduction principle allow us to deduce that the set of triple points has dimension at most $N-2$. Unfortunately, Almgren's formula proves to be of little use for discussing the regularity of the nodal set at double points. To overcome this drawback, we exploit the minimality properties of the functions in $\mathcal{L}_{\loc}(\Omega)$ to show that, in the neighborhood of each double point, the set $\Gamma_{\mf{v}}$ is the nodal set of a harmonic function. The minimality also plays a crucial role in the proof of points ($i$) and ($iii$). 

\begin{remark}\label{rem: part nodal}
It is interesting to observe that positivity sets of different components can overlap, but, by Theorem \ref{thm: nodal set}, the  zero sets $\mathrm{int}\{v_1=0\}$, $\mathrm{int}\{v_2=0\}$ and $\mathrm{int}\{v_3=0\}$ forms an open partition of $\Omega$, in the sense that they are pairwise disjoint, and the union of their relative closure is $\Omega$.
\end{remark}

Next, we focus on the optimal regularity of functions in $\mathcal{L}_{\loc}$. As already recalled, the main result in \cite{ST24p1} establishes the validity of uniform H\"older estimates for sequences of functions satisfying (h1) and (h2) above, for ``small" exponents $\alpha \in (0, \bar \nu)$, with $\bar \nu \le 2/3$. In this paper, we improve the H\"older estimates up to the exponent $3/4$.

\begin{theorem}\label{thm: improved Holder bounds}
Let $\mf{u}_\beta =(u_{1,\beta}, u_{2,\beta}, u_{3,\beta})$ be a solution of \eqref{P beta} at fixed $\beta>1$. Suppose that (h1) and (h2) hold. Then, for every compact set $K \ssubset \Omega$, we have
\beq\label{eq: imp hol}
\|\mf{u}_\beta\|_{C^{0,\alpha}(K)} \le C, \quad \forall \alpha \in \left(0,\frac34\right).
\eeq
Moreover, any function in $\mathcal{L}_{\loc}(\Omega)$ is of class $C^{0,3/4}(\Omega)$.
\end{theorem}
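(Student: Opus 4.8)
The plan is to establish the uniform $C^{0,\alpha}$ bounds for $\alpha<3/4$ by a blow-up / contradiction argument of Almgren--Caffarelli--Gidas--Spruck type, and then obtain the $C^{0,3/4}$ regularity of the limit class $\mathcal{L}_{\loc}(\Omega)$ as a consequence. First I would recall from \cite{ST24p1} that the family $\{\mf{u}_\beta\}$ is already known to be uniformly bounded in $C^{0,\alpha}_{\loc}$ for $\alpha<\bar\nu$; the point is to push the threshold to $3/4$. To do so I would argue by contradiction: assume that there are sequences $\beta_n\to\infty$, points $x_n\in K$, radii, and a target exponent $\alpha\in(\bar\nu,3/4)$ such that a suitable normalized H\"older quotient of $\mf{u}_{\beta_n}$ at scale $r_n$ around $x_n$ is unbounded, say equals $L_n\to\infty$. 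Rescaling, $\mf{v}_n(x):=\mf{u}_{\beta_n}(x_n+r_nx)/(L_nr_n^\alpha)$ (after a further normalization to fix the maximal oscillation) is a minimizer of the rescaled functional $J_{\tilde\beta_n}$ with a new parameter $\tilde\beta_n=\beta_n L_n^2 r_n^{2+2\alpha}$, and one shows the normalization forces a uniform control of the $C^{0,\alpha}$ seminorm on compact sets away from the origin while the seminorm equals $1$ at the origin. Hence $\mf{v}_n$ converges (locally uniformly, and in $H^1_{\loc}$ by a Caccioppoli / almost-monotonicity argument) to a nontrivial limit $\mf{v}_\infty$.

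The next step is to identify $\mf{v}_\infty$. Two regimes occur depending on whether $\tilde\beta_n\to\infty$ or stays bounded. If $\tilde\beta_n$ stays bounded, $\mf{v}_\infty$ solves a fixed-$\beta$ system \eqref{P beta} (or a subsystem where some components vanish), which has smooth solutions, contradicting the fact that the $\alpha$-seminorm (with $\alpha<3/4<1$) is attained and equals $1$ at a point — smoothness gives a better estimate. If $\tilde\beta_n\to\infty$, then $\mf{v}_\infty\in\mathcal{L}_{\loc}(\R^N)$ (this uses exactly the variational structure encoded in Definition \ref{def: L}, i.e. that along the sequence $\tilde\beta_n\int\prod v_{j,n}^2\to0$, which one must verify survives the rescaling), it is globally defined, each component is harmonic where positive, satisfies the partial segregation condition, obeys the Pohozaev-type identity \eqref{loc Poh}, and has $C^{0,\alpha}$ growth at infinity $|\mf{v}_\infty(x)|\lesssim 1+|x|^\alpha$ with $\alpha<3/4$. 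The heart of the matter is then a \textbf{Liouville-type theorem}: any such entire partially segregated harmonic map with subquartic-root growth must be constant (equivalently, its Almgren frequency at infinity is $\le\alpha<3/4$, and one classifies the admissible frequencies). This is where the number $3/4$ enters: the optimal homogeneity of a nontrivial entire blow-up configuration for this triple-segregation problem is exactly $3/4$, realized by an explicit planar profile (three half-plane-like harmonic pieces fitting around a triple junction), which is why $3/4$ is both the regularity exponent and the sharp obstruction. Establishing that $3/4$ is the minimal nonzero admissible frequency — ruling out every homogeneous solution of degree in $(0,3/4)$ — is the main obstacle; it requires the Almgren monotonicity formula for $\mathcal{L}_{\loc}$ together with the classification of homogeneous blow-ups on the sphere $\S^{N-1}$, reducing to a one-dimensional or two-dimensional overlapping-partition eigenvalue problem whose first nonzero eigenvalue corresponds to homogeneity $3/4$.

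Having the Liouville theorem, $\mf{v}_\infty$ is constant, contradicting that its $C^{0,\alpha}$ seminorm equals $1$ at the origin; this contradiction proves \eqref{eq: imp hol}. Some care is needed in the blow-up to correctly normalize — one typically normalizes the quantity $\sup_{r>0} r^{-\alpha}\big(\text{oscillation of }\mf{u}_{\beta_n}\text{ on }B_r(x_n)\big)$ and picks the center/scale almost attaining it, so that on the rescaled functions the analogous sup is $\le 2$ everywhere and $\ge1$ at unit scale; this is standard but must be done compatibly with the three-component structure and with the possibility that some components degenerate in the limit (handled by passing to subsystems, whose Liouville theorems follow a fortiori or are already known, since a single or a pair of harmonic functions with subquartic-root growth is trivially constant).

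Finally, to deduce that any $\mf{v}\in\mathcal{L}_{\loc}(\Omega)$ is in $C^{0,3/4}(\Omega)$: fix $\omega\ssubset\Omega$; by Definition \ref{def: L} there are $M_n\to\infty$ and minimizers $\mf{v}_n$ of $J_{M_n}$ with $\mf{v}_n\to\mf{v}$ uniformly and $M_n\int\prod v_{j,n}^2\to0$. Applying \eqref{eq: imp hol} on a slightly larger compact set gives, for each $\alpha<3/4$, a uniform bound $\|\mf{v}_n\|_{C^{0,\alpha}(\omega)}\le C(\alpha,\omega)$; passing to the limit, $\|\mf{v}\|_{C^{0,\alpha}(\omega)}\le C(\alpha,\omega)$. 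It remains to upgrade from "$C^{0,\alpha}$ for all $\alpha<3/4$" to "$C^{0,3/4}$". For this I would run the blow-up argument once more directly on $\mf{v}$: at any point $x_0\in\Gamma_{\mf{v}}$, use the Almgren monotonicity formula to show the frequency $N(x_0,0^+)\ge 3/4$ (again the classification of homogeneous blow-ups gives that the only admissible value below $1$ is exactly $3/4$, forced by the triple-point profile), whence the standard frequency-to-growth estimate $\mathrm{osc}_{B_r(x_0)}\mf{v}\lesssim r^{3/4}$; away from $\Gamma_{\mf{v}}$ the components are harmonic, hence smooth, and a covering argument patches the local estimates into $\mf{v}\in C^{0,3/4}(\omega)$ for every $\omega\ssubset\Omega$, and thus $\mf{v}\in C^{0,3/4}(\Omega)$.
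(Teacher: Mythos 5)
Your overall framework is the right one --- a blow-up/contradiction argument for the uniform $C^{0,\alpha}$ bounds with $\alpha<3/4$, reducing to a Liouville-type theorem for entire, globally H\"older, partially segregated limits, and then a second pass to get $C^{0,3/4}$ for $\mathcal{L}_{\loc}$ functions via a frequency lower bound at triple points. But there is a genuine gap at the ``heart of the matter'' you identify, namely the proof of the improved Liouville theorem (Proposition \ref{prop: imp liou}).

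You attribute the threshold $3/4$ to ``an overlapping-partition eigenvalue problem on the sphere whose first nonzero eigenvalue corresponds to homogeneity $3/4$.'' This is precisely what the paper explains does \emph{not} happen. The overlapping partition problem on $\S^{N-1}$ only characterizes the Alt--Caffarelli--Friedman exponent $\bar\nu$, which satisfies $\bar\nu\le 2/3<3/4$, and the ACF-type monotonicity argument makes no use of minimality; the whole point of Section \ref{sec: ihb} is that to go beyond $\bar\nu$ one must \emph{leverage minimality}, and the route is not spectral at all. The actual argument first reduces the fixed-$\beta$ case to a globally H\"older, homogeneous element of $\mathcal{L}_{\loc}(\R^N)$ via a blow-down (Lemma \ref{lem: blow-down}); then uses Proposition \ref{prop: liou seg full} and the free-boundary structure theorems of Sections \ref{sec: fb initial study}--\ref{sec: fb reg} (connectedness of $\{v_i>0\}$ in Lemma \ref{lem: connect}, non-emptiness of $\Gamma_{\mf{v}}^3$ via a topological/winding-number argument in Lemma \ref{lem: trip non emp}, $\Gamma_{\mf{v}}^3$ an affine space of codimension $\ge 2$ with $\mf{v}$ homogeneous about each of its points in Lemma \ref{lem: homog}, the sphere argument in Lemma \ref{lem: triple on sphere}); and only then performs an inductive dimensional reduction to $N=2$ (Lemma \ref{lem: 3/4}) where the bound $\nu\ge 3/4$ emerges from an elementary arc-length count: each nodal arc of $g_i$ on $S^1$ has opening $\pi/\nu$, and partial segregation forces $3(2\pi-\pi/\nu)\ge 2\pi$, i.e. $\nu\ge 3/4$. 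None of this topological/combinatorial machinery appears in your sketch, and it is exactly what the ACF/partition approach cannot supply. Relatedly, your treatment of the bounded-$\tilde\beta_n$ regime via ``smoothness gives a better estimate'' is not a valid shortcut: smooth nonconstant entire solutions can attain any prescribed $C^{0,\alpha}$ seminorm, and the paper handles this case through the Liouville theorem for fixed $M$ as well (Proposition \ref{prop: imp liou}(2)), which in turn reduces to the $\mathcal{L}_{\loc}$ case via blow-down.

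Your second part (upgrading $\mathcal{L}_{\loc}\subset\bigcap_{\alpha<3/4}C^{0,\alpha}_{\loc}$ to $C^{0,3/4}_{\loc}$) is essentially what the paper does: from the near-optimal bounds one obtains $N(\mf{v},x_0,0^+)\ge 3/4$ at triple points (Lemma \ref{lem: lbn3}), hence the decay $H(\mf{v},x_0,r)\lesssim r^{3/2}$ (Lemma \ref{lem: decayopt}), and one re-runs the four-case comparison of the H\"older quotient as in Lemma \ref{lem: bu hol}. One remark: the paper does not need to classify all homogeneous profiles of degree below $1$, only the lower bound $N\ge 3/4$ at triple points, which follows once the near-optimal regularity is in hand.
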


In proving this theorem, we will exploit some new Liouville-type theorems for entire \emph{minimal} solutions of \eqref{P beta} and entire functions in $\mathcal{L}_{\loc}(\R^N)$, Proposition \ref{prop: imp liou}, in combination with the uniform $\alpha$-H\"older estimates for small $\alpha$ given by \cite[Theorem 1.1]{ST24p1}. It is worthwhile noticing that this can be considered as an ``improvement of regularity", and we could not directly prove Theorem \ref{thm: improved Holder bounds} without the initial a-priori-bounds in \cite{ST24p1}. A similar two-steps procedure was used (in a different context) in \cite{TVZ16}, which concerns H\"older estimates for a full segregation problem driven by the fractional Laplacian. It is also interesting to note that, in the proof of the Liouville-type theorems, we will exploit some properties of the free boundary demonstrated earlier. Thus, ultimately, the regularity of the free boundary proves useful for studying the optimal regularity of the densities.

The next step consists in showing that Theorem \ref{thm: improved Holder bounds} gives the optimal regularity for the class $\mathcal{L}_{\loc}$ in H\"older spaces. To this end, we exhibit a minimizing harmonic map for problem \eqref{min fix traces intro 1} with regularity exactly $C^{0,3/4}(\Omega)$, and no higher. Since, as already pointed out, minimizers of \eqref{min fix traces intro 1} with fixed traces can be regarded as elements of $\mathcal{L}_{\loc}$, such an example establishes that the regularity of the densities in Theorem \ref{thm: improved Holder bounds} cannot be improved.

To be precise, we consider problem \eqref{min fix traces intro 1} in the unit ball $B_1$ of the plane $\R^2$, and we conveniently choose the traces as follows (in polar coordinates): on $\pa B_1$ we set
\beq\label{tr mer}
\psi_1(\Theta)=\begin{cases} \sin\left(\frac34 \Theta\right) & \text{if }0 \le \Theta \le \frac43\pi \\ 0 & \text{if } \frac43\pi < \Theta < 2\pi, \end{cases} \quad \psi_2(\Theta) = \psi_1\left(\Theta-\frac{2}{3}\pi\right), \quad \psi_3(\Theta) = \psi_1\left(\Theta-\frac{4}{3}\pi\right),
\eeq
and we extend these functions inside the ball in a Lipschitz fashion, in such a way to maintain the partial segregation condition.

\begin{theorem}\label{prop: mer min}
The function $\tilde{\mf{v}} \in H^1(B_1) \cap C^{0,3/4}(\overline{B_1})$ defined in polar coordinates by
\[
(\tilde v_1(\mathfrak{r},\Theta), \tilde v_2(\mathfrak{r},\Theta), \tilde v_3(\mathfrak{r},\Theta)) := \mathfrak{r}^{3/4} (\psi_1(\Theta), \psi_2(\Theta), \psi_3(\Theta))
\]
is a minimizer for problem \eqref{min fix traces intro 1} with $(\psi_1, \psi_2, \psi_3)$ defined by \eqref{tr mer}.
\end{theorem}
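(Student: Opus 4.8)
The plan is to verify directly that $\tilde{\mf{v}}(\mathfrak{r},\Theta)=\mathfrak{r}^{3/4}(\psi_1(\Theta),\psi_2(\Theta),\psi_3(\Theta))$ is an admissible competitor for \eqref{min fix traces intro 1} and then to prove its minimality by a calibration/comparison argument. First one checks admissibility: each $\psi_i$ is supported on an arc of length $\frac43\pi$, the three arcs being rotations of one another by $\frac23\pi$; since three arcs of length $\frac43\pi$ on a circle of total length $2\pi$ cannot have a common point (their total measure is $4\pi=2\cdot 2\pi$, so triple overlap would force equality, which the explicit rotation excludes at the endpoints), we get $\psi_1\psi_2\psi_3\equiv 0$ on $\pa B_1$, hence $\tilde v_1\tilde v_2\tilde v_3\equiv 0$ in $B_1$. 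Each $\psi_i$ vanishes at the endpoints of its arc, so $\tilde{\mf{v}}$ is continuous across the rays bounding the supports; regularity $\tilde{\mf{v}}\in C^{0,3/4}(\overline{B_1})\cap H^1(B_1)$ follows since $\mathfrak{r}^{3/4}$ is exactly $3/4$-H\"older at the origin and each $\psi_i\in C^{0,1}$ of the angular variable (one must check the matching at $\Theta=0,\frac43\pi$ etc., where $\sin(\frac34\Theta)$ meets $0$). Finally the choice of exponent $3/4$ is dictated by separation of variables: $\mathfrak{r}^\gamma\sin(\gamma\Theta)$ is harmonic precisely when the angular frequency equals $\gamma$, and here the angular profile $\sin(\frac34\Theta)$ forces $\gamma=3/4$; one should note in passing that each $\tilde v_i$ is indeed harmonic on $\{\tilde v_i>0\}$.

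The core of the proof is minimality. The natural route is a lower-bound (frequency/monotonicity) argument: if $\mf{w}$ is any competitor with $\mf{w}-\boldsymbol\psi\in H_0^1(B_1,\R^3)$ and $w_1w_2w_3\equiv 0$, one wants $\int_{B_1}\sum_i|\nabla w_i|^2\ge \int_{B_1}\sum_i|\nabla \tilde v_i|^2$. I would compute the energy of $\tilde{\mf{v}}$ explicitly: because the three angular supports are disjoint in the sense that at each $\Theta$ at most two of the $\psi_i$ are nonzero and in fact — after a closer look at the overlaps — one can organize the computation sector by sector, obtaining $\int_{B_1}\sum_i|\nabla \tilde v_i|^2 = \tfrac34\sum_i\int_{0}^{2\pi}\big(\psi_i^2+(\psi_i')^2/( \tfrac34)\big)\cdots$, i.e. a fixed explicit number determined by the $3/4$-homogeneity. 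For the lower bound, the key tool available from the earlier development is the Almgren-type monotonicity formula together with the domain variation identity \eqref{loc Poh}: a competitor that is a harmonic map into the union of the coordinate hyperplanes and that is $\frac34$-homogeneous has Almgren frequency exactly $3/4$, and the boundary data $\boldsymbol\psi$ are themselves the trace of such a homogeneous map, so any competitor has energy at least that of the homogeneous extension of its boundary trace — this is the standard fact that among all extensions of a given boundary datum on a sphere, the $\gamma$-homogeneous extension minimizes the Dirichlet energy when $\gamma$ matches the frequency of the datum, provided the constraint is respected, and here the constraint only helps (it is satisfied by the homogeneous extension). Concretely: write $\mf{w}$ in polar coordinates, use $\int_{B_1}|\nabla w_i|^2=\int_0^1\!\!\int_{\pa B_1}(\partial_{\mathfrak r}w_i)^2 + \mathfrak r^{-2}|\nabla_\Theta w_i|^2\,\mathfrak r\,d\sigma d\mathfrak r$, and minimize first in $\partial_{\mathfrak r}w_i$ with the radial boundary constraint, reducing to a one-dimensional Hardy-type inequality $\int_0^1 (f')^2 \mathfrak r\,d\mathfrak r\ge \gamma f(1)^2$ whose equality case is $f(\mathfrak r)=\mathfrak r^\gamma$; then on each sphere use that $\tilde{\mf{v}}|_{\pa B_r}$ realizes the optimal first eigenvalue-type constant for the angular energy under the partial segregation constraint, which is exactly the "overlapping partition problem" whose optimal value corresponds to the frequency $3/4$.

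The main obstacle I anticipate is the sharp angular (spectral) lower bound: showing that $\sum_i\int_{\pa B_1}|\nabla_\Theta\phi_i|^2\,d\sigma \ge \tfrac34(\tfrac34+N-2)\sum_i\int_{\pa B_1}\phi_i^2\,d\sigma$ for all $(\phi_1,\phi_2,\phi_3)$ on $\S^{N-1}$ with $\phi_1\phi_2\phi_3\equiv 0$ and with the prescribed nodal/boundary structure, with equality attained by $(\psi_1,\psi_2,\psi_3)$. In $N=2$ this is a concrete computation: the constraint means the unit circle is covered by the three supports, each of measure $\frac43\pi$, and one must show the first "constrained eigenvalue" of $-\Theta''$ on such a configuration is $9/16=(3/4)^2$; this follows by a rearrangement/convexity argument showing the optimal configuration is the symmetric one with equal overlaps, on which the profile $\sin(\frac34\Theta)$ is the ground state of $-d^2/d\Theta^2$ with the appropriate boundary conditions at the arc endpoints (Dirichlet at the outer endpoints of each arc, free where two arcs overlap), giving eigenvalue $(3/4)^2$. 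Care is needed to justify that the overlapping (two-component) regions do not allow a lower angular energy, and to handle competitors $\mf w$ whose nodal set is topologically different from that of $\tilde{\mf v}$; here one invokes the classification of homogeneous blow-ups already developed, or argues that any admissible angular configuration can be continuously deformed, without increasing energy, to the model one. Once this angular estimate and the radial Hardy inequality are in hand, combining them and tracking the equality cases yields that $\tilde{\mf v}$ is a minimizer, and simultaneously reconfirms optimality of the exponent $3/4$ asserted in Theorem~\ref{thm: main fixed traces}.
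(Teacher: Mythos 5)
Your plan hinges on a sphere-by-sphere spectral lower bound: that for every triple $(\phi_1,\phi_2,\phi_3) \in H^1(\S^1,\R^3)$ with $\phi_1\phi_2\phi_3\equiv 0$ (the trace of a competitor on a circle), the angular Dirichlet energy satisfies $\sum_i \int_{\S^1} (\phi_i')^2 \ge (3/4)^2 \sum_i \int_{\S^1} \phi_i^2$, with the radial part handled by a one-dimensional Hardy inequality. This is precisely the step that fails, and the paper itself signals why: the exponent dictated by the optimal ``overlapping partition'' problem on the sphere is $\bar\nu$, and the paper records that $\bar\nu \le 2/3 < 3/4$. In other words, the constrained angular eigenvalue for arbitrary triples satisfying only the partial segregation constraint is \emph{not} $(3/4)^2$; it corresponds at best to a H\"older exponent $\le 2/3$. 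The $3/4$ threshold is not a sphere-by-sphere fact. It only emerges after one imposes, on top of the segregation constraint, the structural free-boundary information available to $\gamma$-homogeneous functions in $\mathcal{L}_{\loc}$ (connectedness of each $\{v_i>0\}$, each angular nodal arc having opening $\pi/\nu$, etc.), all of which are consequences of minimality proved in Sections~\ref{sec: fb initial study}--\ref{sec: ihb}. You gesture at ``deforming any admissible angular configuration without increasing energy to the model one,'' but that is the entire difficulty, and it is unavailable for the trace of a generic competitor on a generic circle, which need not resemble the trace of a minimizer at all. So the claimed lower bound, as stated, is false for general competitors, and the proof does not go through.

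The paper's proof takes a genuinely different route that sidesteps this obstacle entirely. It argues by contradiction: if $\tilde{\mf v}$ were not minimal, one takes an actual minimizer $\mf v$ of \eqref{min fix traces intro 1}, uses the free-boundary results of Section~\ref{sec: fb dim 2} (finitely many loops and singular points for simple boundary data) to locate a ``clean'' interior triple point $x_0$ where the nodal set is three smooth arcs meeting at $x_0$ (Lemma~\ref{lem: nod mer}), then proves via a four-fold covering trick and Hartman--Wintner asymptotics that near $x_0$ the minimizer expands as $\sigma\,\rho^{3/4}$ times the model angular profile plus $O(\rho)$ (Lemma~\ref{lem: expansion}). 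Finally, using the $3/2$-scale invariance of the energy (Remark~\ref{rem: scale min}), one pastes a rescaled copy of the minimizer into a small ball around $x_0$; the assumed strict inequality $c_\infty < \tilde c$ then produces a competitor whose energy in $B_r(x_0)$ is strictly less than the minimizer's as $r\to 0$, a contradiction. That indirect argument uses minimality at the level of the full free-boundary structure rather than on each sphere, which is exactly what is needed to get the exponent $3/4$ rather than the weaker $\bar\nu$.
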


The proof is based on a preliminary analysis of the free boundary in dimension $2$. We refer the reader to Section \ref{sec: fb dim 2} and to Theorem \ref{thm: fb dim 2 1} for the precise results, which we believe are of independent interest.

\begin{remark}
The previous example shows that functions in $\mathcal{L}_{\loc}$ are $C^{0,3/4}$, but not better in general. Thus, from the point of view of the regularity in $\mathcal{L}_{\loc}$, Theorem \ref{thm: improved Holder bounds} is optimal. One may still ask whether it is possible to derive optimal uniform bounds for the approximating sequence $\{\mf{u}_\beta\}$ in $C^{0,3/4}_{\loc}$. This is an interesting open problem. In connection with full segregation problems, which we will discuss later in the introduction, it is worth mentioning that the transition from nearly optimal to optimal uniform bounds is very delicate (see \cite{SZ15}).
\end{remark}

Theorems \ref{thm: improved Holder bounds} and \ref{prop: mer min} give the optimal regularity for harmonic maps minimizing \eqref{min fix traces intro 1}. It is remarkable that we have a lower regularity compared to the Lipschitz continuity found in \cite{BoBuFo}, regarding singular limits to the partial segregation model
\beq\label{pb farid}
\Delta u_i= \beta \prod_{j =1}^3 u_j, \quad u_i>0 \qquad \text{in $\Omega$}.
\eeq
Problem \eqref{pb farid}, which was previously studied \cite{CaRo}, is non-variational, and models partial segregation with symmetric-type interaction. The difference in optimal regularity reveals a significant diversity between models of partial or full segregation: indeed, in the latter case both for variational \cite{CL08, CTVind, NTTV10} and symmetric \cite{CKL, CTV05} interaction the optimal regularity is the Lipschitz one. 

\subsection{Comparison with the case of  fully segregated  energy minimizers.} Segregation phenomena naturally appear in problems of a purely mathematical nature, and arise from physics, biology, and other applied sciences. As discussed in detail in the introduction of the previous paper \cite{ST24p1}, most of the available results have focused on phenomena of full segregation, i.e., phenomena where any limiting profile $\bar{\mf{u}}$ satisfies the \emph{full segregation condition} 
\[
\bar u_1  \bar u_j \equiv 0 \quad \text{in $\Omega$, for every $i \neq j$}.
\]
Specifically, this ensures that the positivity sets of the different components of the limiting profile are mutually disjoint. The partial segregation condition, on the other hand, allows the positivity sets of different components to overlap, but only pairwise. Our study is the first contribution concerning problems characterized by variational structure and partial segregation. Previous results, concerning a class of non-variational problems, have been obtained in \cite{BoBuFo, CaRo}.

When condition \eqref{PSC} is replaced by the \emph{(full) segregation condition}
\[
\phi_i\,\phi_j \equiv 0 \quad \text{in $\Omega$ for every $i \neq j$}
\]
(namely the positivity sets of $\phi_i$ and $\phi_j$ are mutually disjoint), problem
\beq\label{FSP}
\begin{cases} 
\displaystyle \Delta u_i = \beta u_i \sum_{j \neq i} u_j^2, \quad u_i>0  & \text{in }\Omega\\
u_i = \phi_i & \text{on }\partial \Omega
\end{cases}
\eeq
and its singular limit were studied in \cite{CL08, NTTV10, SZ15, Wa}, see also \cite{CafLin06, CTV03, CTVind, DWZjfa, STTZ16, TT12}. For each $\beta>1$ fixed, one can easily find a solution $\mf{v}_\beta$ by minimizing the associated functional, and prove that a family of (positive) minimizers weakly converges in $H^1$ to a fully segregated nonnegative function $\bar{\mf{u}}$, namely $\bar u_i \, \bar u_j \equiv 0$ a.e. in $\Omega$, for every $i \neq j$. At this point, one can introduce a \emph{limit class} similar to $\mathcal{L}_{\loc}$, and study the same issues considered here and in \cite{ST24p1}, namely \emph{uniform-in-$\beta$ a priori bounds in H\"older spaces}, \emph{optimal regularity of the limit densities}, \emph{regularity of the free boundary}. The picture is now well understood, and optimal results are available. It is not difficult to show that $\bar{\mf{u}}$ is again a \emph{harmonic map into a singular space}, in the sense that it is a minimizer for
\[
\min\left\{ \int_{\Omega} \sum_{i=1}^3 |\nabla u_i|^2\,dx \left| \begin{array}{l} \mf{u} \in H^1(\Omega,\R^3), \ u_i = \phi_i \quad \text{ on $\pa \Omega$} \\ u_i\,u_j \equiv 0 \quad \text{a.e. in $\Omega$, for every $i \neq j$}\end{array}\right.\right\}.
\]
Notice that the target space is 
\[
\left\{x \in \R^N: \ x_i \,x_j = 0 \quad \text{for every $i \neq j$, and $x_i>0$ for every $i$}\right\},
\]
which is a singular space with non-positive curvature in the sense of Alexandrov, see \cite{GrSc}. This marks another fundamental difference with the case of partial segregation investigated in this paper; here the target, not being geodesically convex, cannot have non-positive curvature. This may be seen as an easy exercise by exhibiting two different minimal geodesics connecting the two points $(1,1,0)$ and $(0,0,1)$.  The lack of geodesic convexity may reflect on a lack of uniqueness of minimizers of the constrained Dirichlet problem and, ultimately, can be held responsible of the lack of Lipschitz continuity of minimizers.
Indeed, contrary to the present case of partial segregation, in the full segregation one, it is proved that $\{\mf{u}_\beta: \beta>1\}$ is bounded in $C^{0,1}_{\loc}(\Omega)$, see \cite{SZ15}; we also refer to \cite{NTTV10} for uniform H\"older estimates, which are a fundamental ingredient in the proof of \cite{SZ15}. Despite the fact that each function $\mf{v}_\beta$ is smooth, uniform Lipschitz estimates are optimal, since the limit $\bar{\mf{u}}$ is Lipschitz continuous but not $C^1$. The uniform boundedness in $C^{0,1}_{\loc}$ entails a number of consequences (in fact, to derive some of these consequence the uniform H\"older estimates are sufficient): it is clear that, up to a subsequence, $\mf{v}_\beta \to \bar{\mf{u}}$ in $C^{0,\alpha}_{\loc}(\Omega)$, for every $\alpha \in (0,1)$. The convergence is also strong in $H^1_{\loc}$, and the limit $\bar{\mf{u}}$ satisfies
\beq\label{prop lim full}
\Delta \bar u_i = 0 \quad \text{in }\{u_i>0\}, \quad \text{and} \quad u_i \, u_j \equiv 0 \quad \text{in $\Omega$, for every $i \neq j$};
\eeq
furthermore, in view of the variational structure of problem \eqref{FSP}, the \emph{domain variation formula} (or \emph{local Pohozaev identity}) \eqref{loc Poh} holds whenever $B_r(x_0) \subset \Omega$. Notice that the domain variation formula is the same as the one arising in the partial segregation model, but the coupling with the condition of full or partial segregation results in significant differences in the way it can be exploited. In both cases, it is worth to remark that this identity keeps track of the interaction between the different components in the singular limit, an interaction that is lost at the pointwise level, and it has been the key ingredient to study the free boundary regularity in full segregation models \cite{CL08, TT12}: the free boundary $\Gamma:= \bigcup \pa\{\bar u_i>0\}$ and the \emph{nodal set} $\{\bar{\mf{u}}=0\}$ (set of points where all the components vanish) coincide; $\Gamma$ has Hausdorff dimension $N-1$ and, more precisely, it splits into a regular part, which is a collection of $C^{1,\alpha}$-hypersurfaces, and a singular set of dimension at most $N-2$ (optimal); the regular part $\mathcal{R}$ is characterized by the fact that for any point $x_0 \in \mathcal{R}$ there exists a radius $\rho>0$ such that exactly two components, say $\bar u_1$ and $\bar u_2$, do not vanish identically in $B_\rho(x_0)$, and $\bar u_1-\bar u_2$ is harmonic in $B_\rho(x_0)$. Finally, in dimension $N=2$ the singular set is locally finite, and the regular part consists in a locally finite collection of curves meeting with equal angles at singular points.

We also refer to \cite{Alp, SZ16} for further contributions such as sharp pointwise decay estimates for solutions to \eqref{FSP}, and fine properties of the singular set in higher dimension.
 
The aforementioned  results admit an extension into more general contexts: they also holds for non-minimal, sign changing, solutions of systems with reaction terms and \emph{competing groups} of components. We refer the interested reader to \cite{STTZ16} for more details. Moreover, the same issues were considered for system \eqref{FSP} with the Laplacian replaced by the fractional Laplacian \cite{TVZ16, TVZ14, ToZi20}, for systems with \emph{long range interaction} \cite{CPQ, STTZ18, STZ23}, for asymmetric systems with different operators for each component \cite{ST23}, and for systems with non-variational interaction \cite{CTV05, CKL, TT12}, such as
\[
\begin{cases} 
\displaystyle \Delta u_i = \beta u_i \sum_{j \neq i} u_j, \quad u_i>0 & \text{in }\Omega\\
u_i = \phi_i & \text{on }\partial \Omega.
\end{cases}
\]
In general, the results obtained depend on the particular problem considered, but a common feature is that all these models describe phenomena of full segregation, namely, at each point at most one component of the limiting profile can be non-zero. 

\subsection*{Structure of the paper} In Section \ref{sec: Alm}, we prove some Almgren-type monotonicity formulas tailored for solutions to \eqref{P beta} and for functions in $\mathcal{L}_{\loc}(\Omega)$. The validity of the latter is a rather direct consequence of the domain variation formula \eqref{loc Poh}. In Section \ref{sec: fb initial study}, we start studying the structure of the nodal set of functions in $\mathcal{L}_{\loc}$, exploiting in particular their minimality properties. These properties alone are not sufficient to analyze the nodal set at triple points, but this is where Almgren's formula plays an essential role: thanks to it, we can show in Section \ref{sec: blow-up} the existence of homogeneous blow-ups at triple points. The classification of blow-ups and the results from Section \ref{sec: fb initial study} allow us to prove the structure theorem on the nodal set, Theorem \ref{thm: nodal set}, in Section \ref{sec: fb reg}. The validity of the improved uniform H\"older estimates, and the $C^{0,3/4}$ regularity of functions in $\mathcal{L}_{\loc}$ (i.e. the validity of Theorem \ref{thm: improved Holder bounds}) are the contents of Section \ref{sec: ihb}. In Section \ref{sec: fb dim 2}, we derive additional results regarding the free boundary of minimizing harmonic maps in dimension $N=2$, under suitable assumptions of the boundary data. By using these results, in Section \ref{sec: mer} we prove Theorem \ref{prop: mer min}.

\section{Almgren-type monotonicity formulae}\label{sec: Alm}

In this section we collect some preliminary properties. 

\subsection{Almgren monotonicity formulae for solutions of systems}

Let $\Omega \subset \R^N$ be a domain, $M>0$, and let $\mf{v} \in H^1_{\loc}(\Omega) \cap C(\Omega)$, $\mf{v} \not \equiv 0$, satisfy
\beq\label{coex}
\begin{cases}
\Delta v_i = M v_i \prod_{j \neq i} v_j^2 & \text{in $\Omega$} \\
v_i \ge 0 & \text{in $\Omega$}.
\end{cases}
\eeq
For $x_0 \in \Omega$ and $r \in (0,\dist(x_0,\pa \Omega))$, we define
\[
\begin{split}
E(\mf{v},x_0,r) &:= \frac{1}{r^{N-2}} \int_{B_r(x_0)} \bigg(\sum_{j=1}^3 |\nabla v_j|^2 + M\prod_{j=1}^3 v_j^2\bigg)\,dx \\
H(\mf{v},x_0,r) &:= \frac{1}{r^{N-1}} \int_{S_r(x_0)} \sum_{j=1}^3 v_j^2\,d\sigma \\
N(\mf{v},x_0,r) &:= \frac{E(\mf{v},x_0,r)}{H(\mf{v},x_0,r)}.
\end{split}
\]
By standard elliptic theory, any $\mf{v}$ solving \eqref{coex} is smooth, and any non-trivial component is strictly positive by the maximum principle. Therefore, $H(\mf{v},x_0,r)>0$ whenever defined, and $E$, $H$, and $N$ are absolutely continuous with respect to $r$. Moreover,
\beq\label{E =H'}
E(\mf{v},x_0,r) \le \frac{1}{r^{N-2}} \int_{B_r(x_0)} \bigg(\sum_{j} |\nabla v_j|^2 + 3 M\prod_{j} v_j^2\bigg)\,dx = \frac{1}{r^{N-2}} \int_{S_r(x_0)} \sum_i v_i \partial_\nu v_i\,d\sigma,
\eeq
by the divergence theorem. In order to compute the derivative of $E$ with respect to $r$, we recall Lemma 6.1 from \cite{ST24p1}.

\begin{lemma}[Local Pohozaev identity]\label{lem: poh}
For every $x_0 \in \Omega$ and $r \in (0, \dist(x_0,\pa \Omega))$, we have that
\[
\begin{split}
r \int_{S_r(x_0)} \Big( \sum_{i=1}^3 |\nabla v_{i}|^2 + & M \prod_{j=1}^3 v_{j}^2 \Big)d\sigma = (N-2)\int_{B_r(x_0)} \sum_{i=1}^3 |\nabla v_{i}|^2\,dx \\
&+ N \int_{B_r(x_0)} M \prod_{j=1}^3 v_{j}^2\,dx + 2r \int_{S_r(x_0)} \sum_{i=1}^3 (\pa_\nu v_{i})^2\,d\sigma.
\end{split}
\]
\end{lemma}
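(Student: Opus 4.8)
The plan is to prove the identity by the classical Rellich--Pohozaev technique. Fix $x_0 \in \Omega$; after a translation we may assume $x_0 = 0$. Fix $r \in (0,\dist(x_0,\pa\Omega))$. Since, as recalled just before the statement, any $\mf{v}$ solving \eqref{coex} is smooth, all the integrations by parts below are fully justified and the traces on $S_r$ are classical. For each $i \in \{1,2,3\}$ I would multiply the equation $\Delta v_i = M v_i \prod_{j\neq i} v_j^2$ by the contraction $x \cdot \nabla v_i$ and integrate over $B_r$, then sum over $i$.

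For the left-hand side I would integrate by parts twice. First, $\int_{B_r}(x\cdot\nabla v_i)\Delta v_i\,dx = \int_{S_r}(x\cdot\nabla v_i)\pa_\nu v_i\,d\sigma - \int_{B_r}\nabla(x\cdot\nabla v_i)\cdot\nabla v_i\,dx$. Next, using the elementary identity $\nabla(x\cdot\nabla v_i)\cdot\nabla v_i = |\nabla v_i|^2 + \tfrac12\, x\cdot\nabla(|\nabla v_i|^2)$ together with the divergence theorem (with $x\cdot\nu = r$ on $S_r$ and $\div x = N$), and noting that $x = r\nu$ on $S_r$ so that $x\cdot\nabla v_i = r\,\pa_\nu v_i$ there, one obtains
\[
\int_{B_r}(x\cdot\nabla v_i)\Delta v_i\,dx = r\int_{S_r}(\pa_\nu v_i)^2\,d\sigma - \frac r2\int_{S_r}|\nabla v_i|^2\,d\sigma + \frac{N-2}{2}\int_{B_r}|\nabla v_i|^2\,dx .
\]

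For the right-hand side the key algebraic point is that the three reaction terms assemble into a total gradient. Setting $W := \prod_{j=1}^3 v_j^2$, one has $\pa_k W = \sum_i \pa_k(v_i^2)\,\prod_{j\neq i} v_j^2$, hence $\sum_i (x\cdot\nabla v_i)\, v_i \prod_{j\neq i} v_j^2 = \tfrac12\, x\cdot\nabla W$. Therefore, summing the previous display over $i$ and applying the divergence theorem once more, $\int_{B_r} x\cdot\nabla W\,dx = r\int_{S_r} W\,d\sigma - N\int_{B_r} W\,dx$, so that $\sum_i \int_{B_r}(x\cdot\nabla v_i)\,M v_i\prod_{j\neq i}v_j^2\,dx = \tfrac{Mr}{2}\int_{S_r}W\,d\sigma - \tfrac{MN}{2}\int_{B_r}W\,dx$. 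Equating the two sides, multiplying by $-2$ and rearranging gives exactly the stated identity.

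The argument is entirely routine; the only points needing a little care are the bookkeeping in the double integration by parts on the left, and the recognition of the gradient structure $\tfrac12\, x\cdot\nabla\prod_j v_j^2$ on the right, which is what makes the ``mixed'' reaction contributions combine cleanly into a single boundary term on $S_r$ and a bulk term on $B_r$. There is no analytic obstacle, since smoothness of solutions is available; in a lower-regularity setting one would instead have to pass through the corresponding domain-variation (inner variation) identity, but that is precisely \eqref{loc Poh} and is not needed here.
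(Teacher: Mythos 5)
Your proof is correct and is the standard Rellich--Pohozaev multiplier argument; the paper simply cites Lemma~6.1 of \cite{ST24p1} for this identity, and your computation (testing $\Delta v_i = M v_i\prod_{j\neq i}v_j^2$ against $x\cdot\nabla v_i$, integrating by parts twice on the left, and recognizing $\sum_i(x\cdot\nabla v_i)\,v_i\prod_{j\neq i}v_j^2 = \tfrac12\,x\cdot\nabla\prod_j v_j^2$ on the right) is the natural way to obtain it. The bookkeeping checks out: multiplying the summed identity by $-2$ and rearranging gives exactly the stated formula, and your closing remark correctly distinguishes this smooth-solution case from the lower-regularity limit \eqref{loc Poh}, where one would instead argue via domain variation.
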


From now on, we often use the notation $\prime=\frac{d}{dr}$.

\begin{proposition}\label{prop: alm coex}
Let $\mf{v} \not \equiv 0$ satisfy \eqref{coex} for some $M>0$. For every $x_0 \in \Omega$, the function $N(\mf{v},x_0,\,\cdot)$ is monotone non-decreasing. Moreover,
\beq\label{stima res N}
\int_{r_0}^r \frac{2}{s^{N-1}H(\mf{v},x_0,s)} \int_{B_s(x_0)} M \prod_{j=1}^3 v_j^2\,dx \, ds \le N(\mf{v},x_0,r) 
\eeq
for every $0<r_0<r <\dist(x_0,\pa \Omega)$, and
\beq\label{der H}
\frac{d}{dr} \log H(\mf{v},x_0,r) = \frac{2}{r} N(\mf{v},x_0,r) + \frac{4}{r^{N-1}H(\mf{v},x_0,r)} \int_{B_r(x_0)} M \prod_{j=1}^3 v_j^2\,dx
\eeq
\end{proposition}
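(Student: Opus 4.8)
The plan is to follow the classical Almgren scheme, with the one twist that the interaction term $M\prod v_j^2$ is nonnegative, which only helps. First I would compute $H'(\mf{v},x_0,r)$. Writing $H$ as an integral over the unit sphere after rescaling, $H(\mf{v},x_0,r)=\int_{S_1}\sum_j v_j^2(x_0+r\theta)\,d\sigma$, differentiating in $r$ and scaling back gives
\[
H'(\mf{v},x_0,r)=\frac{N-1}{r}H(\mf{v},x_0,r)+\frac{2}{r^{N-1}}\int_{S_r(x_0)}\sum_j v_j\pa_\nu v_j\,d\sigma.
\]
Now I would rewrite the last surface integral: by the divergence theorem together with the equation $\Delta v_i=Mv_i\prod_{j\neq i}v_j^2$,
\[
\int_{S_r(x_0)}\sum_i v_i\pa_\nu v_i\,d\sigma=\int_{B_r(x_0)}\Big(\sum_i|\nabla v_i|^2+\sum_i v_i\Delta v_i\Big)dx=\int_{B_r(x_0)}\Big(\sum_i|\nabla v_i|^2+3M\prod_j v_j^2\Big)dx,
\]
since $\sum_i v_i\Delta v_i=3M\prod_j v_j^2$. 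This equals $r^{N-2}E(\mf{v},x_0,r)+2r^{N-2}\cdot\frac{1}{r^{N-2}}\int_{B_r(x_0)}M\prod_j v_j^2\,dx$ after adding and subtracting $M\prod v_j^2$, so substituting back yields exactly formula \eqref{der H}:
\[
\frac{d}{dr}\log H=\frac{2}{r}N(\mf{v},x_0,r)+\frac{4}{r^{N-1}H(\mf{v},x_0,r)}\int_{B_r(x_0)}M\prod_j v_j^2\,dx.
\]
This disposes of \eqref{der H} and will be reused throughout.

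Next, for the monotonicity of $N$ I would differentiate $N=E/H$ logarithmically: $\frac{N'}{N}=\frac{E'}{E}-\frac{H'}{H}$. The derivative $E'$ is precisely where Lemma~\ref{lem: poh} (the local Pohozaev identity) enters. Rearranging that identity, $E'(\mf{v},x_0,r)$ can be written, after the scaling computation $\frac{d}{dr}E=\frac{2-N}{r}E+\frac{1}{r^{N-2}}\int_{S_r(x_0)}\big(\sum_i|\nabla v_i|^2+M\prod_j v_j^2\big)d\sigma$ and substituting the Pohozaev formula for that surface integral, as
\[
E'(\mf{v},x_0,r)=\frac{2}{r^{N-2}}\int_{S_r(x_0)}\sum_i(\pa_\nu v_i)^2\,d\sigma+\frac{2}{r^{N-1}}\int_{B_r(x_0)}M\prod_j v_j^2\,dx.
\]
Crucially both terms are nonnegative; the second one is the genuinely new feature compared to the purely harmonic case. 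Using \eqref{E =H'} to get a clean Cauchy--Schwarz, one has $E(\mf{v},x_0,r)\le\frac{1}{r^{N-2}}\int_{S_r(x_0)}\sum_i v_i\pa_\nu v_i\,d\sigma$, and combining $E'/E$ with $H'/H$ from the step above, all the lower-order geometric terms ($\frac{N-2}{r}$ versus $\frac{N-1}{r}$) cancel against the explicit terms, leaving
\[
\frac{N'}{N}\ge\frac{2}{r}\left(\frac{\int_{S_r}\sum_i(\pa_\nu v_i)^2\,d\sigma\cdot\int_{S_r}\sum_i v_i^2\,d\sigma-\big(\int_{S_r}\sum_i v_i\pa_\nu v_i\,d\sigma\big)^2}{\big(\int_{S_r}\sum_i v_i\pa_\nu v_i\,d\sigma\big)\big(\int_{S_r}\sum_i v_i^2\,d\sigma\big)}\right)+(\text{nonnegative interaction terms}),
\]
and the bracketed quantity is $\ge0$ by Cauchy--Schwarz applied componentwise to the vectors $(v_1,v_2,v_3)$ and $(\pa_\nu v_1,\pa_\nu v_2,\pa_\nu v_3)$ on $S_r(x_0)$. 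Hence $N'\ge0$.

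Finally, for the residual estimate \eqref{stima res N}: from the formula for $E'$ above one reads off that $E'(\mf{v},x_0,r)\ge\frac{2}{r^{N-1}}\int_{B_r(x_0)}M\prod_j v_j^2\,dx$, hence dividing by $E=NH\le N(\mf{v},x_0,r)H$ (using monotonicity, $N(\mf{v},x_0,s)\le N(\mf{v},x_0,r)$ for $s\le r$) and also exploiting that $\frac{d}{dr}\log E$ relates to $N$, one integrates from $r_0$ to $r$. More directly: $\frac{E'(\mf{v},x_0,s)}{E(\mf{v},x_0,s)}\ge\frac{2}{s^{N-1}E(\mf{v},x_0,s)}\int_{B_s(x_0)}M\prod_j v_j^2\,dx=\frac{2}{s^{N-1}N(\mf{v},x_0,s)H(\mf{v},x_0,s)}\int_{B_s}M\prod v_j^2\,dx$, and integrating $\frac{E'}{E}$ gives $\log\frac{E(r)}{E(r_0)}$; bounding this above requires a little care, but the standard route is to note $\frac{d}{ds}\log E=\frac{2}{s}N+\text{(interaction)}/E$ so that $\int_{r_0}^r\frac{2}{s}N\,ds\le\log\frac{E(r)}{E(r_0)}$ is false in the wrong direction — instead I would observe that $s\mapsto E(\mf{v},x_0,s)$ relates to $s\mapsto N(\mf{v},x_0,s)H(\mf{v},x_0,s)$ and use that $N(\mf{v},x_0,s)H(\mf{v},x_0,s)$ has a controlled derivative, ultimately yielding
\[
\int_{r_0}^r\frac{2}{s^{N-1}H(\mf{v},x_0,s)}\int_{B_s(x_0)}M\prod_{j=1}^3 v_j^2\,dx\,ds\le N(\mf{v},x_0,r),
\]
which is \eqref{stima res N}. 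I expect the main obstacle to be precisely this last bookkeeping — tracking the interaction contributions through the chain of inequalities so that they land on the correct side and telescope into the single quantity $N(\mf{v},x_0,r)$ rather than accumulating uncontrollably; the monotonicity of $N$ itself is routine once $E'$ is in hand via Pohozaev, and \eqref{der H} is a direct computation.
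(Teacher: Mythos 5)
Your overall plan matches the paper: compute $E'$ via the Pohozaev identity, deduce monotonicity of $N$ via Cauchy--Schwarz plus the sign of the interaction term, and get \eqref{der H} and \eqref{stima res N} as byproducts. But there are two genuine problems.

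First, your formula for $H'$ is wrong. Since $H(\mf{v},x_0,r)=\int_{S_1}\sum_j v_j^2(x_0+r\theta)\,d\sigma$ already absorbs the $r^{1-N}$ normalization, differentiating gives simply
\[
H'(\mf{v},x_0,r)=\frac{2}{r^{N-1}}\int_{S_r(x_0)}\sum_j v_j\pa_\nu v_j\,d\sigma,
\]
with no $\frac{N-1}{r}H$ term. (The $\frac{N-1}{r}$ term would only appear if you differentiated the unnormalized surface integral.) With your erroneous formula, substituting $\int_{S_r}\sum v_i\pa_\nu v_i = r^{N-2}E+2\int_{B_r}M\prod v_j^2$ would give $H'/H=\frac{N-1}{r}+\frac{2}{r}N+\frac{4}{r^{N-1}H}\int_{B_r}M\prod v_j^2$, which is \emph{not} \eqref{der H}; so your claim that you recover \eqref{der H} ``exactly'' is internally inconsistent. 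Once you drop the spurious $\frac{N-1}{r}H$ term, the identity comes out correctly, and your later comment about ``$\frac{N-2}{r}$ versus $\frac{N-1}{r}$ cancelling'' becomes moot --- after the Pohozaev substitution neither $E'$ nor $H'$ retains such a term.

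Second, your route to \eqref{stima res N} is both more convoluted than necessary and not actually completed --- you admit as much. The clean way, which is what the paper does, is to stay with $N'$ rather than $N'/N$: from the quotient rule $N'=\frac{1}{H^2}(E'H-EH')$, substituting the (correct) expressions for $E'$ and $H'$ and bounding $E\le\frac{1}{r^{N-2}}\int_{S_r}\sum v_i\pa_\nu v_i$ gives
\[
N'(\mf{v},x_0,r)\ge \frac{2}{r^{2N-3}H^2}\Big[\Big(\int_{S_r}\sum_i(\pa_\nu v_i)^2\Big)\Big(\int_{S_r}\sum_i v_i^2\Big)-\Big(\int_{S_r}\sum_i v_i\pa_\nu v_i\Big)^2\Big]+\frac{2}{r^{N-1}H}\int_{B_r}M\prod_j v_j^2\,dx,
\]
and the bracket is nonnegative by Cauchy--Schwarz. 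This gives monotonicity \emph{and}, after discarding the bracket, $N'\ge\frac{2}{r^{N-1}H}\int_{B_r}M\prod v_j^2$. Integrating this over $(r_0,r)$ and using $N(\mf{v},x_0,r_0)\ge0$ yields \eqref{stima res N} immediately; no detour through $\log E$, no appeal to $N(\mf{v},x_0,s)\le N(\mf{v},x_0,r)$, and no ``bookkeeping'' is needed. Your $E'/E$ approach introduces the factor $1/N(\mf{v},x_0,s)$ in the wrong place and would require additional control you never supply.
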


\begin{proof}
By direct computations, and using the subharmonicity of $v_i$, we have that
\[
H'(\mf{v},x_0,r) = \frac{2}{r^{N-1}} \int_{S_r(x_0)} \sum_i v_i \partial_\nu v_i\,d\sigma \ge 0.
\]
Moreover, by Lemma \ref{lem: poh}, 
\[
E'(\mf{v},x_0,r) = \frac{2}{r^{N-1}} \int_{B_r(x_0)} M \prod_j v_j^2\,dx + \frac2{r^{N-2}} \int_{S_r} \sum_i (\pa_\nu v_i)^2\,d\sigma.
\]
Therefore, recalling \eqref{E =H'}, 
\[
\begin{split}
N'(\mf{v}&,x_0,r) = \frac{1}{H^2(\mf{v},x_0,r)} \left[ E'(\mf{v},x_0,r) H(\mf{v},x_0,r)- E(\mf{v},x_0,r) H'(\mf{v},x_0,r) \right] \\
& \ge \frac{2}{r^{2N-3}H^2(\mf{v},x_0,r)} \left[  \left(\int_{S_r(x_0)} \sum_i (\pa_\nu v_i)^2\,d\sigma\right) \left(\int_{S_r(x_0)} \sum_i v_i^2\,d\sigma\right) -  \left(\int_{S_r(x_0)} \sum_i v_i \pa_\nu v_i\,d\sigma\right)^2\right] \\
& \quad \quad +\frac{2}{r^{N-1}H(\mf{v},x_0,r)} \int_{B_r(x_0)} M \prod_j v_j^2\,dx \\
& \ge \frac{2}{r^{N-1}H(\mf{v},x_0,r)} \int_{B_r(x_0)} M \prod_j v_j^2\,dx,
\end{split}
\]
by the Cauchy-Schwarz inequality, which proves the monotonicity of $N$. By integrating, we also deduce estimate \eqref{stima res N}. Furthermore, equation \eqref{der H} follows easily comparing the expression of $H'$ and \eqref{E =H'}.
\end{proof}

In what follows we collect some simple consequences of Proposition \ref{prop: alm coex}. Having established Proposition \ref{prop: alm coex}, the proofs of these properties are very similar to those given in \cite{BeTeWaWe, ST15} for systems appearing in total segregation models.

\begin{lemma}\label{lem: doubling coex}
Let $\mf{v}$ be a solution of \eqref{coex} for some $M>0$.
\begin{itemize}
\item[($i$)] Suppose that $N(\mf{v},x_0,r_0) \ge \alpha$. Then, for every $r_0 \le r_1 <r_2$
\[
\frac{H(\mf{v},x_0,r_2)}{r_2^{2\alpha}} \ge \frac{H(\mf{v},x_0,r_1)}{r_1^{2\alpha}}
\]
\item[($ii$)] Suppose that $N(\mf{u},x_0,r_0) \le \gamma$. Then, for every $0<r_1<r_2 \le r_0$
\[
\frac{H(\mf{v},x_0,r_2)}{r_2^{2\gamma}} \le e^{2\gamma} \frac{H(\mf{v},x_0,r_1)}{r_1^{2\gamma}}. 
\]
\end{itemize}
\end{lemma}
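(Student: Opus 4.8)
The plan is to read off both statements from the logarithmic derivative identity \eqref{der H} together with the monotonicity of $N(\mathbf{v},x_0,\cdot)$ and the residual estimate \eqref{stima res N}, both furnished by Proposition \ref{prop: alm coex}; this is the same scheme used for the full-segregation analogues in \cite{BeTeWaWe, ST15}. Recall that, as noted before the statement, every nontrivial component of $\mathbf{v}$ is strictly positive, so $H(\mathbf{v},x_0,r)>0$ whenever defined and $r\mapsto\log H(\mathbf{v},x_0,r)$ is absolutely continuous; hence all the integrations below over subintervals of $(0,\dist(x_0,\partial\Omega))$ are legitimate.

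For part ($i$), monotonicity of $N$ gives $N(\mathbf{v},x_0,r)\ge N(\mathbf{v},x_0,r_0)\ge\alpha$ for every admissible $r\ge r_0$. Since the last term on the right-hand side of \eqref{der H} is nonnegative (as $M>0$), we obtain $\frac{d}{dr}\log H(\mathbf{v},x_0,r)\ge\frac{2\alpha}{r}$, i.e. $\frac{d}{dr}\log\big(H(\mathbf{v},x_0,r)\,r^{-2\alpha}\big)\ge 0$. Integrating between $r_1$ and $r_2$ (with $r_0\le r_1<r_2$) yields $H(\mathbf{v},x_0,r_2)\,r_2^{-2\alpha}\ge H(\mathbf{v},x_0,r_1)\,r_1^{-2\alpha}$, which is the claim.

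For part ($ii$) I would rewrite \eqref{der H} in the form
\[
\frac{d}{dr}\log\frac{H(\mathbf{v},x_0,r)}{r^{2\gamma}}=\frac{2}{r}\big(N(\mathbf{v},x_0,r)-\gamma\big)+\frac{4}{r^{N-1}H(\mathbf{v},x_0,r)}\int_{B_r(x_0)}M\prod_{j=1}^3 v_j^2\,dx.
\]
On $(0,r_0]$, monotonicity of $N$ gives $N(\mathbf{v},x_0,r)\le N(\mathbf{v},x_0,r_0)\le\gamma$, so the first summand is $\le 0$; the point is that the residual second summand is positive and cannot simply be discarded. Instead I would integrate the identity over $[r_1,r_2]\subset(0,r_0]$ and control the residual term via \eqref{stima res N} applied with lower endpoint $r_1$, namely
\[
\int_{r_1}^{r_2}\frac{2}{s^{N-1}H(\mathbf{v},x_0,s)}\int_{B_s(x_0)}M\prod_{j=1}^3 v_j^2\,dx\,ds\le N(\mathbf{v},x_0,r_2)\le N(\mathbf{v},x_0,r_0)\le\gamma,
\]
so that the integrated residual contributes at most $2\gamma$. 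This gives $\log\big(H(\mathbf{v},x_0,r_2)\,r_2^{-2\gamma}\big)-\log\big(H(\mathbf{v},x_0,r_1)\,r_1^{-2\gamma}\big)\le 2\gamma$, i.e. the asserted inequality with constant $e^{2\gamma}$.

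The only genuine subtlety, which I would flag as the main obstacle, is obtaining the \emph{sharp} constant $e^{2\gamma}$ in ($ii$): a crude pointwise bound on the residual term (e.g. using $\int_{B_r}M\prod_j v_j^2\le r^{N-2}E(\mathbf{v},x_0,r)$ and $E/H=N\le\gamma$) would only produce a larger exponent, and one genuinely needs the \emph{integrated} residual estimate \eqref{stima res N} to absorb that term with the correct constant. Everything else is routine bookkeeping.
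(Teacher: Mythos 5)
Your proof is correct and follows exactly the paper's route: the paper's proof is the one-line remark ``The result follows easily by integrating \eqref{der H}, using the monotonicity of $N$ and estimate \eqref{stima res N},'' which is precisely what you do, including the observation that for part ($ii$) the \emph{integrated} residual bound \eqref{stima res N} (rather than a pointwise bound on the residual term) is what produces the constant $e^{2\gamma}$ alongside the exponent $2\gamma$.
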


\begin{proof}
The result follows easily by integrating \eqref{der H}, using the monotonicity of $N$ and estimate \eqref{stima res N}. 
\end{proof}

\begin{lemma}\label{lem: N infty coex}
Let $\mf{v} \not \equiv 0$ satisfy \eqref{coex} in $\R^N$ for some $M>0$. 
\begin{itemize}
\item[($i$)] If $|\mf{v}(x)| \le C(1+|x|^\gamma)$ for every $x \in \R^N$, for some $\gamma>0$, then $N(\mf{v},x_0,r) \le \gamma$ for every $x_0 \in \R$ and $r>0$.
\item[($ii$)] $N(\mf{v},x_0,r) = 0$ for some $x_0 \in \R^N$ and $r>0$ if and only if $\mf{v}$ at least one component $v_i$ vanishes identically.
\end{itemize}
\end{lemma}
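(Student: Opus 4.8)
The plan is to treat the two parts separately: part ($i$) will follow by contradiction from the monotonicity of $N$ together with the doubling estimates of Lemma \ref{lem: doubling coex}, while part ($ii$) combines the monotonicity with the fact that \eqref{coex} is posed on the whole of $\R^N$, so that Liouville-type rigidity is available. Throughout I use that $N\ge 0$ whenever defined, since $E(\mf{v},x_0,\cdot)\ge 0$ (its integrand is a sum of squares plus $M\prod_j v_j^2\ge 0$) and $H(\mf{v},x_0,\cdot)>0$.

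For ($i$), suppose for contradiction that $N(\mf{v},x_0,r_0)=\alpha>\gamma$ for some $x_0\in\R^N$ and $r_0>0$. By the monotonicity of $N(\mf{v},x_0,\cdot)$ from Proposition \ref{prop: alm coex} we have $N(\mf{v},x_0,r)\ge\alpha$ for all $r\ge r_0$, so Lemma \ref{lem: doubling coex}($i$) gives
\[
H(\mf{v},x_0,r)\ \ge\ c_0\, r^{2\alpha}\qquad\text{for all }r\ge r_0,\qquad c_0:=\frac{H(\mf{v},x_0,r_0)}{r_0^{2\alpha}}>0 .
\]
On the other hand, directly from the definition of $H$ and the growth bound $|\mf{v}(x)|\le C(1+|x|^\gamma)$, using $|x|\le|x_0|+r$ on $S_r(x_0)$ and $|S_r(x_0)|=|\S^{N-1}|\,r^{N-1}$, one gets $H(\mf{v},x_0,r)=r^{1-N}\int_{S_r(x_0)}|\mf{v}|^2\,d\sigma\le|\S^{N-1}|\max_{S_r(x_0)}|\mf{v}|^2\le C'(1+r^{2\gamma})$ for all $r>0$. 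Comparing the two bounds as $r\to+\infty$ forces $\alpha\le\gamma$, a contradiction; hence $N(\mf{v},x_0,r)\le\gamma$ for every $x_0$ and $r$.

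For ($ii$), assume first that some component, say $v_i$, vanishes identically. Then the right-hand side of \eqref{coex} vanishes for every index (each product contains the factor $v_i^2$), so every $v_j$ is harmonic; being nonnegative on $\R^N$, each nontrivial $v_j$ is a positive harmonic function, hence constant by the classical Liouville theorem. Thus $\mf{v}$ is a constant vector with a vanishing component, so $E(\mf{v},x_0,r)\equiv 0$ and $N(\mf{v},x_0,r)\equiv 0$. Conversely, if $N(\mf{v},x_0,r_0)=0$ for some $x_0,r_0$, then $0\le N(\mf{v},x_0,r)\le N(\mf{v},x_0,r_0)=0$ for $0<r\le r_0$ by monotonicity and nonnegativity, whence $E(\mf{v},x_0,r)=0$ there; therefore the nonnegative integrand $\sum_j|\nabla v_j|^2+M\prod_j v_j^2$ vanishes on $B_{r_0}(x_0)$, so each $v_j$ is constant on this ball while $\prod_j v_j^2\equiv0$ there, which forces $v_i\equiv0$ on $B_{r_0}(x_0)$ for some $i$. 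Since solutions of \eqref{coex} are smooth with polynomial (hence real-analytic) nonlinearity, they are real-analytic on the connected set $\R^N$, and vanishing on an open ball propagates to $v_i\equiv0$ on all of $\R^N$.

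The only point requiring genuine care is this last propagation step in ($ii$): it must be justified that $v_i$ vanishing on an open ball implies $v_i\equiv0$ everywhere. I would obtain this from the real-analyticity of solutions of the semilinear system \eqref{coex}, or, alternatively, from the strong unique continuation property for $\Delta v_i=c(x)v_i$ with $c=M\prod_{j\ne i}v_j^2\in L^\infty_{\loc}(\R^N)$. Everything else is a routine combination of the monotonicity formula (Proposition \ref{prop: alm coex}), the doubling estimates (Lemma \ref{lem: doubling coex}), and Liouville's theorem for positive harmonic functions on $\R^N$.
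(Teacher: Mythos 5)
Your proof is correct and follows essentially the same strategy as the paper: part ($i$) via contradiction using monotonicity and the doubling estimate against the polynomial growth, and part ($ii$) by observing that $N=0$ forces the nonnegative integrand of $E$ to vanish on a ball. The one small divergence is the propagation step in ($ii$): you justify $v_i\equiv0$ on a ball implying $v_i\equiv0$ on $\R^N$ by real-analyticity of solutions (or SUCP for $\Delta v_i = c(x)v_i$), whereas the paper relies on the more elementary fact, already recorded before Lemma \ref{lem: poh}, that any non-trivial component of a solution of \eqref{coex} is strictly positive by the strong maximum principle applied to $-\Delta v_i + c(x) v_i = 0$ with $c\ge 0$; either route is valid, but the paper's is lighter machinery and was already in place.
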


\begin{proof}
($i$) The upper bound on $N$ follows easily from Lemma \ref{lem: doubling coex}. Indeed, suppose by contradiction that $N(\mf{v},x_0,\bar r) \ge \gamma +\eps$ for some $x_0 \in \R^N$ and $\eps, r>0$. Then
\[
H(\mf{v},x_0,r) \ge \frac{H(\mf{v},x_0,\bar r)}{\bar r^{2(\gamma+\eps)}} r^{2(\gamma+\eps)} \quad \forall r \ge \bar r,
\]
in contradiction with the fact that $|\mf{v}(x)| \le C(1+|x|^\gamma)$ for every $x \in \R^N$.

\smallskip

\noindent ($ii$) If $N(\mf{v},x_0,r) = 0$, then $\mf{v}$ is constant and $v_1\,v_2\,v_3 =0$ in $B_r(x_0)$. Then, at least one component must vanish in $B_r(x_0)$, and hence in $\R^N$. On the other hand, suppose that one component $v_i \equiv 0$. Then the remaining ones are nonnegative harmonic functions in $\R^N$, thus constants, and hence $N(\mf{v},x_0,r)=0$ for every $x_0$ and $r$.
\end{proof}

\subsection{Almgren-type monotonicity formulae for partially segregated profiles}

In this subsection, we focus on functions in $\mathcal{L}_{\loc}(\Omega)$. Recall that any $\mf{v} \in \mathcal{L}_{\loc}(\Omega)$ is of class $C^{0,\alpha}(\Omega)$, for every $\alpha \in (0,\bar \nu)$, where $\bar \nu \in (0,2]$ is characterized in terms of an optimal ``overlapping partition" problem on the sphere, see \cite[Remark 1.3]{ST24p1}; moreover 
\beq\label{bas prop 1}
\Delta v_i = 0 \quad \text{in $\{v_i>0\}$}, \qquad \text{and} \qquad 
v_1\,v_2\,v_3 \equiv 0  \quad \text{in $\Omega$},
\eeq
and the domain variation formula \eqref{loc Poh} holds for $\mf{v}$:
\[
\int_{S_r(x_0)} \sum_i |\nabla v_i|^2\,d\sigma = \frac{N-2}{r} \int_{B_r(x_0)} \sum_i |\nabla v_i|^2\,dx + 2 \int_{S_r(x_0)} \sum_i (\pa_\nu v_i)^2\,d\sigma,
\]
whenever $B_r(x_0) \subset \Omega$. Starting from \eqref{loc Poh}, we can show that $\mf{v} \in \mathcal{L}_{\loc}(\Omega)$ inherits from $\{\mf{v}_n\}$ the validity of an Almgren-type monotonicity formula. Similarly to what we did in the previous subsection, for $x_0 \in \Omega$ and $r \in (0,\dist(x_0,\pa \Omega))$, we define
\[
\begin{split}
E(\mf{v}_n,x_0,r) &:= \frac{1}{r^{N-2}} \int_{B_r(x_0)} \sum_{j=1}^3 |\nabla v_j|^2 \,dx \\
H(\mf{v}_n,x_0,r) &:= \frac{1}{r^{N-1}} \int_{S_r(x_0)} \sum_{j=1}^3 v_j^2\,d\sigma\\
N(\mf{v}_n,x_0,r) &:= \frac{E(\mf{v}_n,x_0,r)}{H(\mf{v}_n,x_0,r)}
\end{split}
\]
($N(\mf{v}_n,x_0,r)$ being well defined whenever $H(\mf{v}_n,x_0,r) \neq 0$).

\begin{proposition}\label{prop: alm seg}
Let $\mf{v} \in \mathcal{L}_{\loc}(\Omega)$. For every $x_0 \in \Omega$, the function $N(\mf{v},x_0,\,\cdot)$ is well defined for every $r \in (0, \dist(x_0,\pa \Omega))$, absolutely continuous, and monotone non-decreasing. Moreover,
\beq\label{der H seg}
\frac{d}{dr} \log H(\mf{v},x_0,r) = \frac{2}{r} N(\mf{v},x_0,r), 
\eeq
and $N(\mf{v},x_0,r)=\gamma$ for every $r \in (r_0,r_1)$ if and only if $\mf{v}$ is $\gamma$-homogeneous with respect to $x_0$ in the annulus $A_{r_0,r}(x_0) :=B_r(x_0) \setminus \overline{B_{r_0}(x_0)}$.
\end{proposition}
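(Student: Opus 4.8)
The plan is to mimic the proof of Proposition \ref{prop: alm coex}, using the domain variation formula \eqref{loc Poh} (valid for every $\mf{v}\in\mathcal{L}_{\loc}(\Omega)$) in place of the local Pohozaev identity of Lemma \ref{lem: poh}, and the basic properties \eqref{bas prop 1} in place of the system \eqref{coex}. The starting point is that each $v_i$ is nonnegative and harmonic on $\{v_i>0\}$, hence subharmonic on $\Omega$ with $\Delta v_i$ a nonnegative measure supported in $\{v_i=0\}$; consequently $\int_{B_r(x_0)}|\nabla v_i|^2\,dx=\int_{S_r(x_0)}v_i\pa_\nu v_i\,d\sigma$. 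Writing the spherical integrals in the rescaled variable and differentiating (the functions $r\mapsto\int_{B_r(x_0)}|\nabla v_i|^2$ and $r\mapsto\int_{S_r(x_0)}v_i^2$ being absolutely continuous with the expected a.e.\ derivatives), one gets
\[
H'(\mf{v},x_0,r)=\frac{2}{r^{N-1}}\int_{S_r(x_0)}\sum_i v_i\pa_\nu v_i\,d\sigma=\frac{2}{r}\,E(\mf{v},x_0,r)\ge 0,
\]
which is already formula \eqref{der H seg}. Since $H$ is non-decreasing and $\mf{v}\not\equiv0$, and since a nonzero triplet in $\mathcal{L}_{\loc}$ cannot vanish identically on a ball centered at $x_0$ (by unique continuation for the component that is positive and harmonic on a nonempty open set), $H(\mf{v},x_0,r)>0$ for every admissible $r$, so $N(\mf{v},x_0,\cdot)=E/H$ is well defined and absolutely continuous.

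Next I would compute $E'$. Differentiating $E(\mf{v},x_0,r)=r^{2-N}\int_{B_r(x_0)}\sum_i|\nabla v_i|^2\,dx$ produces a boundary term $r^{2-N}\int_{S_r(x_0)}\sum_i|\nabla v_i|^2\,d\sigma$; substituting \eqref{loc Poh} into it, the two terms proportional to $\int_{B_r(x_0)}\sum_i|\nabla v_i|^2\,dx$ cancel exactly, leaving
\[
E'(\mf{v},x_0,r)=\frac{2}{r^{N-2}}\int_{S_r(x_0)}\sum_i(\pa_\nu v_i)^2\,d\sigma.
\]
Then $N'=(E'H-EH')/H^2$, and using $E(\mf{v},x_0,r)=r^{2-N}\int_{S_r(x_0)}\sum_i v_i\pa_\nu v_i\,d\sigma$ one recognises $E'H-EH'$ as $\frac{2}{r^{2N-3}}$ times the Cauchy--Schwarz defect of the pair $\mf{v},\pa_\nu\mf{v}$ in $L^2(S_r(x_0);\R^3)$, hence $N'(\mf{v},x_0,r)\ge0$. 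This proves the monotonicity.

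For the rigidity statement, suppose $N(\mf{v},x_0,\cdot)\equiv\gamma$ on $(r_0,r_1)$. Then $N'\equiv0$, so equality holds in the above Cauchy--Schwarz inequality for a.e.\ $r$; equality forces $\pa_\nu\mf{v}=c(r)\mf{v}$ a.e.\ on $S_r(x_0)$ for some scalar $c(r)$, and substituting back one computes $\gamma=N(\mf{v},x_0,r)=r\,c(r)$. Thus, along each ray from $x_0$, $\pa_{\mathfrak{r}}v_i(x_0+\mathfrak{r}\omega)=\frac{\gamma}{\mathfrak{r}}v_i(x_0+\mathfrak{r}\omega)$ for a.e.\ $\mathfrak{r}\in(r_0,r_1)$; integrating this ODE yields $v_i(x_0+\mathfrak{r}\omega)=\mathfrak{r}^\gamma g_i(\omega)$ on $(r_0,r_1)$, i.e.\ $\mf{v}$ is $\gamma$-homogeneous with respect to $x_0$ in $A_{r_0,r_1}(x_0)$. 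Conversely, if $\mf{v}$ is $\gamma$-homogeneous there, then $\pa_\nu v_i=\frac{\gamma}{r}v_i$ on each $S_r(x_0)$, so $E(\mf{v},x_0,r)=\gamma H(\mf{v},x_0,r)$ and $N\equiv\gamma$.

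I expect the only delicate points to be technical: justifying differentiation under the integral sign (absolute continuity of the radial and spherical integrals, from $|\nabla v_i|^2\in L^1_{\loc}$ and $v_i\in H^1_{\loc}$ via the coarea formula and Fubini), and the identity $\int_{B_r}|\nabla v_i|^2=\int_{S_r}v_i\pa_\nu v_i$ for the merely subharmonic $v_i$ (already available from \cite{ST24p1}); and, in the rigidity part, extracting the pointwise proportionality $\pa_\nu\mf{v}=c(r)\mf{v}$ from the vector-valued Cauchy--Schwarz equality case and integrating the radial ODE. All the genuinely analytic content lies in the cancellation produced by \eqref{loc Poh} in the formula for $E'$, which is precisely what makes $N$ monotone.
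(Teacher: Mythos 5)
Your proposal follows the paper's proof in all essential steps: the identity $H'(\mf{v},x_0,r)=\tfrac{2}{r}E(\mf{v},x_0,r)$ giving \eqref{der H seg}, the computation of $E'$ via the domain-variation formula \eqref{loc Poh} (with exactly the cancellation you describe), the Cauchy--Schwarz argument for $N'\ge 0$, and the rigidity argument through the equality case $\pa_\nu v_i=\lambda(r)v_i$ followed by radial integration (the paper invokes Euler's theorem for homogeneous functions, which is the same thing).

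The one step that does not hold up as written is the justification that $H(\mf{v},x_0,r)>0$ for every admissible $r$. You appeal to ``unique continuation for the component that is positive and harmonic on a nonempty open set,'' but unique continuation does not apply here: each $v_i$ is harmonic only on its positivity set and merely subharmonic on $\Omega$, and a nonnegative subharmonic function can certainly vanish on an open set while being positive elsewhere (e.g.\ $(x_1)_+$). If $\mf{v}\equiv 0$ on $B_{r_0}(x_0)$, no component is harmonic in a connected domain joining $B_{r_0}(x_0)$ to the region where it is positive, so there is nothing to continue. The conclusion $H>0$ is nevertheless correct and can be obtained in two ways. The simplest is to observe that Definition \ref{def: L} requires $\mf{v}\not\equiv 0$ in \emph{every} $\omega\ssubset\Omega$, so once $H$ is known to be nondecreasing (from $H'=\tfrac{2}{r}E\ge 0$), $H(\mf{v},x_0,r_0)=0$ forces $\mf{v}\equiv 0$ on $B_{r_0}(x_0)$ and contradicts $\mf{v}\in\mathcal{L}(B_{r_0}(x_0))$. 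The paper instead uses a self-contained doubling argument: having established $H>0$ on some interval $(r_1,r_2)$ and hence the monotonicity of $N$ there, one sets $r_3:=\inf\{r>0:\ H(\mf{v},x_0,s)>0\ \ \forall s\in(r,\tfrac{r_1+r_2}{2})\}$; if $r_3>0$, integrating \eqref{der H seg} on $(r_3+\eps,r)$ together with $N(\mf{v},x_0,\cdot)\le N(\mf{v},x_0,r)$ gives $H(\mf{v},x_0,r)/H(\mf{v},x_0,r_3+\eps)\le (r/(r_3+\eps))^{2N(\mf{v},x_0,r)}$, whose left side blows up and right side stays bounded as $\eps\to 0^+$. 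Either fix closes the gap; the rest of your argument is sound.
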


\begin{proof}
The function $H(\mf{v},x_0,\,\cdot)$ is always nonnegative. For some $x_0 \in \R^N$, it is in fact strictly positive somewhere, say for $r \in (r_1,r_2)$, otherwise $\mf{v} \equiv 0$. Moreover, it is absolutely continuous, and for a.e. $r$
\beq\label{H' e E seg}
H'(\mf{v},x_0,r)= \frac{2}{r^{N-1}} \int_{S_r(x_0)} \sum_i v_i \pa_\nu v_i\,d\sigma = \frac{2}{r} E(\mf{v},x_0,r)
\eeq
(to prove this, one can argue by approximation as in \cite[Proposition 3.9]{NTTV10} by taking the limit in the analogue expressions for $\mf{v}_n$). This gives \eqref{der H seg}. Moreover, the domain variation formula gives 
\[
E'(\mf{v},x_0,r) = \frac{2}{r^{N-2}} \int_{S_r(x_0)} \sum_i (\pa_\nu v_i)^2\,d\sigma.
\]
We can then compute the derivative of $N$ in $(r_1,r_2)$: for a.e. $r$ we obtain
\begin{multline*}
N'(\mf{v},x_0,r) = \frac{1}{H^2(\mf{v},x_0,r)} \left[ E'(\mf{v},x_0,r) H(\mf{v},x_0,r)- E(\mf{v},x_0,r) H'(\mf{v},x_0,r) \right] \\
 = \frac{2}{r^{2N-3}H^2(\mf{v},x_0,r)} \left[  \left(\int_{S_r(x_0)} \sum_i (\pa_\nu v_i)^2\,d\sigma\right) \left(\int_{S_r(x_0)} \sum_i v_i^2\,d\sigma\right) -  \left(\int_{S_r(x_0)} \sum_i v_i \pa_\nu v_i\,d\sigma\right)^2\right],
 \end{multline*}
which is nonnegative by the Cauchy-Schwarz inequality. At this point we can show that $H(\mf{v},x_0,r)>0$ for every $r>0$. Indeed, let
\[
r_3:= \inf\left\{r>0: H(\mf{v},x_0,s) >0 \quad \forall s \in \left(r,\frac{r_1+r_2}{2}\right)\right\}.
\]
It is clear that $r_3 \le r_1$. If $r_3>0$, then by continuity $H(\mf{v},x_0,r_3) = 0$, and, by integrating \eqref{der H seg} on $(r_3+\eps,r)$ for $r \in (r_3,r_2)$ and using the monotonicity of $N$, we find
\[
\frac{H(\mf{v},x_0,r)}{H(\mf{v},x_0,r_3+\eps)} \le \left(\frac{r}{r_3+\eps}\right)^{2N(\mf{v},x_0,r)}.
\]
By taking the limit as $\eps \to 0^+$, the left hand side tends to $+\infty$, while the right hand side remains bounded, a contradiction. Thus, $H(\mf{v},x_0,r)>0$ for every $x \in \Omega$ and $r \in (0,\dist(x_0,\pa \Omega))$, and it is not difficult to deduce that $H(\mf{v},x,r)>0$ for every $x \in \Omega$ and $r \in (0,\dist(x,\pa \Omega))$.

It remains to consider the case when $N(\mf{v},x_0,r)=\gamma$ for every $r \in (r_0,r_1)$. Then $N'=0$ on $(r_1,r_2)$, so that, by the expression of $N'$, we must have equality in the Cauchy-Schwarz inequality, namely
\[
\nabla v_i \cdot \frac{x-x_0}{|x-x_0|} = \pa_\nu v_i = \lambda(r) v_i \quad \text{on $S_r(x_0)$},
\]
for every $i=1,2,3$ and a.e. $r \in (r_1,r_2)$. By substituting this expression in the definition of $N$, thanks to \eqref{H' e E seg} we infer that
\[
\gamma = N(\mf{v},x_0,r) = \frac{r\lambda(r) \int_{S_r(x_0)} \sum_i v_i^2\,d\sigma}{\int_{S_r(x_0)} \sum_i v_i^2\,d\sigma} = r \lambda(r).
\]
Therefore 
\[
\nabla v_i \cdot (x-x_0) = r \lambda(r) v_i = \gamma v_i \quad \text{in $A_{r_1,r_2}(x_0)$}
\]
and the $\gamma$-homogeneity follows by the Euler theorem for homogeneous functions.
\end{proof}

Useful consequences of the monotonicity formula are collected in the next statements, which are the counterpart of those in the previous subsection. The proofs are analogue, and hence are omitted.

\begin{lemma}\label{lem: doubling seg}
Let $\mf{v} \in \mathcal{L}_{\loc}(\Omega)$.
\begin{itemize}
\item[($i$)] Suppose that $N(\mf{v},x_0,r_0) \ge \alpha$. Then, for every $r_0 \le r_1 <r_2$
\[
\frac{H(\mf{v},x_0,r_2)}{r_2^{2\alpha}} \ge \frac{H(\mf{v},x_0,r_1)}{r_1^{2\alpha}}
\]
\item[($ii$)] Suppose that $N(\mf{u},x_0,r_0) \le \gamma$. Then, for every $0<r_1<r_2 \le r_0$
\[
\frac{H(\mf{v},x_0,r_2)}{r_2^{2\gamma}} \le \frac{H(\mf{v},x_0,r_1)}{r_1^{2\gamma}}. 
\]
\end{itemize}
\end{lemma}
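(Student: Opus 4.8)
The final statement to prove is Lemma~\ref{lem: doubling seg}, the doubling-type estimates for $H(\mf{v},x_0,\cdot)$ when $\mf{v}\in\mathcal{L}_{\loc}(\Omega)$. The plan is to obtain both inequalities by integrating the logarithmic derivative formula \eqref{der H seg}, namely $\frac{d}{dr}\log H(\mf{v},x_0,r)=\frac{2}{r}N(\mf{v},x_0,r)$, and exploiting the monotonicity of $N(\mf{v},x_0,\cdot)$ established in Proposition~\ref{prop: alm seg}. Note that here (unlike the case of systems solving \eqref{coex} treated in Lemma~\ref{lem: doubling coex}) there is no residual term from the interaction integral, since \eqref{der H seg} has no correction; this is why the constant $e^{2\gamma}$ present in Lemma~\ref{lem: doubling coex}($ii$) disappears in the cleaner statement of part ($ii$) here.

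For part ($i$): assume $N(\mf{v},x_0,r_0)\ge\alpha$. By monotonicity of $N$, for every $r\ge r_0$ we have $N(\mf{v},x_0,r)\ge\alpha$. Then from \eqref{der H seg},
\[
\frac{d}{dr}\log\frac{H(\mf{v},x_0,r)}{r^{2\alpha}}=\frac{2}{r}N(\mf{v},x_0,r)-\frac{2\alpha}{r}=\frac{2}{r}\big(N(\mf{v},x_0,r)-\alpha\big)\ge 0
\]
for a.e.\ $r\ge r_0$. Since $r\mapsto\log\big(H(\mf{v},x_0,r)/r^{2\alpha}\big)$ is absolutely continuous (being a sum of absolutely continuous functions, with $H$ strictly positive as shown in Proposition~\ref{prop: alm seg}), it is non-decreasing on $[r_0,\infty)$, and integrating over $[r_1,r_2]\subset[r_0,\infty)$ gives the claimed inequality $H(\mf{v},x_0,r_2)/r_2^{2\alpha}\ge H(\mf{v},x_0,r_1)/r_1^{2\alpha}$.

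For part ($ii$): assume $N(\mf{v},x_0,r_0)\le\gamma$. By monotonicity, for every $r\le r_0$ we have $N(\mf{v},x_0,r)\le\gamma$, hence $\frac{d}{dr}\log\big(H(\mf{v},x_0,r)/r^{2\gamma}\big)=\frac{2}{r}\big(N(\mf{v},x_0,r)-\gamma\big)\le 0$ for a.e.\ $r\in(0,r_0]$, so $r\mapsto H(\mf{v},x_0,r)/r^{2\gamma}$ is non-increasing on $(0,r_0]$; integrating over $[r_1,r_2]\subset(0,r_0]$ gives $H(\mf{v},x_0,r_2)/r_2^{2\gamma}\le H(\mf{v},x_0,r_1)/r_1^{2\gamma}$, which is even sharper than the version in Lemma~\ref{lem: doubling coex}($ii$) since no residual term contributes. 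There is essentially no obstacle here: the only point requiring a little care is justifying that $H>0$ on the whole relevant range of radii so that $\log H$ makes sense and is absolutely continuous, but this is exactly the content of the argument in the proof of Proposition~\ref{prop: alm seg} (via $r_3=0$), so it may be invoked directly. This is why the authors omit the proof.
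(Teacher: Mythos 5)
Your proof is correct and matches the paper's intended (and omitted) argument: integrate the clean logarithmic-derivative identity \eqref{der H seg} using the monotonicity of $N$ from Proposition \ref{prop: alm seg}. Your observation that the factor $e^{2\gamma}$ from Lemma \ref{lem: doubling coex}($ii$) disappears here, because \eqref{der H seg} carries no residual interaction term to be controlled by \eqref{stima res N}, is exactly the right explanation for the sharper statement.
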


\begin{lemma}\label{lem: N infty seg}
Let $\mf{v} \in \mathcal{L}_{\loc}(\R^N)$. Then If $|\mf{v}(x)| \le C(1+|x|^\gamma)$ for every $x \in \R^N$, for some $\gamma>0$, then $N(\mf{v},x_0,r) \le \gamma$ for every $x_0 \in \R$ and $r>0$.
\end{lemma}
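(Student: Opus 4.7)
The plan is to mirror the argument used for Lemma \ref{lem: N infty coex}(i), replacing the doubling estimate of Lemma \ref{lem: doubling coex} with its analogue Lemma \ref{lem: doubling seg}, both of which are already available for $\mathcal{L}_{\loc}$ thanks to the Almgren monotonicity established in Proposition \ref{prop: alm seg}.

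Arguing by contradiction, suppose there exist $x_0 \in \R^N$, $\bar r>0$, and $\eps>0$ such that
\[
N(\mf{v},x_0,\bar r) \ge \gamma + \eps.
\]
Since $N(\mf{v},x_0,\cdot)$ is non-decreasing (Proposition \ref{prop: alm seg}), this inequality persists for all $r \ge \bar r$. Applying Lemma \ref{lem: doubling seg}(i) with $\alpha = \gamma+\eps$, $r_1=\bar r$, and $r_2=r$, we obtain
\[
H(\mf{v},x_0,r) \ge \frac{H(\mf{v},x_0,\bar r)}{\bar r^{2(\gamma+\eps)}}\, r^{2(\gamma+\eps)} \qquad \forall\, r \ge \bar r,
\]
where $H(\mf{v},x_0,\bar r)>0$ by the argument given in Proposition \ref{prop: alm seg}.

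On the other hand, the growth assumption $|\mf{v}(x)| \le C(1+|x|^\gamma)$ yields, for every $r\ge 1$,
\[
H(\mf{v},x_0,r) = \frac{1}{r^{N-1}} \int_{S_r(x_0)} \sum_{j=1}^3 v_j^2\,d\sigma \;\le\; C'\bigl(1+(r+|x_0|)^\gamma\bigr)^2 \;\le\; C''\, r^{2\gamma},
\]
for a constant $C''$ depending on $x_0$ and $C$. Combining this with the lower bound above gives $r^{2\eps} \le C'''$ for all $r \ge \bar r$, which is impossible as $r\to+\infty$. This contradiction forces $N(\mf{v},x_0,r) \le \gamma$ for every $x_0 \in \R^N$ and $r>0$, as claimed.

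The argument is essentially routine once Proposition \ref{prop: alm seg} and Lemma \ref{lem: doubling seg} are in hand; no genuine obstacle arises, since the only delicate point, namely the positivity of $H(\mf{v},x_0,\bar r)$ needed to activate the doubling inequality, was already resolved inside the proof of Proposition \ref{prop: alm seg}.
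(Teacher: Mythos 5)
Your proposal is correct and follows exactly the route the paper intends: the paper omits this proof precisely because it is the same contradiction argument as Lemma \ref{lem: N infty coex}($i$), using the monotonicity of $N$ from Proposition \ref{prop: alm seg} and the doubling estimate of Lemma \ref{lem: doubling seg} to get a lower bound on $H$ that contradicts the polynomial growth bound. Your additional remark on the positivity of $H(\mf{v},x_0,\bar r)$ is accurate and appropriately attributed to Proposition \ref{prop: alm seg}.
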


\section{Free boundary regularity: preliminary results}\label{sec: fb initial study}

Let $\mf{v} \in \mathcal{L}_{\loc}(\Omega)$. In this section we start studying the properties of the nodal set $\Gamma_{\mf{v}}$, defined in \eqref{def fb} as the set of points where at least two components vanish.

Our first results are direct consequences of the Almgren-type monotonicity formula.

\begin{lemma}\label{lem: 3 int vuo}
If $\mf{v} \in \mathcal{L}_{\loc}(\Omega)$, then the set $\Gamma_{\mf{v}}^3$ has empty interior.
\end{lemma}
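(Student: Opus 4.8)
The plan is to argue by contradiction, using the Almgren monotonicity formula together with the fact that on $\Gamma_{\mf v}^3$ all three components vanish simultaneously. Suppose $\Gamma_{\mf v}^3$ has nonempty interior, so there is a ball $B_\rho(x_0)\ssubset\Omega$ in which $v_1\equiv v_2\equiv v_3\equiv 0$. I would first record the elementary dichotomy for $H(\mf v,x_0,\cdot)$: by Proposition \ref{prop: alm seg}, $H(\mf v,x_0,r)>0$ for every $r\in(0,\dist(x_0,\pa\Omega))$, unless $\mf v\equiv 0$ on the whole connected component, which is excluded since $\mf v\not\equiv 0$ in Definition \ref{def: L}. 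But $\mf v\equiv 0$ on $B_\rho(x_0)$ forces $H(\mf v,x_0,r)=\frac{1}{r^{N-1}}\int_{S_r(x_0)}\sum_j v_j^2\,d\sigma=0$ for all $r<\rho$, a direct contradiction.

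Alternatively — and this is probably cleaner to present, avoiding any reliance on the connectedness subtleties in Proposition \ref{prop: alm seg} — I would use unique continuation. Since $\mf v\in\mathcal L_{\loc}(\Omega)$, each $v_i$ is harmonic on the open set $\{v_i>0\}$, and more to the point each $v_i$ is continuous and subharmonic on all of $\Omega$ (as a limit of the subsolutions $v_{i,n}$), hence in particular harmonic across any ball where it is either identically zero or strictly positive. If $B_\rho(x_0)\subset\Gamma_{\mf v}^3$, then for each $i$ the nonnegative subharmonic function $v_i$ vanishes on $B_\rho(x_0)$; I claim this forces $v_i\equiv 0$ on the connected component of $\Omega$ containing $x_0$. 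Indeed, consider the open set $U_i:=\innt\{v_i=0\}$; it is nonempty (it contains $B_\rho(x_0)$) and I want to show it is also closed in the component, whence equal to it. If $y\in\pa U_i$ lies in $\Omega$, then $y\in\{v_i=0\}$ by continuity; near $y$ either $v_i$ is harmonic (if $y$ has a neighborhood where $v_i>0$ — impossible since $y\in\overline{U_i}$) or $y$ is a double/triple point; in any case one uses that a nonnegative subharmonic function vanishing on an open set together with the minimum principle / Harnack inequality propagates the zero set. Since two components being identically zero on a component would make $\mf v$ a single positive harmonic function there and the third one zero — but all three vanish on $B_\rho(x_0)$, so all three vanish on the component, i.e. $\mf v\equiv 0$ there, contradicting $\mf v\not\equiv 0$.

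The cleanest route is in fact the first one, so I would structure the short proof as: assume $B_\rho(x_0)\ssubset\Gamma_{\mf v}^3$; then $H(\mf v,x_0,r)=0$ for $r\in(0,\rho)$; but Proposition \ref{prop: alm seg} asserts $H(\mf v,x_0,r)>0$ for all admissible $r$ (this is exactly the content of the argument with $r_3$ in its proof, which shows the positivity set of $H(\mf v,x_0,\cdot)$ cannot have a left endpoint inside the domain), a contradiction. Thus $\Gamma_{\mf v}^3$ has empty interior. The one point requiring a line of care is ensuring $\mf v\not\equiv 0$ near $x_0$ so that Proposition \ref{prop: alm seg} applies — but that is immediate, since $\mf v\not\equiv 0$ in $\Omega$ and, were two components to vanish identically on the component of $x_0$, the third would be a positive harmonic function there, contradicting its vanishing on $B_\rho(x_0)$ by the strong maximum principle.

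I expect no genuine obstacle here: the statement is essentially an immediate corollary of the strict positivity of $H$ established within Proposition \ref{prop: alm seg}. The only thing to be mildly careful about is the edge case where $\mf v$ is trivial on the relevant connected component, which is ruled out by the standing hypothesis $\mf v\not\equiv 0$ combined with the strong maximum principle for the (at most) two remaining harmonic components.
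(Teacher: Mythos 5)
Your preferred route is exactly the paper's proof: Proposition \ref{prop: alm seg} establishes $H(\mf{v},x_0,r)>0$ for every admissible $r$, and a ball inside $\Gamma_{\mf{v}}^3$ would force $H(\mf{v},x_0,r)=0$ for small $r$, an immediate contradiction. One caution regarding the alternative ``unique continuation'' route you sketch but discard: a nonnegative subharmonic function vanishing on an open set need \emph{not} vanish identically (e.g.\ $v(x)=\max(0,x_1)$), and the minimum principle/Harnack inequality do not propagate zeros for merely subharmonic functions, so that argument as stated has a genuine gap; fortunately you correctly structure the actual proof around the $H>0$ argument, which is sound and coincides with the paper's.
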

\begin{proof}
It is a consequence of Proposition \ref{prop: alm seg}: if $m(x_0)=3$, since $H(\mf{v},x_0,r)>0$ for every $r>0$, every sphere $S_r(x_0)$ contains points where some component does not vanish.
\end{proof}

Now we prove a lower bound for the frequency at triple points. To this end, we recall from \cite[Theorem 1.7]{ST24p1} that any function in $\mathcal{L}_{\loc}(\Omega)$ is of class $C^{0,\alpha}(\Omega)$ for every $\alpha \in (0, \bar \nu)$, for some $\bar \nu \in (0,2/3]$ depending only on the dimension.

\begin{lemma}\label{lem: N >=}
Let $\mf{v} \in \mathcal{L}_{\loc}(\Omega)$. If $m(x_0) =3$, then $N(\mf{v},x_0,0^+) \ge \bar \nu$.
\end{lemma}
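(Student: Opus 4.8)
The plan is to argue by contradiction via a blow-up analysis, exploiting the Almgren monotonicity formula (Proposition~\ref{prop: alm seg}) together with the Hölder regularity of functions in $\mathcal{L}_{\loc}(\Omega)$. Suppose $m(x_0)=3$ but $\gamma_0 := N(\mf{v},x_0,0^+) < \bar\nu$; note $\gamma_0 > 0$ since $H(\mf{v},x_0,r)>0$ for all small $r$ (Lemma~\ref{lem: 3 int vuo}) and $N$ would force at least one component to be constant, hence zero, otherwise — actually the positivity of $\gamma_0$ needs a small argument: if $\gamma_0 = 0$, monotonicity of $N$ and \eqref{der H seg} would give $\mf{v}$ constant near $x_0$, contradicting $m(x_0)=3$ together with $\mf{v}\not\equiv 0$ in any neighborhood. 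So $0 < \gamma_0 < \bar\nu$.

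Next I would introduce the blow-up family
\[
\mf{v}_{x_0,r}(x) := \frac{\mf{v}(x_0 + r x)}{H(\mf{v},x_0,r)^{1/2}},
\]
and use the doubling-type estimates (Lemma~\ref{lem: doubling seg}) to show this family is bounded in $H^1_{\loc}$ and, by the uniform $C^{0,\alpha}$ bounds that functions in $\mathcal{L}_{\loc}$ satisfy, in $C^{0,\alpha}_{\loc}$ as well; along a subsequence $r_n \to 0^+$ it converges (strongly in $H^1_{\loc}$, locally uniformly) to a limit $\mf{w}$, nontrivial because $H(\mf{w},0,1)=1$ by construction. The key point is that $\mf{w}$ is $\gamma_0$-homogeneous: this follows from the scaling invariance of $N$, the convergence $N(\mf{v}_{x_0,r_n},0,s) = N(\mf{v},x_0,r_n s)\to \gamma_0$ for every fixed $s$, and the rigidity part of Proposition~\ref{prop: alm seg}, provided strong $H^1_{\loc}$ convergence is in hand (otherwise one gets only $N(\mf{w},0,\cdot)\equiv\gamma_0$ via a Rellich-type argument, which still suffices). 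Moreover $\mf{w}$ is again partially segregated ($w_1w_2w_3\equiv 0$) and each $w_i$ is harmonic where positive, these being closed conditions under the convergence, and by construction $m_{\mf{w}}(0)=3$ (the scaling keeps all three components vanishing at the origin since $\mf{v}(x_0)=0$ componentwise).

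Then I would invoke the characterization of the threshold $\bar\nu$ from \cite[Remark~1.3]{ST24p1}: $\bar\nu$ is precisely the smallest possible homogeneity degree of a nontrivial, globally defined, partially segregated $\gamma$-homogeneous map whose components are harmonic where positive and which has a triple point at the vertex — equivalently, it is the square root of the first eigenvalue of the relevant ``overlapping partition'' problem on $\S^{N-1}$. Since $\mf{w}$ restricted to the sphere is exactly a competitor for that optimal partition problem (it is $\gamma_0$-homogeneous, globally defined on the cone, partially segregated, with the three zero sets all meeting at $0$), we must have $\gamma_0 \ge \bar\nu$, contradicting $\gamma_0 < \bar\nu$. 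The main obstacle I anticipate is making precise that the blow-up limit genuinely ``sees'' a triple point in the strong sense required by the variational characterization of $\bar\nu$ — one must rule out that in the limit one of the components degenerates to $0$ identically (which would drop the multiplicity and change the relevant Liouville problem); this is handled by noting that, were some $w_i\equiv 0$, then $N(\mf{w},0,\cdot)$ would be the frequency of a two-component harmonic-type map, still $\ge$ the frequency coming from the non-overlapping partition of the sphere into two pieces, whose value one checks is also $\ge \bar\nu$ by the monotonicity of eigenvalues under the inclusion of the overlapping problem into the stricter one — or, more simply, because such a configuration corresponds to a genuine nodal set of a single harmonic function and one uses the classification already available. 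A secondary technical point is the strong $H^1_{\loc}$ convergence of the blow-ups, obtained from the monotonicity of $N$ and the domain variation formula in the standard way (cf.\ \cite{NTTV10, ST15}).
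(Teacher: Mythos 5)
Your proposal has a fundamental circularity that makes it unusable at this point in the paper's development. You propose to pass to a homogeneous blow-up $\mf{w}$ at $x_0$ and then argue that all three components of $\mf{w}$ are non-trivial, so that $\mf{w}$ is a competitor for the overlapping partition problem characterizing $\bar\nu$. But the existence of a locally uniformly convergent blow-up and, crucially, the non-triviality of all components of the blow-up limit are exactly what Proposition \ref{prop: conv blow-up} establishes, and the proof of that proposition relies on Lemma \ref{lem: N >=} in two essential places: Lemma \ref{lem: decay1L} invokes Lemma \ref{lem: N >=} to control $H(\mf{v}_m,x_0,r)$ near rescaled triple points (which is needed for the uniform $C^{0,\alpha}$ bounds of the blow-up family in Lemma \ref{lem: bu hol}), and the final step of Proposition \ref{prop: conv blow-up} uses Lemma \ref{lem: N >=} together with Lemma \ref{cor: comp 0 seg} to deduce that $w_i\not\equiv 0$ for all $i$. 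Your mention in passing of a ``Rellich-type argument'' does not dodge this: getting the blow-up limit to retain a genuine triple point, and to carry a nontrivial partially segregated structure rather than degenerate to a two-component or one-component configuration, is precisely what cannot be done without a lower bound on the frequency at $x_0$, which is the content of the lemma you are proving.

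There is also a secondary issue: you state that $\bar\nu$ is ``precisely the smallest possible homogeneity degree of a nontrivial, globally defined, partially segregated $\gamma$-homogeneous map... with a triple point at the vertex.'' That is not how the paper characterizes $\bar\nu$; it is defined via an Alt--Caffarelli--Friedman monotonicity formula and an ``overlapping partition problem'' on the sphere (see \cite[Section 2 and Remark 1.3]{ST24p1}), and the paper explicitly points out that this yields the suboptimal estimate $\bar\nu\le 2/3$. The minimum homogeneity degree of blow-up configurations is in fact $3/4$, but proving that (Lemma \ref{lem: 3/4}) requires essentially all of Sections \ref{sec: fb initial study}--\ref{sec: fb reg}, which themselves build on Lemma \ref{lem: N >=}.

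The actual proof is far shorter and entirely elementary. Assuming by contradiction $N(\mf{v},x_0,0^+)<\bar\nu$, choose $\alpha<\bar\nu$ and $\eps,\bar r>0$ with $N(\mf{v},x_0,r)\le\alpha-\eps$ for $r\in(0,\bar r]$. Lemma \ref{lem: doubling seg}(ii) (applied at $r_0=\bar r$, reading the inequality in the direction that bounds $H$ at small radii from below) gives
\[
H(\mf{v},x_0,r)\ \ge\ \frac{H(\mf{v},x_0,\bar r)}{\bar r^{2(\alpha-\eps)}}\,r^{2(\alpha-\eps)}\ =\ C\,r^{2(\alpha-\eps)},\qquad r\in(0,\bar r].
\]
On the other hand, $m(x_0)=3$ means $\mf{v}(x_0)=0$, and the $C^{0,\alpha}$ regularity of $\mf{v}$ (from \cite[Theorem 1.7]{ST24p1}, valid for any $\alpha<\bar\nu$) gives $|\mf{v}(x)|\le C|x-x_0|^\alpha$, hence
\[
H(\mf{v},x_0,r)\ \le\ C\,r^{2\alpha}.
\]
Combining the two forces $C_1 r^{2(\alpha-\eps)}\le C_2 r^{2\alpha}$, i.e. $C_1/C_2\le r^{2\eps}$, which fails as $r\to 0^+$. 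No blow-up, compactness, or classification of homogeneous solutions is needed; all one uses is the doubling estimate from Almgren monotonicity against the already-known Hölder regularity.
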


\begin{proof}
Suppose by contradiction that $N(\mf{v},x_0,0^+) <\bar \nu$. Then, by monotonicity, there exist $\alpha < \bar \nu$ and $\bar r, \eps>0$ small enough such that $N(\mf{v},x_0,r) \le\alpha-\eps$ for every $r \in (0, \bar r]$. Thus, by Lemma \ref{lem: doubling seg}, 
\[
H(\mf{v},x_0,r) \ge \frac{H(\mf{v},x_0,\bar r)}{\bar r^{2(\alpha-\eps)}} r^{2(\alpha-\eps)} = C r^{2(\alpha-\eps)}
\]
for every such $r$. On the other hand, since $m(x_0)=3$ and $\mf{v} \in \mathcal{C}^{0,\alpha}(\Omega)$, we also have that
\[
H(\mf{v},x_0,r) \le C r^{2\alpha} \quad \forall r>0,
\]
and the previous estimates are in contradiction for small $r$.
\end{proof}

Next, we collect some useful properties coming from the minimality of the approximating sequence $\{\mf{v}_n\}$. At first we establish that, if $v_j$ vanishes identically in a ball, and therein another component $v_i$ is non-trivial, then $v_i$ is in fact strictly positive.

\begin{lemma}\label{lem: no self} 
Let $\mf{v} \in \mathcal{L}_{\loc}(\Omega)$. Suppose that there exist $x_0 \in \R^N$ and $R>\rho>0$ such that the following hold: $v_{j} =0$ in $B_R(x_0)$, and $v_i|_{S_{\rho}(x_0)} \not \equiv  0$. Then $v_i >0$ in $B_\rho(x_0)$.
\end{lemma}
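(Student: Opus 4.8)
The plan is to exploit the minimality of the approximating sequence $\{\mf{v}_n\}$ together with strong $H^1$ and uniform convergence. Fix $x_0$ and $R>\rho>0$ with $v_j\equiv 0$ on $B_R(x_0)$ and $v_i|_{S_\rho(x_0)}\not\equiv 0$. The key point is that on the ball $B_\rho(x_0)$ the segregation constraint is vacuous for the pair $(v_i,v_k)$, where $k$ is the third index: since $v_j\equiv 0$ there, the product $v_1 v_2 v_3\equiv 0$ holds automatically regardless of $v_i,v_k$, so $v_i$ is free to be modified without penalty. I would therefore compare $\mf{v}_n$ on $B_\rho(x_0)$ with a competitor obtained by replacing $v_{i,n}$ with a function that is harmonic in $B_\rho(x_0)$ with the same boundary trace on $S_\rho(x_0)$. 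The difficulty is that for the finite-$M_n$ problem the penalty term $M_n\prod v_{j,n}^2$ does not vanish on $B_\rho(x_0)$ (since $v_{j,n}>0$), so the competitor construction must be handled carefully in the limit.

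The cleanest route is to work directly with the limit $\mf{v}$ and its minimality in $\mathcal{L}_{\loc}(\Omega)$, or rather to use the following consequence of Definition~\ref{def: L}: if $\mf{w}\in H^1(B_\rho(x_0),\R^3)$ satisfies $\mf{w}-\mf{v}\in H_0^1(B_\rho(x_0),\R^3)$, is nonnegative, and $w_1 w_2 w_3\equiv 0$ in $B_\rho(x_0)$, then $\int_{B_\rho(x_0)}\sum_l|\nabla v_l|^2\le \int_{B_\rho(x_0)}\sum_l|\nabla w_l|^2$. This energy-comparison property for limits follows by a standard diagonal argument: given such a $\mf{w}$, one builds competitors $\mf{w}_n$ for $\mf{v}_n$ by suitably interpolating between $\mf{w}$ and $\mf{v}_n$ in a thin shell near $S_\rho(x_0)$ and pushing the penalty term to zero, using property ($iii$) of Definition~\ref{def: L}; this is essentially \cite[Remark 1.8]{ST24p1} / the argument behind the domain variation formula, and I would either cite it or reproduce the short interpolation. (Since the lemma is stated for $\mf{v}\in\mathcal{L}_{\loc}(\Omega)$, I can freely use that $\mf{v}\in\mathcal{L}(B_\rho(x_0))$ and hence this comparison holds.)

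Granting the energy comparison, the argument finishes quickly. Let $h$ be the harmonic function in $B_\rho(x_0)$ with $h=v_i$ on $S_\rho(x_0)$; since $v_i\ge 0$ and $v_i|_{S_\rho}\not\equiv 0$, the maximum principle gives $h>0$ in $B_\rho(x_0)$. Set $\mf{w}:=\mf{v}$ with the $i$-th component replaced by $h$; this is admissible because $w_j=v_j\equiv 0$ in $B_\rho(x_0)$ forces $w_1w_2w_3\equiv 0$, and $\mf{w}\ge 0$, $\mf{w}-\mf{v}\in H_0^1$. By minimality $\int_{B_\rho}|\nabla v_i|^2 \le \int_{B_\rho}|\nabla h|^2$. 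On the other hand $h$ is the harmonic extension of the trace of $v_i$, so it is the unique minimizer of Dirichlet energy among functions with that trace; hence $\int_{B_\rho}|\nabla v_i|^2 \ge \int_{B_\rho}|\nabla h|^2$, with equality iff $v_i=h$ a.e. in $B_\rho(x_0)$. Thus $v_i=h>0$ in $B_\rho(x_0)$, as claimed.

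The main obstacle, as indicated, is justifying the passage from the minimality of the finite-$M_n$ problems to the energy-comparison inequality for the limit $\mf{v}$ against segregated competitors — i.e. constructing the recovery sequence $\mf{w}_n$ so that the penalty term stays controlled. Once that standard $\Gamma$-convergence-type fact is in hand (or cited from \cite{ST24p1}), the rest is a one-line application of uniqueness of harmonic extensions and the maximum principle.
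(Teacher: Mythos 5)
Your competitor (harmonic extension of $v_i$ on $B_\rho(x_0)$, keeping $v_j,v_k$ unchanged) is the right choice, and the finish via the strong maximum principle is correct. The gap is in the passage from minimality of $\mf{v}_n$ at fixed $M_n$ to the ``energy-comparison against arbitrary segregated competitors'' that you invoke for the limit $\mf{v}$. That property is not established in the paper --- the paper always works with \emph{specific} competitors at the $\mf{v}_n$ level and verifies the penalty term by hand (compare Lemmas \ref{prop: diff arm} and \ref{lem: no |x|}) --- and your sketch of the recovery sequence does not actually control the competitor's penalty. Indeed, since $v_{i,n}$ is subharmonic, its harmonic extension $h_n$ in $B_\rho(x_0)$ satisfies $h_n \ge v_{i,n}$ pointwise, so the competitor's penalty $M_n\int \tilde v_{i,n}^2 v_{j,n}^2 v_{k,n}^2$ is \emph{larger} than the original one wherever you have replaced $v_{i,n}$, and property $(iii)$ of Definition \ref{def: L} gives no bound on it. Interpolating in a thin shell near $S_\rho(x_0)$ does not help: the shell contribution is of the form $C\,M_n\int_{\rm shell} v_{j,n}^2\,v_{k,n}^2$, which is not controlled by $M_n\int \prod_l v_{l,n}^2 \to 0$ unless you already know $v_{i,n}$ has a lower bound there --- which is essentially what you are trying to prove.

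The ingredient you are missing is precisely the hypothesis $R>\rho$: since $v_j\equiv 0$ on the \emph{strictly larger} ball $B_R(x_0)$, you have room to multiply $v_{j,n}$ by a cut-off $\xi$ with $\xi\equiv 0$ on $B_\rho(x_0)$ and $\xi\equiv 1$ near $S_R(x_0)$, and take as competitor $\tilde v_{j,n}:=\xi v_{j,n}$, $\tilde v_{k,n}:=v_{k,n}$, and $\tilde v_{i,n}$ equal to $v_{i,n}$ outside $B_\rho(x_0)$ and to the harmonic extension of $v_{i,n}|_{S_\rho(x_0)}$ inside. Then the penalty vanishes identically on $B_\rho(x_0)$ (because $\tilde v_{j,n}\equiv 0$ there), and on the annulus $B_R(x_0)\setminus B_\rho(x_0)$ one has $\tilde v_{j,n}\le v_{j,n}$ while the other components are unchanged, so the competitor's penalty is dominated by the original one and tends to $0$. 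Since $v_{j,n}\to v_j\equiv 0$ strongly in $H^1(B_R(x_0))$, the Dirichlet cost of the cut-off on the $j$-th component is $o(1)$; the $i$-th component gains $\int_{B_\rho(x_0)}(|\nabla h|^2-|\nabla v_i|^2)\le 0$, strictly negative unless $v_i=h$. This forces $v_i=h$, hence $v_i>0$ in $B_\rho(x_0)$ by the maximum principle, as you concluded. In short: the structure of your argument is fine, but you must name and use the room $R>\rho$ to kill the penalty inside $B_\rho(x_0)$; a generic ``$\Gamma$-convergence type'' comparison with arbitrary segregated competitors is neither available in the paper nor obviously true.
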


\begin{proof}
The proof is completely analogue to the one of \cite[Lemma 4.9]{ST24p1}.
\end{proof}

As a consequence: 

\begin{lemma}\label{cor: comp 0 seg}
For $\mf{v} \in \mathcal{L}_{\loc}(\R^N)$, we have that $N(\mf{v},x_0,+\infty) = 0$ for some $x_0 \in \R^N$ if and only if at least one component $v_i$ vanishes identically.
\end{lemma}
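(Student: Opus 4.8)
The plan is to establish the two implications separately: the forward implication ``$N(\mf v,x_0,+\infty)=0 \ \Rightarrow$ some $v_i\equiv 0$'' will follow purely from the Almgren monotonicity formula of Proposition \ref{prop: alm seg}, while the converse will follow from the rigidity encoded in Lemma \ref{lem: no self} combined with the Liouville theorem for nonnegative entire harmonic functions.

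For the forward implication, recall that $N(\mf v,x_0,\cdot)$ is monotone non-decreasing by Proposition \ref{prop: alm seg}, and that it is manifestly nonnegative since $E(\mf v,x_0,r)\ge 0$ and $H(\mf v,x_0,r)>0$ for every $r>0$. Hence $N(\mf v,x_0,+\infty)=0$ forces $N(\mf v,x_0,r)=0$, and therefore $E(\mf v,x_0,r)=0$, for every $r>0$; that is, $\int_{B_r(x_0)}\sum_j|\nabla v_j|^2\,dx=0$ for all $r>0$. Since the balls $B_r(x_0)$ exhaust $\R^N$, this gives $\nabla\mf v\equiv 0$, so $\mf v$ is a constant triplet; the partial segregation condition $v_1v_2v_3\equiv 0$ (valid for elements of $\mathcal{L}_{\loc}$ by \eqref{bas prop 1}) then forces one of these constants, hence one component, to vanish identically.

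For the converse, suppose $v_3\equiv 0$ (the other cases being symmetric). Since $\mf v\not\equiv 0$, at least one of $v_1,v_2$ is nontrivial, and I first claim that any nontrivial component $v_i$, $i\in\{1,2\}$, is strictly positive on all of $\R^N$. Indeed, fix $y$ with $v_i(y)>0$; if $v_i(x_0)=0$ for some $x_0$, then necessarily $x_0\neq y$, and setting $\rho:=|y-x_0|>0$ we have $v_i|_{S_\rho(x_0)}\not\equiv 0$ (because $y\in S_\rho(x_0)$) while $v_3\equiv 0$ on $B_{2\rho}(x_0)$, so Lemma \ref{lem: no self} gives $v_i>0$ on $B_\rho(x_0)\ni x_0$, contradicting $v_i(x_0)=0$. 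Thus $\{v_i>0\}=\R^N$, so $v_i$ is harmonic on $\R^N$ by \eqref{bas prop 1} and hence a positive constant by Liouville. Consequently $\mf v$ is again a constant triplet, so $E(\mf v,x_0,r)\equiv 0$ and $N(\mf v,x_0,r)\equiv 0$ for all $x_0$ and $r$; in particular $N(\mf v,x_0,+\infty)=0$, as desired.

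There is no real obstacle in this argument --- the statement is, as the excerpt already signals, essentially a corollary of Lemma \ref{lem: no self}. The only step deserving a moment of care is the rigidity claim in the converse, that is, that once one component vanishes identically a second component cannot be simultaneously nontrivial and vanishing somewhere; this is precisely the phenomenon Lemma \ref{lem: no self} excludes, and the short contradiction argument above makes the reduction explicit. The rest is a direct application of the monotonicity formula of Proposition \ref{prop: alm seg}.
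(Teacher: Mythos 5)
Your proof is correct and follows essentially the same route as the paper's: forward direction via monotonicity of $N$ forcing $E\equiv 0$, hence constancy, hence partial segregation kills a component; converse via Lemma~\ref{lem: no self} to upgrade each nontrivial component to a strictly positive harmonic function, then Liouville. The only cosmetic difference is that you phrase the strict-positivity step as a contradiction from a single bad point, whereas the paper argues directly on growing spheres; the substance is identical.
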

\begin{proof}
If $N(\mf{v},x_0,+\infty) = 0$, then $N(\mf{v},x_0,r) = 0$ for every $r>0$, by monotonicity. Hence, $\mf{v}$ is constant in $\R^N$, and to satisfy the partial segregation condition it is necessary that at least one component $v_i$ vanishes identically. Regarding the reverse implication, suppose that one component $v_i \equiv 0$. Let $x_0 \in \R^N$ be arbitrarily chosen. Since $H(\mf{v},x_0,r)>0$ in $\R^N$ for every $r>0$, at least one component $v_j$ has to be nontrivial on $S_r(x_0)$. We claim that $v_j>0$ in $\R^N$. Indeed, since $v_j$ is subharmonic and $v_j \not \equiv 0$ in $B_r(x_0)$, we deduce that $v_j \not \equiv 0$ on $S_\rho(x_0)$, for every $\rho \ge r$. Thus, we can apply Lemma \ref{lem: no self} in any ball $B_\rho(x_0)$, and the claim follows. To sum up, we can divide the components of $\mf{v}$ into two classes: the trivial ones, and the strictly positive ones. The latter are positive harmonic functions in $\R^N$, and hence are constants. Then, all the components of $\mf{v}$ are constants, and $N(\mf{v},x_0,r) = 0$ for every $r>0$ and $x_0 \in \R^N$.
\end{proof}

Secondly, we establish that in a neighborhood of any point of multiplicity $2$, the difference of the two vanishing components is harmonic.

\begin{lemma}\label{prop: diff arm}
Let $\mf{v} \in \mathcal{L}_{\loc}(\Omega)$. Let $x_0 \in \Gamma_{\mf{v}}^{2}$ be such that $v_i(x_0) = 0 = v_j(x_0)$. Then $v_i-v_j$ is harmonic in a neighborhood of $x_0$, and in fact $v_i$ and $v_j$ are the positive and negative part of $v_i-v_j$ in such neighborhood, respectively.
\end{lemma}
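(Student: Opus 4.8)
The plan is to work locally near $x_0 \in \Gamma_{\mf{v}}^2$, where $v_i(x_0)=v_j(x_0)=0$ while the third component $v_k$ must be strictly positive in a neighborhood $B_\rho(x_0)$ (by continuity, since $v_k(x_0) > 0$ as otherwise $m(x_0)=3$). Then the partial segregation condition $v_1 v_2 v_3 \equiv 0$ forces $v_i v_j \equiv 0$ in $B_\rho(x_0)$: at each point of that ball, since $v_k > 0$, at least one of $v_i, v_j$ must vanish. Thus in $B_\rho(x_0)$ the two functions $v_i, v_j$ have disjoint positivity sets, and we are reduced to a \emph{local two-component full segregation} picture. Set $w := v_i - v_j$; then $w^+ = v_i$ and $w^- = v_j$ on $B_\rho(x_0)$, so it suffices to show $w$ is harmonic there.

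To prove harmonicity of $w$, I would use the minimality of the approximating sequence $\{\mf{v}_n\}$ from Definition \ref{def: L}, following the standard argument for full segregation (as in \cite{CL08, NTTV10, TT12}). The idea: $w$ is harmonic in the open set $\{v_i>0\} \cup \{v_j>0\}$ (since $v_i, v_j$ are separately harmonic where positive, by \eqref{bas prop 1}), so it remains to rule out a distributional defect on the nodal set $\{v_i = v_j = 0\} \cap B_\rho(x_0)$. Because $w \in H^1$, testing against $\varphi \ge 0$, $\varphi \in C_c^\infty(B_\rho(x_0))$, one has $\int \nabla w \cdot \nabla \varphi = \int_{\{v_i>0\}} \nabla v_i \cdot \nabla \varphi - \int_{\{v_j>0\}} \nabla v_j \cdot \nabla \varphi$; integrating by parts on each piece picks up boundary terms $-\int \partial_\nu v_i\, \varphi$ and $+\int \partial_\nu v_j\, \varphi$ on the respective free boundaries, both of which have a sign (inward normal derivatives of nonnegative harmonic functions vanishing on the boundary are $\le 0$), giving $\Delta w \ge 0$ and likewise $\Delta w \le 0$ — hence $\Delta w = 0$ — \emph{provided} one can justify these integrations by parts and the vanishing/sign of the boundary contributions. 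The clean way to get this rigorously is through the domain variation / minimality: use the uniform-in-$n$ estimates, the strong $H^1$ convergence $\mf{v}_n \to \mf{v}$, and a competitor argument. Concretely, let $h$ be the harmonic function in $B_\rho(x_0)$ with $h = w$ on $\partial B_\rho(x_0)$; one shows $\int_{B_\rho} |\nabla w|^2 \le \int_{B_\rho} |\nabla h|^2$ by a variational comparison (replacing $(v_{i,n}, v_{j,n})$ near $x_0$ by suitable modifications of $(h^+, h^-)$, controlling the penalization term via condition $(iii)$ of Definition \ref{def: L}), while the reverse inequality $\int |\nabla h|^2 \le \int |\nabla w|^2$ holds by harmonicity of $h$ and the equal boundary data; equality in Dirichlet's principle forces $w = h$, hence $w$ harmonic.

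The main obstacle I anticipate is the construction of the admissible competitor at the level of the approximating sequence $\mf{v}_n$: the functions $v_{i,n}$ are strictly positive, so one cannot directly insert $h^+$ and $h^-$ (which are not positive and whose product is not zero). One must regularize — e.g. use $h^+ + \eps_n$, $h^- + \eps_n$ or a gluing that interpolates between $\mf{v}_n$ on $\partial B_\rho$ and the desired profile inside, paying a penalization cost $M_n \int \prod v^2$ that must be shown to be negligible as $n \to \infty$. This is exactly the type of technical lemma that underlies \cite[Lemma 4.9]{ST24p1} (invoked in Lemma \ref{lem: no self}), so I would try to reduce to that lemma or adapt its proof verbatim. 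Once harmonicity of $w = v_i - v_j$ is established in $B_\rho(x_0)$, the identification of $v_i = w^+$, $v_j = w^-$ is immediate from $v_i v_j \equiv 0$, $v_i, v_j \ge 0$, and $v_i - v_j = w$, completing the proof.
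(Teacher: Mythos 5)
Your plan is correct and is driven by the same engine as the paper: localize where the third component is bounded away from zero, deduce $v_iv_j\equiv 0$ there, and then exploit the minimality of the approximating sequence $\{\mf{v}_n\}$ via a gluing competitor whose penalization cost is controlled. But the paper's competitor is different from yours. Rather than harmonic replacement, the paper proves $w:=v_i-v_j$ is distributionally superharmonic by contradiction: if $\int_B \nabla w\cdot\nabla\varphi<0$ for some $\varphi\ge 0$, it perturbs by a fixed small $\bar t\varphi$ and uses the competitor $\tilde{\mf{v}}_n=\eta\,\mf{v}_n+(1-\eta)\big((v_{i,n}-v_{j,n}+\bar t\varphi)_+,(v_{i,n}-v_{j,n}+\bar t\varphi)_-,v_{k,n}\big)$. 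Two features make this cleaner than harmonic replacement: the positive/negative-part split preserves Dirichlet energy exactly (so one only pays the first-order term $2\bar t\int\nabla w\cdot\nabla\varphi<0$), and it makes the triple product vanish in $B_\rho$ while satisfying $\tilde v_{\ell,n}\le v_{\ell,n}$ in the gluing annulus, so the penalization is monotone decreasing with no $\eps_n$-regularization needed. By swapping $i\leftrightarrow j$ one gets subharmonicity, hence harmonicity; both approaches then identify $v_i=w^+$, $v_j=w^-$ as you do. Your Dirichlet-comparison route should also close, but note a small slip in your anticipated ``obstacle'': $h^+h^-\equiv 0$ always, so the product is not the problem; and the minimality in Definition \ref{def: L} is tested against arbitrary $H_0^1$ variations, so strict positivity of the competitor is not required either. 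The genuine bookkeeping is only the penalization in the annulus where you interpolate between $\mf{v}_n$ and the replacement profile, and the paper's choice of competitor is tailored to make that step trivial.
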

\begin{proof}
For concreteness, we suppose that $v_1(x_0) = v_2(x_0) = 0$, and $v_3(x_0)>0$. Then, by continuity, $v_3 \ge c$ in $B_R(x_0)=:B$ for some $c,R>0$, and the partial segregation condition implies that $v_1\,v_2 \equiv 0$ in $B$. We prove that
\beq\label{arm diff}
\int_{B} \nabla (v_1-v_2)\cdot \nabla \varphi \,dx \ge 0 \qquad \forall \varphi \in C^1_c(B), \ \varphi \ge 0.
\eeq
Exchanging the role of $v_1$ and $v_2$, also the opposite inequality holds, and the thesis follows.

Assume by contradiction that \eqref{arm diff} does not hold. Then there exists a nonnegative $\varphi \in C^1_c(B)$, say $\textrm{supp}\, \varphi =B_\rho(x_0)$ with $\rho<R$, such that
\[
\int_{B} \nabla (v_1-v_2)\cdot \nabla \varphi \,dx < 0.
\]
It is not difficult to deduce that 
\beq\label{capp}
\int_{B_\rho(x_0)} \left(|\nabla (v_1-v_2+t \varphi)|^2- |\nabla v_1|^2-|\nabla v_2|^2\right) dx =: -2\delta<0
\eeq
for $t=\bar t>0$ sufficiently small (notice that $\nabla v_1 \cdot \nabla v_2 =0$ a.e. in $B$, since $v_1v_2 \equiv 0$). We use this estimate to obtain a contradiction with the minimality of the approximating sequence $\{\mf{v}_n\}$, given by Definition \ref{def: L}. Indeed, let $\eta \in C^1_c(B)$ be such that $0 \le \eta \le 1$, $\eta = 1$ in $S_R(x_0)$, $\eta \equiv 0$ in $B_\rho(x_0)$, and let us consider the following competitor:
\[
\tilde{\mf{v}}_n:= \eta \mf{v}_n + (1-\eta)( (v_{1,n}-v_{2,n}+ \bar t \varphi)_+, (v_{1,n}-v_{2,n}+ \bar t \varphi)_-, v_{3,n}).
\]
Notice that $\tilde{\mf{v}}_n = \mf{v}_n$ on $S_R(x_0)$, so that by minimality 
\[
J_{M_n}(\mf{v}_n, B) \le J_{M_n}(\tilde{\mf{v}}_n,B).
\]
On the other hand
\beq\label{capp 2}
\begin{split}
J_{M_n}(\tilde{\mf{v}}_n,B) & - J_{M_n}(\mf{v}_n, B) = \int_{B_R(x_0) \setminus B_\rho(x_0)} \sum_i \left(|\nabla \tilde v_{i,n}|^2  - |\nabla v_{i,n}|^2\right)\,dx \\
& \qquad+  \int_{B_R(x_0) \setminus B_\rho(x_0)} M_n \Big(\prod_j  \tilde v_{j,n}^2-   \prod_j  v_{j,n}^2\Big)dx - M_n \int_{B_\rho(x_0)} \prod_j  v_{j,n}^2\,dx \\
& \qquad + \int_{B_\rho(x_0)} \left(|\nabla (v_{1,n}-v_{2,n}+\bar t \varphi)|^2- |\nabla v_{1,n}|^2-|\nabla v_{2,n}|^2\right) dx .
\end{split}
\eeq
On $B_R(x_0) \setminus B_\rho(x_0)$, where $\varphi \equiv 0$, we have
\begin{align*}
\tilde v_{1,n} &= \eta v_{1,n} + (1-\eta) (v_{1,n}-v_{2,n})_+ \to v_1, \quad \text{and} \\
\tilde v_{2,n} &= \eta v_{2,n} + (1-\eta)(v_{1,n}-v_{2,n})_- \to v_2,
\end{align*}
strongly in $H^1(B_R(x_0) \setminus \overline{B_\rho(x_0)})$, thanks to the segregation condition $v_1 \, v_2 \equiv 0$. Therefore, the first integral on the right hand side in \eqref{capp 2} tends to $0$ as $n \to \infty$. Regarding the second integral, we observe that in $B_R(x_0) \setminus B_\rho(x_0)$
\begin{align*}
\tilde v_{1,n} & = \eta v_{1,n} + (1-\eta) (v_{1,n}-v_{2,n})_+ \le v_{1,n},\\
\tilde v_{2,n} & = \eta v_{2,n} + (1-\eta) (v_{1,n}-v_{2,n})_- \le v_{2,n}, \\
\tilde v_{3,n} & = v_{3,n}.
\end{align*}
Therefore, the second integral on the right hand side in \eqref{capp 2} is nonpositive. Recalling also \eqref{capp}, we conclude that 
\[
J_{M_n}(\tilde{\mf{v}}_n,B)   - J_{M_n}(\mf{v}_n, B) \le o(1)-2\delta <-\delta<0
\]
for sufficiently large $n$, which is the desired contradiction.
\end{proof}

\begin{lemma}\label{cor: diff arm}
Let $\mf{v} \in \mathcal{L}_{\loc}(\Omega)$. Then $v_i-v_j$ is harmonic in $\{v_k>0\}$, for every $i \neq j \neq k$.
\end{lemma}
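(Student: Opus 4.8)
The plan is to deduce this from Lemma~\ref{prop: diff arm} by a localization argument. Fix $i \neq j \neq k$ (so $\{i,j,k\} = \{1,2,3\}$) and work in the open set $U := \{v_k > 0\}$. By the partial segregation condition \eqref{bas prop 1}, $v_i\,v_j \equiv 0$ in $U$, so in particular the positivity sets of $v_i$ and $v_j$ are disjoint open subsets of $U$. On $\{v_i > 0\}$ we have $v_i$ harmonic and $v_j \equiv 0$, hence $v_i - v_j = v_i$ is harmonic there; symmetrically $v_i - v_j = -v_j$ is harmonic on $\{v_j > 0\}$. So the only issue is harmonicity of $w := v_i - v_j$ across the set $Z := U \setminus (\{v_i>0\} \cup \{v_j>0\}) = \{v_i = v_j = 0\} \cap U$, which is precisely a subset of $\Gamma_{\mf v}^2$ (every point of $Z$ has $v_i = v_j = 0$ and $v_k > 0$, so multiplicity exactly $2$).

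The key step is then purely local: it suffices to show $w$ is harmonic in a neighborhood of each point of $U$. For $x_0 \in \{v_i>0\}\cup\{v_j>0\}$ this is immediate as above. For $x_0 \in Z$, we have $v_i(x_0) = v_j(x_0) = 0$ while $v_k(x_0) > 0$, so $x_0 \in \Gamma_{\mf v}^2$ with the two vanishing components being exactly $v_i$ and $v_j$; Lemma~\ref{prop: diff arm} applies directly and gives a ball $B_{r}(x_0) \subset U$ on which $v_i - v_j$ is harmonic. Covering $U$ by these neighborhoods, $w$ is harmonic in an open neighborhood of every point of $U$, hence harmonic in $U$. Since $i \neq j \neq k$ was arbitrary, the claim follows.

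\begin{proof}
Fix $i,j,k$ with $\{i,j,k\}=\{1,2,3\}$, and set $U:=\{v_k>0\}$, which is open since $v_k$ is continuous. We show that $v_i-v_j$ is harmonic in a neighborhood of every point $x_0\in U$; this clearly implies harmonicity in $U$.

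By \eqref{bas prop 1}, the function $v_k$ is harmonic and positive in $U$, and $v_iv_j\equiv 0$ in $U$ (since $v_1v_2v_3\equiv 0$ and $v_k>0$ there). Hence the open sets $\{v_i>0\}$ and $\{v_j>0\}$ are disjoint. If $x_0\in\{v_i>0\}$, then in a ball around $x_0$ contained in $\{v_i>0\}\subset\{v_i>0\}$ we have $v_j\equiv 0$, so $v_i-v_j=v_i$ is harmonic there by \eqref{bas prop 1}. The case $x_0\in\{v_j>0\}$ is symmetric.

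It remains to treat $x_0\in U$ with $v_i(x_0)=v_j(x_0)=0$. Since $v_k(x_0)>0$, the point $x_0$ belongs to $\Gamma_{\mf v}^2$, with vanishing components exactly $v_i$ and $v_j$. By Lemma~\ref{prop: diff arm}, $v_i-v_j$ is harmonic in a neighborhood of $x_0$ (and $v_i,v_j$ are its positive and negative parts there). This proves the claim, and since $i\neq j\neq k$ were arbitrary, the proof is complete.
\end{proof}

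The only subtlety, and the single point where one must be slightly careful, is to ensure that the neighborhood produced by Lemma~\ref{prop: diff arm} at a point of $Z$ can be taken inside $U$; this is automatic since $v_k(x_0)>0$ and $v_k$ is continuous, so that the ball $B_R(x_0)$ appearing in the proof of Lemma~\ref{prop: diff arm} (on which $v_k\ge c>0$) is already contained in $U$.
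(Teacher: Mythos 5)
Your proof is correct and matches the paper's: the paper also deduces this directly from \eqref{bas prop 1} together with Lemma~\ref{prop: diff arm}, by checking harmonicity locally near each point of $\{v_k>0\}$ according to whether the point lies in $\{v_i>0\}$, $\{v_j>0\}$, or on the interface. Your final remark correctly identifies the only minor point to verify, and your argument handles it properly.
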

\begin{proof}
It is a direct consequence of \eqref{bas prop 1} and Lemma \ref{prop: diff arm}.
\end{proof}

\begin{lemma}\label{cor: on N m12}
If $m(x_0)=2$, then $N(\mf{v},x_0,0^+)=0$.
\end{lemma}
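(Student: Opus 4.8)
The plan is to combine Lemma \ref{prop: diff arm} with Proposition \ref{prop: alm seg} (or directly with a local computation of $N$ at a regular point of a harmonic function). If $m(x_0)=2$, say $v_1(x_0)=v_2(x_0)=0$ and $v_3(x_0)>0$, then by Lemma \ref{prop: diff arm} the function $w:=v_1-v_2$ is harmonic in a ball $B_R(x_0)$, with $w(x_0)=0$, $v_1=w_+$, $v_2=w_-$, and moreover $v_3$ is harmonic in $B_R(x_0)$ (shrinking $R$ so that $v_3>0$ there). Hence in $B_R(x_0)$ we have $\sum_i |\nabla v_i|^2 = |\nabla w|^2 + |\nabla v_3|^2$ a.e.\ (the cross term $\nabla v_1\cdot\nabla v_2$ vanishes a.e.\ since $v_1 v_2\equiv 0$), and $\sum_i v_i^2 = w^2 + v_3^2$. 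Therefore, writing $h:=(w,v_3)$, which is a $\R^2$-valued harmonic function on $B_R(x_0)$, we get $E(\mf{v},x_0,r)=E_{\mathrm{harm}}(h,x_0,r)$ and $H(\mf{v},x_0,r)=H_{\mathrm{harm}}(h,x_0,r)$, so $N(\mf{v},x_0,r)$ coincides with the classical Almgren frequency of the harmonic function $h$.

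The key step is then the observation that $h(x_0)=(0,v_3(x_0))\neq 0$, so $h$ does not vanish at $x_0$. By continuity $|h|\ge c>0$ on a smaller ball, hence $H(\mf{v},x_0,r)\ge c^2 |S_r|/r^{N-1}$ stays bounded away from $0$ as $r\to 0^+$, while $E(\mf{v},x_0,r)=\frac{1}{r^{N-2}}\int_{B_r(x_0)}|\nabla h|^2\,dx \to 0$ as $r\to 0^+$ since $|\nabla h|\in L^2_{\loc}$ (indeed $h$ is smooth). Consequently $N(\mf{v},x_0,0^+)=\lim_{r\to 0^+} E/H = 0$. Alternatively, one may invoke the standard fact that the Almgren frequency of a harmonic function at a point where it does not vanish equals its vanishing order, which is $0$; but the direct $E\to 0$, $H\not\to 0$ argument is self-contained given Proposition \ref{prop: alm seg}, which guarantees that $H(\mf{v},x_0,\cdot)>0$ and that $N$ is well defined and monotone, so the limit $N(\mf{v},x_0,0^+)$ exists.

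I do not expect any serious obstacle here: the only point requiring a little care is the identity $\sum_i|\nabla v_i|^2 = |\nabla w|^2+|\nabla v_3|^2$, which uses $\nabla v_1\cdot\nabla v_2=0$ a.e.\ (a consequence of $v_1 v_2\equiv 0$, already used in the proof of Lemma \ref{prop: diff arm}) together with $|\nabla w|^2=|\nabla v_1|^2+|\nabla v_2|^2$ a.e.\ from $w=v_1-v_2$ with disjoint supports; and the elementary fact that $r^{2-N}\int_{B_r(x_0)}|\nabla h|^2\to 0$, which is immediate from the smoothness (or merely $H^1$-regularity) of $h$ near $x_0$. Everything else is a matter of assembling these pieces and passing to the limit $r\to 0^+$ using the monotonicity from Proposition \ref{prop: alm seg}.
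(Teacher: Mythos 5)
Your proof is correct and follows essentially the same route as the paper: use Lemma \ref{prop: diff arm} to write the triplet near $x_0$ in terms of the harmonic function $w=v_1-v_2$ together with the harmonic $v_3$, then observe that $E(\mf{v},x_0,r)\to 0$ while $H(\mf{v},x_0,r)$ stays bounded away from zero. The only minor caveat is your parenthetical that ``merely $H^1$-regularity'' would suffice for $r^{2-N}\int_{B_r}|\nabla h|^2\to 0$ --- that is false in general for $N\ge 5$ --- but you correctly rely on smoothness of the harmonic $h$, so this does not affect the argument.
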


\begin{proof}
For concreteness, suppose that $v_3(x_0) > 0$. By Lemma \ref{prop: diff arm}, we have that $w = v_1-v_2$ is harmonic in a ball $B_R(x_0)$, and the same holds for $v_3$. Thus, both $v_3$ and $w$ are of class $C^1$ in a neighborhood of $x_0$, and the mean value theorem for integrals implies that
\[
\begin{split}
E(\mf{v},x_0,r) &= \frac{1}{r^{N-2}} \int_{B_r(x_0)} \left(|\nabla v_3|^2+ |\nabla w|^2\right)\,dx \\
&= C r^2\left( |\nabla v_3(x_0)|^2+ |\nabla w(x_0)|^2 + o(1)\right) \to 0 
\end{split}
\]
as $r \to 0^+$. On the other hand $H(\mf{v},x_0,r) \to C v_3^2(x_0)>0$.
\end{proof}

\begin{remark}\label{rem: on N}
If $m(x_0)=1$, then one component vanishes identically in a neighborhood of $x_0$, and the remaining ones are positive harmonic functions in such neighborhood. Thus, by direct computations as in the previous proof we have that $N(\mf{v},x_0,0^+)=0$. Therefore, $N(\mf{v},x_0,0^+)=0$ whenever the multiplicity of $x_0$ is smaller than $3$.
\end{remark}

\begin{lemma}\label{lem: 3 closed}
The function $x_0 \in \Gamma_{\mf{v}} \mapsto N(\mf{v},x_0,0^+)$ is upper semi-continuous, and $\Gamma_{\mf{v}}^3$ is a relatively closed set of $\Gamma_{\mf{v}}$.
\end{lemma}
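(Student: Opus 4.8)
The plan is to prove the two assertions separately, both relying on the Almgren monotonicity formula (Proposition~\ref{prop: alm seg}) together with the behavior of the frequency established in Lemmas~\ref{lem: N >=} and~\ref{cor: on N m12} (see also Remark~\ref{rem: on N}). For the upper semicontinuity of $x_0 \mapsto N(\mf{v},x_0,0^+)$, the standard mechanism applies: for fixed $r$, the map $x_0 \mapsto N(\mf{v},x_0,r)$ is continuous, because $E(\mf{v},x_0,r)$ and $H(\mf{v},x_0,r)$ depend continuously on $x_0$ (the former by dominated convergence on the ball, using $\mf{v} \in H^1_{\loc}$; the latter by continuity of $\mf{v}$ and of the trace on shifting spheres, recalling from Proposition~\ref{prop: alm seg} that $H(\mf{v},x_0,r)>0$ for every $x_0\in\Omega$, so no vanishing-denominator issue arises). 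Then, since by monotonicity $N(\mf{v},x_0,0^+) = \inf_{r>0} N(\mf{v},x_0,r)$, the function $x_0 \mapsto N(\mf{v},x_0,0^+)$ is an infimum of continuous functions, hence upper semicontinuous.

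For the closedness of $\Gamma_{\mf{v}}^3$ in $\Gamma_{\mf{v}}$, I would combine the semicontinuity just proved with the frequency gap. Let $x_n \in \Gamma_{\mf{v}}^3$ with $x_n \to x_0 \in \Gamma_{\mf{v}}$; I must show $x_0 \in \Gamma_{\mf{v}}^3$, i.e. $m(x_0)=3$. By Lemma~\ref{lem: N >=}, $N(\mf{v},x_n,0^+) \ge \bar\nu$ for every $n$, and by upper semicontinuity $N(\mf{v},x_0,0^+) \ge \limsup_n N(\mf{v},x_n,0^+) \ge \bar\nu > 0$. On the other hand, $x_0 \in \Gamma_{\mf{v}}$ means $m(x_0) \in \{2,3\}$; by Lemma~\ref{cor: on N m12} (and Remark~\ref{rem: on N}), if $m(x_0) \le 2$ then $N(\mf{v},x_0,0^+) = 0$, a contradiction. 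Hence $m(x_0) = 3$, so $x_0 \in \Gamma_{\mf{v}}^3$, which proves that $\Gamma_{\mf{v}}^3$ is relatively closed in $\Gamma_{\mf{v}}$.

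The only point requiring genuine care is the continuity of $N(\mf{v},\cdot,r)$ at fixed $r$, and in particular the continuity of the surface integral $H(\mf{v},x_0,r) = r^{1-N}\int_{S_r(x_0)} \sum_j v_j^2\,d\sigma$ with respect to $x_0$. I would handle this by a change of variables that maps $S_r(x_0)$ to $S_r(x_0')$ for $x_0'$ near $x_0$, reducing to continuity of $x_0 \mapsto \int_{S_r(0)} \sum_j v_j(x_0+r\omega)^2\,d\sigma(\omega)$, which follows from continuity of $\mf{v}$ and dominated convergence on the fixed sphere $S_r(0)$ (using local boundedness of $\mf{v}$, valid since $\mf{v}$ is continuous); the continuity of $E$ is simpler, being an integral over a full ball of the $H^1_{\loc}$ function $\mf{v}$. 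I do not expect any real obstacle here — this is the routine argument used in all Almgren-frequency settings (cf.\ the references \cite{BeTeWaWe, ST15, NTTV10} already invoked in the excerpt) — so the proof is essentially an assembly of the ingredients already at hand.
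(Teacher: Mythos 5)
Your argument is correct and follows essentially the same route as the paper: upper semicontinuity of $N(\mf{v},\cdot,0^+)$ as an infimum of continuous functions (for which you correctly verify the continuity of $H$ and $E$ at fixed $r$ and note $H>0$), followed by the frequency gap from Lemma~\ref{lem: N >=} together with Lemma~\ref{cor: on N m12}/Remark~\ref{rem: on N} to conclude closedness of $\Gamma_{\mf{v}}^3$ in $\Gamma_{\mf{v}}$. The paper's proof is just a terser version of exactly this.
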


\begin{proof}
The value $N(\mf{v},x_0,0^+)$ is upper semi-continuous with respect to $x_0$, since $N(\mf{v},x_0,0^+)$ is defined as the infimum of continuous functions. Thanks to Lemma \ref{lem: N >=} and Remark \ref{rem: on N}, the upper semi-continuity gives the rest of the thesis.
\end{proof}

Next, we establish that if two components are segregated in a ball, then the third one is harmonic in a smaller ball.

\begin{lemma}\label{lem: no |x|}
Let $\mf{v} \in \mathcal{L}_{\loc}(\Omega)$. Let $x_0 \in \Gamma_{\mf{v}}$ be such that two components, say $v_1, v_2$, satisfy $v_1\,v_2 \equiv 0$ in $B_R(x_0)$. Then $v_3$ is harmonic in $B_{R/2}$.
\end{lemma}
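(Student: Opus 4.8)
The statement says: if $v_1 v_2 \equiv 0$ in $B_R(x_0)$, then $v_3$ is harmonic in $B_{R/2}(x_0)$. The strategy is to use the minimality of the approximating sequence $\{\mf{v}_n\}$, exactly in the spirit of the proof of Lemma \ref{prop: diff arm}: we want to show that the \emph{distributional} Laplacian of $v_3$ is zero on $B_{R/2}(x_0)$, i.e. $\int_{B} \nabla v_3 \cdot \nabla \varphi\,dx = 0$ for every $\varphi \in C^1_c(B_{R/2}(x_0))$. Since $v_3$ is subharmonic (by \eqref{bas prop 1}, $v_3$ is harmonic where positive, hence subharmonic everywhere), we automatically have $\int \nabla v_3 \cdot \nabla \varphi \le 0$ for $\varphi \ge 0$; so the real content is the reverse inequality, $\int_{B_{R/2}(x_0)} \nabla v_3 \cdot \nabla \varphi\,dx \ge 0$ for all nonnegative $\varphi \in C^1_c(B_{R/2}(x_0))$, which is equivalent to $v_3$ being superharmonic, hence harmonic.

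\textbf{Key steps.} First I would suppose, by contradiction, that there is a nonnegative $\varphi \in C^1_c(B_{R/2}(x_0))$ with $\int_{B} \nabla v_3 \cdot \nabla \varphi\,dx < 0$, and note that, for $\bar t>0$ small,
\[
\int_{B_\rho(x_0)} \left(|\nabla(v_3 + \bar t \varphi)|^2 - |\nabla v_3|^2\right) dx =: -2\delta < 0,
\]
where $\supp\varphi = B_\rho(x_0)$ with $\rho < R/2$. The second step is to build a competitor for $\mf{v}_n$ on a ball $B=B_{R'}(x_0)$ with $\rho < R' < R$, using a cutoff $\eta$ equal to $1$ near $S_{R'}(x_0)$ and $0$ on $B_\rho(x_0)$:
\[
\tilde{\mf{v}}_n := \eta\,\mf{v}_n + (1-\eta)\left(v_{1,n}, v_{2,n}, v_{3,n} + \bar t \varphi\right),
\]
so that $\tilde{\mf{v}}_n = \mf{v}_n$ on $S_{R'}(x_0)$ and minimality gives $J_{M_n}(\mf{v}_n,B) \le J_{M_n}(\tilde{\mf{v}}_n,B)$. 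The third step is to estimate the difference $J_{M_n}(\tilde{\mf{v}}_n,B) - J_{M_n}(\mf{v}_n,B)$: on the transition annulus $B_{R'}(x_0)\setminus B_\rho(x_0)$ the first two components are unchanged while $\tilde v_{3,n} = \eta v_{3,n} + (1-\eta) v_{3,n} = v_{3,n}$ — wait, there $\varphi \equiv 0$ so actually $\tilde v_{3,n} = v_{3,n}$ too, hence the \emph{entire} transition annulus contributes exactly zero to the gradient terms and zero to the interaction term; thus the only contribution comes from $B_\rho(x_0)$, where $\tilde v_{1,n}=0=\tilde v_{2,n}$ fail but in fact on $\supp\varphi$ we keep $v_{1,n},v_{2,n}$ and only perturb $v_{3,n}$. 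There the interaction term satisfies $M_n\prod \tilde v_{j,n}^2 = M_n v_{1,n}^2 v_{2,n}^2 (v_{3,n}+\bar t\varphi)^2$, which need not be $\le M_n\prod v_{j,n}^2$; this is the point to handle.

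\textbf{The main obstacle.} Controlling the penalization term on $B_\rho(x_0)$ is the delicate step: the naive competitor increases $v_{3,n}$ and hence could increase $M_n v_{1,n}^2 v_{2,n}^2 (v_{3,n}+\bar t\varphi)^2$. The fix is to exploit that $v_1 v_2 \equiv 0$ on $B_R(x_0)$ together with the strong convergence $\mf{v}_n \to \mf{v}$ and property ($iii$) of Definition \ref{def: L}, $\int M_n \prod v_{j,n}^2 \to 0$. More precisely, I expect one needs to refine the competitor so that on $B_\rho(x_0)$ it simultaneously drives down $v_{1,n} v_{2,n}$ (mimicking the limit segregation) and adds $\bar t\varphi$ to the third component; a clean way is to first replace $(v_{1,n},v_{2,n})$ near $\supp\varphi$ by something with $v_{1,n}' v_{2,n}' \to 0$ in $L^2$ faster — e.g. use that $\int_{B_\rho} M_n v_{1,n}^2 v_{2,n}^2\,dx \le \int_B M_n \prod v_{j,n}^2\,dx / (\min v_{3,n})^2 = o(1)$ since $v_3 \ge c>0$ there isn't available unless $v_3(x_0)>0$, which may fail. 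Absent a positive third component, one instead bounds $M_n v_{1,n}^2 v_{2,n}^2 (v_{3,n}+\bar t\varphi)^2 \le (\|v_{3,n}\|_\infty + \bar t\|\varphi\|_\infty)^2 M_n v_{1,n}^2 v_{2,n}^2$ and uses $\|\mf{v}_n\|_\infty \le C$ (h1)-type bound together with $\int_{B_\rho} M_n v_{1,n}^2 v_{2,n}^2\,dx \le \int_B M_n\prod v_{j,n}^2\,dx \cdot$ (nothing) — here the missing factor $v_{3,n}^2$ is the issue, so one actually argues: $M_n v_{1,n}^2 v_{2,n}^2 (v_{3,n}+\bar t\varphi)^2 - M_n v_{1,n}^2 v_{2,n}^2 v_{3,n}^2 = M_n v_{1,n}^2 v_{2,n}^2 (2\bar t\varphi v_{3,n} + \bar t^2\varphi^2)$, and now estimating the right-hand side requires $\int_{B_\rho} M_n v_{1,n}^2 v_{2,n}^2\,dx = o(1)$, which follows because $v_{1,n} v_{2,n} \to v_1 v_2 = 0$ uniformly on $\overline{B_\rho(x_0)}$ and a Caccioppoli/elliptic-estimate argument (as in \cite{ST24p1}) bounds $M_n \int_{B_\rho} v_{1,n}^2 v_{2,n}^2 \prod$-type quantities. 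Granting this, the penalization perturbation is $o(1)$, and collecting everything yields $J_{M_n}(\tilde{\mf{v}}_n,B) - J_{M_n}(\mf{v}_n,B) \le o(1) - 2\delta < -\delta < 0$ for large $n$, contradicting minimality. Hence $v_3$ is superharmonic on $B_{R/2}(x_0)$, and being also subharmonic, it is harmonic there.
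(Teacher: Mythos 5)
Your plan (use minimality of the approximating sequence) is the right framework, and you correctly identify that the sole difficulty is controlling the penalization term on $\supp\varphi$. But the resolution you propose for this ``main obstacle'' does not work, and the gap is genuine.

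Specifically, you need $\int_{B_\rho} M_n v_{1,n}^2 v_{2,n}^2\,dx = o(1)$. What Definition~\ref{def: L}($iii$) gives is $\int M_n \prod_{j=1}^3 v_{j,n}^2\,dx \to 0$, i.e.\ \emph{with} the extra factor $v_{3,n}^2$. The two are not comparable unless $v_3$ is bounded below on $\supp\varphi$, which you yourself note may fail (and indeed the interesting case is precisely $v_3(x_0)=0$, i.e.\ $x_0\in\Gamma_{\mf{v}}^3$, since otherwise $v_3$ is already harmonic near $x_0$). Uniform convergence $v_{1,n}v_{2,n}\to 0$ alone does not imply that the $M_n$-weighted integral vanishes, since $M_n\to+\infty$ can outpace it; and the ``Caccioppoli/elliptic estimates as in \cite{ST24p1}'' control quantities involving the full triple product, not the pairwise product $M_n v_{1,n}^2 v_{2,n}^2$. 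So your argument is left hanging at exactly the point where you ``grant'' the estimate.

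The paper circumvents this entirely with a more structural competitor: instead of leaving $(v_{1,n},v_{2,n})$ untouched and pushing $v_{3,n}$ up by $\bar t\varphi$, it replaces $(v_{1,n},v_{2,n})$ inside $B_{R/2}$ by $((v_{1,n}-v_{2,n})_+,(v_{1,n}-v_{2,n})_-)$ (with a cutoff transition on the annulus), and replaces $v_{3,n}$ by its \emph{harmonic extension} on $B_{R/2}$. This construction achieves two things at once: (a) $\tilde v_{1,n}\tilde v_{2,n}\equiv 0$ on $B_{R/2}$ and $\tilde v_{i,n}\le v_{i,n}$ elsewhere, so the penalization term is \emph{pointwise} dominated by the original and hence $\to 0$ with no need for the missing estimate; (b) the harmonic replacement strictly lowers the Dirichlet energy of the third component by a fixed $\delta>0$ whenever $v_3$ is not already harmonic, while the energy change in the first two components is $o(1)$ because $(v_{1,n}-v_{2,n})_\pm \to v_1, v_2$ strongly in $H^1$ (using $v_1v_2\equiv 0$ on $B_R$). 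This simultaneous modification of the first two components is the key idea missing from your proposal; without it, the penalization term is genuinely out of control.
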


\begin{proof}
Without loss, let $x_0=0$. Suppose by contradiction that $v_3$ is not harmonic in $B_{R/2}$. Let $\{\mf{v}_n\}$ and $M_n \to +\infty$ be given by Definition \ref{def: L}, and let 
\[
c_n:= J_{M_n}(\mf{v}_n, B_R) = \min \left\{ J_{M_n}(\mf{v}, B_R) : \mf{v} \in H^1(B_R, \R^3) \text{ such that } \mf{v}-\mf{v}_n \in H_0^1(B_R, \R^3)\right\}.
\]
We provide a competitor of ${\mf{v}_n}$ decreasing the energy, which is a contradiction. To this aim, let $\eta \in C^1(\overline{B_R})$ be a radial cut-off function with the following properties: $0 \le \eta \le 1$, $\eta$ is radially increasing, $\eta \equiv 0$ in $B_{R/2}$, $\eta =1$ on $S_{R}$. We set
\[
\tilde v_{1,n}:= \eta v_{1,n} + (1-\eta) (v_{1,n}-v_{2,n})_+, \quad \tilde v_{2,n}:= \eta v_{2,n}+ (1-\eta)(v_{1,n}-v_{2,n})_-, 
\]
and
\[
\tilde v_{3,n}:= \begin{cases} v_{3,n} & \text{in $B_R \setminus B_{R/2}$} \\
\text{harmonic extension of $v_{3,n}$} & \text{in $B_{R/2}$}.
\end{cases}
\]
Notice that, since $v_1\,v_2 \equiv 0$ in $B_R$ by assumption, $(v_{1,n}-v_{2,n})_+ \to v_1$ and $(v_{1,n}-v_{2,n})_- \to v_2$ strongly in $H^1(B_R)$. Therefore, $\tilde v_{1,n} \to v_1$ and $\tilde v_{2,n} \to v_2$ strongly in $H^1$ as well, and in particular
\beq\label{2051}
\Big| \int_{B_R}\left(|\nabla \tilde v_{1,n}|^2 - |\nabla v_{1,n}|^2\right)dx\Big|+\Big|  \int_{B_R}\left(|\nabla \tilde v_{2,n}|^2 - |\nabla v_{2,n}|^2\right)dx \Big|\to 0.
\eeq
Moreover, $\tilde v_{1,n} \le v_{1,n}$ and $\tilde v_{2,n} \le v_{2,n}$ in $B_R$, and $\tilde v_{1,n}\, \tilde v_{2,n} \equiv 0$ in $B_{R/2}$, whence 
\beq\label{2052}
0\le \tilde v_{1,n} \,\tilde v_{2,n} \, \tilde v_{3,n}  \le   v_{1,n} \, v_{2,n} \,  v_{3,n} \quad \implies \quad M_n \int_{B_R} \prod_{i=1}^3 \tilde v_{i,n}^2\,dx \to 0.
\eeq
And, finally, $\tilde v_{3,n} \to w_3$ strongly in $H^1(B_R)$, where $w_3$ is defined by
\[
w_{3}:= \begin{cases} v_{3} & \text{in $B_R \setminus B_{R/2}$} \\
\text{harmonic extension of $v_{3}$} & \text{in $B_{R/2}$}.
\end{cases}
\]
Since we are assuming that $v_3$ is not harmonic in $B_{R/2}$, from the fact that the Dirichlet integral of $w_3$ is strictly smaller than the one of $v_3$ we deduce that there exists $\delta>0$ such that
\beq\label{2053}
\int_{B_R} \left(|\nabla \tilde v_{3,n}|^2 - |\nabla v_{3,n}|^2\right)\,dx < - \delta
\eeq
for every $n$ large. To summarize, thanks to formulas \eqref{2051}-\eqref{2053}, we conclude that
\[
0<J_{M_n}(\tilde{\mf{v}}_n, B_R) -J_{M_n}(\mf{v}_n, B_R) <-\delta +o(1)
\]
as $n \to \infty$, a contradiction.
\end{proof}

\section{Blow-up analysis}\label{sec: blow-up}

In order to study the free boundary regularity for functions in $\mathcal{L}_{\loc}(\Omega)$, we use, as customary, local techniques based on a blow-up analysis. Let $x_0 \in \Gamma_{\mf{v}}^3$, namely $\mf{v}(x_0)=0$, let $\rho_m \to 0^+$, and let us introduce the \emph{blow-up sequence}
\[
\mf{v}_m(x):= \frac1{H(\mf{v},x_0,\rho_m)^{1/2}} \mf{v}(x_0+\rho_m x), \quad x \in \Omega_m:= \frac{\Omega-x_0}{\rho_m}.
\]
Notice that $\|\mf{v}_m\|_{L^2(S_1)}=1$ for every $m$, and that the sets $\Omega_m$ exhaust $\R^N$ as $m \to \infty$. In this section we study the asymptotic behavior of $\{\mf{v}_m\}$.

\begin{remark}\label{rem: on measures}
We know that $v_i$ is harmonic in $\{v_i>0\}$, and moreover $v_i$ is subharmonic in $\Omega$, since it is the limit in $H^1_{\loc}(\Omega)$ of subharmonic functions. Therefore, we can write
\[
\Delta v_i = \mu_i \quad \text{in $\Omega$}
\]
in the sense of distributions, where $\mu_i$ is a non-negative Radon measure supported on $\pa\{v_i>0\}$. 
\end{remark}

\begin{remark}\label{rem: bu m2}
Let $v_1(x_0)=0=v_2(x_0)$, with $v_3(x_0) = 0$. Then it is plain that $\mf{v}_m \to (0,0,c)$ for some positive constant $c>0$ as $m \to \infty$. Analogously, if $v_1(x_0)=0$ and $v_2(x_0), v_3(x_0) >0$, then $\mf{v}_m \to (0,c_1,c_2)$ for some $c_1,c_2>0$.
Therefore, blow-up limits of points with multiplicity $1$ and $2$ are well-understood.
\end{remark}

In what follows we focus on the case when $m(x_0) =3$. By scaling, we have
\beq\label{eq mis}
\Delta v_{i,m} = \mu_{i,m} \quad \text{and} \quad v_{1,m} \,v_{2,m}\, v_{3,m} \equiv 0 \quad \text{in } \Omega_m,
\eeq
where
\[
\mu_{i,m}(E) = \frac{1}{H(\mf{v},x_0,\rho_m)^{1/2} \rho_m^{N-2}} \mu_i(x_0+\rho_m E) \quad \text{for every Borel set $E \subset \R^N$},
\]
is a non-negative Radon measure supported on $\pa \{v_{i,m}>0\}$.

\begin{lemma}\label{lem: inv sc}
For each $m$ fixed, the function $\mf{v}_m \in \mathcal{L}_{\loc}(\Omega_m)$. 
\end{lemma}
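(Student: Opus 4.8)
The plan is to verify directly that the rescaled triplet $\mf{v}_m$ satisfies all three conditions of Definition~\ref{def: L} on any fixed $\omega \ssubset \Omega_m$, reducing everything to the corresponding structure for $\mf{v}$ on the ball $B_{\rho_m r}(x_0)$ for suitable $r$. The key observation is that the defining properties of $\mathcal{L}_{\loc}$ — being a limit of minimizers of penalized functionals, with strong $H^1$ and uniform convergence, and vanishing penalization mass — are all \emph{invariant under translation and parabolic-type scaling}, provided one rescales the penalization parameter appropriately. Concretely, if $\{\mf{w}_n\}$ with parameters $L_n\to+\infty$ realizes $\mf{v} \in \mathcal{L}(B_{\rho_m r}(x_0))$ in the sense of Definition~\ref{def: L}, then I would set
\[
\mf{w}_{n,m}(x) := \frac{1}{H(\mf{v},x_0,\rho_m)^{1/2}}\,\mf{w}_n(x_0 + \rho_m x), \qquad L_{n,m} := H(\mf{v},x_0,\rho_m)\,\rho_m^{2}\,L_n,
\]
and claim that $\{\mf{w}_{n,m}\}$ with parameters $L_{n,m}$ realizes $\mf{v}_m \in \mathcal{L}(\omega)$.

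The steps are as follows. First, compute how $J_L$ transforms under $\mf{w}\mapsto \lambda^{-1}\mf{w}(x_0+\rho x)$: by the change of variables $y = x_0 + \rho x$ one gets $\int |\nabla \mf{w}_{n,m}|^2\,dx = \lambda^{-2}\rho^{2-N}\int|\nabla\mf{w}_n|^2\,dy$ on the corresponding domain, and $L_{n,m}\int \prod w_{j,n,m}^2\,dx = (\lambda^2\rho^2 L_n)\cdot \lambda^{-6}\rho^{-N}\int\prod w_{j,n}^2\,dy = \lambda^{-2}\rho^{2-N} L_n\int\prod w_{j,n}^2\,dy$; with $\lambda = H(\mf{v},x_0,\rho_m)^{1/2}$ and $\rho = \rho_m$ this shows $J_{L_{n,m}}(\mf{w}_{n,m},\omega) = \lambda^{-2}\rho_m^{2-N}\,J_{L_n}(\mf{w}_n, x_0+\rho_m\omega)$, an overall constant multiple. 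Since a global positive constant multiple of the functional has the same minimizers over fixed-trace variations, condition~($i$) of Definition~\ref{def: L} for $\mf{w}_{n,m}$ on $\omega$ follows from the minimality of $\mf{w}_n$ on $x_0+\rho_m\omega \ssubset B_{\rho_m r}(x_0)$. Second, condition~($ii$): strong $H^1$ convergence is preserved because the rescaling is a fixed (in $n$) affine change of variables composed with a fixed scalar multiplication, so $\mf{w}_{n,m}\to\mf{v}_m$ strongly in $H^1(\omega)$ and uniformly on $\overline\omega$ directly from $\mf{w}_n\to\mf{v}$. Third, condition~($iii$): from the displayed identity, $L_{n,m}\int_\omega\prod w_{j,n,m}^2\,dx = \lambda^{-2}\rho_m^{2-N}\,L_n\int_{x_0+\rho_m\omega}\prod w_{j,n}^2\,dy \to 0$ because $\lambda,\rho_m$ are fixed constants once $m$ is fixed and the unrescaled integral tends to $0$. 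Finally, $L_{n,m}\to+\infty$ since $H(\mf{v},x_0,\rho_m)\rho_m^2 > 0$ is a fixed positive number (here $H>0$ by Proposition~\ref{prop: alm seg}) and $L_n\to+\infty$; and $\mf{v}_m\not\equiv 0$ since $\|\mf{v}_m\|_{L^2(S_1)}=1$. Assembling these gives $\mf{v}_m\in\mathcal{L}(\omega)$ for every $\omega\ssubset\Omega_m$, i.e. $\mf{v}_m\in\mathcal{L}_{\loc}(\Omega_m)$.

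The only genuinely delicate point — and the one I would want to state carefully rather than wave through — is the matching of the approximating sequences: Definition~\ref{def: L} asserts $\mf{v}\in\mathcal{L}(\omega)$ for every $\omega\ssubset\Omega$, but the sequence $\{\mf{w}_n\}$ and parameters $L_n$ a priori depend on $\omega$, so one must either fix a single exhausting sequence or simply apply the definition on the fixed ball $B_{\rho_m r}(x_0)$ (with $\rho_m r < \dist(x_0,\pa\Omega)$) that contains $x_0 + \rho_m\overline\omega$, and then pass to the subdomain $x_0+\rho_m\omega$ using that minimality on a domain restricts to minimality on subdomains with fixed trace. This is routine but should be spelled out. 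Everything else is bookkeeping with the scaling exponents, and no new analytic input beyond $H(\mf{v},x_0,\rho_m)>0$ (Proposition~\ref{prop: alm seg}) is needed. I would therefore present the proof as: "Fix $m$ and $\omega\ssubset\Omega_m$; choose $r$ with $x_0+\rho_m\overline\omega\subset B_{\rho_m r}(x_0)\ssubset\Omega$; apply Definition~\ref{def: L} on $B_{\rho_m r}(x_0)$; rescale the approximating sequence as above; check (i)–(iii) via the scaling identity for $J$."
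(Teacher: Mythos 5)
Your approach is essentially the paper's (rescale the approximating sequence, rescale the penalization parameter, transfer minimality and vanishing penalization), but there is a concrete arithmetic error in your scaling of the competition parameter. You set $L_{n,m} = H(\mf{v},x_0,\rho_m)\,\rho_m^2\,L_n = \lambda^2\rho_m^2 L_n$, but the correct choice is $L_{n,m} = \lambda^4\rho_m^2 L_n$. Indeed, under $u\mapsto \lambda^{-1}u(x_0+\rho\,\cdot)$ the penalization integral $\int\prod u_j^2$ picks up the factor $\lambda^{-6}\rho^{-N}$, while the Dirichlet term picks up $\lambda^{-2}\rho^{2-N}$; balancing them so that $J_{L_{n,m}}(\mf{w}_{n,m},\omega)$ is a fixed constant multiple of $J_{L_n}(\mf{w}_n,x_0+\rho_m\omega)$ forces
\[
L_{n,m}\,\lambda^{-6}\rho_m^{-N} = \lambda^{-2}\rho_m^{2-N}L_n \ \Longleftrightarrow\ L_{n,m} = \lambda^4\rho_m^2 L_n.
\]
In your inline computation you wrote $(\lambda^2\rho^2)\cdot\lambda^{-6}\rho^{-N} = \lambda^{-2}\rho^{2-N}$, but this product is in fact $\lambda^{-4}\rho^{2-N}$. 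As written, the penalization term scales by $\lambda^{-4}\rho_m^{2-N}$ while the gradient term scales by $\lambda^{-2}\rho_m^{2-N}$, so $J_{L_{n,m}}(\mf{w}_{n,m},\omega)$ is \emph{not} a constant multiple of $J_{L_n}(\mf{w}_n,x_0+\rho_m\omega)$ and the minimality transfer fails.

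With the corrected exponent the proof is exactly the paper's argument; the paper uses $M_{m,n}=\rho_m^2 H(\mf{v}_n,x_0,\rho_m)^2 M_n$, normalizing by $H(\mf{v}_n,\cdot)$ rather than your $H(\mf{v},\cdot)$, but these differ only by a factor converging to $1$ as $n\to\infty$, so that choice is immaterial. Your remaining points — that $L_{n,m}\to+\infty$ since $H(\mf{v},x_0,\rho_m)>0$ by Proposition~\ref{prop: alm seg}, that the rescaling preserves strong $H^1$ and uniform convergence, and that one fixes the approximating sequence on a single ball $B_{\rho_m r}(x_0)\ssubset\Omega$ containing $x_0+\rho_m\overline\omega$ and then restricts — are correct and match the paper's reasoning.
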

\begin{proof}
Let $\{\mf{v}_n\}$ and $M_n \to +\infty$ be the two sequences given by Definition \ref{def: L}. Defining 
\[
\mf{v}_{m,n}(x):= \frac{1}{H(\mf{v}_n,x_0,\rho_m)^{1/2}} \mf{v}_n(x_0+\rho_m x),
\]
it is clear that, for every $m$ fixed, $\mf{v}_{m,n} \to \mf{v}_m$ locally uniformly and in $H^1_{\loc}(\Omega_m)$, as $n \to \infty$. Each function $\mf{v}_{m,n}$ minimizes a scaled functional, with competition parameter 
\beq\label{def mmn}  
M_{m,n}:= \rho_m^2 H(\mf{v}_n, x_0,\rho_m)^2 M_n,
\eeq 
with respect to variations with compact support; we observe that $M_{m,n} \to +\infty$ as $n \to \infty$, for each $m$ fixed: indeed, $M_n \to +\infty$ and 
\[
\lim_{n \to \infty} H(\mf{v}_n, x_0,\rho_m) = H(\mf{v}, x_0,\rho_m)=: c_m >0 \quad \implies \quad H(\mf{v}_n, x_0,\rho_m) \ge \frac12 c_m 
\]
for large $n$, where we used the fact that $H(\mf{v},x_0,r)>0$ for every $r$. Finally, for every $\omega \ssubset \Omega_m$, by scaling 
\[
M_{m,n} \int_{\omega} \prod_{i=1}^3 v_{i,m,n}^2\,dx \le \frac{2 \rho_m^{2-N} M_n}{H(\mf{v},x_0,\rho_m)} \int_{x_0+\rho_m \omega} \prod_{i=1}^3 v_{i,n}^2\,dx,
\]
and the latter tends to $0$ as $n \to \infty$.
\end{proof}

The main result of this section is the following:

\begin{proposition}\label{prop: conv blow-up}
Let $x_0 \in \Gamma_{\mf{v}}^3$, and let $\gamma = N(\mf{v},x_0,0^+)$. As $m \to \infty$
\[
\mf{v}_m \to \mf{w} \quad \text{in $C^{0,\alpha}_{\loc}(\R^N)$ and strongly in $H^1_{\loc}(\R^N)$},
\]
for every $\alpha \in (0, \bar \nu)$, up to a subsequence, where $\mf{w} \in \mathcal{L}_{\loc}(\R^N)$ is $\gamma$-homogeneous with respect to $0$. Moreover, all the components of $\mf{w}$ are non-trivial.
\end{proposition}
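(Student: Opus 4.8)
The plan is to follow the standard blow-up scheme for problems with an Almgren monotonicity formula, adapting it to the class $\mathcal{L}_{\loc}$. First I would establish the uniform bounds. By Proposition~\ref{prop: alm seg} and Lemma~\ref{lem: N >=}, the frequency $r \mapsto N(\mf{v},x_0,r)$ is monotone non-decreasing with $N(\mf{v},x_0,0^+) = \gamma \ge \bar\nu > 0$; fix $R_0$ with $B_{R_0}(x_0) \subset \Omega$ and set $\Lambda := N(\mf{v},x_0,R_0) < \infty$. Integrating the logarithmic derivative \eqref{der H seg} and using the doubling Lemma~\ref{lem: doubling seg}, one controls $H(\mf{v},x_0,\rho_m r)/H(\mf{v},x_0,\rho_m)$ from above and below by powers of $r$ uniformly in $m$, on any fixed annulus; this gives $\|\mf{v}_m\|_{L^2(S_r)} \le C(r)$ for all $m$ and $r$ in a fixed range, hence, via the equation $\Delta v_{i,m} \ge 0$ (subharmonicity) and the harmonicity of $v_{i,m}$ where positive, a uniform $L^\infty_{\loc}$ bound and then, crucially, a uniform $C^{0,\alpha}_{\loc}$ bound for every $\alpha \in (0,\bar\nu)$ by applying the a priori H\"older estimate of \cite[Theorem 1.1]{ST24p1} (which is scale-invariant, and applies since $\mf{v}_m \in \mathcal{L}_{\loc}(\Omega_m)$ by Lemma~\ref{lem: inv sc}). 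A uniform $H^1_{\loc}$ bound follows from the Caccioppoli-type/energy control encoded in $E(\mf{v}_m,0,r) = N(\mf{v}_m,0,r) H(\mf{v}_m,0,r) \le \Lambda\, C(r)$.

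Second, I would pass to the limit. By Ascoli--Arzel\`a and weak $H^1_{\loc}$ compactness, along a subsequence $\mf{v}_m \to \mf{w}$ in $C^{0,\alpha}_{\loc}(\R^N)$ and weakly in $H^1_{\loc}$, with $\|\mf{w}\|_{L^2(S_1)} = 1$ so $\mf{w} \not\equiv 0$; the uniform convergence passes the partial segregation condition to the limit, $w_1 w_2 w_3 \equiv 0$. The key point is that the convergence is in fact \emph{strong} in $H^1_{\loc}$: this I would get by a diagonal argument, using that $\mf{v}_m \in \mathcal{L}_{\loc}(\Omega_m)$ via Lemma~\ref{lem: inv sc}, so there are sequences $\mf{v}_{m,n} \to \mf{v}_m$ (in $C^0$ and $H^1_{\loc}$) minimizing scaled functionals with parameters $M_{m,n} \to \infty$ and $M_{m,n}\int \prod v_{i,m,n}^2 \to 0$; extracting a suitable diagonal sequence $\mf{v}_{m,n(m)}$ produces approximating data realizing $\mf{w} \in \mathcal{L}_{\loc}(\R^N)$ directly from Definition~\ref{def: L} (on each $\omega \ssubset \R^N$), which in particular yields strong $H^1_{\loc}$ convergence of this diagonal sequence, hence of $\mf{v}_m$ by uniqueness of the limit together with the segregation $w_iw_j \equiv 0$ on common supports that kills the cross terms. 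Strong convergence then gives continuity of $E$, $H$, and $N$ along the blow-up: $N(\mf{w},0,r) = \lim_m N(\mf{v}_m,0,r) = \lim_m N(\mf{v},x_0,\rho_m r) = \gamma$ for every $r>0$, so by the rigidity part of Proposition~\ref{prop: alm seg}, $\mf{w}$ is $\gamma$-homogeneous about $0$.

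Third, the non-triviality of all components. Here Lemma~\ref{cor: comp 0 seg} does the job: if some $w_i \equiv 0$, then since $\mf{w} \in \mathcal{L}_{\loc}(\R^N)$ and $\mf{w} \not\equiv 0$ is $\gamma$-homogeneous with $\gamma = N(\mf{w},0,r) \ge \bar\nu > 0$, we get a contradiction, because Lemma~\ref{cor: comp 0 seg} forces $N(\mf{w},0,+\infty) = 0$ (equivalently $N \equiv 0$ by monotonicity) whenever a component vanishes identically, whereas here $N \equiv \gamma > 0$. Alternatively, and more robustly, one shows each $w_i \not\equiv 0$ by tracking the ``mass'': since $x_0 \in \Gamma_{\mf{v}}^3$, none of the $v_i$ vanishes in any neighborhood of $x_0$ (Lemma~\ref{lem: 3 int vuo} rules out $w$ having all components vanishing, and one argues componentwise), and a lower frequency/doubling bound for each component individually—combined with the normalization $H(\mf{v},x_0,\rho_m) = \sum_i \|v_{i,m}\|_{L^2(S_1)}^2 = 1$—prevents any single $\|v_{i,m}\|_{L^2(S_1)}^2$ from vanishing in the limit; this needs that each $v_i$ is genuinely present near $x_0$, which is where the hypothesis $m(x_0)=3$ (as opposed to merely $\mf{v}(x_0)=0$ with some component trivial nearby) is used.

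The main obstacle I expect is the \textbf{strong $H^1_{\loc}$ convergence} of the blow-up sequence: unlike the smooth systems \eqref{coex}, functions in $\mathcal{L}_{\loc}$ are not solutions of a single PDE with a compactness-friendly right-hand side, so one cannot simply test the equation; the argument must go through the defining approximation in Definition~\ref{def: L} and a careful diagonalization, making sure the scaled parameters $M_{m,n}$ and the residual masses behave (which Lemma~\ref{lem: inv sc} is designed to guarantee), and then using the absence of cross terms $\int \nabla w_i \cdot \nabla w_j$ on overlapping positivity sets—licit because $w_iw_j \equiv 0$ there after segregation—to upgrade weak to strong convergence. Once strong convergence is in hand, the homogeneity and non-triviality are comparatively routine consequences of the monotonicity formula and Lemma~\ref{cor: comp 0 seg}.
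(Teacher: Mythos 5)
Your outline correctly identifies the three ingredients (uniform bounds, passage to the limit with homogeneity, non-triviality via Lemma~\ref{cor: comp 0 seg}), and the non-triviality step is exactly the paper's. However, there are two points that deserve comment, one of which is a genuine gap.

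The genuine gap is the strong $H^1_{\loc}$ convergence. Your argument is circular: you propose to show $\mf{w}\in\mathcal{L}_{\loc}(\R^N)$ by producing a diagonal sequence $\mf{v}_{m,n(m)}$ that serves as the approximating data in Definition~\ref{def: L}, and then to read off strong $H^1$ convergence from item (ii) of that definition. But to verify that the diagonal sequence satisfies item (ii) you must already know it converges strongly in $H^1_{\loc}$ to $\mf{w}$, which is precisely what is to be proved. The paper avoids this by exploiting the structure of the PDE at the level of the limit: each $v_{i,m}$ is subharmonic, hence $\Delta v_{i,m}=\mu_{i,m}$ for a nonnegative Radon measure $\mu_{i,m}$ supported on $\pa\{v_{i,m}>0\}$, and Lemma~\ref{lem: bdd H1 mis} gives a uniform bound $\mu_{i,m}(B_R)\le C$. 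Testing the equations for $v_{i,m}$ and $w_i$ against $(v_{i,m}-w_i)\varphi$ and using the local uniform convergence yields $\int|\nabla(v_{i,m}-w_i)|^2\varphi\to 0$, hence strong $H^1_{\loc}$ convergence independently of any minimality structure. Only \emph{then} does the diagonal argument establish $\mf{w}\in\mathcal{L}_{\loc}(\R^N)$. Your fallback idea of ``killing cross terms because $w_i w_j\equiv 0$ on overlapping positivity sets'' is also not available here: partial segregation only gives $w_1w_2w_3\equiv 0$, not pairwise vanishing of products, so $\nabla w_i\cdot\nabla w_j$ does not vanish on common positivity sets.

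The other point is a difference in route rather than an error. To obtain uniform $C^{0,\alpha}_{\loc}$ bounds for the blow-up sequence you propose to invoke \cite[Theorem 1.1]{ST24p1} directly on the scaled approximating sequences from Lemma~\ref{lem: inv sc}. The paper instead proves a self-contained estimate (Lemma~\ref{lem: bu hol}) via a four-case analysis based on the distance from the H\"older-extremal pair $(x_m,y_m)$ to the set of triple points, using the decay estimate of Lemma~\ref{lem: decay1L} (which itself rests on the frequency lower bound $N\ge\bar\nu$ at triple points, Lemma~\ref{lem: N >=}, and the doubling Lemma~\ref{lem: doubling seg}). Your shortcut is plausible because the functional is scale-invariant, but it requires carefully checking that the constant in \cite[Theorem 1.1]{ST24p1} depends only on the $L^\infty$ bound and on $\dist(K,\pa\Omega)$ in a way that stays uniform as the blow-up domains $\Omega_m$ exhaust $\R^N$; the paper's direct argument sidesteps this verification and, moreover, produces the decay estimate (Lemma~\ref{lem: decay1L}) that is reused verbatim later to obtain the optimal $C^{0,3/4}$ regularity.
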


The proof is divided into a series of lemmas.

\begin{lemma}\label{lem: bdd H1 mis}
For every $R>0$, there exists $C>0$ such that $\|\mf{v}_m\|_{H^1(B_R)} \le C$, $\|\mf{v}_m\|_{L^\infty(B_R)} \le C$ and $\max_i \mu_{i,m}(B_R) \le C$ for every $m$.
\end{lemma}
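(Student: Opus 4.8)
The plan is to prove the three uniform bounds on the rescaled functions $\mf{v}_m$ by tracking how the Almgren frequency controls $L^2$ means on spheres, and then bootstrapping to $L^\infty$ and measure bounds via standard elliptic estimates. Recall that by construction $H(\mf{v}_m,0,1)=\|\mf{v}_m\|_{L^2(S_1)}^2=1$, and that $\mf{v}_m\in\mathcal{L}_{\loc}(\Omega_m)$ by Lemma~\ref{lem: inv sc}, so the Almgren monotonicity formula (Proposition~\ref{prop: alm seg}) and its consequences (Lemma~\ref{lem: doubling seg}) apply to each $\mf{v}_m$. Moreover, the frequency is scale-invariant: $N(\mf{v}_m,0,r)=N(\mf{v},x_0,\rho_m r)$, so by the monotonicity of $r\mapsto N(\mf{v},x_0,r)$ and $\gamma=N(\mf{v},x_0,0^+)$, for any fixed $R>0$ and all $m$ large we get the uniform bound $N(\mf{v}_m,0,r)\le N(\mf{v},x_0,\rho_m R)\le \gamma+1$ for all $r\in(0,R]$ (here I use that $\rho_m R\to0$).

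\textbf{Step 1: $H^1(B_R)$ bound.} Using the doubling estimate, part $(ii)$ of Lemma~\ref{lem: doubling seg}, applied with $\gamma$ replaced by $\gamma+1$ and $r_0=R$: for $0<r\le R$ we have $H(\mf{v}_m,0,r)\le R^{-2(\gamma+1)}H(\mf{v}_m,0,R)\,r^{2(\gamma+1)}$; conversely, integrating \eqref{der H seg} and using $N\ge0$ gives $H(\mf{v}_m,0,r)\le H(\mf{v}_m,0,R)$ for $r\le R$, and combined with $H(\mf{v}_m,0,1)=1$ and monotonicity of $H$ in $r$ one controls $H(\mf{v}_m,0,R)$ by a constant depending only on $R,\gamma$. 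Then $\int_{B_R}\sum_j v_{j,m}^2\,dx=\int_0^R H(\mf{v}_m,0,r)\,r^{N-1}\,dr$ is uniformly bounded, i.e. $\|\mf{v}_m\|_{L^2(B_R)}\le C$. For the gradient part, integrate the identity \eqref{H' e E seg}, namely $H'=\frac2r E$, over $r\in(0,2R)$ to get $\int_{B_{2R}}\sum_j|\nabla v_{j,m}|^2\,dx = \int_0^{2R} E(\mf{v}_m,0,r)r^{N-2}\,dr$, and bound $E(\mf{v}_m,0,r)=N(\mf{v}_m,0,r)H(\mf{v}_m,0,r)\le(\gamma+1)\cdot C$; this yields $\|\nabla\mf{v}_m\|_{L^2(B_R)}\le C$. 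Hence $\|\mf{v}_m\|_{H^1(B_R)}\le C$.

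\textbf{Step 2: $L^\infty$ bound.} Each $v_{i,m}$ is subharmonic in $\Omega_m\supset B_{2R}$ (being an $H^1_{\loc}$-limit of subharmonic functions, as in Remark~\ref{rem: on measures}). For subharmonic functions the local maximum estimate gives $\sup_{B_R}v_{i,m}\le C(N,R)\|v_{i,m}\|_{L^2(B_{2R})}$, which is uniformly bounded by Step~1. Thus $\|\mf{v}_m\|_{L^\infty(B_R)}\le C$.

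\textbf{Step 3: measure bound.} From $\Delta v_{i,m}=\mu_{i,m}\ge0$ in $\Omega_m$, choose a cut-off $\eta\in C_c^\infty(B_{2R})$ with $0\le\eta\le1$, $\eta\equiv1$ on $B_R$ and $\|\Delta\eta\|_\infty\le C(N,R)$. Testing the distributional equation against $\eta$ gives $\mu_{i,m}(B_R)\le\int\eta\,d\mu_{i,m}=\int_{B_{2R}} v_{i,m}\,\Delta\eta\,dx\le\|v_{i,m}\|_{L^1(B_{2R})}\|\Delta\eta\|_\infty\le C$, using the $L^1$ (hence $L^2$ or $L^\infty$) bound from Steps~1--2. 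Therefore $\max_i\mu_{i,m}(B_R)\le C$, completing the proof. The only delicate point is guaranteeing the uniform frequency bound $N(\mf{v}_m,0,r)\le\gamma+1$ on a full interval $(0,R]$ rather than just at a single scale; this is exactly where the monotonicity of the Almgren quotient and the fact $\rho_m\to0^+$ are used, and it is the step that requires care.
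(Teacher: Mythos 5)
Your strategy (frequency bound via scale invariance, then Almgren doubling for $H^1$, subharmonicity for $L^\infty$, and testing the distributional equation for the measure bound) matches the paper's, and Steps~2 and~3 are correct; Step~3 tests against $\Delta\eta$ where the paper tests against $\nabla\varphi$ and uses the gradient bound, but both are fine. Step~1, however, contains two genuine errors.

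First, the displayed doubling inequality has the wrong direction. Applying Lemma~\ref{lem: doubling seg}($ii$) with $r_1=r<r_2=R$ gives a \emph{lower} bound $H(\mf{v}_m,0,r)\ge (r/R)^{2(\gamma+1)}H(\mf{v}_m,0,R)$, not the upper bound you wrote. Relatedly, the phrase ``monotonicity of $H$ in $r$ \dots\ controls $H(\mf{v}_m,0,R)$'' does not close the argument: since $H(\mf{v}_m,0,\cdot)$ is nondecreasing and $H(\mf{v}_m,0,1)=1$, monotonicity only yields $H(\mf{v}_m,0,R)\ge 1$ for $R>1$. The uniform \emph{upper} bound on $H(\mf{v}_m,0,R)$ is obtained precisely by applying Lemma~\ref{lem: doubling seg}($ii$) with $r_1=1<r_2=R$ (valid because $N(\mf{v}_m,0,R)\le\gamma+1$), giving $H(\mf{v}_m,0,R)\le R^{2(\gamma+1)}$; this is exactly the paper's inequality \eqref{H m bdd}. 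Combined with nondecreasing $H$, this does give $H(\mf{v}_m,0,r)\le R^{2(\gamma+1)}$ for all $r\le R$ and hence the $L^2(B_R)$ bound you want.

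Second, the gradient formula is incorrect: $\int_{B_{2R}}\sum_j|\nabla v_{j,m}|^2\,dx$ is \emph{not} $\int_0^{2R}E(\mf{v}_m,0,r)r^{N-2}\,dr$. Integrating $H'=\tfrac{2}{r}E$ produces $H(\mf{v}_m,0,2R)-H(\mf{v}_m,0,0^+)=\int_0^{2R}\tfrac{2}{r}E(\mf{v}_m,0,r)\,dr$, which is not the quantity you need. There is no integration over scales to be done here: by the very definition of $E$, one has $\int_{B_R}\sum_j|\nabla v_{j,m}|^2\,dx=R^{N-2}E(\mf{v}_m,0,R)=R^{N-2}N(\mf{v}_m,0,R)H(\mf{v}_m,0,R)\le(\gamma+1)R^{N-2+2(\gamma+1)}$, which is what the paper uses in \eqref{grad m bdd}. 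With these two fixes Step~1 is correct and the proof coincides with the paper's.
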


\begin{proof}
Let $2R_0:= \dist(x_0,\pa \Omega)$ and $R>1$. By Proposition \ref{prop: alm seg}, for $m$ so large such that $\rho_m R \le R_0$ we have that
\[
N(\mf{v}_m,0,R) = N(\mf{v},x_0,\rho_m R) \le N(\mf{v},x_0,R_0)=: \gamma' <+\infty.
\]
Therefore, by Lemma \ref{lem: doubling seg},  
\beq\label{H m bdd} 
\frac{1}{R^{N-1}} \int_{S_R} \sum_i v_{i,m}^2\,d\sigma = H(\mf{v}_m,0,R) \le H(\mf{v}_m,0,1) R^{2\gamma'} =R^{2\gamma'},
\eeq
and moreover
\beq\label{grad m bdd}
\frac{1}{R^{N-2}} \int_{B_R} \sum_i |\nabla v_{i,m}|^2\,dx = N(\mf{v}_m,0,R) H(\mf{v}_m,0,R) \le \gamma' R^{2\gamma'}.
\eeq 
Estimates \eqref{H m bdd} and \eqref{grad m bdd} ensure that $\{\mf{v}_m\}$ is bounded in $H^1(B_R)$, as claimed. As $R>1$ was arbitrarily chosen, we deduce that in fact $\{\mf{v}_m\}$ is bounded in $H^1_{\loc}(\R^N)$. Furthermore, since each $v_{i,m}$ is subharmonic, $\{\mf{v}_m\}$ is also bounded in $L^\infty_{\loc}(\R^N)$.

Finally, let $i \in \{1,2,3\}$. We test the equation for $v_{i,m}$ with a cut-off function $\varphi \in C^\infty_c(B_{2R})$ such that $0\le \varphi \le 1$, and $\varphi \equiv 1$ in $B_R$. We obtain 
\[
\mu_{i,m}(B_R) \le \int_{B_{2R}} \varphi \,d\mu_{i,m} = - \int_{B_{2R}} \nabla v_{i,m} \cdot \nabla \varphi\,dx \le C \|\nabla v_{i,m}\|_{L^2(B_{2R})} \le C,
\]
since $\{\mf{v}_m\}$ is bounded in $H^1(B_{2R})$.
\end{proof}

The lemma implies that, up to a subsequence, there exist $\mf{w} \in H^1_{\loc}(\R^N)$ and a nonnegative Radon measure $\bar \mu_i$ such that: $\mf{v}_m \rightharpoonup \mf{w}$ weakly in $H^1_{\loc}(\R^N)$, $\mu_{i,m} \overset{\ast}{\rightharpoonup} \bar \mu_i$ weakly star in the sense of measures, and moreover
\[
\Delta w_i = \bar \mu_i, \quad w_i \ge 0, \quad w_1\,w_2\,w_3 \equiv 0 \quad \text{a.e. in $\R^n$}.
\]
The next step consists in showing that, up to a further subsequence, $\mf{v}_m \to \mf{w}$ locally uniformly. 

\begin{lemma}\label{lem: decay1L}
Let $\mf{v} \in \mathcal{L}_{\loc}(\Omega)$, $\alpha \in (0,\bar \nu)$, and $R>0$. There exists $C>0$ and $\bar m \in \N$ such that
\beq\label{1350}
\frac1{r^{N-1}} \int_{S_r(x_0)} \sum_i v_{i,m}^2\,d\sigma \le C r^{2\alpha} 
\eeq
for every $x_0 \in \Gamma_{\mf{v}_m}^3 \cap B_R(0)$, $r \in (0, R)$, and for $m>\bar m$.
\end{lemma}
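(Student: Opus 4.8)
The goal is a uniform (in $m$) decay estimate $H(\mf{v}_m, x_0, r) \le C r^{2\alpha}$ valid at triple points $x_0$ of the blow-up functions, for $r$ up to a fixed scale $R$. The natural strategy is to transfer the estimate back to the original function $\mf{v}$ and argue by contradiction, exploiting that triple points of $\mf{v}_m$ correspond (after undoing the scaling) to triple points of $\mf{v}$ that converge to $x_0 = 0 \in \Gamma_{\mf{v}}^3$, where we already have a lower bound on the Almgren frequency from Lemma~\ref{lem: N >=}. First I would unwind the definitions: a point $y \in \Gamma_{\mf{v}_m}^3$ corresponds to $z = x_0 + \rho_m y \in \Gamma_{\mf{v}}^3$, and by the scaling invariance of the Almgren quotient (Proposition~\ref{prop: alm seg}, as used in Lemma~\ref{lem: inv sc} and Lemma~\ref{lem: bdd H1 mis}) one has $N(\mf{v}_m, y, r) = N(\mf{v}, z, \rho_m r)$ and a corresponding proportionality for $H$. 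So it suffices to prove: there exist $C > 0$ and $\bar r > 0$ such that $H(\mf{v}, z, s) \le C s^{2\alpha}$ for every $z \in \Gamma_{\mf{v}}^3$ with $|z - x_0| \le \bar r$ and every $s \in (0, \bar r)$, i.e.\ a \emph{locally uniform} (in the base point, over triple points near $x_0$) version of the pointwise bound $H(\mf{v}, x_0, s) \le C s^{2\alpha}$ used in the proof of Lemma~\ref{lem: N >=}.

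The heart of the matter is therefore the following statement about $\mf{v}$ itself: near a triple point $x_0$, the $C^{0,\alpha}$ decay $H(\mf{v}, z, s) \le C s^{2\alpha}$ holds at \emph{every} triple point $z$ nearby, with a single constant $C$. Here is how I would get it. By upper semicontinuity of $z \mapsto N(\mf{v}, z, 0^+)$ (Lemma~\ref{lem: 3 closed}) this is \emph{not} immediately a uniform lower frequency bound issue — rather, the point is the reverse inequality: I want a uniform \emph{upper} control on $H$, which via Lemma~\ref{lem: doubling seg}(i) would follow from a uniform \emph{lower} bound on the frequency $N(\mf{v}, z, s) \ge \alpha$ for all $z \in \Gamma_{\mf{v}}^3$ near $x_0$ and all small $s$. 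Wait — that is the wrong direction; a lower frequency bound gives a lower bound on $H/s^{2\alpha}$ via doubling (i), while an \emph{upper} frequency bound $N(\mf{v}, z, s) \le \alpha$ on a range $(0, \bar r]$ is what yields $H(\mf{v}, z, s) \le C s^{2\alpha}$ via Lemma~\ref{lem: doubling seg}(ii). So what I really need is: $N(\mf{v}, z, s) \le \alpha$ for all $z \in \Gamma_{\mf{v}}^3 \cap B_{\bar r}(x_0)$ and $s \in (0, \bar r]$ — but this is false in general (the frequency at a triple point can exceed $\bar\nu > \alpha$ is impossible since $\bar\nu$ is the Hölder exponent, but it can be close to $\bar\nu$). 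The correct reading: since $\mf{v} \in C^{0,\alpha}(\Omega)$ with a locally uniform Hölder seminorm, and $\mf{v}(z) = 0$ at triple points, one directly has $\sum_i v_i(x)^2 \le C|x - z|^{2\alpha}$ for $x$ near $z$, hence $H(\mf{v}, z, s) = s^{-(N-1)} \int_{S_s(z)} \sum_i v_i^2 \, d\sigma \le C s^{2\alpha}$ — with $C$ depending only on $[\mf{v}]_{C^{0,\alpha}}$ on a fixed neighborhood of $x_0$, hence uniform over $z$. That is the whole content for $\mf{v}$; no frequency argument is needed at this level.

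It remains to transfer this to $\mf{v}_m$ uniformly in $m$. Given $\alpha \in (0,\bar\nu)$ and $R > 0$, fix a ball $B_{2R_0}(x_0) \ssubset \Omega$ and let $L := [\mf{v}]_{C^{0,\alpha}(\overline{B_{R_0}(x_0)})} < \infty$. For $m$ large so that $\rho_m R \le R_0$ and $\rho_m R \le \bar r$: if $y \in \Gamma_{\mf{v}_m}^3 \cap B_R(0)$ then $z := x_0 + \rho_m y \in \Gamma_{\mf{v}}^3$, $|z - x_0| \le \rho_m R \le R_0$, and for $r \in (0,R)$ the scaling identity gives
\[
H(\mf{v}_m, y, r) = \frac{H(\mf{v}, z, \rho_m r)}{H(\mf{v}, x_0, \rho_m)} \le \frac{L^2 (\rho_m r)^{2\alpha}}{H(\mf{v}, x_0, \rho_m)}.
\]
Here the subtlety: I need $H(\mf{v}, x_0, \rho_m) \ge c\, \rho_m^{2\alpha}$ for a uniform $c > 0$. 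This is exactly where Lemma~\ref{lem: N >=} and Lemma~\ref{lem: doubling seg}(i) enter: since $x_0 \in \Gamma_{\mf{v}}^3$, $N(\mf{v}, x_0, 0^+) \ge \bar\nu > \alpha$, so $N(\mf{v}, x_0, r) \ge \alpha$ for $r$ small, whence by doubling (i) the ratio $H(\mf{v}, x_0, r)/r^{2\alpha}$ is non-decreasing for small $r$, giving $H(\mf{v}, x_0, \rho_m) \ge \rho_m^{2\alpha} \cdot H(\mf{v}, x_0, \bar r_0)/\bar r_0^{2\alpha} =: c\,\rho_m^{2\alpha}$ for all $m$ large. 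Substituting, $H(\mf{v}_m, y, r) \le (L^2/c)\, r^{2\alpha}$, which is \eqref{1350} with $C = L^2/c$. The main obstacle — and the only place genuine structure is used — is precisely this lower bound $H(\mf{v}, x_0, \rho_m) \gtrsim \rho_m^{2\alpha}$, i.e.\ the interplay between the a priori Hölder exponent $\bar\nu$, the strict inequality $\alpha < \bar\nu$, and the frequency lower bound at the fixed triple point $x_0$; everything else is bookkeeping with the scaling of $H$ and $N$. I would also remark that the case analysis for whether $\Gamma_{\mf{v}_m}^3 \cap B_R$ is empty is trivial (the estimate is vacuous there), so no separate treatment is needed.
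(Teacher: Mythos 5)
There is a genuine gap, and it is precisely in the step you flag as ``the main obstacle.'' Your scaling identity $H(\mf{v}_m,y,r) = H(\mf{v},z,\rho_m r)/H(\mf{v},x_0,\rho_m)$ and the H\"older bound $H(\mf{v},z,\rho_m r)\le L^2(\rho_m r)^{2\alpha}$ are both correct, but the claimed lower bound $H(\mf{v},x_0,\rho_m)\ge c\,\rho_m^{2\alpha}$ is false, and your derivation of it reverses the doubling inequality. Lemma~\ref{lem: doubling seg}(i) says that if $N(\mf{v},x_0,r_0)\ge\alpha$ then $r\mapsto H(\mf{v},x_0,r)/r^{2\alpha}$ is \emph{non-decreasing}; taking $r_1=\rho_m<r_2=\bar r_0$ this gives $H(\mf{v},x_0,\rho_m)/\rho_m^{2\alpha}\le H(\mf{v},x_0,\bar r_0)/\bar r_0^{2\alpha}$, i.e.\ an \emph{upper} bound, which is the opposite of what you wrote. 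Worse, since $x_0\in\Gamma^3_{\mf{v}}$ gives $N(\mf{v},x_0,0^+)\ge\bar\nu>\alpha$, integrating \eqref{der H seg} shows $H(\mf{v},x_0,r)\lesssim r^{2\bar\nu}$ near $0$, so $H(\mf{v},x_0,\rho_m)/\rho_m^{2\alpha}\to0$: there is no such $c>0$, and the final bound $H(\mf{v}_m,y,r)\le(L^2/c)r^{2\alpha}$ cannot be salvaged along this route.

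The confusion originates earlier, in the ``Wait — that is the wrong direction'' aside. A lower frequency bound $N(\mf{v},z,s)\ge\alpha$ for small $s$ \emph{does} give $H(\mf{v},z,s)\le Cs^{2\alpha}$ for $s<R$: by (i), $H(s)/s^{2\alpha}\le H(R)/R^{2\alpha}$. You talked yourself out of the correct mechanism and replaced it with the raw H\"older estimate at $z$, which then forces a normalization you cannot control. The paper's proof is exactly the approach you abandoned, carried out intrinsically on $\mf{v}_m$: since $\mf{v}_m\in\mathcal{L}_{\loc}(\Omega_m)$ by Lemma~\ref{lem: inv sc}, Lemma~\ref{lem: N >=} applies directly and gives $N(\mf{v}_m,x_0,0^+)\ge\bar\nu>\alpha$ at every triple point $x_0$ of $\mf{v}_m$; Lemma~\ref{lem: doubling seg}(i) then yields $H(\mf{v}_m,x_0,r)\le\bigl(H(\mf{v}_m,x_0,R)/R^{2\alpha}\bigr)r^{2\alpha}$ for $r<R$, and the prefactor is bounded uniformly in $m$ and in $x_0\in B_R$ by the $L^\infty$ bound on $\mf{v}_m$ from Lemma~\ref{lem: bdd H1 mis}. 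No reference to the normalization $H(\mf{v},x_0,\rho_m)$ is needed. If you insist on working with $\mf{v}$ rather than $\mf{v}_m$, the correct version is to apply the doubling at the varying point $z$ (not at $x_0$): $H(\mf{v},z,\rho_m r)/(\rho_m r)^{2\alpha}\le H(\mf{v},z,\rho_m R)/(\rho_m R)^{2\alpha}$, and then $H(\mf{v}_m,y,r)\le H(\mf{v}_m,y,R)\,(r/R)^{2\alpha}$, which again closes via the uniform $L^\infty$ bound on $\mf{v}_m$ and never isolates $H(\mf{v},x_0,\rho_m)$ in the denominator.
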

\begin{proof}
Let $\bar m$ so large that $B_{2R} \subset \Omega_m$ for $m>\bar m$. By Lemma \ref{lem: N >=} we have that $N(\mf{v}_m,x_0,0^+) \ge \alpha$ for every $x_0 \in \Gamma_{\mf{v}_m}^3 \cap B_R(0)$. Thus, by Lemma \ref{lem: doubling seg} we deduce that for every $r \in (0,R)$,
\[
\frac1{r^{N-1}} \int_{S_r(x_0)} \sum_i v_{i,m}^2\,d\sigma \le \frac{H(\mf{v}_m,x_0,R)}{R^{2\alpha}} r^{2\alpha} \le C r^{2\alpha},
\]
where we used that $\|\mf{v}_m\|_{L^\infty(B_{2R})} \le C$ for the last estimate.
\end{proof}

\begin{lemma}\label{lem: bu hol}
Let $\mf{v} \in \mathcal{L}_{\loc}(\Omega)$, $\alpha \in (0,\bar \nu)$, and $R>0$. There exists $C>0$ such that $\|\mf{v}_m\|_{C^{0,\alpha}(\overline{B_R})} \le C$.
\end{lemma}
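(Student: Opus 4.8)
The plan is to transfer to the blow-up sequence the uniform-in-$\beta$ a priori H\"older estimates of \cite[Theorem 1.1]{ST24p1}, exploiting their invariance under the natural scaling of the problem. Fix $\alpha \in (0,\bar\nu)$ and $R>0$, and restrict attention to $m$ large enough that $B_{4R} \subset \Omega_m$ (this is the only relevant range, since $\rho_m \to 0^+$ and the $\Omega_m$ exhaust $\R^N$). By Lemma \ref{lem: inv sc} and the construction in its proof, $\mf{v}_m \in \mathcal{L}_{\loc}(\Omega_m)$ and is the locally uniform and $H^1_{\loc}(\Omega_m)$ limit, as $n \to \infty$, of minimizers $\mf{v}_{m,n}$ of the scaled functionals $J_{M_{m,n}}$ with respect to compactly supported variations, with $v_{i,m,n}>0$ and $M_{m,n}\to+\infty$.

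First I would produce a uniform $L^\infty$ bound for the approximating sequence on $B_{3R}$. Each $v_{i,m,n}$ is subharmonic (its Laplacian equals $M_{m,n}\,v_{i,m,n}\prod_{j\neq i}v_{j,m,n}^2 \ge 0$), so the sub-mean value inequality gives $\|v_{i,m,n}\|_{L^\infty(B_{3R})} \le C_{N,R}\,\|v_{i,m,n}\|_{L^2(B_{4R})}$; since $\mf{v}_{m,n} \to \mf{v}_m$ in $L^2(B_{4R})$ and $\|\mf{v}_m\|_{L^2(B_{4R})} \le C$ uniformly in $m$ by Lemma \ref{lem: bdd H1 mis}, this yields $\|\mf{v}_{m,n}\|_{L^\infty(B_{3R})} \le C$ with $C$ independent of $m$, for all $n \ge n_0(m)$.

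Next I would apply \cite[Theorem 1.1]{ST24p1} on the ball $B_{3R}$. Restricted to $B_{3R}$, the function $\mf{v}_{m,n}$ is a positive solution of the scaled system \eqref{P beta} with competition parameter $M_{m,n}$; it satisfies hypothesis (h1) there with the constant of the previous step, and it satisfies (h2) there because minimality with respect to variations compactly supported in any $\Omega'\ssubset\Omega_m$ entails, in particular, minimality with respect to variations compactly supported in any $\Omega'\ssubset B_{3R}$. Since the conclusion of \cite[Theorem 1.1]{ST24p1} is uniform in the competition parameter, it gives $\|\mf{v}_{m,n}\|_{C^{0,\alpha}(\overline{B_{2R}})} \le C$ with $C$ depending only on $N$, $\alpha$, $R$ and the $L^\infty$ constant of the previous step, hence on none of $m$, $n$, $M_{m,n}$. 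Letting $n \to \infty$ and using the locally uniform convergence $\mf{v}_{m,n}\to\mf{v}_m$, the H\"older quotients pass to the limit, so $\|\mf{v}_m\|_{C^{0,\alpha}(\overline{B_{2R}})} \le C$, and in particular $\|\mf{v}_m\|_{C^{0,\alpha}(\overline{B_R})} \le C$, uniformly in $m$.

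The one point requiring care, and the conceptual core of the argument, is that the estimate of \cite[Theorem 1.1]{ST24p1} is genuinely \emph{local} and \emph{scale invariant}: the H\"older bound on a ball must depend only on the dimension, the exponent, the radius, and the $L^\infty$ norm on a slightly larger concentric ball, and never on $\beta$. Granting this (which is precisely the form in which that estimate is proved and used), everything else is bookkeeping. An alternative, more self-contained route would combine the decay estimate of Lemma \ref{lem: decay1L} at triple points with interior estimates for subharmonic functions and for the harmonic functions $v_i-v_j$ arising near double points (Lemma \ref{prop: diff arm}); but controlling the oscillation uniformly at double points and inside the positivity regions without the a priori bounds of \cite{ST24p1} is delicate, so I would prefer the route above.
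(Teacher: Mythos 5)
Your proof is correct, but it takes a genuinely different and softer route than the paper's. The paper argues by hand: it picks points $x_m,y_m$ achieving the H\"older quotient, sets $r_m=|x_m-y_m|$ and $R_m$ proportional to $\dist(x_m,\Gamma_{\mf{v}_m}^3)$, and runs a four-case analysis according to the asymptotics of $r_m$ and $R_m/r_m$; it uses the decay estimate of Lemma \ref{lem: decay1L} at the nearest triple point, the sub-mean value inequality, and a harmonic function pieced together from $\pm v_{i,m}$ near double points (via Lemma \ref{prop: diff arm}). Your argument bypasses all of this by invoking the $\beta$-uniform a priori estimate of \cite[Theorem 1.1]{ST24p1} directly on the approximating minimizers $\mf{v}_{m,n}$ furnished by Lemma \ref{lem: inv sc} and then letting $n\to\infty$; the intermediate uniform $L^\infty$ bound on $B_{3R}$ obtained from subharmonicity and Lemma \ref{lem: bdd H1 mis} is exactly what is needed to make the family $\{\mf{v}_{m,n}\}$ admissible for (h1)--(h2). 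The one load-bearing point is the one you flag: that the constant in \cite[Theorem 1.1]{ST24p1} depends only on the dimension, the exponent, the two concentric radii and the $L^\infty$ bound, and not on $\beta$ or the particular family. This is the natural form in which such estimates are produced (by contradiction, blow-up, and Liouville theorems), and it is consistent with how the estimate is deployed elsewhere in this paper (e.g.\ on the blow-down family in Lemma \ref{lem: blow-down}), though it is a slight reading-between-the-lines of the literal family-indexed statement. A likely reason the paper nevertheless chooses the hands-on route is modularity: the same four-case scheme is reused verbatim, with Lemma \ref{lem: decayopt} in place of Lemma \ref{lem: decay1L}, in the final step of the proof of Theorem \ref{thm: improved Holder bounds}, where your soft transfer is unavailable because the sharper $C^{0,3/4}$ a priori bound is precisely what is being proved there.
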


\begin{proof}
Since $\|\mf{v}_m\|_{L^\infty(B_R)} \le C$, we focus now on the boundedness of the H\"older seminorm
\[
[\mf{v}_m]_{C^{0,\alpha}(\overline{B_R})} = \max_{i=1,2,3} \ \sup_{\substack{x,y \in \overline{B_R} \\ x \neq y}} \frac{|v_{i,m}(x)-v_{i,m}(y)|}{|x-y|^\alpha}.
\]
Recall that $\mf{v} \in C^{0,\alpha+\eps}(\Omega)$, provided that $\alpha+\eps<\bar \nu$. Thus, up to a relabelling and up to a subsequence, for each $m$ there exists $x_m, y_m \in \overline{B_R}$ such that
\[
[\mf{v}_m]_{C^{0,\alpha}(\overline{B_R})} = \frac{|v_{1,m}(x_m)-v_{1,m}(y_m)|}{|x_m-y_m|^\alpha}.
\]
Let now $r_m:= |x_m-y_m|$, and, without loss of generality, 
\[
2R_m:= \max\{\dist(x_m,\Gamma_{\mf{v}_m}^3), \ \dist(y_m,\Gamma_{\mf{v}_m}^3)\} = \dist(x_m,\Gamma_{\mf{v}_m}^3).
\]
By definition, $r_m \le R_m$, since clearly $x_m, y_m$ stay in the closure of the same connected component of $\{v_{1,m}>0\}$. We can suppose that $R_m>0$, otherwise $\mf{v}_m \equiv 0$ and the lemma is trivial. We prove the boundedness of $[\mf{v}_m]_{C^{0,\alpha}(\overline{B_R})}$ in different ways, according to the asymptotic properties of $\{r_m\}$ and $\{R_m\}$.

\smallskip

\emph{Case 1)} $r_m \ge c >0$. \\
\noindent By the boundedness in $L^\infty$, we have
\[
[\mf{v}_m]_{C^{0,\alpha}(\overline{B_R})} = \frac{|v_{1,m}(x_m)-v_{1,m}(y_m)|}{r_m^\alpha} \le \frac{2C}{c}.
\]

\smallskip

\emph{Case 2)} $r_m \to 0$ and $R_m \ge c>0$. \\
\noindent By definition of $R_m$, in the ball $B_{c}(y_m)$ there are no triple points, namely the only free-boundary points (if any) of $\Gamma_{\mf{v}_m} \cap B_c(y_m)$ are points of multiplicity $2$, and, up to a subsequence the following alternative holds: either $v_{1,m}>0$ in $B_c(y_m)$, or $v_{1,m}$ vanish somewhere. Therefore, by Lemma \ref{prop: diff arm}, we can construct a harmonic function $w_m$ in the following way: in the connected component $\omega_{1,m}$ of $\{v_{1,m}>0\} \cap B_c(y_m)$ such that $x_m, y_m \in \overline{\omega_{1,m}}$ (it is not difficult to see that, if $x_m$ and $y_m$ belong to different components, then they cannot achieve the maximum of the H\"older quotient), we set $w_m=v_{1,m}$; then we consider a component of the boundary of $\omega_{1,m} \cap B_c(y_m)$, which we call $\sigma_{1,m}$. Since $\sigma_{1,m}$ consists of double points, there exists an index $i \in \{2,3\}$, say $i=2$, such that $\sigma_{1,m}$ separates $\omega_{1,m}$ and a connected component $\omega_{2,m}$ of $\{v_{2,m}>0\} \cap B_c(y_m)$, while $v_{3,m}>0$ on $\sigma_{1,m}$. We set $w_m=0$ on $\sigma_{1,m}$, and $w=-v_{2,m}$ on $\omega_{2,m}$. Iterating the argument in all the adjacent components to $\omega_{1,m}$, $\omega_{2,m}$, \dots, we define $w_m$ as a harmonic function in $B_c(y_m)$, coinciding with $v_{1,m}$ in $\overline{\omega_{1,m}}$.

Notice that $x_m \in B_{c/2}(y_m)$ for large $m$. Therefore, to complete the proof in this case we can estimate $[v_{1,m}]_{C^{0,\alpha}(\overline{B_{c/2}(y_m)})} = [v_{1,m}]_{C^{0,\alpha}(\overline{\omega_{1,m}})}$. This can be done via standard $L^p$ estimates for harmonic functions: for every $p>N$
\[
\begin{split}
\|w_m\|_{W^{2,p}(B_{c/2}(y_m))}  \le C \|w_m\|_{L^p(B_{c}(y_m))} \le C \|w_m\|_{L^\infty(B_c(y_m))} \le C,
\end{split}
\]
by Lemma \ref{lem: bdd H1 mis}. By the Sobolev embedding $W^{2,p} \hookrightarrow C^{0,1}$ for $p>N$, we deduce that 
\[
[v_{1,m}]_{C^{0,\alpha}(B_{c/2}(y_m))} = [w_{m}]_{C^{0,\alpha}(B_{c/2}(y_m))}   \le C \|w_m\|_{W^{2,p}(B_{c/2}(y_m))} \le C,
\]
which is the desired result.

\smallskip

\emph{Case 3)} $r_m, R_m \to 0$ and $R_m/r_m \to +\infty$. \\
\noindent Let $z_m \in \Gamma_{\mf{v}_m}^3$ such that $|x_m-z_m| =2R_m$. By Lemma \ref{lem: decay1L}, there exists $C>0$ such that for every $m$ large 
\[
\frac{1}{r^{N-1}} \int_{S_r(z_m)} \sum_i v_{i,m}^2\,d\sigma \le C r^{2\alpha}, \quad \forall r \in (0,R).
\]
In particular, this estimate holds for $r \in (0,4R_m)$, and by integrating it we deduce that
\[
\frac{1}{|B_{4R_m}(z_m)|} \int_{B_{4R_m}(z_m)} \sum_i v_{i,m}^2\,dx \le C R_m^{2\alpha}.
\]
Hence, since $v_{i,m}$ is subharmonic, for every $y \in B_{R_m}(x_m)$, 
\beq\label{1461}
\begin{split}
\sum_i v_{i,m}^2(y) &\le \frac{1}{|B_{R_m}(y)|} \int_{B_{R_m}(y)} \sum_i v_{i,m}^2\,dx\\
 &\le \frac{C}{|B_{4R_m}(z_m)|} \int_{B_{4R_m}(z_m)} \sum_i v_{i,m}^2\,dx \le C R_m^{2\alpha},
\end{split}
\eeq
with $C$ independent of $m$ and of $y$. Since $r_m \ll R_m$, the ball $B_{R_m/2}(x_m)$ contains $y_m$. Furthermore, in the ball $B_{R_m}(x_m)$ there are no triple points of $\mf{v}_m$, by definition of $R_m$, and hence, as in the previous step, we can construct a harmonic function $w_m$ in $B_{R_m}(x_m)$ such that $[v_{1,m}]_{C^{0,\alpha}(B_{R_m/2}(x_m))} = [w_{m}]_{C^{0,\alpha}(B_{R_m/2}(x_m))}$ and $\|w_m\|_{L^\infty(B_{R_m}(x_m))} \le \|\mf{v}_m\|_{L^\infty(B_{R_m}(x_m))}$. Let us define now $w_m'(x):= w_m(x_m+R_m x)$. Being $w_{m}$ harmonic in $B_{R_m}(x_m)$, we have that $w_m'$ is harmonic as well in $B_1$, and by $L^p$ estimates, Sobolev embedding, and using \eqref{1461}, we obtain
\[
\begin{split}
[w_{m}]_{C^{0,\alpha}(\overline{B_{R_m/2}(x_m)})} & = R_m^{-\alpha} [w_{m}']_{C^{0,\alpha}(\overline{B_{1/2}})}  \le C R_m^{-\alpha} \|w_m'\|_{W^{2,p}(B_{1/2})}   \le C R_m^{-\alpha}  \|w_m'\|_{L^{p}(B_1)} \\
& = C R_m^{-\alpha} \left( R_m^{-N/p} \|w_m\|_{L^{p}(B_{R_m}(x_m))}\right)  \le C R_m^{-\alpha}\|w_m\|_{L^\infty(B_{R_m}(x_m))} \le C,
\end{split}
\]
which is the desired result.

\smallskip

\emph{Case 4)} $r_m, R_m \to 0$ and $R_m/r_m \le C$. \\
\noindent Let $z_m, z_m' \in \Gamma_{\mf{v}_m}^3$ such that $|x_m-z_m| =2R_m$ and $|x_m-z_m'| = \dist(y_m,\Gamma_{\mf{v}_m}^3) \le 2R_m$. By Lemma \ref{lem: decay1L}, there exists $C>0$ such that for every $m$ large 
\[
\frac{1}{r^{N-1}} \int_{S_r(z_m)} v_{1,m}^2\,d\sigma \le C r^{2\alpha} \quad \text{and} \quad  \frac{1}{r^{N-1}} \int_{S_r(z_m')} v_{1,m}^2\,d\sigma \le C r^{2\alpha},
\]
for every $r \in (0,R)$. By integrating, we deduce that
\[
\frac{1}{|B_{3R_m}(z_m)|} \int_{B_{3R_m}(z_m)} v_{1,m}^2\,dx \le C R_m^{2\alpha} \quad \text{and} \quad  \frac{1}{|B_{3R_m}(z_m')|} \int_{B_{3R_m}(z_m')} v_{1,m}^2\,dx \le C R_m^{2\alpha}.
\]
Since $v_{1,m}$ is subharmonic, as in the previous step we deduce that
\[
v_{1,m}^2(x_m) \le \frac{C}{|B_{3R_m}(z_m)|} \int_{B_{3R_m}(z_m)} v_{1,m}^2\,dx \le C R_m^{2\alpha} \le C r_m^{2\alpha}.
\]
The same estimate also holds true for $v_{1,m}^2(y_m)$. Therefore,
\[
|v_{1,m}(x_m)-v_{1,m}(y_m)| \le C r_m^{\alpha} = C |x_m-y_m|^\alpha,
\]
as desired.
\end{proof}
 
The lemma and the Ascoli-Arzel\`a theorem imply that, by means of a diagonal selection, there exists a convergent subsequence $\mf{v}_m \to \mf{w}$ in $C^{0,\alpha}_{\loc}(\R^N)$. Now we prove that the convergence is also strong in $H^1_{\loc}(\R^N)$.

\begin{lemma}
Up to a subsequence, $\mf{v}_m \to \mf{w}$ strongly in $H^1_{\loc}(\R^N)$.
\end{lemma}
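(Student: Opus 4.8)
The plan is to upgrade the weak convergence $\mf{v}_m \weak \mf{w}$ in $H^1_{\loc}(\R^N)$ to strong convergence (along the same subsequence for which $C^{0,\alpha}_{\loc}$ convergence has already been established) by showing that the Dirichlet energies pass to the limit on compact sets, i.e. $\int_{B_R} |\nabla v_{i,m}|^2\,dx \to \int_{B_R} |\nabla w_i|^2\,dx$ for every $i$ and every $R>0$. Since the $v_{i,m}$ converge to $w_i$ uniformly on compact sets, this yields $\mf{v}_m \to \mf{w}$ strongly in $H^1_{\loc}$. The tool is the equation $\Delta v_{i,m} = \mu_{i,m}$ of \eqref{eq mis}, together with the key structural fact from Remark \ref{rem: on measures}: $\mu_{i,m}$ is supported on $\pa\{v_{i,m}>0\}$, a set on which the continuous function $v_{i,m}$ vanishes identically. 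Hence $\int v_{i,m}\,\psi\,d\mu_{i,m}=0$ for every bounded $\psi$.

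First I would fix $R>0$ and a cut-off $\varphi \in C_c^\infty(\R^N)$ with $0\le\varphi\le1$ and $\varphi\equiv1$ on $B_R$. Testing the weak formulation of $\Delta v_{i,m}=\mu_{i,m}$ against the admissible test function $v_{i,m}\varphi^2$ (continuous, compactly supported, of class $H^1$; admissibility follows by mollification, using the local finiteness of $\mu_{i,m}$ from Lemma \ref{lem: bdd H1 mis}) and using $\int v_{i,m}\varphi^2\,d\mu_{i,m}=0$ gives
\[
\int\varphi^2|\nabla v_{i,m}|^2\,dx=-2\int v_{i,m}\,\varphi\,\nabla\varphi\cdot\nabla v_{i,m}\,dx.
\]
On the right-hand side $v_{i,m}\varphi\nabla\varphi\to w_i\varphi\nabla\varphi$ uniformly (hence in $L^2$), while $\nabla v_{i,m}\weak\nabla w_i$ weakly in $L^2_{\loc}$; since a strongly convergent sequence paired with a weakly convergent one passes to the limit,
\[
\int\varphi^2|\nabla v_{i,m}|^2\,dx\longrightarrow-2\int w_i\,\varphi\,\nabla\varphi\cdot\nabla w_i\,dx.
\]

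Next I would observe that $w_i$ satisfies the same identity. Indeed $w_i$ is harmonic in $\{w_i>0\}$: on any compact $K\ssubset\{w_i>0\}$ one has $v_{i,m}>0$ on $K$ for $m$ large by uniform convergence, hence $v_{i,m}$ is harmonic there, and the weak $H^1$ (and uniform) limit $w_i$ is harmonic on $\innt K$. Therefore the nonnegative measure $\bar\mu_i=\Delta w_i$ is concentrated on $\{w_i=0\}$, so $\int w_i\varphi^2\,d\bar\mu_i=0$, and testing $\Delta w_i=\bar\mu_i$ against $w_i\varphi^2$ yields $\int\varphi^2|\nabla w_i|^2\,dx=-2\int w_i\varphi\nabla\varphi\cdot\nabla w_i\,dx$. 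Combining with the previous limit, $\int\varphi^2|\nabla v_{i,m}|^2\,dx\to\int\varphi^2|\nabla w_i|^2\,dx$.

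Finally I would expand
\[
\int\varphi^2|\nabla(v_{i,m}-w_i)|^2\,dx=\int\varphi^2|\nabla v_{i,m}|^2\,dx-2\int\varphi^2\nabla v_{i,m}\cdot\nabla w_i\,dx+\int\varphi^2|\nabla w_i|^2\,dx:
\]
the first term converges to $\int\varphi^2|\nabla w_i|^2\,dx$ by the previous step, the middle term to $-2\int\varphi^2|\nabla w_i|^2\,dx$ (weak convergence of $\nabla v_{i,m}$ tested against the fixed $L^2$ function $\varphi^2\nabla w_i$), and the third is constant, so the left-hand side tends to $0$. Since $\varphi\equiv1$ on $B_R$, $\nabla v_{i,m}\to\nabla w_i$ in $L^2(B_R)$, and together with the uniform convergence of $v_{i,m}$ this gives $\mf{v}_m\to\mf{w}$ in $H^1(B_R)$ for every $R$, i.e. strongly in $H^1_{\loc}(\R^N)$. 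The only genuinely delicate point is the vanishing of $\int v_{i,m}\varphi^2\,d\mu_{i,m}$ and of its limit analogue $\int w_i\varphi^2\,d\bar\mu_i$, which rests entirely on the free-boundary structure of Remark \ref{rem: on measures}: the subharmonicity defect of each component is carried exactly by the set where that component is zero. The rest is routine weak-strong convergence of products.
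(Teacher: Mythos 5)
Your argument is correct, but it takes a genuinely different route from the paper's. The paper tests both equations $\Delta v_{i,m}=\mu_{i,m}$ and $\Delta w_i=\bar\mu_i$ against the \emph{single} test function $(v_{i,m}-w_i)\varphi$ and subtracts; after rearranging, every term on the right-hand side carries the prefactor $\|v_{i,m}-w_i\|_{L^\infty(B_R)}\to 0$, multiplied by quantities (the Dirichlet pairing with $\nabla\varphi$, and the total masses $\mu_{i,m}(B_R)$, $\bar\mu_i(B_R)$) which are uniformly bounded by Lemma \ref{lem: bdd H1 mis}. This closes the argument in two lines and, notably, does \emph{not} invoke the fact that the defect measures live on the zero sets. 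You instead test $\Delta v_{i,m}=\mu_{i,m}$ against $v_{i,m}\varphi^2$ (and similarly for $w_i$), exploiting the structural identity $\int v_{i,m}\varphi^2\,d\mu_{i,m}=0$ to obtain a Caccioppoli-type formula, and then conclude by convergence of the Dirichlet energies combined with weak convergence of the gradients. Your route gives a cleaner conceptual picture (the energy converges because each defect measure is invisible to its own density), but at the cost of two verifications the paper bypasses: admissibility of the non-smooth test functions $v_{i,m}\varphi^2$ and $w_i\varphi^2$ (which you correctly flag, and which needs a mollification argument using local finiteness of the measures), and the fact that $\bar\mu_i$ is concentrated on $\{w_i=0\}$, which in turn requires establishing harmonicity of $w_i$ in $\{w_i>0\}$ by stability under local uniform plus weak $H^1$ convergence. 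Both approaches are sound; the paper's is shorter because it trades the free-boundary structure for the crude but sufficient bound $\|v_{i,m}-w_i\|_{L^\infty}\to 0$.
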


\begin{proof}
For any $R>0$, let $\varphi \in C^\infty_c(B_R)$. On $B_R$, for every $m$ sufficiently large
\[
\Delta v_{i,m} = \mu_{i,m} \quad \text{and} \quad \Delta w_{i} = \bar \mu_{i}.
\]
We test these equations with $(v_{i,m}-w_i) \varphi$, and subtract term by term: we obtain
\begin{multline*}
\left| \int_{B_R} |\nabla (v_{i,m}-w_i)|^2\varphi\,dx\right| \\
\le \|v_{i,m}-w_i\|_{L^\infty(B_R)} \left[ \int_{B_R} |\nabla(v_{i,m}-w_i)\cdot \nabla \varphi| \,dx + \int_{B_R} |\varphi|\,d\mu_{i,m}+ \int_{B_R} |\varphi|\,d\bar \mu_{i}\right],
\end{multline*}
and the right hand side tends to $0$ as $m \to \infty$, by local uniform convergence and Lemma \ref{lem: bdd H1 mis}. Since $\varphi$ was arbitrarily chosen, the thesis follows.
\end{proof}

\begin{proof}[Proof of Proposition \ref{prop: conv blow-up}]
We proved that, up to a subsequence, $\mf{v}_m \to \mf{w}$ in $C^{0}_{\loc}(\R^N)$ and in $H^1_{\loc}(\R^N)$. We show that $\mf{w} \in \mathcal{L}_{\loc}(\R^N)$. Let $R>0$ and $\eps>0$ be arbitrarily chosen. There exists $\bar m \in \N$ such that
\beq\label{1751}
m > \bar m \quad \implies \quad \|\mf{w}-\mf{v}_m\|_{L^\infty(B_R)} + \|\mf{w}-\mf{v}_m\|_{H^1(B_R)} < \frac{\eps}{2}.
\eeq
Moreover, since $\mf{v}_m \in \mathcal{L}_{\loc}(\Omega_m)$, we can take sequences $\{\mf{v}_{m,n}: n \in \N\}$ and $M_{m,n} \to +\infty$ satisfying the three points in Definition \ref{def: L}. For each $m$ fixed, we choose $\bar n=\bar n(m) \gg 1$ so large that
\beq\label{1752}
\|\mf{v}_{m,\bar n}-\mf{v}_m\|_{L^\infty(B_R)} + \|\mf{v}_{m,\bar n}-\mf{v}_m\|_{H^1(B_R)} < \frac{\eps}{2}, 
\eeq
and moreover
\beq\label{1753}
M_{m, \bar n} > m \quad \text{and} \quad M_{m, \bar n} \int_{B_R} \prod_i v_{i,m,\bar n}^2\,dx < \frac1m.
\eeq
By combining \eqref{1751}-\eqref{1753}, we deduce that the definition of $\mathcal{L}(B_R)$ applies to $\mf{w}$ with the sequences $\mf{w}_m:= \mf{v}_{m, \bar n}$ and $M_{m, \bar n}$. Since $R$ was arbitrarily chosen, we finally infer that $\mf{w} \in \mathcal{L}_{\loc}(\R^N)$. 

We prove now that $\mf{w}$ is $\gamma$-homogeneous, with $\gamma=N(\mf{v},x_0,0^+)$. Indeed, since $\mf{v}_m \to \mf{w}$ in $C^0_{\loc}(\R^N)$ and in $H^1_{\loc}(\R^N)$, we have that for every $R>0$
\[
N(\mf{w},0,R) = \lim_{m \to \infty} N(\mf{v}_m,0,R) = \lim_{m \to \infty} N(\mf{v},x_0,\rho_m R) = \gamma.
\]
This means that $N(\mf{w},0,\,\cdot)$ is constant, equal to $\gamma$, and the homogeneity follows by Proposition \ref{prop: alm seg}. Furthermore, Lemmas \ref{lem: N >=} and \ref{cor: comp 0 seg} imply that all the components of $\mf{w}$ are non-trivial.
\end{proof}

\section{Free boundary regularity}\label{sec: fb reg}

In this section we study the structure of the nodal set $\Gamma_{\mf{v}}$ for functions in $\mathcal{L}_{\loc}(\Omega)$. As we will see, the crucial step consists in studying the properties of the set $\Gamma_{\mf{v}}^3$. So far, we know that $\Gamma_{\mf{v}}^3$ has empty interior, that 
\[
N(\mf{v},x_0,0^+) \ \begin{cases} = 0 & \text{if $m(x_0) =1, 2,$} \\ \ge \bar \nu & \text{if $m(x_0)=3$},
\end{cases}
\]
and that $\Gamma_{\mf{v}}^3$ is a relatively closed subset of $\Gamma_{\mf{v}}$, see Lemmas \ref{lem: 3 int vuo}-\ref{lem: 3 closed}. The first main result of this section is the following:

\begin{proposition}\label{prop: H dim sing}
For any $\mf{v} \in \mathcal{L}_{\loc}(\Omega)$, the Hausdorff dimension of $\Gamma_{\mf{v}}^3$ is at most $N-2$, and $\Gamma_{\mf{v}}^3$ is a locally finite set in dimension $N=2$.
\end{proposition}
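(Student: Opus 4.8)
The plan is to combine the blow-up analysis of Section~\ref{sec: blow-up} with a classification of homogeneous blow-up limits at triple points and then invoke Federer's dimension reduction principle. First I would set
\[
\mathcal{A} := \left\{\gamma > 0 : \exists\, \mf{w} \in \mathcal{L}_{\loc}(\R^N) \text{ $\gamma$-homogeneous, all components non-trivial, } \mf{w}(0)=0\right\},
\]
and observe, via Lemma~\ref{lem: N >=} and Proposition~\ref{prop: conv blow-up}, that for every $x_0 \in \Gamma_{\mf{v}}^3$ the frequency $\gamma = N(\mf{v},x_0,0^+)$ lies in $\mathcal{A}$ and satisfies $\gamma \ge \bar\nu$. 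The key structural input is a \emph{Liouville-type / classification statement}: any $\mf{w} \in \mathcal{L}_{\loc}(\R^N)$ which is $\gamma$-homogeneous with $m(0)=3$ and all components non-trivial cannot exist when $N = 1$ (trivially, since on $\R$ homogeneous harmonic pieces cannot meet three-to-a-point while staying segregated and non-trivial), and more generally, if such $\mf{w}$ exists in $\R^N$, then the set $\{\mf{w}=0\}$ has a one-dimensional-smaller cone structure. This is what feeds the reduction.

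\textbf{Key steps in order.} (1) \emph{Blow-up existence and homogeneity}: for $x_0 \in \Gamma_{\mf{v}}^3$, Proposition~\ref{prop: conv blow-up} produces a $\gamma$-homogeneous blow-up $\mf{w} \in \mathcal{L}_{\loc}(\R^N)$ with all components non-trivial and $\gamma = N(\mf{v},x_0,0^+) \ge \bar\nu$. (2) \emph{Upper semicontinuity of the frequency and closedness}: by Lemma~\ref{lem: 3 closed}, $\Gamma_{\mf{v}}^3$ is relatively closed in $\Gamma_{\mf{v}}$, and $x_0 \mapsto N(\mf{v},x_0,0^+)$ is upper semicontinuous; moreover the map $\mf{v} \mapsto$ (its blow-ups) is compatible with taking blow-ups of blow-ups, which is exactly the stability needed for Federer reduction. (3) \emph{Stratification via Federer's reduction principle}: one applies the abstract reduction lemma (in the form used, e.g., in the full-segregation literature cited in the introduction) to the ``singular stratum'' $\Gamma_{\mf{v}}^3$, whose defining property is invariant under blow-up and passes to limits. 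The reduction principle then asserts that $\dim_{\mathcal{H}} \Gamma_{\mf{v}}^3 \le d$, where $d$ is the maximal dimension of a linear subspace $V$ along which some blow-up limit (an iterated blow-up, hence again an element of $\mathcal{L}_{\loc}(\R^N)$ with the triple-point property) is translation invariant. (4) \emph{Ruling out $d = N-1$}: if some blow-up were invariant along an $(N-1)$-plane, then modulo rotation it would be a function of one variable $t \in \R$, homogeneous, with all three components non-trivial and vanishing at $0$, each harmonic on its positivity set --- i.e. piecewise linear in $t$, non-negative, with $w_1 w_2 w_3 \equiv 0$. A non-negative piecewise-linear function on $\R$ vanishing at $0$ and not identically zero is $c t_+$ or $c(-t)_+$ or $c|t|$ up to the homogeneity constraint; three such non-trivial functions with pairwise-overlap-only segregation and a common zero at $0$ is impossible (two of them would have to be supported on the same half-line and positive there simultaneously). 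Hence $d \le N-2$, giving $\dim_{\mathcal{H}} \Gamma_{\mf{v}}^3 \le N-2$. (5) \emph{Dimension $N=2$}: here $d \le 0$, so no blow-up is translation invariant along a line; the reduction principle then yields that $\Gamma_{\mf{v}}^3$ is $0$-dimensional, and combined with relative closedness and the fact that each triple point has frequency $\ge \bar\nu > 0$ (so isolated with respect to the doubling bounds of Lemma~\ref{lem: doubling seg}), it follows that $\Gamma_{\mf{v}}^3$ is locally finite.

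\textbf{Main obstacle.} The delicate point is verifying the hypotheses of Federer's reduction principle in the present setting: one must check that the class $\mathcal{L}_{\loc}$ together with the ``triple point'' designation is closed under the operations of rescaling, translation along the free boundary, and passage to blow-up limits, and that the relevant ``density''-type functional (here the Almgren frequency $N(\mf{v},x_0,0^+)$, or equivalently $H$-normalized energy) is upper semicontinuous under these operations and compatible with the scaling --- this is where Lemma~\ref{lem: inv sc}, Proposition~\ref{prop: alm seg}, Lemma~\ref{lem: 3 closed}, and Proposition~\ref{prop: conv blow-up} are all used in concert. The second, more computational, difficulty is the classification step~(4): ruling out $(N-2)$-homogeneous-in-the-transverse-variables invariant blow-ups (i.e., genuinely two-dimensional homogeneous profiles with a triple point) requires understanding the $2$-dimensional homogeneous segregated configurations, which is precisely the content deferred to Section~\ref{sec: fb dim 2}; I expect that analysis --- showing that the only $2$D homogeneous triple-point profiles are the explicit $\mathfrak{r}^{3/4}(\psi_1,\psi_2,\psi_3)$-type solutions with frequency exactly $3/4$ --- to be the technical heart of the argument.
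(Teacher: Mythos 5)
Your overall strategy (blow-up $\Rightarrow$ homogeneous limits $\Rightarrow$ Federer dimension reduction) matches the paper, and steps (1)--(3) and (5) are essentially correct. But there is a genuine gap in your step (4), the reduction to one dimension.

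You claim that three non-trivial, non-negative, piecewise-linear functions on $\R$, all vanishing at $0$ and satisfying $w_1 w_2 w_3 \equiv 0$, cannot exist, ``because two of them would have to be supported on the same half-line and positive there simultaneously.'' That is not a contradiction: pairwise overlap is precisely what the \emph{partial} segregation constraint permits. Indeed the configuration $w_1 = t_+$, $w_2 = t_-$, $w_3 = |t|$ is non-negative, non-trivial, $1$-homogeneous, harmonic where positive, satisfies $w_1 w_2 w_3 \equiv 0$, and has a triple point at $0$. Ruling out this one-dimensional profile requires \emph{minimality}, not just the algebra of the constraint. The paper does this via Lemma~\ref{lem: no |x|}: since $w_1 w_2 \equiv 0$ on $\R$, minimality forces the remaining component $w_3$ to be globally harmonic (not merely harmonic where positive); being a non-negative harmonic function on $\R$ vanishing at a point, it must be identically zero, contradicting non-triviality. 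Without invoking this minimality property (or something equivalent, e.g. an extremality/variational inequality showing $w_3$ cannot ``touch'' zero from above while staying subharmonic across $\{w_1 w_2 = 0\}$), the exclusion of $d = N-1$ does not go through.

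A secondary point: you suggest in your ``main obstacle'' paragraph that the argument requires classifying two-dimensional homogeneous triple-point profiles (deferred to Section~\ref{sec: fb dim 2}). It does not. Federer's reduction only asks you to rule out the case $d = N-1$, which after slicing is a one-dimensional problem; the 2D analysis of Section~\ref{sec: fb dim 2} is used later for optimality of the H\"older exponent, not for the Hausdorff dimension bound here.
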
 
 
To prove the proposition, we make use of the \emph{Federer's dimensional reduction principle}, in the form appeared in \cite[Theorem 8.5]{Chen} (see also \cite{TT12}). In turn, this is a generalization of the main result presented in \cite[Appendix A]{SimBook}. 

\begin{theorem}[Federer's Reduction Principle]\label{teo:FRP}
Let $\mathcal{F}\subset L^{\infty}_\text{loc}(\R^N, \R^h)$, and define, for any given $U\in\mathcal{F},\ x_0\in\R^N$ and $t>0$, the rescaled and translated function $$U_{x_0,t}:=U(x_0+t\,\cdot).$$
We say that $U_n\rightarrow U$ in $\mathcal{F}$ iff $U_n\rightarrow U$ in $L^{\infty}_\text{loc}(\R^N,\R^h)$.

Assume that $\mathcal{F}$ satisfies the following conditions:
\begin{itemize}
\item[(A1)] (Closure under appropriate scalings and translations)
Given any $|x_0|\leq 1-t, 0<t<1$, $\rho>0$ and $U\in \mathcal{F}$, we have that also $\rho\cdot U_{x_0,t}\in \mathcal{F}$.
\item[(A2)] (Existence of a homogeneous ``blow--up'')
Given $|x_0|<1, t_k\downarrow 0$ and $U \in \mathcal{F}$, there exists a sequence $\rho_k\in(0,+\infty)$, a real number $\alpha\geq 0$ and a  function $\bar U\in \mathcal{F}$ homogeneous of degree\footnote{That is, $\bar U(tx)=t^\alpha U(x)$ for every $t>0$.} $\alpha$ such that, if we define $U_k(x)=U(x_0+t_k x)/\rho_k$, then
$$U_k\rightarrow \bar U \qquad {\rm in }\ \mathcal{F},\qquad \qquad \text{ up to a subsequence}.$$
\item[(A3)] (Singular Set hypotheses)
There exists a map $\Sigma:\mathcal{F}\rightarrow \cC$ (where $\cC:=\{A\subset \R^N:\ A\cap B_1 \text{ is relatively closed in } B_1\}$) such that
\begin{itemize}
 \item [(i)]  Given $|x_0|\leq 1-t$, $0<t<1$ and $\rho>0$, it holds $$\Sigma(\rho\cdot U_{x_0,t})=(\Sigma(U))_{x_0,t}:=\frac{\Sigma(U)-x_0}{t}.$$
 \item[(ii)] Given $|x_0|<1$, $t_k\downarrow 0$ and $U,\bar U\in \cF$ such that there exists $\rho_k>0$ satisfying $U_k:=\rho_k U_{x_0,t_k}\rightarrow \bar U$ in $\cF$, the following ``continuity'' property holds:
$$\forall \varepsilon>0\ \exists k(\epsilon)>0:\ k\geq k(\varepsilon) \Rightarrow \Sigma(U_k)\cap B_1\subseteq \{x\in \R^N:\ \dist(x,\Sigma(\bar U))<\varepsilon\}.$$
\end{itemize}
\end{itemize}
Then, either $\Sigma(U)\cap B_1=\emptyset$ for every $U\in \cF$, or else there exists an integer $d \in [0,N-1]$ such that ${\rm dim}_{\cH}(\Sigma(U)\cap B_1)\leq d$ for every $U\in\cF$. Moreover in the latter case there exist a function $V\in \cF$, a d-dimensional subspace $L\leq \R^N$ and a real number $\alpha\geq0$ such that
\begin{equation*}\label{Psi_invariant_over_L}
V_{y,t}=t^\alpha V \qquad \forall y\in L, \ t>0, \qquad \quad \text{ and } \qquad \quad \Sigma(V)\cap B_1=L\cap B_1.
\end{equation*}
If $d=0$ then $\Sigma(U)\cap B_\rho$ is a finite set for each $U\in \cF$ and $0<\rho<1$.
\end{theorem}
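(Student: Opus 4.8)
The plan is to run the classical Federer dimension-reduction argument (cf. \cite[Appendix A]{SimBook}), in four movements: a \emph{density lemma} showing homogeneous blow-ups do not decrease the Hausdorff dimension of the singular set; an \emph{iteration} that grows an invariance subspace until the singular set becomes an affine plane; the deduction of integrality, attainment and the bound $N-1$; and finally the local finiteness when $d=0$. Throughout, (A1) guarantees all the rescalings $\rho\cdot U_{x_0,t}$ stay in $\mathcal F$, (A2) supplies homogeneous blow-up limits, and (A3) governs how $\Sigma$ transforms and degenerates along such limits. First I would assume $\Sigma(U)\cap B_1\neq\emptyset$ for some $U$ (otherwise we are in the first alternative) and, for a fixed such $U$, set $s:=\dim_{\mathcal H}(\Sigma(U)\cap B_1)$. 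By Federer's density theorem for Hausdorff measures, for each $s'<s$ there is a point $x_0\in\Sigma(U)\cap B_1$ at which $\Sigma(U)$ has positive upper $\mathcal H^{s'}$-density. Blowing up at $x_0$ along $t_k\downarrow 0$, (A2) produces a homogeneous $\bar U\in\mathcal F$, while (A3)(i) shows the rescaled sets $\Sigma(U_k)\cap B_1=\bigl((\Sigma(U))_{x_0,t_k}\bigr)\cap B_1$ retain uniformly positive $\mathcal H^{s'}$-content on a fixed ball. One then deduces $\dim_{\mathcal H}(\Sigma(\bar U)\cap B_1)\ge s'$: otherwise $\Sigma(\bar U)\cap B_1$ would admit a finite cover by balls of arbitrarily small total $s'$-content, which by the upper semicontinuity (A3)(ii) covers $\Sigma(U_k)\cap B_1$ for $k$ large, contradicting the content lower bound. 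Letting $s'\uparrow s$ gives a homogeneous $\bar U\in\mathcal F$ with $\dim_{\mathcal H}(\Sigma(\bar U)\cap B_1)\ge s$.

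\textbf{Iteration.} Starting from $W_0:=\bar U$ (homogeneous, so $\Sigma(W_0)$ is a cone, hence contains $0$ and is $L_0$-invariant with $L_0:=\{0\}$), I would build inductively homogeneous $W_k\in\mathcal F$, invariant along a $k$-dimensional subspace $L_k$, with $\dim_{\mathcal H}(\Sigma(W_k)\cap B_1)\ge s$. Given $W_k$: since $\Sigma(W_k)$ is an $L_k$-invariant cone, it contains $L_k$; if $\dim_{\mathcal H}(\Sigma(W_k)\cap B_1)>k=\dim L_k$, choose $e\in(\Sigma(W_k)\cap B_{1/2})\setminus L_k$ and form a blow-up $W_{k+1}$ of $W_k$ at $e$. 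By (A2) it is homogeneous; by the density lemma its singular set still has dimension $\ge s$; and it is invariant along $L_{k+1}:=L_k\oplus\R e$ — invariance along $L_k$ passes to blow-ups, and invariance along $\R e$ is the standard fact that a blow-up of a homogeneous function at a nonzero point acquires a translation symmetry in that direction (the corresponding invariance of $\Sigma$ follows from (A3)(i) and the $L^\infty_{\mathrm{loc}}$-continuity built into convergence in $\mathcal F$). Since $\dim L_k$ strictly increases in integer steps and stays $\le\dim_{\mathcal H}(\Sigma(W_k)\cap B_1)\le N$, the process stops at some $V:=W_d$, $L:=L_d$, where necessarily $\dim L=\dim_{\mathcal H}(\Sigma(V)\cap B_1)=:d$. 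At this terminal stage $\Sigma(V)$ is a cone containing the $d$-plane $L$ and invariant along $L$; a point $x\in\Sigma(V)\setminus L$ would generate a $(d{+}1)$-dimensional cone inside $\Sigma(V)$, contradicting $\dim_{\mathcal H}\Sigma(V)=d$. Hence $\Sigma(V)\cap B_1=L\cap B_1$, $d$ is an integer, $V_{y,t}=t^\alpha V$ for all $y\in L$, $t>0$ (with $\alpha\ge0$ the homogeneity degree), and $d\ge s=\dim_{\mathcal H}(\Sigma(U)\cap B_1)$.

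\textbf{Integer $d\le N-1$, attainment, and the case $d=0$.} If $\dim_{\mathcal H}(\Sigma(U)\cap B_1)=N$ for some $U$, the iteration forces $L=\R^N$, making $V$ constant and $\Sigma(V)\cap B_1=B_1$; this is excluded (in the generality relevant here $\Sigma(U)$ always has empty interior, so the degenerate case does not occur). Thus $\dim_{\mathcal H}(\Sigma(U)\cap B_1)\le N-1$ for every $U$, and $d:=\sup_{U\in\mathcal F}\dim_{\mathcal H}(\Sigma(U)\cap B_1)\le N-1$. If this supremum is not attained, take $U_j$ with $\dim_{\mathcal H}(\Sigma(U_j)\cap B_1)\to d$; applying the iteration to each $U_j$ yields integers $\dim L_j=\dim_{\mathcal H}(\Sigma(V_j)\cap B_1)\ge\dim_{\mathcal H}(\Sigma(U_j)\cap B_1)$, also $\le d$, which for $j$ large forces $\dim L_j=d$; hence $d$ is an integer attained by $V_j$, with the stated structure $(V,L,\alpha)$. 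Finally, for $d=0$: if $\Sigma(U)\cap B_\rho$ were infinite for some $U$ and $\rho<1$, it would have an accumulation point $x_0\in\Sigma(U)\cap B_1$; choosing $t_k\downarrow0$ with $\Sigma(U)\cap(B_{t_k}(x_0)\setminus B_{t_k/2}(x_0))\neq\emptyset$ and blowing up at $x_0$, one gets a homogeneous $\bar U\in\mathcal F$ whose rescaled singular sets meet the fixed annulus $B_1\setminus B_{1/2}$, so by (A3)(ii) $\Sigma(\bar U)$ contains some $p\neq0$; being a cone, $\Sigma(\bar U)$ then contains the ray $\R_+p$, whence $\dim_{\mathcal H}(\Sigma(\bar U)\cap B_1)\ge1>0=d$, a contradiction. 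Therefore $\Sigma(U)\cap B_\rho$ is finite for all $U$ and $0<\rho<1$.

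\textbf{Main difficulty.} The crux is the density lemma of the first step: propagating the Hausdorff-dimension lower bound through a blow-up limit with only the one-sided set convergence in (A3)(ii) at one's disposal. This hinges on the right choice of density point via Federer's density theorem, careful bookkeeping of the surviving $s'$-dimensional content under rescaling, and the compactness-of-covers argument. The two auxiliary facts in the iteration — that $\Sigma(V)$ inherits the subspace invariance of $V$, and that blow-ups of homogeneous maps at nonzero points gain a translation symmetry — are also delicate, and rely on combining (A3)(i) with the $L^\infty_{\mathrm{loc}}$-continuity of convergence in $\mathcal F$; the remaining steps are then essentially bookkeeping with cones and subspaces.
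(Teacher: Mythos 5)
The paper does not prove this statement at all: it is quoted verbatim as a known result, with the proof deferred to \cite[Theorem 8.5]{Chen} (see also \cite{TT12}), itself a generalization of \cite[Appendix A]{SimBook}. So there is no in-paper proof to compare against; what you have written is a reconstruction of the classical dimension-reduction argument, and as such it is essentially the right argument: the density lemma via Hausdorff content and Federer's density theorem, propagation of the dimension lower bound through blow-ups using the one-sided set convergence (A3)(ii) and a covering argument, the inductive construction of the invariance subspace $L_k$ using the translation symmetry acquired by blowing up a homogeneous function at a nonzero point, and the accumulation-point argument for the case $d=0$ are all the standard steps and are correctly sketched.

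One point deserves flagging. Your exclusion of $d=N$ is not derived from the hypotheses (A1)--(A3): you appeal to ``$\Sigma(U)$ always has empty interior in the generality relevant here,'' which is a property of the application (where $\Sigma(\mf{v})=\Gamma^3_{\mf{v}}\cap B_1$ has empty interior by Lemma \ref{lem: 3 int vuo}), not of the abstract class $\mathcal F$. As stated abstractly, nothing in (A1)--(A3) forbids a fully translation-invariant $V$ with $\Sigma(V)\cap B_1=B_1$; indeed the versions in \cite{SimBook, TT12} handle this either by an explicit extra hypothesis or by allowing $d\le N$ and ruling out the top dimension in the application by classifying the invariant profiles. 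If you intend to prove the theorem exactly as stated (with $d\in[0,N-1]$ built into the conclusion), you either need to add such a hypothesis or acknowledge that this bound is obtained a posteriori. The remaining standard details you compress (choosing the density point of $\Sigma(W_k)$ \emph{off} $L_k$ so that the dimension lower bound survives the next blow-up, and the $t<1$ restrictions in (A3)(i) when deriving the cone and translation invariance of $\Sigma(V)$) are routine and do not affect the soundness of the outline.
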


\begin{proof}[Proof of Proposition \ref{prop: H dim sing}]
Since any scaling or translation does not change the Hausdorff dimension of a set, and since a countable union of sets with Hausdorff dimension less than or equal to some $d \in [0,+\infty)$ also has Hausdorff dimension less than or equal to $d$, we can prove the estimate for $\Gamma_{\mf{v}}^3 \cap B_1$, supposing that $\mf{v} \in \mathcal{L}_{\loc}(B_2)$. To this end, we apply the Federer's principle to the family
\[
\mathcal{F}:= \left\{ \mf{v} \in L^\infty_{\loc}(\R^N,\R^3) \left| \begin{array}{l} \text{there exists a domain $\Omega \supset \joinrel \supset B_2$}\\ \text{such that $\mf{v} \in \mathcal{L}_{\loc}(\Omega)$}\end{array}\right.\right\}.
\]
The results of the previous sections ensure that Assumption (A1) and (A2) are satisfied, see in particular Lemma \ref{lem: inv sc}, Remark \ref{rem: bu m2} and Proposition \ref{prop: conv blow-up}. Concerning (A3), let us define $\Sigma: \mathcal{F}\rightarrow \cC$ by $\Sigma(\mf{v})=\Gamma_{\mf{v}}^3 \cap B_1$ (we have already recalled that this is a closed set). It is straightforward to check that (A3)-($i$) is fulfilled. Moreover, the validity of (A3)-($ii$) is a direct consequence of the local uniform convergence of blow-up sequences, and of the fact that $\Gamma_{\mf{v}}^3$ is defined by the condition $\mf{v}(x)=0$ (which is stable under local uniform convergence). Therefore, the Federer's principle is applicable, and implies that either $\Gamma_{\mf{v}}^3 \cap B_1=\emptyset$ for every $\mf{v} \in \mathcal{F}$, or there exists an integer $0 \le d \le N-1$ such that
\[
\dim_{\cH} (\Gamma_{\mf{v}}^3 \cap B_1) \le d \qquad \text{for every $\mf{v} \in \cF$}.
\]
Moreover, in this latter case there exists a $d$-dimensional linear subspace $E \subset \R^N$, and a $\alpha$-homogeneous function $\mf{v} \in \cF$ (for some $\alpha\ge 0$) such that $\Gamma_{\mf{v}}^3=E$, and 
\begin{equation}\label{d-dim}
\frac{\mf{v}(x_0+\lambda x)}{\lambda^\alpha} = \mf{v}(x) \qquad \text{for every $x_0 \in E$ and $\lambda>0$};
\end{equation}
identity \eqref{d-dim} means that $\mf{v}$ is $\alpha$-homogeneous with respect to all the points in $E$, and hence, up to a rotation, it depends only on $N-d$ variables. 

Let us suppose by contradiction that $d=N-1$. Then, without loss of generality, we can suppose that $\mf{v}(x_1,\dots,x_N) = \mf{w}(x_1)$ for a function $\mf{w} \in \mathcal{L}_{\loc}(\R)$ with $\Gamma_{\mf{w}}^3 = \{x_1=0\}$. Notice also that we can extend $\mf{v}$ outside of $B_2$ as a function in $\mathcal{L}_{\loc}(\R^N)$, by homogeneity. Now, by Lemmas \ref{lem: N >=} and \ref{cor: comp 0 seg}, we have that $N(\mf{v},0,0^+) \ge \bar \nu$, and hence all the components of $\mf{v}$ and $\mf{w}$ are non-trivial. By homogeneity, $w_i(1)>0$ if and only $w_i(x_1)>0$ for every $x_1>0$, and $w_i(-1)>0$ if and only $w_i(x_1)>0$ for every $x_1<0$. By partial segregation, at least one component must vanish on $(-\infty,0]$, and at least one component must vanish on $[0,+\infty)$. This means that two components, say $w_1$ and $w_2$, satisfies the condition $w_1\,w_2 \equiv 0$ in $\R$. By Lemma \ref{lem: no |x|}, we deduce that $w_3$ is a non-negative harmonic function in $\R$, with $w_3(0) = 0$. Therefore, $w_3 \equiv 0$, a contradiction. This shows that $d \le N-2$, as desired.
\end{proof}

The previous proposition allows us to show that the zero sets of different components do not share common interior points.

\begin{lemma}\label{lem: tot emp int}
For any $\mf{v} \in \mathcal{L}_{\loc}(\Omega)$, we have that $\mathrm{int}\{v_i=0\} \cap \mathrm{int}\{v_j=0\} = \emptyset$ for every $i \neq j$, unless two components of $\mf{v}$ vanish identically and the other is strictly positive.
\end{lemma}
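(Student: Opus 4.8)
The plan is as follows. Fix, without loss of generality, the pair $(i,j)=(1,2)$ and set $U:=\mathrm{int}\{v_1=0\}\cap\mathrm{int}\{v_2=0\}$; since $\Omega$, being a domain, is connected, it suffices to show that if $U\neq\emptyset$ then necessarily $v_1\equiv v_2\equiv 0$ in $\Omega$ and $v_3>0$, i.e. we land in the exceptional case (the statement for the other two pairs follows by relabelling the components).

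First I would extract a connected component $V_0$ of $\{v_3>0\}$ on which both $v_1$ and $v_2$ vanish identically. Because $\Gamma_{\mf{v}}^3$ has empty interior (Lemma \ref{lem: 3 int vuo}) while $U$ is open and nonempty, there is a point $y_0\in U$ with $v_3(y_0)>0$; let $V_0$ be the component of $\{v_3>0\}$ through $y_0$. By Lemma \ref{cor: diff arm} the function $w:=v_1-v_2$ is harmonic in $\{v_3>0\}\supset V_0$, and it vanishes on the nonempty open set $U\cap V_0\ni y_0$; by real analyticity of harmonic functions, $w\equiv 0$ on the connected set $V_0$. Combined with $v_1v_2v_3\equiv 0$ and $v_3>0$ on $V_0$, this gives $v_1^2=v_1v_2=0$, hence $v_1\equiv v_2\equiv 0$ on $V_0$.

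The core of the argument is then to show $V_0=\Omega$. The key observation is that $\partial V_0\cap\Omega\subseteq\Gamma_{\mf{v}}^3$: a point $x\in\partial V_0\cap\Omega$ is a limit of points of $V_0$, so $v_1(x)=v_2(x)=0$ by continuity, while $v_3(x)=0$ because otherwise, $\{v_3>0\}$ being open, a whole (connected) neighbourhood of $x$ would meet $V_0$ and hence be contained in $V_0$, forcing $x\in V_0$. Therefore, by Proposition \ref{prop: H dim sing}, $\mathcal{H}^{N-1}(\partial V_0\cap\Omega)\le\mathcal{H}^{N-1}(\Gamma_{\mf{v}}^3)=0$; since a relatively closed subset of a domain with vanishing $(N-1)$-dimensional Hausdorff measure cannot disconnect it, the set $\Omega\setminus(\partial V_0\cap\Omega)=V_0\sqcup(\Omega\setminus\overline{V_0})$ is connected, which forces $\Omega\setminus\overline{V_0}=\emptyset$. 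Thus $V_0$ is dense in $\Omega$, and $v_1,v_2$, being continuous and zero on a dense set, vanish identically in $\Omega$. Finally $v_3$ is harmonic in $\Omega$ (it is harmonic where positive, and its nodal set $\{v_3=0\}=\Gamma_{\mf{v}}^3$ has dimension $\le N-2$, hence zero capacity, so the measure $\mu_3$ of Remark \ref{rem: on measures} carries no mass), nonnegative and not identically zero, so $v_3>0$ in $\Omega$ by the strong maximum principle. This is exactly the exceptional case.

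The main obstacle is the step in the last paragraph: recognizing that the free-boundary component $\partial V_0$ is trapped inside the thin set $\Gamma_{\mf{v}}^3$ and hence, by Proposition \ref{prop: H dim sing}, is too small to bound a proper subdomain. The two soft facts invoked there — that a relatively closed set of codimension at least one does not separate a domain, and that such a set is removable for the nonnegative $H^1$ subharmonic function $v_3$ — are classical, and I would only quote them.
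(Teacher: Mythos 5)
Your proof follows the same overall strategy as the paper: pick a component $V_0$ of $\{v_3>0\}$ containing a point of $U:=\mathrm{int}\{v_1=0\}\cap\mathrm{int}\{v_2=0\}$, use harmonicity of $v_1-v_2$ on $V_0$ (Lemma~\ref{cor: diff arm}) and unique continuation to get $v_1\equiv v_2\equiv 0$ on $V_0$, and then argue that $V_0$ must be all of $\Omega$ because its relative boundary sits inside the thin set $\Gamma_{\mf{v}}^3$. You implement the last step differently — and, frankly, somewhat more cleanly — than the paper. Where the paper asserts that for a strict open inclusion $\omega\subsetneq\Omega$ the piece $\partial\omega\cap\Omega$ must have dimension at least $N-1$ (a statement that is not literally true for arbitrary open $\omega$, e.g. $\omega=\Omega\setminus\{p\}$), you instead invoke the correct topological fact that a relatively closed set of $\mathcal H^{N-1}$-measure zero cannot separate a domain. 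This is the right formulation and it only requires $\dim_{\mathcal H}\Gamma_{\mf v}^3<N-1$, which Proposition~\ref{prop: H dim sing} does give.

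The one genuine gap is in the final step, where you pass from ``$V_0$ dense in $\Omega$, hence $v_1\equiv v_2\equiv 0$'' to ``$v_3>0$'' by claiming that $\{v_3=0\}=\Gamma_{\mf v}^3$ has zero capacity because its Hausdorff dimension is at most $N-2$. This implication is not valid in general: $\dim_{\mathcal H}E\le N-2$ only guarantees $E$ is polar when the inequality is strict (or when $\mathcal H^{N-2}(E)$ is $\sigma$-finite); a set of dimension exactly $N-2$ may have positive $W^{1,2}$-capacity, and at this point in the paper nothing stronger than the dimension bound is available for $\Gamma_{\mf v}^3$ in general $N$. The fix is simple and does not require removability at all: once $v_1\equiv v_2\equiv 0$ in $\Omega$, Lemma~\ref{lem: no self} (as in the proof of Lemma~\ref{cor: comp 0 seg}) applies at every point — the remaining nontrivial component is subharmonic, hence nontrivial on every small sphere, and the lemma then yields $v_3>0$ in every ball. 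With that substitution your argument is complete and correct.
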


\begin{proof}
Let us suppose that $x_0 \in \mathrm{int}\{v_i=0\} \cap \mathrm{int}\{v_j=0\}$ for a pair of indexes, say $i=1$ and $j=2$. Then there exists $R>0$ such that $v_1 = 0 = v_2$ in $B_R(x_0)$. If $v_3(x_0)=0$, since $\Gamma_{\mf{v}}^3$ has empty interior, we must have that $v_3|_{S_\rho(x_0)}\neq 0$ for every $\rho <R$. Then $v_3(x_0)>0$ by Lemma \ref{lem: no self}, a contradiction. Therefore, $v_3(x_0)>0$, and by Lemma \ref{cor: diff arm} we have that $v_1-v_2$ is harmonic in the connected component $\omega$ of $\{v_3>0\}$ containing $x_0$. Since $v_1-v_2 = 0$ in $B_R(x_0)$, by unique continuation $v_1-v_2 = 0$ in $\omega$. If $\omega \subset \Omega$ with strict inclusion, then there exists a portion $\sigma \subset \partial \omega$ of dimension at least $(N-1)$ contained inside $\Omega$ (the boundary of an open bounded set with non-empty interior in $\R^N$ cannot have Hausdorff dimension strictly smaller than $N-1$). On $\sigma$, by continuity $v_1=v_2=0$, and moreover $v_3=0$, since $\sigma \subset \pa \omega \subset \{v_3=0\}$. Hence, $\mathrm{dim}_{\mathcal{H}}(\Gamma_{\mf{v}}^3) \ge N-1$, in contradiction with Proposition \ref{prop: H dim sing}. This means that $\omega =\Omega$, namely $v_1 \equiv v_2 \equiv 0$ in $\Omega$, and $v_3>0$ in $\Omega$.
\end{proof}

We are ready to prove the main result regarding the structure of the nodal set $\Gamma_{\mf{v}}$ and of the free-boundary $\Gamma_{\mf{v}}'$ (recall definitions \eqref{def fb} and \eqref{def fb2}).

\begin{proof}[Proof of Theorem \ref{thm: nodal set}]
($i$) Assume that $\Gamma_{\mf{v}} \neq \Omega$. If $x_0 \in \Gamma_{\mf{v}}'$, then $x_0 \in \pa \{v_i>0\}$ for at least an index $i$, say $x_0 \in \pa \{v_1>0\}$. Then $v_1(x_0)=0$, and moreover for every $r>0$ there exists $x_r \in B_r(x_0)$ such that $v_1(x_r)>0$. If $v_2(x_0), v_3(x_0)>0$, then by continuity $v_1,v_2>0$ in a small ball $B_\rho(x_0)$. Therefore, such ball contains points where all the components are positive, in contradiction with the partial segregation condition. It is then necessary that at least one of the equalities $v_2(x_0)=0$ and $v_3(x_0)=0$ is satisfied, and hence $x_0 \in \Gamma_{\mf{v}}$. This proves that $\Gamma_{\mf{v}}' \subset \Gamma_{\mf{v}}$. On the other hand, let $x_0 \in \Gamma_{\mf{v}}$; thus, $x_0 \in \{v_i=0\}$ for at least two indexes, say $i=1,2$. If $x_0 \not \in \Gamma_{\mf{v}}'$, then $x_0 \not \in \pa \{v_j=0\}$ for any index $j=1,2,3$, and hence $x_0 \in \mathrm{int}\{v_1=0\} \cap \mathrm{int}\{v_2=0\}$, in contradiction with Lemma \ref{lem: tot emp int}. Thus, $x_0 \in \Gamma_{\mf{v}}'$, and hence $\Gamma_{\mf{v}} \subset \Gamma_{\mf{v}}'$.\\
($ii$) It is an easy consequence of Proposition \ref{prop: H dim sing}, and Lemmas \ref{prop: diff arm} and \ref{lem: tot emp int}. Indeed, any point $x_0 \in \Gamma_{\mf{v}}^2$ has a neighborhood where $v_i-v_j$ is harmonic for some $i \neq j$, namely $\Gamma_{\mf{v}}^2$ has locally the same structure as the nodal set of a harmonic function. This means that, unless two components of $\mf{v}$ vanish identically and the other is strictly positive, $\Gamma_{\mf{v}}^2$ splits as $\mathcal{R}_{\mf{v}}^2 \cup \Sigma_{\mf{v}}^2$, where $\mathcal{R}_{\mf{v}}^2$ is the union of smooth hypersurfaces of dimension $N-1$, $\Sigma_{\mf{v}}^2$ has Hausdorff dimension at most $N-2$, and is a locally finite set in dimension $2$. Now it is sufficient to pose $\mathcal{R}_{\mf{v}}:=\mathcal{R}_{\mf{v}}^2$ and $\Sigma_{\mf{v}} = \Sigma_{\mf{v}}^2 \cup \Gamma_{\mf{v}}^3$, and the thesis follows.  \\
 ($iii$) For concreteness, we suppose by contradiction that $0 \in \{v_1=0\}$ and $0 \not \in \overline{\mathrm{int}\{v_1=0\}}$. Then there exists $R>0$ such that $\mathrm{int}\{v_1=0\} \cap B_R =\emptyset$. Hence, by subharmonicity, $v_1|_{S_\rho} \not \equiv 0$ for every $\rho>0$. If $v_2(0),v_3(0)>0$, then by continuity the partial segregation condition would be violated. Therefore, at least another component, say $v_2$, vanishes at $0$. We distinguish two cases, $v_3(0)>0$ and $v_3(0) = 0$, and we obtain a contradiction in both of them. This completes the proof.
 
If $v_3(0)>0$, up to replacing $R$ with a smaller quantity, we can also assume that $\inf_{B_R} v_3>0$. We claim that $v_2 \equiv 0$ in $B_R$. Indeed, if not, then we should have
\[
v_1 \not \equiv 0, \quad v_2 \not \equiv 0, \quad v_1\,v_2 \equiv 0 \quad \text{in $B_R$}.
\]
But this implies by continuity that any point in $\{v_2>0\}$ is in $\mathrm{int}\{v_1=0\} \cap B_R \neq \emptyset$, in contradiction with the definition $R$. Therefore, $v_2 \equiv 0$ in $B_R$, as claimed. Since instead $v_1|_{S_{R-\eps}} \not \equiv 0$, Lemma \ref{lem: no self} implies that $v_1>0$ in $B_R$, in contradiction with the fact that $v_1(0) = 0$. Thus, the case $v_3(0)>0$ cannot take place.

On the other hand, if $v_3(0)=0$, we claim that $v_2 \, v_3 \equiv 0$ in $B_R$. Indeed, if not, then we should have
\[
v_1 \not \equiv 0, \quad v_2 \,v_3 \not \equiv 0, \quad v_1\,v_2\,v_3 \equiv 0 \quad \text{in $B_R$}.
\]
As before, this implies by continuity that any point in $\{v_2 v_3>0\}$ is in $\mathrm{int}\{v_1=0\} \cap B_R \neq \emptyset$, in contradiction with the definition $R$. Therefore, $v_2\, v_3 \equiv 0$ in $B_R$, and Lemma \ref{lem: no |x|} implies that $v_1$ is harmonic in $B_{R/2}$. In particular, either $v_1>0$ in $B_{R/2}$, or $v_1 \equiv 0$ in $B_{R/2}$, and both of these alternatives lead to a contradiction. 
\end{proof}

\begin{remark}\label{rem: on triple}
Let $\mf{v} \in \mathcal{L}_{\loc}(\Omega)$ be such that $v_i \not \equiv 0$ for every $i$, and let $x_0 \in \Gamma_{\mf{v}}^3$ be a triple point. Then we have the following basic properties:

($i$) For every $r>0$ and every $i =1,2,3$ we have $B_r(x_0) \cap \{v_i>0\} \neq \emptyset$. Indeed, if by contradiction $v_i \equiv 0$ in $B_{r_0}(x_0)$ for some $r_0>0$ and an index $i$, then by Lemma \ref{lem: no self} one easily deduces that any another component must be strictly positive at $x_0$, a contradiction.

($ii$) For every $r>0$ and every $i =1,2,3$ we also have $B_r(x_0) \cap \mathrm{int}\{v_i=0\} \neq \emptyset$. Indeed, if not $\{v_i=0\} \cap B_{r_0}(x_0)=\pa \{v_i=0\} \cap B_{r_0}(x_0)$ for some $r_0>0$ and an index $i$, say $i=3$. Since $\Gamma_{\mf{v}}$ has dimension $N-1$, we deduce that $v_1\,v_2 \equiv 0$ in $B_{r_0}(x_0)$. Thus, Lemma \ref{lem: no |x|} implies that $v_3$ is harmonic in $B_{r_0}(x_0)$ and, being non-negative and non-trivial, it cannot vanish at $x_0$, a contradiction again. 
\end{remark}

\section{Improved H\"older bounds and $C^{0,3/4}$ regularity}\label{sec: ihb}

The main result in \cite{ST24p1} establishes the validity of uniform H\"older estimates for sequences $\{\mf{u}_\beta\}$ of uniformly bounded local minimizers, for ``small" exponents $0<\alpha < \bar \nu \le 2/3$. In this section we aim at improving this result, by demonstrating Theorem \ref{thm: improved Holder bounds}: we will obtain uniform H\"older estimates up to the exponent $\hat \nu =3/4$, and show that any function in $\mathcal{L}_{\loc}$ is $C^{0,3/4}$ regular in $\Omega$. Subsequently, in Section \ref{sec: mer}, we will also prove Theorem \ref{prop: mer min} which demonstrates that the optimality of the result.

To prove the validity of $C^{0,3/4-}$ estimates, we recall from \cite[Remark 4.14]{ST24p1} that the limitation $\alpha < \bar \nu$ in \cite[Theorem 1.1]{ST24p1} enters into the game only when applying two Liouville-type theorems for entire solutions of certain systems modeling partial segregation, see \cite[Corollary 3.5 and Theorem 3.6]{ST24p1}. We recall the statements for the reader's convenience.

\begin{corollary}
[Corollary 3.5 in\cite{ST24p1}]\label{cor: liou seg}
Let $N \ge 2$ be a positive integer, and let $\alpha \in (0, \bar \nu)$. For $i=1,2,3$, let $u_i \in H^1_{\loc}(\R^N) \cap C(\R^N)$ be globally $\alpha$-H\"older continuous functions in $\R^N$, such that
\[
-\Delta u_i \le 0 \quad \text{and} \quad u_i(x) \ge 0 \quad \text{in $\R^N$},
\]
and moreover the partial segregation condition holds:
\[
\prod_{j=1}^3 u_j  \equiv 0 \quad \text{in $\R^N$},
\]
Then at least one component $u_i$ vanishes identically.
\end{corollary}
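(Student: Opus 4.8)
The plan is to argue by contradiction, assuming all three components are non-trivial, and to reach a contradiction by combining an Alt--Caffarelli--Friedman (ACF) type monotonicity formula for the triple with the $\alpha$-H\"older growth control. The key point is that the threshold exponent produced by the formula is exactly $\bar\nu$: this is precisely what the ``overlapping partition'' problem on the sphere that characterizes $\bar\nu$ in \cite{ST24p1} encodes.

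\emph{Preliminary bounds and a degenerate case.} Since each $u_i\ge 0$ is subharmonic and globally $\alpha$-H\"older with $\alpha<1$, a Caccioppoli estimate together with $u_i(x)\le u_i(x_0)+C|x-x_0|^\alpha$ gives, for every fixed $x_0$ and every $r\ge 1$, that $\int_{B_r(x_0)}|\nabla u_i|^2\,dx\le C r^{N-2+2\alpha}$, and hence, decomposing the integral into dyadic annuli, that $D_i(r):=\int_{B_r(x_0)}|\nabla u_i|^2\,|x-x_0|^{2-N}\,dx\le C r^{2\alpha}$. One must isolate a degenerate case: if some component, say $u_1$, is constant, then (being non-trivial) $u_1\equiv c>0$, so $u_2u_3\equiv 0$; then $\{u_2>0\}$ and $\{u_3>0\}$ are non-empty proper disjoint open sets, and any $x_0\in\partial\{u_2>0\}$ is a common zero of $u_2$ and $u_3$ (a neighbourhood of $x_0$ meets $\{u_2>0\}$, which is disjoint from the open set $\{u_3>0\}$). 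The classical two-function ACF formula then makes $r\mapsto r^{-4}D_2(r)D_3(r)$ non-decreasing and eventually positive, while the bound above forces it to be $\le Cr^{4\alpha-4}\to 0$ -- a contradiction. So we may assume every $u_i$ is non-constant.

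\emph{Three-function monotonicity and conclusion.} Fix $x_0=0$ and set $\Phi(r):=r^{-2\Gamma_0}D_1(r)D_2(r)D_3(r)$. Writing $\omega_i^r:=\{u_i>0\}\cap S_r$, the partial segregation condition gives $\omega_1^r\cap\omega_2^r\cap\omega_3^r=\emptyset$, while, once $D_i(r)>0$, the maximum principle forces $\omega_i^r\neq\emptyset$ (otherwise $u_i\equiv 0$ in $B_r$). The standard ACF computation yields, for each $i$, $\frac{d}{dr}\log\bigl(r^{-2\gamma_i}D_i(r)\bigr)\ge \frac{2}{r}\bigl(\gamma(\omega_i^r)-\gamma_i\bigr)$, where $\gamma(\omega)=\sqrt{\lambda_1(\omega)+((N-2)/2)^2}-(N-2)/2$; hence, choosing $\gamma_1+\gamma_2+\gamma_3=\Gamma_0$,
\[
\frac{d}{dr}\log\Phi(r)\ \ge\ \frac{2}{r}\Bigl(\sum_{i=1}^3\gamma(\omega_i^r)-\Gamma_0\Bigr),
\]
which is $\ge 0$ as soon as $\sum_i\gamma(\omega_i^r)\ge\Gamma_0$ for every triple of non-empty open subsets of $S^{N-1}$ with empty triple intersection. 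Taking $\Gamma_0$ to be the optimal constant in this spherical inequality -- the quantity characterizing $\bar\nu$, with $\Gamma_0=3\bar\nu$ -- we conclude that $\Phi$ is non-decreasing on the unbounded interval where all three $D_i$ are positive (which is non-empty because each $u_i$ is non-constant), so $\Phi$ stays bounded below there by a positive constant. On the other hand the growth bound gives $\Phi(r)\le C r^{6\alpha-2\Gamma_0}=C r^{6(\alpha-\bar\nu)}\to 0$ since $\alpha<\bar\nu$. This contradiction proves that at least one $u_i$ vanishes identically.

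\emph{Main obstacle.} The hard ingredient is the sharp spherical eigenvalue inequality $\sum_i\gamma(\omega_i)\ge 3\bar\nu$ for triples of open subsets of $S^{N-1}$ with \emph{empty triple intersection}, together with the corresponding ACF differential inequality: contrary to the fully segregated case the caps $\omega_i^r$ are not pairwise disjoint but only triple-intersection-free, so the underlying Friedland--Hayman-type estimate must be adapted to this weaker geometric constraint -- and it is exactly here that both the constraint $u_1u_2u_3\equiv 0$ and the hypothesis $\alpha<\bar\nu$ genuinely enter. Everything else (the Caccioppoli and growth estimates, the two-function ACF formula, the passage from monotonicity to the growth contradiction) is routine.
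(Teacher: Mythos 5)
Your proposal reconstructs exactly the strategy that the paper attributes to \cite{ST24p1}: an Alt--Caffarelli--Friedman-type monotonicity formula for the product of three weighted Dirichlet integrals, whose monotonicity hinges on a sharp lower bound for $\sum_i\gamma(\omega_i)$ over triples of open subsets of $S^{N-1}$ with empty triple intersection (the ``overlapping partition problem'' on the sphere), combined with the sublinear H\"older growth to force the ACF quantity to zero at infinity. This is indeed what the paper says is behind this corollary (``based upon an Alt--Caffarelli--Friedman monotonicity formula for partially segregated functions \dots the exponent $\bar\nu$ is characterized in terms of an optimal `overlapping partition problem' on the sphere which is a key ingredient to derive the monotonicity''), and the identification $\bar\nu=\Gamma_0/3$ with $\Gamma_0$ the optimal spherical constant is consistent with $\bar\nu\le 2/3$ (the infimum of $\sum_i\gamma(\omega_i)$ under empty triple intersection is $2$, approached by $\omega_1\to S^{N-1}$, $\omega_2,\omega_3$ complementary half-spheres). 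Your handling of the degenerate case and the Caccioppoli/dyadic estimate are routine and fine. The one thing you should be explicit about is that the ACF differential inequality here does not require $u_i(0)=0$ for all $i$ simultaneously (unlike the fully segregated two-phase version where a common zero is needed); the boundary contribution at the origin has a favorable sign for nonnegative subharmonic functions, and the eigenvalue step on $S_r$ only uses where each $u_i$ vanishes on the sphere, so the formula applies at an arbitrary center as long as each $D_i$ is positive at some radius, which your non-constancy reduction guarantees. Since you explicitly flag the spherical eigenvalue inequality and the associated ACF differential inequality as the crux (proved in \cite[Section 2]{ST24p1}) rather than prove it, your write-up is a faithful reconstruction of the same proof scheme rather than a genuinely different route.
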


\begin{theorem}
[Follows from Theorem 3.6 in \cite{ST24p1}]\label{thm: liou sub}
Let $N \ge 2$ be a positive integer, and, for $i=1,2,3$, let $u_i \in H^1_{\loc}(\R^N) \cap C(\R^N)$ be nonnegative functions satisfying 
\beq\label{ent sys}
\Delta u_i = M u_i \prod_{j \neq i} u_j^2 \quad \text{in $\R^N$},
\eeq
for some constant $M >0$. Assume moreover that 
\[
0\le u_i(x) \le C(1+|x|^{\alpha}) \quad \text{for every $x \in \R^N$},
\]
with $\alpha  \in (0,\bar \nu)$. Then at least one component $u_i$ vanishes identically.
\end{theorem}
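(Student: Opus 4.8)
\emph{Plan of proof.} I would argue by contradiction, assuming that every component $u_i$ is nontrivial, and reach a contradiction by a blow-down analysis at infinity that reduces the statement to the ``segregated'' Liouville theorem, Corollary \ref{cor: liou seg}. First, rewriting the $i$-th equation as $-\Delta u_i + c_i(x)u_i=0$ with $c_i:=M\prod_{j\neq i}u_j^2\ge 0$, the strong maximum principle gives $u_i>0$ in $\R^N$ for each $i$. Since $0\le u_i(x)\le C(1+|x|^\alpha)$, Lemma \ref{lem: N infty coex}(i) yields $N(\mf{u},0,r)\le\alpha$ for every $r>0$; as $N(\mf{u},0,\cdot)$ is monotone non-decreasing (Proposition \ref{prop: alm coex}), the limit $d:=N(\mf{u},0,+\infty)\in(0,\alpha]$ exists, and it is strictly positive, for otherwise $N(\mf{u},0,\cdot)\equiv 0$ and Lemma \ref{lem: N infty coex}(ii) would produce a vanishing component, against our assumption. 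Hence $0<d\le\alpha<\bar\nu$.

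Next, I would fix the blow-down scale from the Almgren residual estimate. Integrating \eqref{stima res N} on $(1,+\infty)$ gives
\[
\int_1^{+\infty}\frac{2}{s^{N-1}H(\mf{u},0,s)}\left(\int_{B_s}M\prod_{j=1}^3 u_j^2\,dx\right)ds \ \le\ N(\mf{u},0,+\infty)\ =\ d\ <\ +\infty,
\]
so there is a sequence $\rho_m\to+\infty$ along which $\frac{1}{\rho_m^{N-2}H(\mf{u},0,\rho_m)}\int_{B_{\rho_m}}M\prod_j u_j^2\,dx\to 0$. Setting $v_{i,m}(x):=H(\mf{u},0,\rho_m)^{-1/2}u_i(\rho_m x)$, a direct scaling computation shows that $\mf{v}_m$ solves the same system with competition parameter $M_m:=\rho_m^2 M\,H(\mf{u},0,\rho_m)^2\to+\infty$, that $H(\mf{v}_m,0,1)=1$ and $N(\mf{v}_m,0,r)=N(\mf{u},0,\rho_m r)\le\alpha$ for all $r>0$, and --- crucially --- that $M_m\int_{B_1}\prod_j v_{j,m}^2\,dx\to 0$. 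The doubling Lemma \ref{lem: doubling coex} then makes $\{\mf{v}_m\}$ bounded in $H^1_{\loc}\cap L^\infty_{\loc}(\R^N)$, with each $v_{i,m}$ subharmonic of locally bounded Laplacian mass.

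Running the compactness scheme of Section \ref{sec: blow-up} (uniform local H\"older bounds near the free boundary, then strong $H^1_{\loc}$ convergence by testing the equations against $(v_{i,m}-v_i)\varphi$), one extracts a subsequence with $\mf{v}_m\to\mf{v}$ in $C^0_{\loc}\cap H^1_{\loc}(\R^N)$; passing to the limit and using $M_m\int_{B_1}\prod_j v_{j,m}^2\,dx\to 0$ gives $v_i\ge 0$, $v_i$ subharmonic in $\R^N$ and harmonic on $\{v_i>0\}$, $v_1v_2v_3\equiv 0$ on $B_1$, while $N(\mf{v},0,r)=\lim_m N(\mf{u},0,\rho_m r)=d$ for every $r$, so by Proposition \ref{prop: alm seg} $\mf{v}$ is $d$-homogeneous; by homogeneity $v_1v_2v_3\equiv 0$ on all of $\R^N$ and $\int_{S_1}\sum_i v_i^2\,d\sigma=1$, in particular $\mf{v}\not\equiv 0$. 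Being $d$-homogeneous, continuous and bounded on $S^{N-1}$, $\mf{v}$ is globally $d$-H\"older on $\R^N$, with $d\le\alpha<\bar\nu$; Corollary \ref{cor: liou seg} then forces at least one component of $\mf{v}$ to vanish identically. To conclude one checks that this cannot occur while all components of $\mf{u}$ are positive: if, say, $v_3\equiv 0$, then $(v_1,v_2)$ is a $d$-homogeneous, nonnegative, subharmonic pair, harmonic where positive, with $d<\bar\nu\le 2/3<1$; re-blowing the surviving component at its own scale (or inspecting the first spherical eigenvalue forced by $d$-homogeneity) again contradicts the minimality of $\bar\nu$ in the overlapping-partition problem of \cite{ST24p1}. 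This is in substance Theorem~3.6 of \cite{ST24p1}.

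I expect the main obstacle to be the compactness step. Because $M_m\to+\infty$ the interior elliptic estimates for $\mf{v}_m$ degenerate, so the uniform local H\"older bounds for the rescaled sequence, together with the fact that the normalization $H(\mf{v}_m,0,1)=1$ and the Almgren frequency pass to the limit without loss of mass, must be extracted from the monotonicity formula and the singular-perturbation machinery (in the spirit of \cite{NTTV10, BeTeWaWe, ST15, ST24p1}) rather than from plain elliptic theory; keeping track of the possible degeneration of the blow-down --- a component may be ``shadowed'' by a faster-growing one --- is what makes the final case distinction delicate.
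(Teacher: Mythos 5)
Your route --- a blow-down at infinity via the Almgren frequency, reducing to Corollary~\ref{cor: liou seg} --- is different from the one in \cite{ST24p1}: the paper notes, right after the two recalled Liouville statements, that Theorem~3.6 there is proved via an Alt--Caffarelli--Friedman-type monotonicity formula for partially segregated triplets, with $\bar\nu$ entering as the threshold that makes a product of suitably rescaled Dirichlet-type integrals monotone. That argument works directly on $\mf{u}$: the growth hypothesis forces the monotone quantity to vanish identically, so one factor is zero, and combined with the equation and the strong maximum principle one component must vanish. No blow-down, hence no loss of components.

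Your scheme has a genuine gap exactly where you flag it. After the blow-down you produce a $d$-homogeneous triplet $\mf{v}$, $0<d\le\alpha<\bar\nu$, globally $d$-H\"older, nonnegative, subharmonic, harmonic where positive, satisfying $v_1v_2v_3\equiv 0$, and Corollary~\ref{cor: liou seg} gives that some component of $\mf{v}$, say $v_3$, vanishes. This, however, is not a contradiction. Once $v_3\equiv 0$ the ternary constraint on $\mf{v}$ is vacuous: $(v_1,v_2)$ is just an unconstrained pair of nonnegative $d$-homogeneous subharmonic functions, harmonic where positive, and such objects exist with $d$ inside $(0,\bar\nu)$ (in the plane, $v_1(r,\theta)=r^d(\sin(d\theta))_+$, $v_2\equiv 0$ are admissible for any $d>1/2$, and $\bar\nu\le 2/3$). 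The overlapping partition problem characterising $\bar\nu$ is genuinely ternary and puts no lower bound on the homogeneity of one or two \emph{unconstrained} components, so ``inspecting the first spherical eigenvalue'' cannot close the argument. What would upgrade Corollary~\ref{cor: liou seg} to the desired contradiction is knowing that \emph{all} components of the blow-down are nontrivial; in this paper that step is Lemma~\ref{lem: blow-down} combined with Lemma~\ref{cor: comp 0 seg}, but both use minimality in an essential way (through Lemma~\ref{lem: no self}), whereas Theorem~\ref{thm: liou sub} assumes no minimality. Re-blowing at the slower scale of the lost component does not help either, since the two faster-growing components then escape to infinity and no nontrivial finite limit remains. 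In short, a single Almgren blow-down discards exactly the information the ACF proof is designed to retain, and as written the argument does not establish the theorem.
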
 

The proof of these Liouville-type theorems is based upon an Alt-Caffarelli-Friedman monotonicity formula for partially segregated function. In fact, the exponent $\bar \nu$ is characterized in terms of an optimal ``overlapping partition problem" on the sphere which is a key ingredient to derive the monotonicity of the ACF-type functional, see \cite[Section 2]{ST24p1}.

In \cite{ST24p1}, we applied the previous results to entire functions obtained as limit of scaled local minimizers of the associated energy. In particular, Corollary \ref{cor: liou seg} was applied to globally H\"older continuous functions in $\mathcal{L}_{\loc}(\R^N)$, and Theorem \ref{thm: liou sub} was applied to solutions of \eqref{ent sys} which are minimal for the associated energy with respect to variations with compact support. We stress that the minimality property was not used in \cite{ST24p1}. In this section we show how to leverage minimality to obtain improved Liouville theorems, that is, with a threshold $\hat \nu=3/4$, greater than $\bar \nu \le 2/3$ for the growth of the solutions considered. 

This allows to pass from uniform $C^{0,\alpha}$ bounds for small $\alpha$, to uniform $C^{0,3/4-}$ bounds.

\begin{proposition}\label{prop: imp liou}
Let $\nu \in (0,3/4)$, and suppose that one of the following assumptions is satisfied:
\begin{itemize}
\item[(1)] $\mf{v} \in \mathcal{L}_{\loc}(\R^N)$ and $\mf{v}$ is globally $\nu$-H\"older continuous.
\item[(2)] $\mf{v} \not \equiv 0$, $\mf{v} \in H^1_{\loc}(\R^N) \cap C(\R^N)$ is a minimizer of $J_{M}$ with respect to variations with compact support in $\R^N$, and $0 \le v_i(x) \le C(1+|x|^\nu)$ for every $x \in \R^N$, for some positive constants $C, M > 0$.
\end{itemize}
Then at least one component $v_i$ vanishes identically.
\end{proposition}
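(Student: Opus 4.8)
The plan is to argue by contradiction. Suppose that all three components of $\mf{v}$ are nontrivial; in case (2) each $v_i$ solves $\Delta v_i=Mv_i\prod_{j\ne i}v_j^2$, so by the strong maximum principle each $v_i$ is either $\equiv0$ or strictly positive, and the hypothesis becomes $v_i>0$ for all $i$. The idea is to produce, by blowing down, a nontrivial $\gamma$-homogeneous element of $\mathcal{L}_{\loc}(\R^N)$ of frequency $\gamma<3/4$, and then to rule this out by classifying such profiles --- it is precisely in this classification that minimality is used, beyond the a priori H\"older bounds of \cite{ST24p1}.

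\emph{Blow-down.} Fix $x_0=0$. In case (1) the Almgren quotient $r\mapsto N(\mf{v},0,r)$ is monotone non-decreasing (Proposition~\ref{prop: alm seg}) and bounded above by $\nu$ by Lemma~\ref{lem: N infty seg}; in case (2) the same holds via Proposition~\ref{prop: alm coex} and Lemma~\ref{lem: N infty coex}($i$). Hence $\gamma:=\lim_{r\to+\infty}N(\mf{v},0,r)\in[0,\nu]$ exists. Set $\mf{v}_R(x):=\mf{v}(Rx)/H(\mf{v},0,R)^{1/2}$. The doubling estimates (Lemmas~\ref{lem: doubling coex} and~\ref{lem: doubling seg}) give uniform $H^1_{\loc}\cap L^\infty_{\loc}$ and, arguing as in Section~\ref{sec: blow-up}, $C^{0,\alpha}_{\loc}$ bounds on $\{\mf{v}_R\}$; moreover $\mf{v}_R$ minimizes the rescaled energy --- in case (2) with competition parameter $M_R=R^2H(\mf{v},0,R)^2M$, which tends to $+\infty$ since $H(\mf{v},0,\cdot)$ is positive and non-decreasing, and in case (1) $\mf{v}_R\in\mathcal{L}_{\loc}$ by the blow-down analogue of Lemma~\ref{lem: inv sc}. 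Passing to the limit as in Proposition~\ref{prop: conv blow-up} (using \eqref{stima res N} to control the residual in case (2)) we get, along a subsequence, $\mf{v}_R\to\mf{w}$ with $\mf{w}\in\mathcal{L}_{\loc}(\R^N)$, $\mf{w}\not\equiv0$ (because $\|\mf{w}\|_{L^2(S_1)}=1$), and $N(\mf{w},0,\cdot)\equiv\gamma$, so $\mf{w}$ is $\gamma$-homogeneous by Proposition~\ref{prop: alm seg}. If $\gamma=0$, then $N(\mf{v},0,\cdot)\equiv0$, and Lemma~\ref{cor: comp 0 seg} (case (1)) or Lemma~\ref{lem: N infty coex}($ii$) (case (2)) forces some $v_i\equiv0$, contradicting the assumption. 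Thus $\gamma\in(0,3/4)$.

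\emph{Classification of the profile.} First, all components of $\mf{w}$ are nontrivial: if, say, $w_3\equiv0$, then either $\Gamma_{\mf{w}}=\R^N$, so by Theorem~\ref{thm: nodal set} the remaining component is positive and harmonic, hence constant, giving $\gamma=0$; or $\Gamma_{\mf{w}}\ne\R^N$, so $\{w_1=0\}\subseteq\Gamma_{\mf{w}}$ has Hausdorff dimension $\le N-1$, hence empty interior, whence $\{w_1=0\}=\emptyset$ by Theorem~\ref{thm: nodal set}($iii$), and $w_1>0$ is harmonic and $\gamma$-homogeneous, hence constant, giving again $\gamma=0$ --- both contradict $\gamma>0$. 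Now restrict $\mf{w}$ to $S^{N-1}$ and put $\tau_i:=\{w_i>0\}\cap S^{N-1}$. Since $\mf{w}$ is $\gamma$-homogeneous and $w_i$ is harmonic and nonnegative where positive, $w_i|_{S^{N-1}}$ is a \emph{first} Dirichlet eigenfunction of $-\Delta_{S^{N-1}}$ on each connected component of $\tau_i$, with eigenvalue exactly $\gamma(\gamma+N-2)$. By partial segregation $\tau_1\cap\tau_2\cap\tau_3=\emptyset$; by Proposition~\ref{prop: H dim sing} the set of triple directions has dimension $\le N-3$, so $\tau_1\cup\tau_2\cup\tau_3=S^{N-1}$ up to a null set; and, by Theorem~\ref{thm: nodal set}($iii$) together with Lemma~\ref{lem: tot emp int}, the complements $S^{N-1}\setminus\tau_i$ are the closures of the members of an open partition of $S^{N-1}$ into three pieces. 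This is an overlapping optimal-partition problem carrying the \emph{exact} eigenvalue constraint (for mere subsolutions one has only an inequality, which is why \cite{ST24p1} gets the suboptimal $\bar\nu\le2/3$). A Federer-type dimension reduction, as in the proof of Proposition~\ref{prop: H dim sing}, shows that the infimal admissible frequency over all dimensions is attained by the planar configuration, where it is elementary: three arcs of length $\pi/\gamma$ covering $S^1$ with no triple overlap satisfy $3\pi/\gamma=\int_{S^1}\#\{i:\theta\in\tau_i\}\,d\theta\le 2\cdot2\pi$, i.e.\ $\gamma\ge3/4$. This contradicts $\gamma<3/4$ and completes the proof.

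\emph{Main obstacle.} The heart of the matter is the classification step: one must upgrade the Alt--Caffarelli--Friedman spherical eigenvalue inequality to an equality by exploiting harmonicity where positive, and use the structure theorem (itself built on minimality) to turn the positivity sets on $S^{N-1}$ into complements of a genuine partition; carrying out the Federer-type reduction that brings the optimal-partition estimate in $\R^N$ down to the planar case --- where $3/4$ is sharp, cf.\ Theorem~\ref{prop: mer min} --- is the most delicate technical point.
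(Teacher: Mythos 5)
Your blow-down reduction (covering case (2), and redundantly also case (1)) is sound and essentially the paper's Lemma~\ref{lem: blow-down}. You correctly extract a nontrivial $\gamma$-homogeneous profile $\mf{w}\in\mathcal{L}_{\loc}(\R^N)$ with $\gamma\in(0,3/4)$ and all components nontrivial, and you correctly identify the upgrade from subsolution-type inequalities to the exact spherical eigenvalue $\gamma(\gamma+N-2)$ as the source of the improvement over $\bar\nu$. Your $N=2$ computation ($\sum_i|\tau_i|\le 2\cdot 2\pi$, each component of $\tau_i$ an arc of length $\pi/\gamma$, hence $\gamma\ge 3/4$) is correct and parallels the paper's Lemma~\ref{lem: 3/4}, Step~1.

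The gap is in the passage from $N\ge 3$ to the planar configuration. You invoke ``a Federer-type dimension reduction'' to claim that the infimal admissible frequency is attained in $N=2$, but you do not carry it out, and Federer's principle as stated in Theorem~\ref{teo:FRP} does not apply here: it bounds the Hausdorff dimension of a singular set, it is not a mechanism for transferring an eigenvalue/measure inequality on $S^{N-1}$ to an inequality on $S^1$. There is no compactness or lower-semicontinuity structure set up for your overlapping-partition optimization that would make a reduction-by-blow-up argument work, and you do not supply one. The paper instead proves the reduction by an entirely different route, namely a direct induction on $N$: Lemma~\ref{lem: triple on sphere} shows, via a delicate connectedness/winding-number argument (inherited from the minimality-based structure of $\Gamma_{\mf{w}}$ --- connectedness of the positivity sets and of their complements, the open-partition property, and the classification of arcs on the sphere), that the unit sphere centered at a triple point contains a second triple point. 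Combined with Lemma~\ref{lem: homog}, which makes $\Gamma_{\mf{w}}^3$ an affine subspace along which $\mf{w}$ is homogeneous, this produces a full line of translation invariance, so $\mf{w}$ depends on at most $N-1$ variables and the induction closes. Nothing in your proposal replaces this step: you acknowledge it as ``the most delicate technical point'' but leave it unproved, so the argument is incomplete exactly where the real work lies.
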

 
Once that Proposition \ref{prop: imp liou} is proved, Theorem \ref{thm: improved Holder bounds} follows rather easily. The rest of the section is organized as follows: in Subsection \ref{sub: imp liou} we prove Proposition \ref{prop: imp liou}. In Subsection \ref{sub: conc imp} we complete the proof of Theorem \ref{thm: improved Holder bounds}.

\subsection{Proof of Proposition \ref{prop: imp liou}}\label{sub: imp liou}

The proof is divided into several intermediate steps; we shall often use the results of Sections \ref{sec: Alm} and \ref{sec: fb initial study}, and the following:

\begin{proposition}[Proposition 7.2 in \cite{CTV05}]\label{prop: liou seg full}
Let $\alpha \in (0,1)$, and let $(u,v) \in H^1_{\loc}(\R^N) \cap C(\R^N)$ be globally $\alpha$-H\"older continuous functions in $\R^N$, such that
\[
\begin{cases}
\Delta u \ge 0   \\
\Delta v \ge 0 \\
u \, v \equiv 0,
\end{cases}
\qquad u, v \ge 0, \qquad  \text{in $\R^N$}.
\]
Then at least one component of $(u,v)$ must vanish identically.
\end{proposition}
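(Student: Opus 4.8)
\emph{Strategy.} If $u\equiv 0$ or $v\equiv 0$ there is nothing to prove, so I would argue by contradiction, assuming both components are non-trivial. The first step is to produce a common zero: since $u,v$ are continuous, nonnegative and $uv\equiv 0$, the positivity sets $\{u>0\}$ and $\{v>0\}$ are non-empty, open and disjoint, hence (being $\R^N$ connected) their union cannot exhaust $\R^N$, and any point $x_0\notin\{u>0\}\cup\{v>0\}$ satisfies $u(x_0)=v(x_0)=0$. After a translation I may take $x_0=0$, and then global $\alpha$-H\"older continuity together with $u(0)=v(0)=0$ gives the growth bounds $0\le u(x)\le C|x|^\alpha$ and $0\le v(x)\le C|x|^\alpha$ for every $x\in\R^N$, with $C=\max\{[u]_{C^{0,\alpha}},[v]_{C^{0,\alpha}}\}$.

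Next I would establish an energy decay estimate. Since $u\ge 0$ is subharmonic, $\Delta u$ is a nonnegative Radon measure; testing against $\eta^2 u\ge0$ with a standard cut-off $\eta$ equal to $1$ on $B_r$, supported in $B_{2r}$, with $|\nabla\eta|\le C/r$, and using Young's inequality yields the Caccioppoli-type bound $\int_{B_r}|\nabla u|^2\,dx\le C r^{-2}\int_{B_{2r}}u^2\,dx\le C r^{N-2+2\alpha}$, and the same for $v$. Splitting $B_r$ into dyadic annuli $B_{2^{-j}r}\setminus B_{2^{-j-1}r}$ and applying this bound on each of them, one controls the weighted Dirichlet integrals
\[
a(r):=\int_{B_r}\frac{|\nabla u|^2}{|x|^{N-2}}\,dx\le C r^{2\alpha},\qquad b(r):=\int_{B_r}\frac{|\nabla v|^2}{|x|^{N-2}}\,dx\le C r^{2\alpha},
\]
with $C$ independent of $r$; in particular both quantities are finite, the dyadic series converging because $\alpha>0$.

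The core of the argument is the classical Alt--Caffarelli--Friedman monotonicity formula (see \cite{CTV05}): as $u,v\ge0$ are continuous, subharmonic, satisfy $uv\equiv0$ and vanish at the origin, the function $\Phi(r):=r^{-4}a(r)b(r)$ is non-decreasing on $(0,+\infty)$. Combining with the previous estimates gives $\Phi(r)\le C r^{4\alpha-4}\to 0$ as $r\to+\infty$ (here $\alpha<1$ is used), and a non-decreasing nonnegative function with vanishing limit at infinity is identically zero, so $a(r)b(r)\equiv 0$. Since $a,b$ are non-decreasing, $a(r_1)>0$ for some $r_1$ forces $b\equiv0$ on $(0,r_1]$ and then $b\equiv0$ on all of $(0,+\infty)$ (because $a\ge a(r_1)>0$ beyond $r_1$), whence $\nabla v\equiv 0$ a.e.\ and $v\equiv v(0)=0$; symmetrically, $b(r_1)>0$ gives $u\equiv0$; and if $a\equiv b\equiv0$ then both $u,v$ are constant, hence both zero. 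In every case a component vanishes identically, contradicting the assumption and proving the claim.

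The only non-elementary ingredient is the ACF monotonicity formula, which is available exactly in the form used in \cite{CTV05}; everything else — the connectedness argument for the common zero, the Caccioppoli estimate for subharmonic functions, and the dyadic summation producing the $r^{2\alpha}$ bound on the weighted energies — is routine, so the main point is simply to have the monotonicity formula at hand and to feed it the sublinear growth coming from H\"older continuity.
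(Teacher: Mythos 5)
Your argument is correct and is essentially the proof given in the cited source \cite{CTV05} (the paper under review only quotes this proposition and does not reprove it): one finds a common zero by connectedness, derives the sublinear bound $a(r),b(r)\le Cr^{2\alpha}$ via Caccioppoli plus dyadic summation, and feeds it into the Alt--Caffarelli--Friedman monotonicity formula, whose limit at infinity then forces $\Phi\equiv 0$ and hence the vanishing of one component. No gaps.
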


The first step in the proof of Proposition \ref{prop: imp liou} consists in showing that the case (2) can be somehow reduced to the case (1).

\begin{lemma}\label{lem: blow-down}
Let $M>0$, and let $\mf{v} \not \equiv 0$ be a minimizer of $J_{M}$ with respect to variations with compact support in $\R^N$, such that $0 \le v_i(x) \le C(1+|x|^\nu)$ for every $x \in \R^N$ and every $i$. Then there exists $\mf{w} \in \mathcal{L}_{\loc}(\R^N)$, globally $\nu'$-H\"older continuous for some $\nu' \in [0,\nu]$, and $\nu'$-homogeneous. Moreover, if all the components $v_i$ are non-trivial, then the same holds for the components $w_i$ (and $\nu'>0$).
\end{lemma}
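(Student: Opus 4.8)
The plan is to produce $\mf{w}$ as a \emph{blow-down} limit of $\mf{v}$ at infinity, mimicking the blow-up analysis of Section~\ref{sec: blow-up} but with scaling factors $R_k \to +\infty$ instead of $\rho_m \to 0^+$. First I would fix a point, say the origin, and the radii $R_k \to +\infty$, and set
\[
\mf{v}_k(x) := \frac{1}{H(\mf{v},0,R_k)^{1/2}} \mf{v}(R_k x).
\]
The growth bound $0 \le v_i(x) \le C(1+|x|^\nu)$ together with the Almgren monotonicity formula for solutions of \eqref{coex} (Proposition~\ref{prop: alm coex}) and the doubling Lemma~\ref{lem: doubling coex}(i) give, as in Lemma~\ref{lem: N infty coex}(i), that $N(\mf{v},0,r) \le \nu$ for all $r$; hence $\gamma:= N(\mf{v},0,+\infty) \le \nu$ exists by monotonicity. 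By the same arguments as in the proof of Lemma~\ref{lem: bdd H1 mis} (using the scaling invariance of $N$ and the bound $N(\mf{v},0,R_k r) \le \nu$), the sequence $\{\mf{v}_k\}$ is bounded in $H^1(B_R)$, in $L^\infty(B_R)$, and the associated measures $\mu_{i,k}$ have locally bounded mass, for every $R>0$.

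Next I would establish compactness of $\{\mf{v}_k\}$ in $C^{0,\alpha}_{\loc}$ and $H^1_{\loc}$. For this, note that each $\mf{v}_k$ is (up to the scaling) a minimizer of a rescaled functional $J_{M_k}$ with $M_k := R_k^2 H(\mf{v},0,R_k)^2 M$; since $H(\mf{v},0,R_k) \sim c R_k^{2\gamma}$ by Lemma~\ref{lem: doubling coex}, we get $M_k \to +\infty$ provided $\gamma>-1$ (always true), more precisely $M_k$ behaves like $R_k^{2+4\gamma} \to \infty$. This means the situation is exactly the one in which the uniform H\"older estimates of \cite[Theorem 1.1]{ST24p1} apply: each $\mf{v}_k$ satisfies (h1), (h2) on every fixed ball for $k$ large, so $\{\mf{v}_k\}$ is uniformly bounded in $C^{0,\alpha}_{\loc}(\R^N)$ for every $\alpha \in (0, \bar \nu)$. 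Passing to a subsequence, $\mf{v}_k \to \mf{w}$ in $C^{0,\alpha}_{\loc}$; strong $H^1_{\loc}$ convergence follows by testing the equations for $\mf{v}_k$ and the limiting relations $\Delta w_i = \bar\mu_i$ with $(v_{i,k}-w_i)\varphi$, exactly as in the last lemma before the proof of Proposition~\ref{prop: conv blow-up}. The limit $\mf{w}$ then belongs to $\mathcal{L}_{\loc}(\R^N)$ by the same diagonal-sequence construction as in the proof of Proposition~\ref{prop: conv blow-up} (choosing, for each $k$, an approximating pair from the definition of $\mathcal{L}(B_R)$ applied to $\mf{v}_k$), and it is globally $\nu$-H\"older (indeed better, globally $\nu'$-H\"older with $\nu' = \gamma$) because on each $B_R$ the H\"older seminorm of $\mf{v}_k$ at scale comparable to the distance from $\Gamma^3$ is controlled, and the growth bound $\|\mf{v}_k\|_{L^\infty(B_R)} \le C R^\nu$ is inherited in the limit; combined with $N(\mf{w},0,r) \equiv \gamma$ (which follows from $N(\mf{v}_k,0,r) = N(\mf{v},0,R_k r) \to \gamma$ and the continuity of $N$ under the convergence), Proposition~\ref{prop: alm seg} gives that $\mf{w}$ is $\gamma$-homogeneous with $\gamma=:\nu' \in [0,\nu]$.

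Finally, the propagation of non-triviality: if all $v_i \not\equiv 0$, I must show all $w_i \not\equiv 0$. Suppose some $w_i \equiv 0$; then by Lemma~\ref{cor: comp 0 seg} (or directly, since then the remaining components of $\mf{w}$ are positive harmonic, hence constant, and one of those must vanish by partial segregation, forcing $\mf{w}\equiv 0$, contradicting $\|\mf{w}\|_{L^2(S_1)}=1$) we reach a contradiction — so the non-triviality argument is really: $\|\mf{v}_k\|_{L^2(S_1)} = 1$ for all $k$ forces $\mf{w} \not\equiv 0$, and then $N(\mf{w},0,0^+) = \gamma$; if $\gamma = 0$ then $\mf{w}$ is constant and partial segregation forces a trivial component, but that is still fine as a conclusion of the lemma (the clause ``if all $v_i$ non-trivial'' then needs $\gamma>0$). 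To get $\gamma>0$ in that case, observe that if $\gamma=0$ then by Lemma~\ref{lem: doubling coex}(ii)–type reasoning $H(\mf{v},0,r)$ stays bounded as $r\to\infty$, which combined with strict positivity of each $v_i$ (maximum principle, since each is a non-trivial solution of \eqref{coex}) and Harnack forces each $v_i$ to be bounded, hence a bounded solution of \eqref{coex}; a Liouville/ACF argument (or directly: $v_i$ bounded subharmonic and harmonic outside a set, but here one can invoke that bounded positive solutions of the full competitive system with $M>0$ must have a trivial component — this is the $\nu=0$ case of the statement, provable by the ACF formula with exponent $0$) yields a contradiction with all $v_i \not\equiv 0$. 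I expect this last point — ruling out $\gamma=0$ when all components of $\mf{v}$ are non-trivial — to be the main obstacle, together with the bookkeeping needed to check that the scaled minimizers $\mf{v}_k$ genuinely satisfy the hypotheses of \cite[Theorem~1.1]{ST24p1} uniformly on compact sets (the $L^\infty$ bound there must be uniform in $k$, which is where the precise asymptotics $H(\mf{v},0,R_k)\sim cR_k^{2\gamma}$ and the growth bound combine). Everything else is a direct transcription of the blow-up machinery already developed in Sections~\ref{sec: Alm}–\ref{sec: blow-up}.
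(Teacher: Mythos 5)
Your overall route — blow-down at infinity, Almgren monotonicity to bound the frequency by $\nu$, rescaled minimality with $M_k\to+\infty$, compactness via \cite[Theorem 1.1]{ST24p1}, passage to $\mathcal{L}_{\loc}$ membership, and identification of the limiting frequency $\nu'=N(\mf{v},0,+\infty)$ giving $\nu'$-homogeneity through Proposition \ref{prop: alm seg} — is exactly the paper's. Two things, one cosmetic and one substantial.

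\textbf{Cosmetic.} You justify $M_k\to+\infty$ by asserting $H(\mf{v},0,R_k)\sim cR_k^{2\gamma}$, i.e.\ exact polynomial asymptotics for $H$. That is not what the doubling lemma gives (it gives one-sided bounds, not an asymptotic), and it is also unnecessary: since each $v_i$ is subharmonic, $H(\mf{v},0,\cdot)$ is nondecreasing, hence $H(\mf{v},0,R_k)\ge H(\mf{v},0,1)>0$, and already $M_k=R_k^2H(\mf{v},0,R_k)^2M\ge cR_k^2\to+\infty$. That is the paper's observation and it avoids any discussion of the sign of $\gamma$.

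\textbf{Substantial.} You flag ``ruling out $\gamma=0$ when all components of $\mf{v}$ are non-trivial'' as the main obstacle and propose a detour through $L^\infty$ bounds, Harnack, and a Liouville theorem for bounded solutions. There is no obstacle: this is handled by a lemma you already have. Lemma \ref{lem: N infty coex}(ii) says that $N(\mf{v},x_0,r)=0$ for some $r>0$ if and only if a component of $\mf{v}$ vanishes identically. If all $v_i\not\equiv0$, then $N(\mf{v},0,r)>0$ for every $r$; since $N$ is nondecreasing in $r$, $\nu'=N(\mf{v},0,+\infty)\ge N(\mf{v},0,1)>0$. Then, since $N(\mf{w},0,+\infty)=\nu'>0$, Lemma \ref{cor: comp 0 seg} (applied to the limit $\mf{w}\in\mathcal{L}_{\loc}(\R^N)$) immediately yields that all $w_i$ are non-trivial. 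No bounded Liouville theorem, no Harnack, no ACF monotonicity are needed, and the route you sketch (which itself would need care — e.g.\ it is not obvious that boundedness of $H(\mf{v},0,\cdot)$ alone forces $\mf{v}\in L^\infty(\R^N)$ without a uniform doubling estimate) is avoided entirely. The rest of your proof — compactness, $\mathcal{L}_{\loc}$ membership via the diagonal argument, and $\nu'$-homogeneity — is correct and matches the paper.
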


\begin{proof}
By minimality, the function $\mf{v}$ solves 
\[
\Delta v_{i} = M v_i \prod_{j \neq i} v_j^2 \quad \text{in $\R^N$}.
\]
We consider the \emph{blow-down family} of $\mf{v}$, defined by
\[
\mf{v}_r(x):= \frac{\mf{v}(rx)}{H(\mf{v},0,r)^{1/2}} \quad \forall r >1.
\]
Plainly, each $\mf{v}_r$ is a minimizer of the associated energy functional with respect to variations with compact support, $H(\mf{v}_r,0,1) = 1$, and 
\[
\Delta v_{i,r} = M_r v_{i,r} \prod_{j \neq i} v_{j,r}^2, \quad v_i \ge 0 \quad \text{in $\R^N$},
\]
where $M_r:= r^2 H(\mf{v},0,r)^2 M \to +\infty$ as $r \to +\infty$ (notice that $H$ is nondecreasing with respect to $r$, by subharmonicity). Now, by Lemma \ref{lem: N infty coex} and the monotonicity of $N$, we have that 
\[
N(\mf{v},0,r) \le N(\mf{v},0,+\infty) =:\nu' \le \nu \quad \text{for every $r>0$},
\] 
so that by the doubling property in Lemma \ref{lem: doubling coex} we also have
\[
\int_{S_R} \sum_i v_{i,r}^2\,d\sigma = R^{N-1} H(\mf{v}_r,0,R) = R^{N-1}\frac{H(\mf{v},0,Rr)}{H(\mf{v},0,r)} \le e^{2\nu'} R^{N-1+2\nu'} \qquad \forall R>1.
\]
By subharmonicity, this implies that $\{\mf{v}_r\}$ is uniformly bounded on any compact set of $\R^N$. We are then in position to apply \cite[Theorem 1.1]{ST24p1}: up to a diagonal selection, there exists a subsequence of radii $r \to +\infty$ and a function $\mf{w} \in \mathcal{L}_{\loc}(\R^N) \cup \{0\}$ such that 
\[
\mf{v}_{r} \to \mf{w} \quad \text{locally uniformly, and in $H^1_{\loc}(\R^N)$},
\]
and for every $R>1$
\[
M_r \int_{B_R} \prod_{j=1}^3 v_{j,r}^2\,dx \to 0.
\]
In fact, $\mf{w} \not \equiv 0$ and hence $\mf{w} \in \mathcal{L}_{\loc}(\R^N)$, since $H(\mf{w},0,1)=1$ by uniform convergence. 

Now, by scaling and convergence, for every $R>0$ we have that
\[
N(\mf{w},0,R) = \lim_{r \to +\infty} N(\mf{v}_r,0,R) = \lim_{r \to +\infty} N(\mf{v},0,rR) = N(\mf{v},0,+\infty) = \nu',
\] 
and the second part of Proposition \ref{prop: alm seg} ensures that $\mf{w}$ is $\nu'$-homogeneous with respect to $0$.

It remains to show that, if all the components $v_i$ are non-trivial, then the same holds for the components $w_i$. Suppose that $v_i \not \equiv 0$ for every $i$. Then, by Lemma \ref{lem: N infty coex}, we have that $\nu' = N(\mf{v},0,+\infty)>0$. But we also have $\nu' = N(\mf{w},0,+\infty)$, so that also $w_i \not \equiv 0$ for every $i$, by Lemma \ref{cor: comp 0 seg}.
\end{proof}

Thanks to the previous lemma, we can focus on functions in $\mathcal{L}_{\loc}(\R^N)$ from now on.

\begin{lemma}\label{lem: connect}
Let $\mf{v} \in \mathcal{L}_{\loc}(\R^N)$ be globally $\nu$-H\"older continuous for some $\nu \in (0,1)$. Then $\{v_i>0\}$ is connected for every $i$.
\end{lemma}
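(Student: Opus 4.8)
The plan is to argue by contradiction. Suppose some positivity set, say $\{v_1>0\}$, is disconnected: write $\{v_1>0\}=A\cup B$ with $A,B$ nonempty, open, and relatively closed in $\{v_1>0\}$, hence $\dist(A,B)>0$ on every compact set. Consider the two ``branches'' $a:=v_1\mathbf{1}_A$ and $b:=v_1\mathbf{1}_B$. Since $v_1$ is harmonic in $\{v_1>0\}$ (by \eqref{bas prop 1}) and these sets are separated, both $a$ and $b$ are nonnegative functions that are harmonic where positive; moreover, because $v_1$ is subharmonic on all of $\R^N$ and $a,b$ are obtained by restricting to unions of connected components, each of $a,b$ is subharmonic on $\R^N$ (a local computation: near $\partial A$ the measure $\Delta v_1$ charges $\partial\{v_1>0\}$ only, and this part of the boundary sits in $\partial A$, so $\Delta a\ge 0$). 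Finally $a\,b\equiv 0$ by construction, and $a,b$ inherit the global $\nu$-Hölder continuity of $v_1$ with $\nu\in(0,1)$.

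With these properties in hand I would invoke Proposition~\ref{prop: liou seg full} (Proposition 7.2 in \cite{CTV05}) applied to the pair $(a,b)$: both are globally $\nu$-Hölder, nonnegative, subharmonic, and product-segregated in $\R^N$, so one of them must vanish identically. If $b\equiv 0$ then $\{v_1>0\}=A$ was connected, contradicting the disconnectedness assumption; symmetrically if $a\equiv 0$. This forces $\{v_1>0\}$ to be connected, and the same argument applies to $v_2$ and $v_3$.

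The main obstacle I anticipate is the justification that each branch $a=v_1\mathbf{1}_A$ is genuinely subharmonic on all of $\R^N$ (not merely on $A$), together with the measurability/regularity bookkeeping needed to make $a\in H^1_{\loc}\cap C$ — one must check that $A$ and $B$ can indeed be separated by positive distance on compacts, which uses continuity of $v_1$, and that $\nabla a = \nabla v_1 \mathbf{1}_A$ in the weak sense with no singular part appearing across $\partial A$. This is where the structure of the free boundary established earlier (Theorem~\ref{thm: nodal set}, in particular $\{v_1=0\}=\overline{\mathrm{int}\{v_1=0\}}$ and $\partial\{v_1>0\}=\Gamma_{\mf{v}}$ having Hausdorff dimension $N-1$ with a small singular set) is useful: it guarantees that the common boundary between different components of $\{v_1>0\}$ is itself contained in $\mathrm{int}\{v_1=0\}$ (since two branches of the same component cannot touch along a set of positive capacity without $v_1$ being positive there), so that $a$ and $b$ do not interact and the cut is clean. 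Once that is settled, the Hölder continuity of $a$ and $b$ is immediate from that of $v_1$, the subharmonicity is a local statement away from and across the (nice) interface, and Proposition~\ref{prop: liou seg full} closes the argument.
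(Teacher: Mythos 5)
Your argument is essentially identical to the paper's: decompose $\{v_i>0\}=A\cup B$, form the branches $v_i\chi_A$ and $v_i\chi_B$, and invoke Proposition~\ref{prop: liou seg full} to kill one of them. The subharmonicity worry you flag can be dispatched without appealing to the free-boundary structure of Theorem~\ref{thm: nodal set}: since $a=v_1\chi_A$ is continuous (it agrees with $v_1$ on $\overline{A}$, where $v_1=0$ on $\partial A$, and is $0$ elsewhere), nonnegative, and harmonic wherever it is either zero or equal to $v_1>0$, the sub-mean-value inequality at a point $x_0$ with $a(x_0)=0$ is automatic from $a\ge 0$, so $a$ is subharmonic on all of $\R^N$ with no bookkeeping needed across $\partial A$.
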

\begin{proof}
By contradiction, let $\{v_i>0\} = A \cup B$ for two non-etmpy open disjoint sets $A$ and $B$. Then $w_1:= v_i \chi_A$ and $w_2:= v_i \chi_B$ are non-trivial subharmonic functions with disjoint positivity sets, globally $\nu$-H\"older continuous for some $\nu \in (0,1)$; namely, they satisfy the assumptions of Proposition \ref{prop: liou seg full}, whence we deduce that one of them vanishes identically, a contradiction.
\end{proof}

It is crucial to show that the set $\Gamma_{\mf{v}}^3$ of points of multiplicity 3 is not empty, and to characterize the behavior of $\mf{v}$ with respect to triple points. This is the content of the next three lemmas.

\begin{lemma}\label{lem: trip non emp}
Let $\mf{v} \in \mathcal{L}_{\loc}(\R^N)$ be globally $\nu$-H\"older continuous for some $\nu \in (0,1)$, and suppose that all the components of $\mf{v}$ are non-trivial. Then $\Gamma_{\mf{v}}^3 \neq \emptyset$. 
\end{lemma}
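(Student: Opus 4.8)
\textbf{Proof plan for Lemma \ref{lem: trip non emp}.}

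The plan is to argue by contradiction: suppose $\Gamma_{\mf{v}}^3 = \emptyset$, so that $m(x) \le 2$ for every $x \in \R^N$. Under this assumption every point has either multiplicity $0$, $1$, or $2$, and by Remark \ref{rem: on N} we have $N(\mf{v},x_0,0^+)=0$ at every point of the nodal set $\Gamma_{\mf{v}}$. The strategy is then to show that, up to relabelling, two of the three components must be globally segregated (i.e.\ their positivity sets are disjoint in all of $\R^N$), which—combined with the information we already have—forces a contradiction via the Liouville theorems already available.

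First I would use the connectedness of each positivity set $\{v_i>0\}$ from Lemma \ref{lem: connect}. Pick any point $x_0\in\Gamma_{\mf{v}}$ (which is nonempty: since all components are nontrivial and $v_1v_2v_3\equiv 0$, the positivity sets cannot cover $\R^N$, so their complement meets $\Gamma_{\mf{v}}$; more carefully, one takes a point on the boundary of $\{v_i>0\}$ for some $i$ and invokes Theorem \ref{thm: nodal set}(i) to see it lies in $\Gamma_{\mf{v}}$). Since $m(x_0)=2$, say $v_1(x_0)=v_2(x_0)=0$ and $v_3(x_0)>0$. By Lemma \ref{prop: diff arm}, $v_1-v_2$ is harmonic in a neighborhood of $x_0$, and in fact (using Lemma \ref{cor: diff arm}) $v_1-v_2$ is harmonic throughout the connected component of $\{v_3>0\}$ containing $x_0$; by unique continuation across that whole component it is harmonic. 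Next I would leverage the absence of triple points together with Lemma \ref{lem: no |x|} / Lemma \ref{lem: no self}: since no point has multiplicity $3$, whenever two components vanish simultaneously on a ball the situation is rigid, and the structure theorem Theorem \ref{thm: nodal set} tells us $\{v_i=0\}=\overline{\mathrm{int}\{v_i=0\}}$ and that $\mathrm{int}\{v_i=0\}$, $\mathrm{int}\{v_j=0\}$ are pairwise disjoint (Lemma \ref{lem: tot emp int}). So $\R^N = \overline{\mathrm{int}\{v_1=0\}}\cup\overline{\mathrm{int}\{v_2=0\}}\cup\overline{\mathrm{int}\{v_3=0\}}$ with pairwise disjoint interiors, i.e.\ the zero-interiors form a genuine open partition of $\R^N$ into (at most) three pieces.

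The heart of the argument—and the step I expect to be the main obstacle—is upgrading this local partition picture into a \emph{global} segregation of two components. The point is: since $\{v_i>0\}$ is connected and open for each $i$, and the three sets $\mathrm{int}\{v_i=0\}$ partition $\R^N$, one shows that at least two of the positivity sets $\{v_i>0\}$ must be disjoint. Heuristically, if all three overlapped pairwise one could build a closed loop crossing all three ``phases'', and tracking the harmonic function $v_i-v_j$ around the loop (together with the homogeneity/growth bound) would yield a contradiction—this is the analogue, in the partially segregated setting and without triple points, of the fact that a genuine triple junction cannot be avoided. Concretely, the cleanest route is: fix the component, say $v_3$, whose positivity set $\{v_3>0\}$ is connected; on its boundary every point has multiplicity $\ge 2$ but (no triple points) exactly multiplicity $2$, so $\partial\{v_3>0\}\subset \Gamma_{\mf{v}}^2$, and locally near each such point exactly one of $v_1,v_2$ is the ``other'' vanishing component; the set of boundary points where $v_1$ vanishes and the set where $v_2$ vanishes are relatively open and closed in $\partial\{v_3>0\}$ (using continuity of $v_1,v_2$), and by a connectedness argument on $\partial\{v_3>0\}$ one of them is empty. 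Say $v_2=0$ on all of $\overline{\{v_3>0\}}$. Then $v_2$ vanishes on $\{v_3>0\}$; since $v_2$ also vanishes on $\mathrm{int}\{v_2=0\}\supset\mathrm{int}\{v_2=0\}$, and $\R^N=\overline{\{v_3>0\}}\cup\overline{\mathrm{int}\{v_3=0\}}$, one checks $v_2 v_3\equiv 0$ actually sharpens to: $\{v_2>0\}\subset\mathrm{int}\{v_3=0\}$. Now apply Lemma \ref{lem: no |x|} (in large balls, using that $\{v_3>0\}$ and $\mathrm{int}\{v_3=0\}$ together cover $\R^N$, so $v_2 v_3\equiv 0$ holds globally) to conclude $v_1$ is harmonic in every ball, hence harmonic on $\R^N$; being nonnegative and, by global $\nu$-Hölder continuity with $\nu<1$, sublinear, Liouville forces $v_1$ to be constant. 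But then $v_1>0$ everywhere contradicts $v_1(x_0)=0$, and $v_1\equiv 0$ contradicts nontriviality. This contradiction shows $\Gamma_{\mf{v}}^3\neq\emptyset$, completing the proof. The delicate points to get right are the connectedness argument on $\partial\{v_3>0\}$ and verifying that the pairwise global segregation $v_2 v_3\equiv 0$ genuinely holds on all of $\R^N$ so that Lemma \ref{lem: no |x|} applies at every scale.
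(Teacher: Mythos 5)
Your proposal correctly identifies the overall shape of the argument (suppose no triple points; then the boundary of each positivity set decomposes into relatively clopen pieces according to which other component vanishes there; conclude via a connectedness dichotomy) and your final step — applying Lemma~\ref{lem: no \textbar x\textbar} and Liouville for nonnegative harmonic functions once global pairwise segregation is established — is a valid alternative ending. However, there is a genuine gap at the step "by a connectedness argument on $\partial\{v_3>0\}$ one of them is empty." You are implicitly assuming that $\partial\{v_3>0\}$ is connected, but connectedness of the open set $\{v_3>0\}$ alone does not give this (an annulus is connected with disconnected boundary). One also needs connectedness of the \emph{complement} $\{v_3=0\}$ (together with the simple connectedness of $\R^N$, as in the cited result [CzKuLu]) to deduce that the boundary is connected.

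Establishing that $\{v_i=0\}$ is connected is precisely the hard part of the paper's proof. The paper devotes its Step 2 to this, via a nontrivial topological argument: assuming $\{v_3=0\}$ is disconnected, it constructs an explicit continuous map $f:\R^N\to\mathbb{S}^1$ distinguishing a connected component $F$ of $\{v_1=0\}$ from the rest, and then builds a loop $\beta\ast\alpha$ (one leg through $\Omega_1$, the other through $F$) whose image under $f$ has winding number $1$ around $\mathbb{S}^1$ — contradicting the simple connectedness of $\R^N$. Without this step, or some replacement for it, the dichotomy on $\partial\{v_3>0\}$ does not follow, and the subsequent claim $v_2 v_3\equiv 0$ is unjustified. (A smaller point: even granting the connectedness of $\partial\{v_3>0\}$, deducing $\{v_2>0\}\subset\mathrm{int}\{v_3=0\}$ requires ruling out the alternative $\{v_2>0\}\subset\{v_3>0\}$; this does work via Lemma~\ref{lem: tot emp int} and the fact that $\mathrm{int}\{v_3=0\}\neq\emptyset$, but the ruling-out is not spelled out in your proposal.)
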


\begin{proof}
In what follows we suppose by contradiction that $\Gamma_{\mf{v}}^3 = \emptyset$, and we observe that every component $v_i$ must vanish somewhere. Indeed, if $v_1>0$ in $\R^N$, then $v_2$ and $v_3$ satisfy the assumptions of Proposition \ref{prop: liou seg full}, and hence one between $v_2$ and $v_3$ vanishes identically, against our assumptions.

Now we recall from Theorem \ref{thm: nodal set} that 
\beq\label{st 1 e 2}
\text{$\overline{\mathrm{int}\{v_i=0\}} = \{v_i=0\}$ for every $i$, and that $\mathrm{int}\{v_i=0\} \cap \mathrm{int}\{v_j=0\} = \emptyset$ for every $i \neq j$.}
\eeq

\medskip

\emph{Step 1)} Let $i \neq j \neq k$, and let $\gamma_i$ be a connected component of $\partial \{v_i>0\}$. If $\Gamma_{\mf{v}}^3=\emptyset$, then either $v_j=0$ and $v_k>0$ on $\gamma_i$, or $v_j>0$ and $v_k=0$ on $\gamma_i$.

\smallskip

\noindent For concreteness, let $i=1$. If $x_0 \in \gamma_1$, then in any neighborhood of $x_0$ there exist points of $\{v_1>0\}$. If $v_2(x_0), v_3(x_0)>0$, this gives a contradiction with the partial segregation condition $v_1\,v_2\,v_3 \equiv 0$. Therefore, one and only one between $v_2(x_0)=0$ and $v_3(x_0)=0$ holds, since $\Gamma_{\mf{v}}^3 = \emptyset$; say $v_2(x_0)=0<v_3(x_0)$. By continuity, we have that $v_1=v_2=0$ on the arc $\gamma_1 \cap B_R(x_0)$ for some $R>0$ and, using again the fact that $\Gamma_{\mf{v}}^3 = \emptyset$, we deduce that $v_2=0$ on the full $\gamma_1$.

\medskip

\emph{Step 2)} $\{v_i=0\}$ is connected for every $i$.

\smallskip

\noindent Suppose by contradiction that $\{v_3=0\}$ is not connected. Let $\Omega_1=\{v_1>0\}$. We note at first that $v_2,v_3>0$ in $\mathrm{int}\{v_1=0\}$. Indeed, thanks to \eqref{st 1 e 2}, if $v_2(x_0)=0$ and $x_0 \in \mathrm{int}\{v_1=0\}$, then $x_0 \not \in \mathrm{int}\{v_2=0\}$, whence $v_2|_{S_\rho(x_0)} \not \equiv 0$ for every $\rho>0$ (by subharmonicity). Therefore, for some $R>\rho>0$, the assumptions of Lemma \ref{lem: no self} are satisfied, implying that $v_2(x_0)>0$, a contradiction. This means that $\{v_2=0\} \cup \{v_3=0\} \subset \overline{\Omega_1}$, and, more precisely, by partial segregation and \eqref{st 1 e 2}
\[
\{v_2=0\} \cup \{v_3=0\} = \overline{\Omega_1}, \quad \mathrm{int}\{v_2=0\} \cap \mathrm{int}\{v_3=0\} = \emptyset.
\]
Moreover, since by assumption there are no points with multiplicity $3$, $\pa \{v_2=0\} \cap \pa \{v_3=0\} \cap \pa \Omega_1 = \emptyset$, and hence $\gamma_{2,3}:= \{v_2=v_3=0\} \subset \Omega_1$.

Now let $A$ and $B$ two closed disjoint sets such that $A \cup B = \{v_3=0\}$. Both $A$ and $B$ have non-empty interior, by \eqref{st 1 e 2}. In $\mathrm{int}\,A$ and $\mathrm{int}\,B$, the second component $v_2$ must be positive. If $\pa A \cap \pa \Omega_1=\emptyset$ or $\pa B \cap \pa \Omega_1 = \emptyset$, then we find two different connected components for $\{v_2>0\}$, in contradiction with Lemma \ref{lem: connect}. Therefore, it is necessary that both $\partial A$ and $\partial B$ intersect $\pa \Omega_1$. They must intersect two different components of $\pa \Omega_1$, say $\sigma_1$ and $\sigma_2$, otherwise, being $\pa A$ and $\pa B$ disjoint, both $v_3 \not \equiv 0$ and $v_2 \not \equiv 0$ on the same component of $\pa \Omega_1$, in contradiction with Step 1. We claim that $\sigma_1$ and $\sigma_2$ are part of the boundary of two different components of $\{v_1=0\}$. Once that this claim is demonstrated, we deduce that $\sigma_1$ and $\sigma_2$ belong to two different connected components of $\{v_2>0\}$ (notice that $v_2>0$ on $\sigma_1 \cup \sigma_2$, since $\Gamma_{\mf{v}}^3 = \emptyset$), in contradiction again with Lemma \ref{lem: connect}. 

It remains to prove the claim, namely to show that $\sigma_1$ and $\sigma_2$ are part of the boundary of two different components of $\{v_1=0\}$. 
Suppose that this is not true, and let $F$ be the connected component of $\{v_1=0\}$ containing $\sigma_1 \cup \sigma_2$. Let $\Sigma_1$ and $\Sigma_2$ be two open sets such that:
\begin{itemize}
\item $\Sigma_1$ is an open neighborhoods of $\sigma_1$, and $\Sigma_2$ is an open neighborhood of all the remaining components of $\pa F$ (including $\sigma_2$);
\item $\overline{\Sigma_1} \cap \overline{\Sigma_2} = \emptyset$;
\item $\Sigma_1$ is connected, and each component of $\Sigma_2$ has non-empty intersection with $\pa F \setminus \sigma_1$.
\end{itemize}
Let $\mathbb{S}^1$ be the unit circle in $\R^2$, and let $a=(0,1)$, $b=(0,-1)$. We define a map $f:\R^N \to \mathbb{S}^1$ in the following way:
\begin{itemize}
\item $f(\overline{\Sigma_1}) = a$;
\item $f(\overline{\Sigma_2}) = b$;
\item for a point $x \in F \setminus (\overline{\Sigma_1} \cup \overline{\Sigma_2})$, let $f(x)$ be the point on the right half of $\mathbb{S}^1$ for which
\beq\label{CKL1}
\frac{\dist(x, \Sigma_1)}{\dist(x, \Sigma_2)} = \frac{|f(x)-a|}{|f(x)-b|}.
\eeq
\item for a point $x \in \R^N \setminus (F \cup \overline{\Sigma_1} \cup \overline{\Sigma_2})$, let $f(x)$ be the point on the left half of $\mathbb{S}^1$ for which \eqref{CKL1} holds.
\end{itemize}
By definition, $f$ is well defined and continuous in $\R^N$. 

Let $p_1 \in \sigma_1$ and $p_2 \in \sigma_2$. The connected component of $\Sigma_2' \subset \Sigma_2$ containing $\sigma_2$ has non-empty intersection with $\Omega_1$. Therefore, by construction, $\Omega_1 \cup \Sigma_1 \cup \Sigma_2'$ is connected and open. Analogously $F \cup \Sigma_1 \cup \Sigma_2$ is connected and open, since it can be written as the union of three open sets:
\[
F \cup \Sigma_1 \cup \Sigma_2 = \mathrm{int}\, F \cup \pa F \cup \Sigma_1 \cup \Sigma_2 = \mathrm{int}\, F  \cup \Sigma_1 \cup \Sigma_2.
\]
Therefore, they are both path-wise connected, and we can defined two paths $\alpha$ and $\beta$ as follows:
\begin{itemize}
\item $\alpha$ is a path joining $p_1$ and $p_2$ in $\Omega_1 \cup \Sigma_1 \cup \Sigma_2'$;
\item $\beta$ is a path joining $p_2$ and $p_1$ in $F \cup \Sigma_1 \cup \Sigma_2$.
\end{itemize}
Let $H$ be a homotopy in $\R^N$ between the loop $\beta \ast \alpha$ and a constant. The composition $f \circ H$ is then a homotopy in $\mathbb{S}^1$ between the loop $f(\beta \ast \alpha)$ and a constant; such a homotopy, however, cannot exist, since by construction the winding number of $f(\beta \ast \alpha)$ is $1$. This is the desired a contradiction.

\medskip

\noindent \emph{Step 3)} Conclusion of the proof.

\smallskip Since every component $v_i$ is non-trivial and vanish somewhere, we have that $\pa \Omega_i \neq \emptyset$ for every $i$. Now, by Lemma \ref{lem: connect} and Step 2, $\Omega_i:=\{v_i>0\}$ is a connected set with connected complement, for every $i$. Then, also $\pa \Omega_i$ is connected (see \cite{CzKuLu}). Let $x_0 \in \pa \Omega_1$. Then, by Step 1, either $\pa \Omega_1 = \pa \Omega_2$, and $v_3>0$ on $\pa \Omega_1$, or $\pa \Omega_1 = \pa \Omega_3$, and $v_2>0$ on $\pa \Omega_1$. In the former case, at any point of $\pa \Omega_3$ we have that $v_1,v_2>0$, which by continuity gives a contradiction with the partial segregation condition. In the latter case, the same contradiction is reached at points of $\pa \Omega_2$.
\end{proof}

\begin{lemma}\label{lem: homog}
Let $\mf{v} \in \mathcal{L}_{\loc}(\R^N)$ be globally $\nu$-H\"older continuous for some $\nu \in (0,1)$, and suppose that all the components of $\mf{v}$ are non-trivial. Then the set $\Gamma_{\mf{v}}^3$ is an affine space of dimension at most $N-2$, and $\mf{v}$ is $\nu$-homogeneous with respect to any of its points.
\end{lemma}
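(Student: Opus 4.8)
The plan is to split the statement into two parts: first to show that $\mf{v}$ is $\nu$-homogeneous with respect to \emph{every} triple point, and then to exploit the classical fact that a function homogeneous of the same degree with respect to two distinct points is translation invariant along their difference. Combined with the dimension estimate already available from Proposition \ref{prop: H dim sing}, this will give that $\Gamma_{\mf{v}}^3$ is an affine subspace of dimension at most $N-2$.

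\emph{Step 1: $\mf{v}$ is $\nu$-homogeneous at each triple point.} Fix $x_0\in\Gamma_{\mf{v}}^3$ (nonempty by Lemma \ref{lem: trip non emp}), so $\mf{v}(x_0)=0$. Global $\nu$-Hölder continuity gives $|\mf{v}(x)|\le C(1+|x|^\nu)$, hence $N(\mf{v},x_0,r)\le\nu$ for every $r>0$ by Lemma \ref{lem: N infty seg}; in particular $\gamma:=N(\mf{v},x_0,0^+)\le\nu$. Applying Lemma \ref{lem: doubling seg}(ii) with exponent $\nu$ on $(r,1)$ shows that $r\mapsto H(\mf{v},x_0,r)/r^{2\nu}$ is non-increasing, so $H(\mf{v},x_0,r)\ge H(\mf{v},x_0,1)\,r^{2\nu}$ for $0<r<1$, with $H(\mf{v},x_0,1)>0$ by Proposition \ref{prop: alm seg}. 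Consequently, for the blow-up sequence $\mf{v}_m(x)=\mf{v}(x_0+\rho_m x)/H(\mf{v},x_0,\rho_m)^{1/2}$, $\rho_m\to0^+$, the scaling of the seminorm yields
\[
[\mf{v}_m]_{C^{0,\nu}(\R^N)}=\frac{\rho_m^{\nu}}{H(\mf{v},x_0,\rho_m)^{1/2}}\,[\mf{v}]_{C^{0,\nu}(\R^N)}\le H(\mf{v},x_0,1)^{-1/2}\,[\mf{v}]_{C^{0,\nu}(\R^N)},
\]
uniformly in $m$. By Proposition \ref{prop: conv blow-up}, up to a subsequence $\mf{v}_m\to\mf{w}$ locally uniformly, with $\mf{w}\in\mathcal{L}_{\loc}(\R^N)$ $\gamma$-homogeneous and all components non-trivial; passing the Hölder bound to the limit, $\mf{w}$ is globally $\nu$-Hölder and $\mf{w}(0)=0$. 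If $\gamma<\nu$, then for all $x$ and $\lambda>0$,
\[
\lambda^{\gamma}|\mf{w}(x)|=|\mf{w}(\lambda x)|=|\mf{w}(\lambda x)-\mf{w}(0)|\le C\lambda^{\nu}|x|^{\nu},
\]
so $|\mf{w}(x)|\le C\lambda^{\nu-\gamma}|x|^{\nu}\to0$ as $\lambda\to0^+$, i.e.\ $\mf{w}\equiv0$, contradicting non-triviality. Hence $\gamma=\nu$; since $N(\mf{v},x_0,\cdot)$ is non-decreasing with $N(\mf{v},x_0,0^+)=\nu$ and bounded above by $\nu$, it is identically $\nu$, and the last part of Proposition \ref{prop: alm seg} gives that $\mf{v}$ is $\nu$-homogeneous with respect to $x_0$. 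As $x_0\in\Gamma_{\mf{v}}^3$ was arbitrary, this holds at every triple point.

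\emph{Step 2: affine structure and conclusion.} Let $x_0,x_1\in\Gamma_{\mf{v}}^3$ be distinct and $y=x_1-x_0$. From the two $\nu$-homogeneity relations one gets, by a one-line manipulation ($\mf{v}(\lambda z+(1-\lambda)x_1)=\lambda^{\nu}\mf{v}(z)=\mf{v}(\lambda z)$ for $\lambda\in(0,1)$, substituting $w=\lambda z$, varying $\lambda$, iterating, and using both base points), that $\mf{v}(z+\tau y)=\mf{v}(z)$ for all $z\in\R^N$, $\tau\in\R$. Letting $V$ be the linear span of $\{x_1-x_0:x_0,x_1\in\Gamma_{\mf{v}}^3\}$, the function $\mf{v}$ is invariant under translations by vectors of $V$, so for a fixed $x_0\in\Gamma_{\mf{v}}^3$ we have $\mf{v}(x_0+v)=\mf{v}(x_0)=0$ for all $v\in V$, i.e.\ $x_0+V\subseteq\Gamma_{\mf{v}}^3$, while $\Gamma_{\mf{v}}^3\subseteq x_0+V$ by definition of $V$. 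Thus $\Gamma_{\mf{v}}^3=x_0+V$ is an affine subspace, every point of which is a triple point, hence a point of $\nu$-homogeneity of $\mf{v}$ by Step 1; and $\dim V=\dim_{\cH}(\Gamma_{\mf{v}}^3)\le N-2$ by Proposition \ref{prop: H dim sing}.

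\emph{Main obstacle.} The delicate point will be the identity $N(\mf{v},x_0,0^+)=\nu$ in Step 1. The inequality ``$\le$'' is immediate, but ``$\ge$'' is not a consequence of the doubling bounds for $\mf{v}$ alone, which stay mutually consistent for any frequency $\le\nu$. What makes it work is that the homogeneous blow-up $\mf{w}$ \emph{retains} the global $\nu$-Hölder bound of $\mf{v}$ — this is exactly where the lower bound $H(\mf{v},x_0,\rho_m)\gtrsim\rho_m^{2\nu}$ is used — together with the rigidity that a nonzero $\gamma$-homogeneous function cannot be globally $\nu$-Hölder continuous when $\gamma<\nu$.
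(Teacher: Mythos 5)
Your proof is correct, but it takes a noticeably heavier route than the paper in both halves, and the closing discussion contains a factual error worth flagging.

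For the identity $N(\mf{v},x_0,0^+)=\nu$, the paper argues directly from the doubling lemma: if $N(\mf{v},x_0,\bar r)\le\nu-\eps$ for some $\bar r,\eps>0$, then Lemma~\ref{lem: doubling seg}\,(ii) applied with exponent $\nu-\eps$ gives $H(\mf{v},x_0,r)\ge c\,r^{2(\nu-\eps)}$ for $r<\bar r$, which contradicts $H(\mf{v},x_0,r)\le C r^{2\nu}$ as $r\to0^+$; together with the upper bound this pins $N(\mf{v},x_0,\cdot)\equiv\nu$. Your proof reaches the same conclusion via Proposition~\ref{prop: conv blow-up} (existence of a non-trivial $\gamma$-homogeneous blow-up in $\mathcal L_{\loc}$), the lower bound on $H$ to control the rescaled Hölder seminorm, and a rigidity argument for homogeneous globally Hölder maps. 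This is valid, but it imports the full blow-up machinery where a two-line doubling estimate suffices. Your ``Main obstacle'' paragraph asserts that the lower bound $N(\mf{v},x_0,0^+)\ge\nu$ ``is not a consequence of the doubling bounds for $\mf{v}$ alone'': this is incorrect. You only applied Lemma~\ref{lem: doubling seg}\,(ii) with exponent $\nu$, where indeed no contradiction appears; applying it with the tighter exponent $\nu-\eps$ at a scale $\bar r$ where $N(\mf{v},x_0,\bar r)\le\nu-\eps$ gives the contradiction directly, exactly as in the paper.

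For the dimension bound, the paper does not cite Proposition~\ref{prop: H dim sing}. Having established that $\Gamma_{\mf{v}}^3$ is affine, it rules out the hyperplane case $\dim=N-1$ directly: picking $x_1,x_2$ on either side, restricting suitable components to the two half-spaces produces two globally Hölder, non-trivial, non-negative subharmonic functions with disjoint supports, and Proposition~\ref{prop: liou seg full} forces one of them to vanish, a contradiction. Your route -- citing Proposition~\ref{prop: H dim sing}, which rests on Federer's reduction principle -- also works, since the Hausdorff dimension of a $d$-dimensional affine subspace is exactly $d$. The paper's version is more elementary and localizes the dependence on external inputs, while yours leans on a result already available in the text; both are acceptable. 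Your derivation of the affine structure from $\nu$-homogeneity at two points is essentially the same as the paper's, just written out more explicitly.
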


\begin{proof}
Let $x_0 \in \Gamma_{\mf{v}}^3$. At first, we show that $N(\mf{v},x_0,r) = \nu$ for every $r>0$. Then, by Proposition \ref{prop: alm seg}, $\mf{v}$ is $\nu$-homogeneous with respect to $x_0$.

Let us suppose by contradiction that $N(\mf{v},x_0,\bar r) = \nu+\eps$ for some $\eps>0$. By monotonicity, $N(\mf{v},x_0,r) \ge \nu+\eps$ for every $r>\bar r$, and by Lemma \ref{lem: doubling seg} we have that 
\[
H(\mf{v},x_0,r) \ge \frac{H(\mf{v},x_0,\bar r)}{\bar r^{2(\nu+\eps)}} r^{2(\nu+\eps)} \qquad \forall r>\bar r,
\] 
whereas we have also
\beq\label{H nu}
H(\mf{v},x_0,r) \le C r^{2 \nu} \qquad \forall r>0
\eeq
by the global $\nu$-H\"older continuity of $\mf{v}$. This gives a contradiction for large $r$, and show that $N(\mf{v},x_0,r) \le \nu$ for every $r>0$. Similarly, if $N(\mf{v},x_0,\bar r) \le \nu-\eps$ for some $\eps>0$, then 
\[
H(\mf{v},x_0,r) \ge \frac{H(\mf{v},x_0,\bar r)}{\bar r^{2(\nu-\eps)}} r^{2(\nu-\eps)} \qquad \forall r<\bar r,
\] 
in contradiction with \eqref{H nu} for $r>0$ small enough.

The previous argument proves that $\mf{v}$ is $\nu$-homogeneous with respect to any point in $\Gamma_{\mf{v}}^3$. Therefore, given two points in the set, the line connecting them must be contained within the set itself, namely $\Gamma_{\mf{v}}^3$ must be an affine space of $\R^N$. 

Suppose that $\Gamma_{\mf{v}}^3$ has dimension $N-1$. Then it is a hyperplane, and $\R^N \setminus \Gamma_{\mf{v}}^3$ consists of two open half-spaces $H_1$ and $H_2$. Let $x_1 \in H_1$ and $x_2 \in H_2$. Since $H(\mf{v},x_j,r) >0$ for every $r>0$ (see Proposition \ref{prop: alm seg}), in each neighborhood of $x_j$ one component must be non-trivial, say $v_{i_j} \not \equiv 0$ in $B_{\rho_j}(x_j)$ for $j=1,2$ (the index $i_j$ could be the same for $j=1,2$). We set $w_1 := v_{i_1} \chi_{H_1}$ and $w_2:= v_{i_2} \chi_{H_2}$. By definition, these are non-trivial subharmonic functions in $\R^N$ with disjoint positivity sets, globally H\"older continuous for some exponent $\nu \in (0,1)$. Therefore, by Proposition \ref{prop: liou seg full}, one between $v_{i_1}$ and $v_{i_2}$ must vanish identically, a contradiction. This shows that the dimension of $\Gamma_{\mf{v}}^3$ is at most $N-2$.
\end{proof}

\begin{lemma}\label{lem: triple on sphere}
Let $N>2$, and let $\mf{v} \in H^1_{\loc}(\R^N,\R^3) \cap C^0(\R^N,\R^3)$ be a triplet of nonnegative subharmonic functions with the following properties: 
\begin{itemize}
\item[($i$)] $v_i \not \equiv 0$ and $\Delta v_i = 0$ in $\{v_i>0\}$ for every $i$; moreover, $v_1 \,v_2\,v_3 \equiv 0$ in $\R^N$;
\item[($ii$)] $\Gamma_{\mf{v}}^3 \neq \emptyset$ is an affine space of dimension at most $N-2$;
\item[($iii$)] $\mf{v}$ is $\nu$-homogeneous with respect to any point of $\Gamma_{\mf{v}}^3$, for some $\nu \in (0,1)$;
\item[($iv$)] $\overline{\mathrm{int}\{v_i=0\}} = \{v_i=0\}$ for every $i$;
\item[($v$)] $\mathrm{int}\{v_i=0\} \cap \mathrm{int}\{v_j=0\} = \emptyset$ for every $i \neq j$.
\end{itemize}
Let $x_0 \in \Gamma_{\mf{v}}^3$. Then $S_1(x_0) \cap \Gamma_{\mf{v}}^3 \neq \emptyset$.
\end{lemma}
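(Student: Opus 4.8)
The plan is to argue by contradiction, use the homogeneity to reduce to a configuration of ``overlapping'' sets on the unit sphere, and then rule that configuration out by a homological argument that genuinely requires $N>2$. First I would normalise $x_0=0$ and suppose $S_1(0)\cap\Gamma_{\mf v}^3=\emptyset$. By ($iii$), $\mf v$ is $\nu$-homogeneous with respect to $0$ (and $\mf v(0)=0$ since $0\in\Gamma_{\mf v}^3$), so each $\{v_i>0\}$ and the nodal set $\Gamma_{\mf v}^3=\{\mf v=0\}$ are cones with vertex $0$; since a cone with vertex $0$ containing some $y\neq 0$ also contains $y/|y|\in S_1(0)$, the contradiction hypothesis forces $\Gamma_{\mf v}^3=\{0\}$. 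Denoting by $\S^{N-1}$ the unit sphere $S_1(0)$, setting $g_i:=v_i|_{\S^{N-1}}$ and $A_i:=\{g_i>0\}\subset\S^{N-1}$, the cone structure gives $\{v_i>0\}=\{t\theta:\ t>0,\ \theta\in A_i\}$; partial segregation gives $A_1\cap A_2\cap A_3=\emptyset$, while $\Gamma_{\mf v}^3=\{0\}$ gives $A_1\cup A_2\cup A_3=\S^{N-1}$ (a point of $\S^{N-1}$ lying in no $A_i$ would generate a ray inside $\Gamma_{\mf v}^3$). Since $v_i$ is harmonic in $\{v_i>0\}$ and $\nu$-homogeneous, on each connected component $\omega$ of $A_i$ the function $g_i$ is a positive $H^1_0$-eigenfunction of $-\Delta_{\S^{N-1}}$ with eigenvalue $\lambda_\nu:=\nu(\nu+N-2)$; hence $\lambda_1^D(\omega)=\lambda_\nu$, and $\lambda_\nu<N-1$ exactly because $\nu<1$. (In particular $A_i\neq\S^{N-1}$, since a positive eigenfunction on the closed manifold $\S^{N-1}$ would have eigenvalue $0$.)

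Next I would invoke the Faber--Krahn (Sperner) inequality on the sphere: among open subsets of $\S^{N-1}$ of a given measure, geodesic balls minimise the first Dirichlet eigenvalue, and the open hemisphere has first Dirichlet eigenvalue exactly $N-1$ (eigenfunction: the restriction of a linear coordinate). Therefore any component $\omega$ of any $A_i$, having $\lambda_1^D(\omega)=\lambda_\nu<N-1$, must satisfy $|\omega|>\tfrac12|\S^{N-1}|$; this forces each $A_i$ to be connected and of measure $>\tfrac12|\S^{N-1}|$, and hence $A_i\cap A_j\neq\emptyset$ for all $i\neq j$ by a volume count. (Alternatively, connectedness of the $A_i$ and non-triviality of the pairwise intersections could be extracted from Proposition~\ref{prop: liou seg full} applied to suitable ``pieces'' of the $\nu$-homogeneous — hence globally $\nu$-H\"older — functions $v_i$, but the Faber--Krahn route sidesteps any regularity discussion of the free boundary.)

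The contradiction is then topological. Put $U:=A_1\cup A_2$ and $V:=A_3$: these are connected open subsets of $\S^{N-1}$ with $U\cup V=\S^{N-1}$, while $U\cap V=(A_1\cap A_3)\cup(A_2\cap A_3)$ is a union of two nonempty open sets that are disjoint (their intersection is $A_1\cap A_2\cap A_3=\emptyset$), so $U\cap V$ is disconnected. Since $N-1\geq 2$, the sphere $\S^{N-1}$ is simply connected, so $H_1(\S^{N-1};\mathbb{Z})=0$, and the Mayer--Vietoris sequence
\[
H_1(\S^{N-1})\longrightarrow H_0(U\cap V)\longrightarrow H_0(U)\oplus H_0(V)\longrightarrow H_0(\S^{N-1})\longrightarrow 0
\]
collapses to $0\to H_0(U\cap V)\to\mathbb{Z}^2\to\mathbb{Z}\to 0$, which forces $U\cap V$ to be connected — the desired contradiction.

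I expect the main difficulty to be conceptual rather than computational: the statement is equivalent to ruling out an \emph{isolated} triple point, and purely metric/measure information does not suffice for that — indeed on $\S^1$ one can genuinely cover the circle by three arcs of equal length $\pi/\nu\in(\pi,\tfrac43\pi]$ with no triple overlap (this is precisely the $N=2$, $\nu\ge 3/4$ obstruction), so the hypothesis $N>2$ is indispensable and must enter essentially, here through the vanishing of $H_1(\S^{N-1})$ — equivalently, the fact that $U\cap V$ is connected whenever $U,V$ are connected open sets with $U\cup V=\S^{N-1}$. A secondary technical point is to make the reduction to the spherical eigenvalue problem rigorous: that $g_i\in H^1(\S^{N-1})$ (inherited from $\mf v\in H^1_{\loc}$ by homogeneity), that $g_i$ restricted to a component $\omega$ of $A_i$ lies in $H^1_0(\omega)$ because it is continuous and vanishes on $\partial\omega$, and that $\omega$ is a proper subdomain of $\S^{N-1}$, so that Faber--Krahn applies.
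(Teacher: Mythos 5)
Your proof is correct, but it follows a genuinely different route from the paper's. The paper's proof of this lemma retraces the steps of Lemma~\ref{lem: trip non emp}, restricted to the sphere: it shows each $\{v_i>0\}\cap S_1(x_0)$ is connected via a Liouville-type argument (Proposition~\ref{prop: liou seg full}), establishes the structure of components of the relative boundary $\partial_r\{v_i>0\}$, shows $\{v_i=0\}\cap S_1(x_0)$ is connected via a winding-number homotopy argument, and then invokes the result of Czarnecki--Kulczycki--Lubawski \cite{CzKuLu} (connectedness of the boundary of a set with connected complement in a simply connected ambient) to reach the contradiction. In particular the paper explicitly uses hypotheses (iv) and (v), transferred to the sphere by homogeneity. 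Your argument replaces both the Liouville step and the winding-number step with cleaner ingredients: the Faber--Krahn/Sperner inequality on $\S^{N-1}$, combined with the identity $\lambda_1^D(\omega)=\nu(\nu+N-2)<N-1$ for each component $\omega$ of each $A_i$, simultaneously yields that every $A_i$ is connected \emph{and} that $A_i\cap A_j\neq\emptyset$ by a volume count, without any analysis of the free boundary structure; the contradiction is then a two-line Mayer--Vietoris computation using $H_1(\S^{N-1})=0$, which is exactly where $N>2$ enters. Your proof does not appear to use hypotheses (iv) or (v) at all. What each approach buys: the paper's argument is designed to mirror (and recycle the technical steps of) the full-space Lemma~\ref{lem: trip non emp}, where no homogeneity or compactness is available and the Faber--Krahn route is not accessible; your argument exploits the extra structure (homogeneity forces a spherical eigenvalue problem with eigenvalue strictly below that of the hemisphere) to get a shorter, more quantitative proof, and it cleanly isolates the topological obstruction via $H_1(\S^{N-1})=0$ rather than through an ad hoc homotopy-to-$\S^1$ construction. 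One minor point to make fully rigorous in your write-up, as you yourself flag: the passage $g_i|_\omega\in H^1_0(\omega)$ should be justified (continuity of $g_i$ on $\overline\omega$, vanishing on $\partial\omega$, and a standard truncation $(g_i-\eps)^+$ suffice), and one should state explicitly that $m\mapsto\lambda_1^D$ of the geodesic cap of measure $m$ is strictly decreasing so that $\lambda_1^D(\omega)<N-1=\lambda_1^D(\text{hemisphere})$ forces $|\omega|>\tfrac12|\S^{N-1}|$.
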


\begin{proof}
The proof is based on the same arguments previously employed in the demonstration of Lemma \ref{lem: trip non emp}. We now retrace the key steps:

\smallskip

Thanks to the homogeneity of $\mf{v}$, we have that points ($iv$) and ($v$) (namely \eqref{st 1 e 2}) hold true for the restriction on the nodal set on the sphere: denoting by $\mathrm{int}_r$, $\overline{\,\,\cdot\,\,}^{r}$, and $\pa_{r}$ the relative interior, relative closure, and relative boundary on the sphere, we have that
\[
\text{$\overline{\mathrm{int}_{r}\{v_i=0\}}^{r} = \{v_i=0\} \cap S_1(x_0)$ for every $i$},
\]
and 
\[
\text{$\mathrm{int}_{r}\{v_i=0\} \cap \mathrm{int}_{r}\{v_j=0\} = \emptyset$ for every $i \neq j$.}
\]
Moreover, arguing as in Lemma \ref{lem: trip non emp} and using the homogeneity, one can show that:

\smallskip

\emph{Step 1)} For every $i=1,2,3$, the set $\{v_i>0\} \cap S_1(x_0)$ is connected.

\smallskip

\emph{Step 2)} Let $i \neq j \neq k$, and let $\gamma_i$ be a connected component of the relative boundary $\partial_{r} \{v_i>0\}$. Then either $v_j=0$ and $v_k>0$ on $\gamma_i$, or $v_j>0$ and $v_k=0$ on $\gamma_i$.

\smallskip

\emph{Step 3)} $\{v_i=0\} \cap S_1(x_0)$ is connected for every $i$. In the proof of this step, we take advantage of the fact that the sphere is simply connected in order to adapt the topological argument developed in Lemma \ref{lem: trip non emp}.

\smallskip

Thanks to these properties, we can reach the same contradiction as in Lemma \ref{lem: trip non emp} (notice that the result in \cite{CzKuLu} holds also on $S_1(x_0)$, since $S_1(x_0)$ is simply connected).
\end{proof}

\begin{lemma}\label{lem: 3/4}
Let $N \ge 2$, and let $\mf{v} \in H^1_{\loc}(\R^N,\R^3) \cap C^0(\R^N,\R^3)$ be a triplet of nonnegative subharmonic functions with the following properties: 
\begin{itemize}
\item[($i$)] $v_i \not \equiv 0$ and $\Delta v_i = 0$ in $\{v_i>0\}$ for every $i$; moreover, $v_1 \,v_2\,v_3 \equiv 0$ in $\R^N$;
\item[($ii$)] $\Gamma_{\mf{v}}^3 \neq \emptyset$ is an affine space of dimension at most $N-2$;
\item[($iii$)] $\mf{v}$ is $\nu$-homogeneous with respect to any point of $\Gamma_{\mf{v}}^3$, for some $\nu \in (0,1)$;
\item[($iv$)] $\overline{\mathrm{int}\{v_i=0\}} = \{v_i=0\}$ for every $i$;
\item[($v$)] $\mathrm{int}\{v_i=0\} \cap \mathrm{int}\{v_j=0\} = \emptyset$ for every $i \neq j$.
\end{itemize}
Then $\nu \ge 3/4$.
\end{lemma}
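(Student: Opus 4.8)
The plan is first to reduce to the planar case $N=2$ with $\Gamma_{\mf v}^3$ a single point, and then to run an explicit computation on the unit circle.

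\emph{Step 1: reduction to $N=2$ with $\Gamma_{\mf v}^3=\{0\}$.} Since $\mf v$ is $\nu$-homogeneous with respect to every point of the affine space $\Gamma_{\mf v}^3$, and a function homogeneous of a fixed degree with respect to two distinct points is invariant under translations along the line through them, $\mf v$ is translation invariant in the directions parallel to $\Gamma_{\mf v}^3$. Writing $d=\dim\Gamma_{\mf v}^3\le N-2$ and choosing coordinates so that $\Gamma_{\mf v}^3=\R^d\times\{0\}$, we obtain $\mf v(x)=\tilde{\mf v}(x_{d+1},\dots,x_N)$ for a triplet $\tilde{\mf v}$ on $\R^{M}$, $M:=N-d\ge 2$, which inherits (i) and (iii)--(v), is $\nu$-homogeneous with respect to $0$, and satisfies $\Gamma_{\tilde{\mf v}}^3=\{0\}$ (so (ii) holds, with dimension $0\le M-2$). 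If $M\ge 3$, Lemma \ref{lem: triple on sphere} applied to $\tilde{\mf v}$ at $x_0=0$ would produce a point of $S_1(0)\cap\Gamma_{\tilde{\mf v}}^3$, contradicting $\Gamma_{\tilde{\mf v}}^3=\{0\}$. Hence $M=2$; renaming, from now on $N=2$, $\Gamma_{\mf v}^3=\{0\}$, and $\mf v$ is $\nu$-homogeneous with respect to the origin with $\nu\in(0,1)$.

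\emph{Step 2: the ODE on the circle.} In polar coordinates write $v_i(r,\theta)=r^\nu g_i(\theta)$, where $g_i:=v_i(1,\cdot)\ge 0$, $g_i\not\equiv 0$, and $g_1g_2g_3\equiv 0$ on $\mathbb{S}^1$; set $A_i:=\{g_i>0\}\subset\mathbb{S}^1$. Since $v_i$ is harmonic on $\{v_i>0\}$ and $\Delta(r^\nu g_i)=r^{\nu-2}(g_i''+\nu^2 g_i)$, we have $g_i''+\nu^2 g_i=0$ on each connected component of $A_i$. A component cannot be all of $\mathbb{S}^1$: otherwise $\{v_i=0\}=\{0\}$ has empty interior, contradicting (iv). So every component of $A_i$ is a proper open arc on which $g_i$ vanishes at both endpoints and is positive inside; solving the equation forces each such arc to have length exactly $\pi/\nu$. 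In particular $A_i$ is a finite union of $k_i\ge 1$ arcs and $|A_i|=k_i\,\pi/\nu$.

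\emph{Step 3: multiplicity count and conclusion.} Because $\Gamma_{\mf v}^3\cap\mathbb{S}^1=\emptyset$ we have $A_1\cup A_2\cup A_3=\mathbb{S}^1$, while partial segregation gives $A_1\cap A_2\cap A_3=\emptyset$; hence $n(\theta):=\#\{i: g_i(\theta)>0\}\in\{1,2\}$ for every $\theta$. If $g_1(\theta)>0=g_2(\theta)=g_3(\theta)$, then (v), which transfers to $\mathbb{S}^1$ because the sets $\{v_i=0\}$ are cones, excludes $\theta\in\mathrm{int}_r\{g_2=0\}\cap\mathrm{int}_r\{g_3=0\}$, so $\theta\in\pa_r A_2\cup\pa_r A_3$, a finite set; therefore $n=2$ a.e.\ and
\[
\sum_{i=1}^3 |A_i| = \int_{\mathbb{S}^1} n\,d\sigma = 2\cdot 2\pi = 4\pi .
\]
Comparing with $\sum_i|A_i|=\tfrac{\pi}{\nu}\sum_i k_i\ge \tfrac{3\pi}{\nu}$ yields $4\pi\ge 3\pi/\nu$, i.e.\ $\nu\ge 3/4$. (One even gets $\nu=3/4$, since $k_i\,\pi/\nu\le 2\pi$ and $\nu<1$ force $k_i=1$, but only the inequality is needed.) The main obstacle I expect is the claim that $n=2$ almost everywhere on $\mathbb{S}^1$: this is where conditions (iv), (v) and partial segregation must be combined with care, and it is precisely this identity that fixes the total angular measure $\sum_i|A_i|$ and hence the value of $\nu$.
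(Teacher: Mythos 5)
Your proof is correct and follows essentially the same route as the paper: reduce to $N=2$ with $\Gamma_{\mf{v}}^3$ a single point (using Lemma \ref{lem: triple on sphere}), write $v_i(r,\theta)=r^\nu g_i(\theta)$ so each component of $\{g_i>0\}$ is an arc of length $\pi/\nu$, and finish with a measure count on the circle. Your two small streamlinings are sound: you exploit assumption (ii) to quotient out all of $\Gamma_{\mf{v}}^3$ in one step (rather than inducting one dimension at a time), and you observe via (iv)–(v) that the pointwise multiplicity equals $2$ a.e., so $\sum_i|A_i|=4\pi$ exactly, which makes the final inequality $3\pi/\nu\le 4\pi$ immediate without the paper's case split on whether some $\{g_i>0\}$ is disconnected.
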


\begin{proof}
\emph{Step 1)} $N=2$. Without loss of generality, we assume that $0 \in \Gamma_{\mf{v}}^3$. By homogeneity, we write $\mf{v}(r,\theta) = r^\nu \mf{g}(\theta)$, and each component $g_i$ satisfies
\[
-g_i''  = \nu^2 g_i \quad \text{in $\{g_i>0\} \cap S_{1}$}.
\]
By continuity and positivity of $v_i$, up to a rotation $g_i(\theta) = c \sin(\nu \theta)$ in each connected component of $\{g_i>0\} \cap S_{1}$, and any such component is an arc with opening $\pi/\nu$. If $\{g_i>0\}$ is disconnected for an index $i$, then at least two such arcs must stay in $S_1 \simeq [0,2\pi)$, whence 
\[
2\frac{\pi}{\nu} \le 2\pi \quad \implies \quad \nu \ge 1.
\]
If instead $\{g_i>0\}$ is connected for every $i$, by partial segregation we deduce that the union of the complements $\{g_i=0\}$ must cover $S_1$, namely 
\[
3\left(2\pi - \frac{\pi}{\nu}\right) \ge 2\pi \quad \implies \quad \nu \ge \frac34.
\]
In conclusion, $\nu \ge 3/4$. 

\medskip

\noindent \emph{Step 2)} For $N \ge 3$, we argue by induction on $N$: suppose that the thesis holds in dimension $N-1$, and let $\mf{v}$ satisfy the assumptions of the lemma in $\R^N$. Without loss of generality, we can suppose that $0 \in \Gamma_{\mf{v}}^3$. By Lemma \ref{lem: triple on sphere}, there exists another triple point on the unit sphere. Hence, by assumptions ($ii$)-($iii$), we have a full line $\ell$ of triple points, and $\mf{v}$ is $\nu$-homogeneous with respect of any point of $\ell$. But then $\mf{v}$ depends at most on $N-1$ variables. Up to a rotation and a translation, we can assume that $\ell$ is the $x_N$ axis, so that $\mf{v}$ is independent of $x_N$. Thanks to the homogeneity, the restriction $\bar{\mf{v}}:= \mf{v}|_{\R^{N-1} \times \{0\}}: \R^{N-1} \to \R^3$ satisfies the same assumption of $\mf{v}$, in dimension $N-1$. Therefore, by inductive assumption, we deduce that $\nu \ge 3/4$.
\end{proof}

\begin{proof}[Proof of Proposition \ref{prop: imp liou}]
(1) Let $\mf{v}$ satisfy assumption (1) in the proposition, and suppose by contradiction that all the components $v_i$ are non-trivial.
By Theorem \ref{thm: nodal set}, and Lemmas \ref{lem: trip non emp} and \ref{lem: homog}, $\mf{v}$ satisfies all the assumptions of Lemma \ref{lem: 3/4}, and we deduce that $\nu \ge 3/4$.

\smallskip

\noindent (2) Let $\mf{v}$ satisfy assumption (2) in the proposition, and suppose by contradiction that all the components $v_i$ are non-trivial. By Lemma \ref{lem: blow-down}, there exists a positive $\nu' \le \nu <3/4$ and a $\nu'$-homogeneous function $\mf{w} \in \mathcal{L}_{\loc}(\R^N)$ with all non-trivial components. But then $\nu' \ge 3/4$ by Point (1), a contradiction. 
\end{proof}

\subsection{Proof of Theorem \ref{thm: improved Holder bounds}}\label{sub: conc imp}

\begin{proof}[Proof of \eqref{eq: imp hol}]
As already observed in \cite[Remark 4.14]{ST24p1}, in order to prove the uniform H\"older estimate, the contradiction and blow-up schemes are exactly the same described in \cite[Section 4]{ST24p1}. The only difference is that, in \cite[Lemmas 4.12 and 4.13]{ST24p1}, we use Proposition \ref{prop: imp liou} instead of Corollary \ref{cor: liou seg} and Theorem \ref{thm: liou sub}. This allows to enlarge the threshold for $\alpha$ from $\bar \nu$ to $3/4$. 
\end{proof}

Estimate \eqref{eq: imp hol} implies that functions in $\mathcal{L}_{\loc}(\Omega)$ are of class $C^{0,\alpha}(\Omega)$ for every $\alpha \in (0,3/4)$. To complete the proof of Theorem \ref{thm: improved Holder bounds}, it remains to show that actually they are of class $C^{0,3/4}(\Omega)$. This requires some preliminary lemmas.

\begin{lemma}\label{lem: lbn3}
Let $\mf{v} \in \mathcal{L}_{\loc}(\Omega)$. If $m(x_0) =3$, then $N(\mf{v},x_0,0^+) \ge 3/4$.
\end{lemma}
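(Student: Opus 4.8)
The plan is to run a blow-up at the triple point and invoke the classification of homogeneous partially segregated profiles from Lemma \ref{lem: 3/4}. Set $\gamma := N(\mf{v},x_0,0^+)$. By Lemma \ref{lem: N >=} we already know $\gamma \ge \bar\nu > 0$, so it suffices to exclude the case $\gamma < 3/4$; assume then, by contradiction, $\gamma < 3/4$.

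By Proposition \ref{prop: conv blow-up}, along a suitable sequence $\rho_m \to 0^+$ the rescalings $\mf{v}_m$ converge, in $C^{0,\alpha}_{\loc}$ and strongly in $H^1_{\loc}(\R^N)$, to some $\mf{w} \in \mathcal{L}_{\loc}(\R^N)$ that is $\gamma$-homogeneous with respect to $0$ and has all components non-trivial; in particular $\mf{w}(0)=0$, so $0 \in \Gamma_{\mf{w}}^3$. I would then upgrade the interior regularity of $\mf{w}$ to a \emph{global} H\"older bound: since $\gamma<3/4$, the $C^{0,\alpha}$ regularity of functions in $\mathcal{L}_{\loc}$ for every $\alpha<3/4$ (a consequence of \eqref{eq: imp hol}, established above) gives $\mf{w}\in C^{0,\gamma}_{\loc}(\R^N)$, and $\gamma$-homogeneity turns this into a global estimate by scaling: for $x\ne y$ set $R:=2\max\{|x|,|y|\}$, so that $x/R,\,y/R\in B_1$ and
\[
|\mf{w}(x)-\mf{w}(y)| = R^{\gamma}\,|\mf{w}(x/R)-\mf{w}(y/R)| \le [\mf{w}]_{C^{0,\gamma}(B_1)}\,|x-y|^{\gamma}.
\]
Thus $\mf{w}$ is globally $\gamma$-H\"older continuous with $\gamma \in (0,1)$.

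It remains to verify that $\mf{w}$ satisfies all the hypotheses of Lemma \ref{lem: 3/4}. Property (i) follows from \eqref{bas prop 1}, the non-triviality of all components, and the fact that each $w_i$ is nonnegative and subharmonic (being an $H^1_{\loc}$-limit of subharmonic functions). Since all components are non-trivial, Theorem \ref{thm: nodal set} yields property (iv), $\overline{\mathrm{int}\{w_i=0\}}=\{w_i=0\}$, and Lemma \ref{lem: tot emp int} yields property (v), $\mathrm{int}\{w_i=0\}\cap\mathrm{int}\{w_j=0\}=\emptyset$ for $i\ne j$. By Lemma \ref{lem: trip non emp}, $\Gamma_{\mf{w}}^3\ne\emptyset$, and Lemma \ref{lem: homog} gives that $\Gamma_{\mf{w}}^3$ is an affine subspace of dimension at most $N-2$ with $\mf{w}$ $\gamma$-homogeneous about each of its points, i.e.\ properties (ii) and (iii). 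Lemma \ref{lem: 3/4} then forces $\gamma\ge 3/4$, contradicting $\gamma<3/4$; hence $N(\mf{v},x_0,0^+)=\gamma\ge 3/4$. The only step requiring some care is the passage from interior to global H\"older continuity of $\mf{w}$, which however is immediate from homogeneity as displayed above; everything else is a direct application of results already established.
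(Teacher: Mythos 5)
Your proof is correct, but it takes a genuinely different and considerably longer route than the paper's. The paper's argument for this lemma is a short doubling estimate, exactly parallel to Lemma~\ref{lem: N >=} with $\bar\nu$ replaced by $3/4$: if $N(\mf{v},x_0,0^+)<3/4$, monotonicity and Lemma~\ref{lem: doubling seg} give a lower bound $H(\mf{v},x_0,r)\ge c\,r^{2(\alpha-\eps)}$ for small $r$, with suitable $\alpha-\eps<\alpha<3/4$, while the $C^{0,\alpha}$ regularity coming from \eqref{eq: imp hol}, combined with $\mf{v}(x_0)=0$, yields the upper bound $H(\mf{v},x_0,r)\le C\,r^{2\alpha}$; these are incompatible as $r\to 0^+$. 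You instead blow up at $x_0$ via Proposition~\ref{prop: conv blow-up}, upgrade interior to global H\"older continuity of the $\gamma$-homogeneous limit $\mf{w}$ by scaling, verify the hypotheses of Lemma~\ref{lem: 3/4}, and conclude; this amounts to re-running the proof of Proposition~\ref{prop: imp liou}, part (1), on the blow-up profile. Both strategies rely on the nearly-optimal regularity from \eqref{eq: imp hol}; the paper's is shorter and avoids the blow-up construction, whereas yours makes the connection with the classification of homogeneous profiles explicit. One minor redundancy on your side: since you already observe $\mf{w}(0)=0$ and $\gamma>0$, you have $0\in\Gamma_{\mf{w}}^3$ directly, so the separate appeal to Lemma~\ref{lem: trip non emp} to establish $\Gamma_{\mf{w}}^3\neq\emptyset$ is unnecessary.
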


\begin{proof}
Thanks to Theorem \ref{thm: improved Holder bounds}, the proof is analogue to the one of Lemma \ref{lem: N >=}, with $\bar \nu$ replaced by $3/4$.
\end{proof}

\begin{lemma}\label{lem: decayopt}
Let $\mf{v} \in \mathcal{L}_{\loc}(\Omega)$, and let $K \subset \Omega$ be a compact set. There exists $C>0$ such that 
\[
\frac{1}{r^{N-1}} \int_{S_r(x_0)} \sum_i v_i^2\,d\sigma \le C r^{3/2}
\]
for every $r \in (0, \dist(K, \pa \Omega) / 2 )$ and for every $x_0 \in \Gamma_{\mf{v}}^3 \cap K$.
\end{lemma}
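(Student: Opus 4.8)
The plan is to prove the optimal decay estimate $H(\mf{v},x_0,r)\le C r^{3/2}$ at triple points via the Almgren frequency function, mirroring the proof of Lemma \ref{lem: decay1L} but now using the \emph{improved} frequency lower bound from Lemma \ref{lem: lbn3} in place of Lemma \ref{lem: N >=}. First I would fix the compact set $K\ssubset\Omega$ and set $R:=\dist(K,\pa\Omega)$, so that $B_R(x_0)\subset\Omega$ for every $x_0\in K$; since $\mf{v}$ is fixed (not a sequence), there is no need for the family parameter $m$ that appeared in Lemma \ref{lem: decay1L}. For any $x_0\in\Gamma_{\mf{v}}^3\cap K$, Lemma \ref{lem: lbn3} gives $N(\mf{v},x_0,0^+)\ge 3/4$, and by the monotonicity of $N$ in Proposition \ref{prop: alm seg} we get $N(\mf{v},x_0,r)\ge 3/4$ for every $r>0$.

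Next I would apply the doubling estimate, Lemma \ref{lem: doubling seg}($i$), with $\alpha=3/4$, $r_1=r$, $r_2=R/2$: for every $r\in(0,R/2)$,
\[
\frac{H(\mf{v},x_0,r)}{r^{3/2}}\le \frac{H(\mf{v},x_0,R/2)}{(R/2)^{3/2}}.
\]
It then remains to bound $H(\mf{v},x_0,R/2)$ uniformly in $x_0\in K$. This follows from subharmonicity of each $v_i$: by the mean value inequality applied on spheres, $H(\mf{v},x_0,R/2)$ is controlled by $\sup_{B_{R}(x_0)}\sum_i v_i^2\le \|\mf{v}\|_{L^\infty(K_R)}^2$, where $K_R$ is the closed $R$-neighbourhood of $K$ in $\Omega$, a fixed compact set on which $\mf{v}$ is bounded (being continuous). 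Combining the two displays yields $H(\mf{v},x_0,r)\le C r^{3/2}$ with $C=C(N,R,\|\mf{v}\|_{L^\infty(K_R)})$ independent of $x_0$, which is exactly the claimed estimate after rewriting $H(\mf{v},x_0,r)=r^{1-N}\int_{S_r(x_0)}\sum_i v_i^2\,d\sigma$.

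I do not expect any genuine obstacle here: the lemma is a routine consequence of Lemma \ref{lem: lbn3} (which in turn rests on Theorem \ref{thm: improved Holder bounds}) together with the frequency monotonicity and doubling already established in Section \ref{sec: Alm}. The only point requiring minor care is the uniform-in-$x_0$ control of $H(\mf{v},x_0,R/2)$, but this is immediate from the continuity of $\mf{v}$ on the compact set $K_R$ together with subharmonicity. The estimate will subsequently be used, exactly as \eqref{1461} was used in the proof of Lemma \ref{lem: bu hol}, to derive the optimal $C^{0,3/4}$ modulus of continuity of $\mf{v}$ near triple points, completing the proof of Theorem \ref{thm: improved Holder bounds}.
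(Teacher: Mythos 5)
Your proof is correct and follows exactly the route the paper intends: the paper's proof of this lemma simply says to proceed as in Lemma \ref{lem: decay1L}, replacing the frequency lower bound from Lemma \ref{lem: N >=} with the improved one from Lemma \ref{lem: lbn3}, and then invoking the doubling inequality of Lemma \ref{lem: doubling seg}($i$) together with a uniform $L^\infty$ bound on a compact neighbourhood of $K$ -- precisely what you do. (Minor remark: the bound on $H(\mf{v},x_0,R/2)$ already follows from continuity of $\mf{v}$ on the compact set $K_R$; subharmonicity is not needed there.)
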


\begin{proof}
One can proceed exactly as in the proof of Lemma \ref{lem: decay1L}.
\end{proof}

\begin{proof}[Conclusion of the proof of Theorem \ref{thm: improved Holder bounds}]
By contradiction, suppose that $\mf{v} \in \mathcal{L}_{\loc}(\Omega)$ but $\mf{v} \not \in C^{3/4}(K)$, for a compact set $K \subset \Omega$. Then there exist sequences $\{x_m\}$ and $\{y_m\}$ in $K$ such that (up to a relabelling) $x_m \neq y_m$ and
\[
\frac{|v_{1}(x_m)-v_1(y_m)|}{|x_m-y_m|^{3/4}} \to +\infty.
\]
Let $r_m:= |x_m-y_m|$ and $2R_m:= \max\{\dist(x_m,\Gamma_{\mf{v}}^3), \dist(x_m, \Gamma_{\mf{v}}^3)\} = \dist(x_m,\Gamma_{\mf{v}}^3)$. We can consider the same four cases as in Lemma \ref{lem: bu hol}, and reach a contradiction in each of them following the same strategy (by using Lemma \ref{lem: decayopt} instead of Lemma \ref{lem: decay1L}).
\end{proof}

\section{Further results on the nodal set in dimension 2}\label{sec: fb dim 2}

In this section, we prove further properties of the nodal set in dimension $N=2$. We consider the problem with fixed traces in a simply connected bounded domain $\Omega \subset \R^2$. The starting points of our analysis are Theorem \ref{thm: nodal set}, Remark \ref{rem: part nodal}, and the basic properties of $\Gamma_{\mf{v}}^3$ collected in Remark \ref{rem: on triple}.

In order to state our main result, we make some assumptions on the boundary datum.

\begin{definition}\label{def: simple boundary}
We say that a function $\boldsymbol{\psi} =(\psi_1,\psi_2,\psi_3) \in \mathrm{Lip}(\overline{\Omega})$, with $\psi_1\,\psi_2\,\psi_3 \equiv 0$ in $\overline{\Omega}$, is a \emph{simple boundary datum}, if
\begin{itemize}
\item[($i$)] $\{\psi_i>0\} \cap \pa \Omega \neq \emptyset$ is connected for every $i$; 
\item[($ii$)] $\Gamma_{\mf{v}} \cap \pa \Omega$ contains exactly three point, one from $\{\psi_1=0=\psi_2\}$, one from $\{\psi_1=0=\psi_3\}$, and one from $\{\psi_2=0=\psi_3\}$.
\item[($iii$)] $\varphi_i \in C^{1,\alpha}(\supp \,\varphi_i \cap \pa \Omega)$, and moreover, if $x_0 \in \{\psi_i=0=\psi_j\}$ for two indexes $i$ and $j$, then $\nabla_{\theta} \varphi_i(x_0) = - \nabla_{\theta} \varphi_j(x_0)$.
\end{itemize}
\end{definition}
In particular, $\Gamma_{\boldsymbol{\psi}}^3 \cap \pa \Omega=\emptyset$. A prototypical example is the function defined by \eqref{tr mer} on $S_1$ (conveniently extended within $B_1$). 

Given a simple boundary datum, we consider the minimization problem
\beq\label{def cinfty'}
c_\infty:= \min\left\{ \sum_{j=1}^3 \int_{\Omega} |\nabla u_j|^2\,dx\left|\begin{array}{l}  \mf{u}-\boldsymbol{\psi} \in H_0^1(\Omega,\R^3) \\ u_1\,u_2\,u_3 \equiv 0 \ \text{in $\Omega$}\end{array}\right.\right\}.
\eeq
Theorem 1.2 in \cite{ST24p1} ensures the existence of a minimizer $\mf{v}$, and the fact that $\mf{v} \in C^{0,\alpha}(\overline{\Omega})$ for some $\alpha >0$. We denote by $\omega_i:=\{v_i>0\}$ the positivity set of $v_i$, and we define the \emph{interfaces} 
\beq\label{def Gamma ij}
\Gamma_{ij}:= \pa \omega_i \cap \pa \omega_j \cap \{x \in \Omega: \ m(x) = 2\}.
\eeq
We say that $\omega_i$ and $\omega_j$ are \emph{adjacent} if $\Gamma_{ij} \neq \emptyset$.

The fact that $\boldsymbol{\psi}$ is a simple boundary datum entails several consequences for $\Gamma_{\mf{v}}$. Together with Theorem \ref{thm: nodal set} and the global $C^{0,\alpha}$-regularity (see Theorem 1.2 in \cite{ST24p1}), it ensures that in a tubular neighborhood of $\pa \Omega$ the nodal set $\Gamma_{\mf{v}}$ consists in exactly three regular arcs intersecting $\pa \Omega$ transversally. Thus, the set of triple points $\Gamma_{\mf{v}}^3$ in $\overline{\Omega}$ is finite. We also recall that $\Gamma_{\mf{v}} \setminus \Gamma_{\mf{v}}^3$ is, locally, the nodal set of some harmonic functions. 

Regarding the set of \emph{double singular points}, each point $x_0 \in \Sigma_{\mf{v}} \setminus \Gamma_{\mf{v}}^3$ has a neighborhood where $x_0$ is the unique singular point of a harmonic function (by Lemma \ref{prop: diff arm}). However, here we have a subtle issue: both the size of the neighborhood, and the harmonic function, depend on the point $x_0$ itself and, notably, on its distance from $\Gamma_{\mf{v}}^3$. Therefore, in principle we could have infinitely many double singular points. This is the first issue that we investigate here.

Furthermore, we observe that $\Gamma_{\mf{v}}$ may contain some piecewise smooth \emph{loops}, defined as the boundary of an open non-empty region which does not contain points of $\Gamma_{\mf{v}}$, or the bouquet of piecewise smooth loops attached together at singular points (possibly sharing some regular arcs), each of which containing an open region where one component \(v_i = 0\) (we cannot have a nodal loop for $v_i$ containing a region where $v_i>0$, since this would contradict the harmonicity of $v_i$ and the maximum principle). The second issue we address is whether the number of loops must be finite or not.

\begin{remark}
The fact that any loop $\gamma$ of $\Gamma_{\mf{v}}$ surrounds a region where a one component $v_i=0$ is an easy consequence of Theorem \ref{thm: nodal set}, see in particular Remark \ref{rem: part nodal}. Indeed, if this were not true, then there were (at least) three arcs on $\gamma$ belonging to $\Gamma_{12}$, $\Gamma_{13}$ and $\Gamma_{23}$. Inside the loop, close to the arc of $\Gamma_{ij}$, one component between $v_i$ and $v_j$ must vanish. Thus, at least two components must vanish inside the loops, say $v_1$ and $v_2$. Since $\mathrm{int}\{v_1=0\} \cap \mathrm{int}\{v_2=0\}=\emptyset$, this forces the existence of another arc of $\Gamma_{12} \subset \Gamma_{\mf{v}}$ in the open region surrounded by $\gamma$, in contradiction with the definition of loop.
\end{remark}

In the main result of this section, we answer the above issues.

\begin{theorem}\label{thm: fb dim 2 1}
Let $\boldsymbol{\psi}$ be a simple boundary datum. Then $\omega_i$ is connected for every $i=1,2,3$, and $\Gamma_{\mf{v}}$ has finitely many singular points, and finitely many loops.
\end{theorem}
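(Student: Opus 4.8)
The plan is to prove the three claims in turn, using Theorem~\ref{thm: nodal set}, the maximum principle, and the local structure of $\Gamma_{\mf v}$ near $\pa\Omega$ forced by the hypothesis that $\boldsymbol{\psi}$ is a simple boundary datum. First I would record that structure: combining Theorem~\ref{thm: nodal set}, the global $C^{0,\alpha}(\overline{\Omega})$ regularity, and conditions $(ii)$--$(iii)$ of Definition~\ref{def: simple boundary} (at each boundary double point $\nabla_\theta\psi_i=-\nabla_\theta\psi_j$, so $\psi_i-\psi_j$ has non-vanishing tangential derivative there, while away from the three double points exactly one $\psi_i$ vanishes along $\pa\Omega$, hence $v_i\equiv0$ nearby by segregation), one checks that for a thin enough collar $U_\delta=\{x\in\Omega:\dist(x,\pa\Omega)<\delta\}$ the set $\Gamma_{\mf v}\cap U_\delta$ consists of exactly three $C^1$ arcs, one for each pair $\{i,j\}$, emanating transversally from the boundary double points and cutting $U_\delta$ into three pieces $P_1,P_2,P_3$ with $v_i\equiv0$ on $P_i$; in particular $U_\delta$ contains no singular point of $\mf v$. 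From this the connectedness of $\omega_i=\{v_i>0\}$ is quick: $\omega_i$ has no connected component $\omega$ with $\overline\omega\subset\Omega$ (else $v_i$ would be harmonic in $\omega$, vanish on $\pa\omega$ and be positive inside, contradicting the maximum principle), so every component of $\omega_i$ meets $U_\delta$; but $\omega_i\cap U_\delta$ equals $P_j\cup P_k$ together with the arc separating them, which is connected, hence lies in a single component of $\omega_i$, so there is only one.

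Next, for the finiteness of $\Sigma_{\mf v}$: by the previous paragraph $\Sigma_{\mf v}$ is contained in the compact set $K:=\overline\Omega\setminus U_\delta\subset\Omega$, so $\Gamma_{\mf v}^3$, being locally finite in $\Omega$ (Theorem~\ref{thm: nodal set}), is finite. For the double singular points $\Sigma_{\mf v}\setminus\Gamma_{\mf v}^3$, by Lemma~\ref{prop: diff arm} each is an isolated singular point of a harmonic function, so they cannot accumulate in $\Omega\setminus\Gamma_{\mf v}^3$, nor at $\pa\Omega$; accumulation at a triple point $x_0$ is ruled out by a blow-up argument. If $x_n\to x_0$ with the $x_n$ double singular points, set $r_n=|x_n-x_0|$ and pass to a subsequence so that $(x_n-x_0)/r_n\to\xi_0\in S_1$ and the same pair, say $\{1,2\}$, vanishes at each $x_n$. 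By Proposition~\ref{prop: conv blow-up} the blow-ups of $\mf v$ at $x_0$ converge in $C^0_{\loc}\cap H^1_{\loc}$ to a $\gamma$-homogeneous $\mf w\in\mathcal L_{\loc}(\R^2)$ with non-trivial components; by homogeneity $\Gamma_{\mf w}$ is a cone, its regular part is a scaling-invariant smooth curve hence a finite union of rays, and its singular set is a discrete cone, hence $\{0\}$. Since $|\xi_0|=1$, $\xi_0$ is a regular double point of $\mf w$, so $w_1(\xi_0)=w_2(\xi_0)=0<w_3(\xi_0)$ and $\nabla(w_1-w_2)(\xi_0)\neq0$. On the other hand $v_{3,n}>0$ near $\xi_0$ for large $n$, so $v_{1,n}-v_{2,n}$ is harmonic there (Lemma~\ref{cor: diff arm}) and converges to $w_1-w_2$ in $C^1_{\loc}$; as $(x_n-x_0)/r_n$ is a critical point of $v_{1,n}-v_{2,n}$ lying on its nodal set, passing to the limit yields $\nabla(w_1-w_2)(\xi_0)=0$, a contradiction. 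Hence $\Sigma_{\mf v}$ is finite.

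Finally, for the loops, the crucial point is that $\Gamma_{\mf v}$ contains no loop free of singular points: on such a loop $\gamma$ exactly two components vanish, and since no triple point is crossed the pair is constant, say $v_1=v_2=0<v_3$; the region $R_\gamma$ bounded by $\gamma$ meets no point of $\Gamma_{\mf v}$, so $m\equiv1$ on $R_\gamma$, and since $v_1-v_2$ is harmonic across $\gamma$ with $v_1=(v_1-v_2)_+$, $v_2=(v_1-v_2)_-$, it has constant sign on $R_\gamma$; thus one of $v_1,v_2$, say $v_2$, vanishes identically on $R_\gamma$ while the other is positive, harmonic on $R_\gamma$ and zero on all of $\pa R_\gamma=\gamma$ --- contradicting the maximum principle. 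Once this is excluded, $\Gamma_{\mf v}\cup\pa\Omega$ may be regarded as a finite planar graph: its vertices are the finitely many points of $\Sigma_{\mf v}$ and the three boundary double points; after removing small (generic) balls around the singular points and a collar of $\pa\Omega$, what remains of $\Gamma_{\mf v}$ is a compact $1$-manifold with boundary, hence a finite union of arcs (circle components being excluded by the previous step); and near each vertex $\Gamma_{\mf v}$ is a finite union of arcs ($2m$ at a double singular point where the relevant difference vanishes to order $m$, one at a boundary double point, and finitely many at a triple point, the blow-up above having nodal set equal to a finite union of rays). A finite planar graph has finite first Betti number and finitely many bounded complementary faces, so $\Gamma_{\mf v}$ has finitely many loops.

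The main obstacle is the blow-up analysis at triple points, used twice above: to exclude accumulation of double singular points at a triple point, and to bound the number of arcs of $\Gamma_{\mf v}$ emanating from one. Away from triple points the minimality and the maximum principle already control everything; at a triple point one must invoke the Almgren monotonicity formula, the existence and classification of homogeneous blow-ups of Section~\ref{sec: blow-up} (together with the two-dimensional argument behind Lemma~\ref{lem: 3/4}), and --- for the precise local picture of $\Gamma_{\mf v}$ near the triple point --- an argument upgrading subsequential blow-up convergence to a genuine local description.
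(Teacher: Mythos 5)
Your treatment of the connectedness of $\omega_i$ and of the finiteness of $\Gamma_{\mf v}^3$ is sound, and the blow-up argument excluding accumulation of double singular points at a triple point can be made to work, though it is considerably heavier machinery than what the paper uses. The genuine gap is in the loop-counting step, and you flag it yourself at the end: the planar-graph argument requires $\Gamma_{\mf v}$ to be a finite one-dimensional complex, and in particular that only finitely many nodal arcs emanate from each triple point. Knowing only that \emph{subsequential} blow-ups at a triple point are homogeneous with nodal set a finite union of rays does not give a local description of $\Gamma_{\mf v}$ near that point --- one would need uniqueness of the blow-up (or an epiperimetric or Reifenberg-type improvement), which the paper does not establish. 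In fact, the paper proves the precise local picture near a triple point only later, in Lemma~\ref{lem: nod mer}, and there it relies on Theorem~\ref{thm: fb dim 2 1}; so your route is circular unless you supply this structure theorem independently.

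The paper proceeds in the opposite order and avoids both the blow-up at triple points and the local structure theorem. After the connectedness of $\omega_i$ (Lemma~\ref{lem: connect 1}), it first observes (Lemma~\ref{lem: 2 triple on loops}) that any loop of $\pa\omega_i$ must carry at least \emph{two} triple points: if the same second component vanished all along the loop, the loop would bound a nodal domain of a harmonic function, which is impossible. Finiteness of loops (Lemma~\ref{lem: loops and triple}) then follows from finiteness of $\Gamma_{\mf v}^3$ by a combinatorial argument: infinitely many loops through finitely many triple points would give two triple points joined by three regular arcs, enclosing a region where two components vanish identically, contradicting Theorem~\ref{thm: nodal set}. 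Finally, finiteness of the double singular points is read off from finiteness of loops: near such a point $\Gamma_{\mf v}$ is the nodal set of a harmonic function with at least four equal-angle arcs, and the connectedness of the $\omega_i$ then forces $\pa\omega_1$ or $\pa\omega_2$ to self-intersect there and bound a loop. The ordering ``triple points finite $\Rightarrow$ loops finite $\Rightarrow$ double singular points finite'' thus replaces your blow-up and graph arguments with two short topological observations and dispenses entirely with the local description at triple points.
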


The proof is divided into intermediate lemmas.

\begin{lemma}\label{lem: connect 1}
If $\boldsymbol{\psi}$ is a simple boundary datum, the set $\omega_i:=\{v_i>0\}$ is connected for every $i$.
\end{lemma}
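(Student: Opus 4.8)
The plan is to prove connectedness of each positivity set $\omega_i = \{v_i > 0\}$ by a topological argument, combining the structural information coming from the simple boundary datum with the Liouville-type rigidity already developed for functions in $\mathcal{L}_{\loc}$. First I would reduce to the situation where $\omega_i$ is disconnected for some index, say $i=3$, and derive a contradiction. Write $\omega_3 = A \cup B$ with $A, B$ nonempty, open and disjoint. Since $\boldsymbol{\psi}$ is a simple boundary datum, $\{\psi_3 > 0\} \cap \partial\Omega$ is connected, so at most one of $A$, $B$ can touch $\partial\Omega$ along the set $\{\psi_3 > 0\}$; say $B \cap \partial\Omega \subset \{\psi_3 = 0\}$, so that $\overline{B} \cap \partial\Omega$ is contained in the zero set of the boundary datum, and in fact (using that $\Gamma_{\boldsymbol{\psi}} \cap \partial\Omega$ has only three points, none of multiplicity three) $\partial B \cap \partial\Omega$ is either empty or reduced to finitely many points. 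The key point is that $B$ is then a connected component of $\omega_3$ that is, up to a finite set on $\partial\Omega$, compactly contained in $\Omega$, so $\partial B \cap \Omega$ is nonempty of Hausdorff dimension $N-1 = 1$.

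Next I would analyze what lies along $\partial B$. Since $v_3 = 0$ on $\partial B$ and $\partial B$ has dimension $1$, by Theorem \ref{thm: nodal set}(ii) the generic point of $\partial B \cap \Omega$ is a double point, so exactly one further component vanishes there; and $\partial B$ meets $\Gamma_{\mf{v}}^3$ only in finitely many points (finiteness of triple points, already established in this section). Using Theorem \ref{thm: nodal set}, near each regular point $x_0 \in \partial B$ there is a ball in which $v_3 - v_j$ is harmonic for a single $j \in \{1,2\}$, so $v_j > 0$ just outside $B$ across that portion of $\partial B$, and hence $\partial B$ decomposes into finitely many arcs, each bordered on the outside either by $\{v_1 > 0\}$ or by $\{v_2 > 0\}$. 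The heart of the matter is to build a continuous map $f : \overline\Omega \to \mathbb{S}^1$ which "detects" the component $B$ — exactly as in the proof of Lemma \ref{lem: trip non emp}, Step 2, where $f$ was constructed using distance functions to two disjoint neighbourhoods and \eqref{CKL1} — and then to exhibit a loop in $\overline\Omega$ that links $B$ (using simple connectedness of $\Omega$ to contract it) whose image under $f$ has nonzero winding number. Since $\Omega \subset \R^2$ is simply connected, any loop in $\overline\Omega$ is null-homotopic, so composing the null-homotopy with $f$ would contract a degree-one loop in $\mathbb{S}^1$, which is impossible.

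To make the winding-number step work I would argue as follows. If all arcs of $\partial B \cap \Omega$ had $v_1 > 0$ on the outside, then $\{v_1>0\}$ would surround $B$ on the inside of $\Omega$ while $B$ itself contains points of $\mathrm{int}\{v_1=0\}$ in its interior near $\partial B$ (by Theorem \ref{thm: nodal set} again, $v_1 = 0$ inside $B$ adjacent to those arcs), and then $\mathrm{int}\{v_1=0\}$ would be disconnected in a way forcing an extra nodal arc of $\Gamma_{12}$ inside $B$ — but inside $B$ we have $v_3 > 0$, so that arc carries $v_1 v_2 \equiv 0$, contradicting Remark \ref{rem: part nodal} together with the maximum principle (a bounded component of $\{v_1=0\} \cap B$ would make $v_1$ a nonnegative harmonic function vanishing somewhere, hence $\equiv 0$ there, making the component interior and contradicting its being a component of a nodal set). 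Hence at least one arc of $\partial B$ borders $\{v_1>0\}$ and at least one borders $\{v_2>0\}$; but then, walking along $\partial B$, the boundary datum on $\partial\Omega$ forces the three traces to be arranged so that the component $B$ genuinely separates a piece of $\{v_1>0\}$ from a piece of $\{v_2>0\}$, and connectedness of $\{v_1>0\}$ and $\{v_2>0\}$ (which I would prove first, or simultaneously, by the same scheme applied to $i=1,2$) is violated — a path joining the two pieces through $\overline\Omega \setminus \overline B$ together with a path through $B$ gives the required degree-one loop.

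The main obstacle I anticipate is the bookkeeping in this last topological step: ensuring that the auxiliary map $f$ is well-defined and continuous across all the finitely many singular points of $\partial B$ and the finitely many boundary points where $\partial B$ meets $\partial\Omega$, and verifying rigorously that the constructed loop indeed has winding number one. This is essentially the same difficulty already overcome in Lemma \ref{lem: trip non emp}, Step 2 (and adapted to the sphere in Lemma \ref{lem: triple on sphere}), so I would follow that template closely; the new features are only that the ambient region is $\Omega$ rather than $\R^N$ (handled by simple connectedness of $\Omega$) and that $\partial B$ may touch $\partial\Omega$ in finitely many points, which I would absorb by slightly enlarging $B$ to an open set whose closure meets $\partial\Omega$ only in the corresponding arcs of $\{\psi_3 = 0\}$, exactly as the neighbourhoods $\Sigma_1, \Sigma_2$ were used before.
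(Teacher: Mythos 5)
Your proposal wildly overcomplicates the argument, and in doing so misses the one-line conclusion that the paper uses. You correctly observe the decisive fact early on --- if $\omega_3 = A \cup B$ disjointly and $\{\psi_3>0\}\cap\partial\Omega$ is connected, then one component, say $B$, cannot meet the positivity arc on $\partial\Omega$, so $v_3 = 0$ on all of $\partial B$. But at that point the proof is already over: $v_3$ is a nonnegative harmonic function in $B$, continuous on $\overline B$, vanishing identically on $\partial B$, so the maximum principle forces $v_3 \equiv 0$ in $B$, contradicting $B \subset \{v_3>0\}$. That is the paper's entire argument. Instead, you embark on an elaborate topological detour (analyzing which component of $\mathbf{v}$ is positive on each arc of $\partial B$, building a map $f:\overline\Omega \to \mathbb{S}^1$ \`a la Lemma \ref{lem: trip non emp}, constructing a degree-one loop and contradicting simple connectedness of $\Omega$).

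That detour is not only unnecessary, it is also incomplete as written: you acknowledge the "bookkeeping" is not done, the winding-number claim is not actually verified, and the argument seems to presuppose connectedness of $\{v_1>0\}$ and $\{v_2>0\}$ ("which I would prove first, or simultaneously"), making the logic potentially circular. None of the machinery from Lemma \ref{lem: trip non emp} Step 2 is needed here; in that lemma the difficulty was that no component vanishes on a whole connected boundary piece, so a maximum-principle shortcut is unavailable, whereas here the simple boundary datum hands you exactly the boundary vanishing you need. The lesson: check whether the structural hypothesis already gives you Dirichlet boundary data $v_i=0$ on some component before reaching for heavier topology.
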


\begin{proof}
By contradiction, let $\omega_i= A \cup B$, with $A$ and $B$ disjoint, open, and non-empty. Recall that $v_i$ is continuous on $\overline{\Omega}$, hence it vanishes (continuously) on $\partial \omega_i \setminus  \{\psi_i > 0\}$. Since $\{\psi_i>0\}$ is connected on $\pa \Omega$, and $v_i$ is strictly positive there, it cannot intersect both $A$ and $B$. We infer that $v_i$ vanishes, for instance, on $\pa B$. But $v_i \ge 0$ is a harmonic function in $B$, continuous up to the boundary, and such that $v_i=0$ on $\pa B$; then $v_i \equiv 0$ in $B$, a contradiction.
\end{proof}

\begin{lemma}\label{lem: 2 triple on loops}
On any loop of $\pa \omega_i$ there must be at least two triple points.  
\end{lemma}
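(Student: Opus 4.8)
The plan is to argue by contradiction: suppose some loop $\gamma\subset\partial\omega_i$ contains no triple point, or exactly one. By the Remark preceding Theorem \ref{thm: fb dim 2 1}, the loop $\gamma$ surrounds an open region $U$ in which exactly one component — say $v_1$, so $i\ne 1$ — vanishes identically, and in which, by Remark \ref{rem: part nodal} together with Theorem \ref{thm: nodal set}, $\mathrm{int}\{v_1=0\}\cap U$ is all of $U$ up to the nodal set, so no arc of $\Gamma_{\mathbf v}$ lies inside $U$. Thus $\gamma=\partial U$ consists of arcs of $\Gamma_{12}$ and $\Gamma_{13}$ only (no arc of $\Gamma_{23}$, since $v_1=0$ on one side of each arc of $\gamma$), and the triple points on $\gamma$ are precisely the points where an arc of $\Gamma_{12}$ meets an arc of $\Gamma_{13}$. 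The goal is to rule out the case of zero or one such meeting point.

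First I would dispose of the case of \emph{no} triple point: then $\gamma$ is a single smooth arc lying entirely in one interface, say $\Gamma_{12}$ (it cannot switch from $\Gamma_{12}$ to $\Gamma_{13}$ without passing through a triple point). So $v_3$ vanishes identically in a neighborhood of $\gamma$ inside $U$; but also $v_1\equiv 0$ in $U$, so $v_1 v_3\equiv 0$ near $\gamma$, and then Lemma \ref{lem: no |x|} forces $v_2$ to be harmonic across $\gamma$. Since $v_2=0$ on $\gamma$ and $v_2\ge 0$ inside $U$ is harmonic (being positive on the inner side and vanishing on $\gamma$, a nodal loop enclosing $\{v_2>0\}$), the maximum principle gives $v_2\equiv 0$ in $U$ — but inside $U$ we already have $v_1\equiv 0$, contradicting $\mathrm{int}\{v_1=0\}\cap\mathrm{int}\{v_2=0\}=\emptyset$ from Lemma \ref{lem: tot emp int}. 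Alternatively, and more cleanly: $v_i$ (with $i=2$) is a nonnegative harmonic function in the bounded domain $U$, continuous up to $\overline U$, vanishing on all of $\partial U=\gamma$; hence $v_i\equiv 0$ in $U$, contradicting that $\omega_i=\{v_i>0\}$ borders $\gamma$ on its outer side and $\gamma\subset\partial\omega_i$ while $U$ is on the $v_i=0$ side — but we need $U$ to be the region where $v_i=0$, which it is only if $i=1$; so I must instead observe that $U\subset\{v_1=0\}$ and rerun the argument to get $v_k\equiv 0$ in $U$ for the remaining index $k$, giving the same contradiction with Lemma \ref{lem: tot emp int}.

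For the case of \emph{exactly one} triple point $p\in\gamma$: then $\gamma\setminus\{p\}$ is a single smooth arc, and at $p$ this arc must transition between $\Gamma_{12}$ and $\Gamma_{13}$ (if it stayed in $\Gamma_{12}$ on both sides, $p$ would not be genuinely a triple point of the local picture). So $\gamma$ is the union of one closed arc $\gamma_{12}\subset\overline{\Gamma_{12}}$ and one closed arc $\gamma_{13}\subset\overline{\Gamma_{13}}$ meeting only at $p$. Now I consider the function $w:=v_2-v_3$ in a neighborhood of $\gamma\setminus\{p\}$. Along $\gamma_{12}$ we have $v_3>0$ on both sides (it is an interface between $\{v_1>0\}$ and $\{v_2>0\}$ with $v_3$ positive there), so by Lemma \ref{cor: diff arm} $v_2-v_3$... — actually the cleaner route is: near $\gamma_{12}$, $v_1 v_3\equiv 0$? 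No: $v_1=0$ inside $U$ but $v_2>0$ just inside $U$ along $\gamma_{12}$, so $U$ along $\gamma_{12}$ has $v_1=0,v_2>0,v_3=0$; and outside $\gamma_{12}$ we have $v_1>0,v_2=0$. So $v_3\equiv 0$ on a full neighborhood of the interior of $\gamma_{12}$, and likewise $v_2\equiv 0$ on a full neighborhood of the interior of $\gamma_{13}$. Thus in a punctured neighborhood of $p$ along $\gamma$, the component that vanishes identically switches from $v_3$ to $v_2$ — but then $p$ is a point where $v_2=v_3=0$ and both $\mathrm{int}\{v_2=0\}$ and $\mathrm{int}\{v_3=0\}$ accumulate, which is exactly the kind of configuration that a single isolated triple point cannot support: by Remark \ref{rem: on triple}(ii), near the triple point $p$ each component has $\mathrm{int}\{v_k=0\}$ meeting every ball, consistent; the actual contradiction comes from a degree/winding count. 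I would instead count: going once around $\gamma$, the "dominant positive component on the outer side" must return to itself, but the pattern forced by $U\subset\{v_1=0\}$ and by having only arcs of $\Gamma_{12},\Gamma_{13}$ is that the outer positive component alternates $v_1,v_1$ (it is always $v_1$!) — wait, outside $U$ across $\gamma_{12}$ it is $\{v_1>0\}=\omega_i$ assuming $i=1$, contradicting $i\ne 1$. Let me restart this paragraph's logic cleanly in the write-up; the honest skeleton is: \textbf{use the Euler-type relation for a planar nodal picture.} Since $v_2-v_3$ is harmonic in the connected open set $\{v_1>0\}$ by Lemma \ref{cor: diff arm}, and $\gamma_{12}\cup\gamma_{13}$ lies in $\overline{\{v_1>0\}}$ with $v_2-v_3$ vanishing on it having a nodal crossing structure, the local degree (number of nodal arcs emanating) at an interior crossing of the nodal set of a harmonic function in the plane is even and at least $4$; an isolated triple point $p$ on the loop $\gamma$ would be such a crossing with only the two arcs of $\gamma$ plus the arcs of $\Gamma_{\mathbf v}$ leaving $p$ into $\{v_1>0\}$, and balancing this count against "no arcs inside $U$" forces a second triple point on $\gamma$.

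\textbf{Main obstacle.} The delicate point is making the "which component vanishes on each side" bookkeeping rigorous and then converting it into a clean contradiction — in particular, handling the possibility that $\gamma$ has a single triple point where it is tangent to itself, or where three or more arcs of $\Gamma_{\mathbf v}$ leave $p$ into the exterior of $U$. I expect the cleanest finish is via Lemma \ref{prop: diff arm}/\ref{cor: diff arm}: the loop $\gamma$, minus its triple points, is a nodal line of a harmonic function (namely $v_j-v_k$ for the appropriate indices in $\{v_i>0\}$ and its complement), and a smooth closed nodal curve of a harmonic function in the plane enclosing a region where that harmonic function has a sign cannot exist without singular (crossing) points of the nodal set on it; zero triple points contradicts the strong maximum principle outright (as in Lemma \ref{lem: connect 1}), and one triple point is incompatible with the local even-order vanishing of harmonic functions at nodal crossings. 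I would present the two cases separately, lean on Lemma \ref{lem: tot emp int} and Lemma \ref{lem: no |x|} for the zero-triple-point case, and on the local structure of nodal sets of harmonic functions in dimension two (even number of arcs at each crossing) for the one-triple-point case.
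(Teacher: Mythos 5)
Your zero-triple-point case is on the right track: the maximum principle applied to the second vanishing component on the loop is exactly the paper's mechanism. (Some index bookkeeping in your write-up needs straightening out: a loop of $\pa\omega_i$ encloses a region where $v_i=0$, so the interior vanishing index is $i$ itself, not a different one, and along a $\Gamma_{12}$ arc one has $v_3>0$, \emph{not} $v_3\equiv 0$, so your first route via Lemma~\ref{lem: no |x|} is unavailable there; your ``cleaner alternative'' via the maximum principle and Lemma~\ref{lem: tot emp int} is the correct one.)

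The genuine gap is the one-triple-point case. The ``degree/winding count'' and ``even-order vanishing at nodal crossings'' ideas do not go through: Lemmas~\ref{prop: diff arm} and~\ref{cor: diff arm} give a single harmonic function only \emph{locally around double points}, where the third component stays bounded away from zero, and that structure breaks precisely at the triple point $p$; so $\gamma$ is not the nodal set of any one harmonic function in a neighbourhood of $p$, and the even-degree heuristic has no function to apply to. Making your sketch rigorous would effectively require the Almgren blow-up analysis at triple points, far heavier than needed. What you miss is that the one-triple-point case reduces to the zero-triple-point case and requires no new idea: if $p$ were the only triple point on $\gamma$, then along the connected arc $\gamma\setminus\{p\}$ the identity of the second vanishing component cannot switch (a switch occurs only at a triple point), so by continuity that same component vanishes on all of $\gamma$, and your maximum-principle contradiction applies verbatim. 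The paper encodes this symmetrically and case-free: each of the closed sets $\{v_2=0\}\cap\gamma$ and $\{v_3=0\}\cap\gamma$ is a proper subset of $\gamma$ (each filling $\gamma$ is excluded by the maximum principle in the enclosed region where $v_1\equiv 0$), their union is all of $\gamma$, hence each contains an arc, and going once around the circle forces at least two transitions between them, i.e.\ at least two triple points.
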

\begin{proof}
Let $\alpha \subset \pa \omega_i$ be a loop for some index $i$, say $i=1$. Being a nodal point, at each point of $\alpha$ another component of $\mf{v}$ must vanish. If $v_2=0$ on $\alpha$, then recalling that the nodal set $\Gamma_{\mf{v}}$ has empty interior, and that $v_1=0$ in the region surrounded by $\alpha$, we have that $\alpha$ must be the boundary of a positivity set of $v_2$, which is not possible (since $v_2$ is harmonic where positive). Thus, there must be at least an arc of $\alpha$ where $v_3=0$ and $v_2>0$, and at least an arc of $\alpha$ where $v_2=0$ and $v_3>0$. Any of these arcs is delimited by two triple points.
\end{proof}

\begin{lemma}\label{lem: loops and triple}
The number of loops in $\Gamma_{\mf{v}}$ is finite.
\end{lemma}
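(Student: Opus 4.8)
The plan is to bound the number of loops by combining the earlier structural facts (finitely many triple points in $\overline{\Omega}$, and the fact that each loop is piecewise smooth with vertices at singular points) with an Euler-characteristic / combinatorial argument on the planar graph formed by $\Gamma_{\mf{v}}$. First I would recall from the discussion preceding the theorem that, since $\boldsymbol{\psi}$ is a simple boundary datum, the set $\Gamma_{\mf{v}}^3$ in $\overline{\Omega}$ is finite; moreover $\Gamma_{\mf{v}}\setminus\Gamma_{\mf{v}}^3$ is locally the nodal set of a harmonic function, so away from triple points the nodal set consists of smooth arcs. By Lemma \ref{lem: 2 triple on loops}, every loop of $\partial\omega_i$ carries at least two triple points; more precisely each such loop is subdivided by triple points into at least two regular arcs, and along each arc exactly one specified component vanishes while the other two are positive.

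The key step is to turn the nodal set into a finite planar graph $G$ whose vertices are the triple points $\Gamma_{\mf{v}}^3$ (together with the three boundary intersection points, which I would either include as vertices on $\partial\Omega$ or treat separately) and whose edges are the closures of the connected components of $\Gamma_{\mf{v}}\setminus\Gamma_{\mf{v}}^3$. I must argue this graph is \emph{finite}, i.e.\ has finitely many edges: each edge is an arc whose two endpoints are triple points (or boundary points), and no two distinct edges can share the same unordered pair of endpoints \emph{and} enclose a region free of $\Gamma_{\mf{v}}$ in which the same component vanishes — because Remark \ref{rem: part nodal} (the interiors of $\{v_i=0\}$ form an open partition) forbids two disjoint regions with the same vanishing component being forced by such a configuration, and the local structure (each arc borders exactly two positivity sets with a prescribed pattern) limits the multiplicity of edges between a fixed vertex pair to a bounded number. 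With finitely many vertices and each pair joined by boundedly many edges, $G$ is a finite graph; its complement in $\Omega$ therefore has finitely many connected components (faces). Every loop of $\Gamma_{\mf{v}}$ bounds at least one face in which some $v_i$ vanishes, and — crucially — by Remark \ref{rem: part nodal} the interiors $\mathrm{int}\{v_i=0\}$ are pairwise disjoint and their closures cover $\Omega$, so each face is contained in exactly one $\mathrm{int}\{v_i=0\}$ and distinct loops enclose distinct faces (a single face cannot be enclosed by two different loops, and each loop is the boundary, or part of the boundary, of the union of the faces it surrounds). Since there are finitely many faces, there are finitely many loops.

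The main obstacle I anticipate is ruling out an \emph{a priori} infinite family of edges between a fixed pair of triple points (equivalently, ruling out that a single pair of triple points is joined by infinitely many regular arcs), since that is exactly the subtle point flagged in the paragraph before the theorem — the harmonic function and the size of the good neighborhood at a double singular point depend on the point and on its distance to $\Gamma_{\mf{v}}^3$. To handle this I would use a compactness/contradiction argument: if there were infinitely many such arcs, they would accumulate somewhere in $\overline{\Omega}$; the accumulation set would force either a new triple point (contradicting finiteness of $\Gamma_{\mf{v}}^3$) or, via the local harmonic structure from Lemma \ref{prop: diff arm} and unique continuation, force one component to vanish on a set with interior, contradicting Theorem \ref{thm: nodal set}(iii) / Remark \ref{rem: part nodal}. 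Once the finiteness of the edge set is secured, the Euler-characteristic bound and the face-counting argument are routine, and I would write them out briefly, noting that the number of faces is bounded by $1 + \#E - \#V$ (or the appropriate planar formula taking $\partial\Omega$ into account) and hence the number of loops is finite. Finally I would remark that the connectedness of the $\omega_i$, already established in Lemma \ref{lem: connect 1}, combined with this loop count, also pins down the number of double singular points (each isolated singular point is a vertex of $G$, and $G$ is finite), completing the proof of Theorem \ref{thm: fb dim 2 1}.
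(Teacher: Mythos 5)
Your proposal takes the same basic combinatorial viewpoint as the paper — finitely many triple points, so too many loops would force too many arcs between them — but the step where the argument must actually close has a gap, and the machinery you wrap around it (finite planar graph, Euler characteristic, faces) is unnecessary once that step is done correctly. The specific problem is the claim that Remark~\ref{rem: part nodal} ``forbids two disjoint regions with the same vanishing component being forced by such a configuration.'' It does no such thing: $\mathrm{int}\{v_i=0\}$ is perfectly allowed to have many connected components, so the open-partition property alone bounds nothing about edge multiplicity. What is actually needed, and what you never state, is a \emph{harmonicity/maximum-principle} input: if two arcs of, say, $\Gamma_{12}$ join the same two triple points, then $v_2$ is harmonic and nonnegative where positive and vanishes on the entire boundary of the sandwiched region (since $v_2=0$ on $\Gamma_{12}$ and at the triple points), so $v_2\equiv 0$ there; combined with $v_1\equiv 0$ there, one \emph{then} contradicts the disjointness of the $\mathrm{int}\{v_i=0\}$. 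Without this intermediate step your ``bounded multiplicity'' claim is unsupported. Your compactness fallback (accumulating arcs produce a new triple point or a fat nodal set) could probably be made rigorous, but it is a heavy detour: it secretly reproduces the same local contradiction in the limit, and nothing about the argument requires infinitely many arcs.

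The paper's proof is much more economical precisely here. If there were infinitely many loops then, by Lemma~\ref{lem: 2 triple on loops} and finiteness of $\Gamma_{\mf{v}}^3$, some pair of triple points would be joined by at least \emph{three} regular arcs. Those three arcs cut out two sandwiched regions. A short case analysis on the possible $\Gamma_{ij}$-labels of the arcs (Figure~\ref{fig:3}) shows that in every configuration one of these regions has two components vanishing identically, by exactly the maximum-principle argument above — contradicting $\mathrm{int}\{v_i=0\}\cap\mathrm{int}\{v_j=0\}=\emptyset$ from Theorem~\ref{thm: nodal set}. No Euler formula and no accumulation argument are needed; three arcs already give the contradiction. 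I would suggest you drop the graph/Euler framing, make the maximum-principle step explicit, and observe that the contradiction already arises at three parallel arcs rather than infinitely many.
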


\begin{proof}
This follows directly from Lemma \ref{lem: 2 triple on loops} and from the fact that $\Gamma_{\mf{v}}^3$ is finite: if the number of loops $m \to +\infty$, then we must have infinitely many arcs connecting a finite number of triple points (indeed, the same finitely many triple points must belong to infinitely many loops). In particular, there are two triple points connected by three arcs of regular points. These arcs must belong to at least two different $\Gamma_{ij}$. In any case, for all the possible combination of indexes we find an open non-empty region where two components vanish identically (see Figure \ref{fig:3}), in contradiction with the fact that $\mathrm{int}\{v_i=0\} \cap \mathrm{int}\{v_j=0\} = \emptyset$, being the nodal set of dimension $N-1$.
\end{proof}

\begin{figure}[ht]
\centering

\tikzset{every picture/.style={line width=0.75pt}} 

\begin{tikzpicture}[x=0.75pt,y=0.75pt,yscale=-1,xscale=1]

\draw [color={rgb, 255:red, 208; green, 2; blue, 27 }  ,draw opacity=1 ][line width=1.5]  [dash pattern={on 5.63pt off 4.5pt}]  (202,152) .. controls (152,114) and (158,97) .. (201,79) ;
\draw [line width=1.5]    (201,79) .. controls (244,67) and (254,147) .. (202,152) ;
\draw [shift={(202,152)}, rotate = 174.51] [color={rgb, 255:red, 0; green, 0; blue, 0 }  ][fill={rgb, 255:red, 0; green, 0; blue, 0 }  ][line width=1.5]      (0, 0) circle [x radius= 4.36, y radius= 4.36]   ;
\draw [shift={(201,79)}, rotate = 344.41] [color={rgb, 255:red, 0; green, 0; blue, 0 }  ][fill={rgb, 255:red, 0; green, 0; blue, 0 }  ][line width=1.5]      (0, 0) circle [x radius= 4.36, y radius= 4.36]   ;
\draw [color={rgb, 255:red, 208; green, 2; blue, 27 }  ,draw opacity=1 ][line width=1.5]  [dash pattern={on 5.63pt off 4.5pt}]  (201,79) .. controls (272,42) and (356,157) .. (202,152) ;
\draw [color={rgb, 255:red, 208; green, 2; blue, 27 }  ,draw opacity=1 ][line width=1.5]  [dash pattern={on 5.63pt off 4.5pt}]  (404,152) .. controls (354,114) and (360,97) .. (403,79) ;
\draw [line width=1.5]    (403,79) .. controls (446,67) and (456,147) .. (404,152) ;
\draw [shift={(404,152)}, rotate = 174.51] [color={rgb, 255:red, 0; green, 0; blue, 0 }  ][fill={rgb, 255:red, 0; green, 0; blue, 0 }  ][line width=1.5]      (0, 0) circle [x radius= 4.36, y radius= 4.36]   ;
\draw [shift={(403,79)}, rotate = 344.41] [color={rgb, 255:red, 0; green, 0; blue, 0 }  ][fill={rgb, 255:red, 0; green, 0; blue, 0 }  ][line width=1.5]      (0, 0) circle [x radius= 4.36, y radius= 4.36]   ;
\draw [color={rgb, 255:red, 74; green, 144; blue, 226 }  ,draw opacity=1 ][line width=1.5]    (403,79) .. controls (404.45,76.44) and (406.23,75.61) .. (408.35,76.51) .. controls (410.36,77.5) and (411.79,76.97) .. (412.64,74.9) .. controls (413.61,72.85) and (415.4,72.34) .. (418.02,73.37) .. controls (419.78,74.69) and (421.21,74.4) .. (422.3,72.51) .. controls (423.51,70.64) and (425.28,70.42) .. (427.63,71.85) .. controls (429.12,73.43) and (430.52,73.37) .. (431.85,71.66) .. controls (434,70.01) and (435.73,70.06) .. (437.05,71.8) .. controls (438.91,73.65) and (440.61,73.83) .. (442.15,72.35) .. controls (444.47,71.06) and (446.13,71.37) .. (447.13,73.28) .. controls (448.63,75.37) and (450.24,75.8) .. (451.95,74.57) .. controls (454.38,73.62) and (455.93,74.15) .. (456.6,76.18) .. controls (457.73,78.45) and (459.22,79.1) .. (461.06,78.11) .. controls (463.55,77.5) and (465.24,78.39) .. (466.12,80.78) .. controls (466.31,82.84) and (467.62,83.67) .. (470.06,83.28) .. controls (472.56,83.03) and (473.79,83.94) .. (473.74,86.01) .. controls (474.01,88.42) and (475.35,89.59) .. (477.75,89.54) .. controls (480.19,89.63) and (481.37,90.89) .. (481.3,93.31) .. controls (481.04,95.63) and (482.06,96.95) .. (484.35,97.28) .. controls (486.65,97.77) and (487.48,99.15) .. (486.84,101.4) .. controls (486.01,103.49) and (486.65,104.89) .. (488.75,105.62) .. controls (490.98,107.09) and (491.46,108.76) .. (490.19,110.64) .. controls (488.76,112.27) and (488.93,113.94) .. (490.7,115.66) .. controls (492.31,117.23) and (492.16,118.89) .. (490.23,120.64) .. controls (488.29,121.57) and (487.78,123.19) .. (488.71,125.5) .. controls (489.52,127.65) and (488.79,128.98) .. (486.53,129.51) .. controls (484.28,129.76) and (483.27,131.04) .. (483.49,133.33) .. controls (483.38,135.78) and (482.06,136.98) .. (479.54,136.93) .. controls (477.06,136.68) and (475.73,137.61) .. (475.54,139.72) .. controls (474.51,142.23) and (472.96,143.09) .. (470.87,142.29) .. controls (468.82,141.41) and (467.4,142.04) .. (466.62,144.18) .. controls (465.67,146.33) and (464.1,146.91) .. (461.92,145.9) .. controls (459.76,144.83) and (458.03,145.34) .. (456.74,147.44) .. controls (455.28,149.53) and (453.39,149.97) .. (451.08,148.78) .. controls (449.77,147.35) and (448.24,147.64) .. (446.51,149.66) .. controls (445.72,151.48) and (444.11,151.73) .. (441.66,150.41) .. controls (440.33,148.9) and (438.61,149.11) .. (436.52,151.04) .. controls (435.49,152.81) and (433.68,152.98) .. (431.08,151.53) .. controls (429.72,149.96) and (428.46,150.04) .. (427.29,151.78) .. controls (424.74,153.57) and (422.76,153.66) .. (421.35,152.04) .. controls (419.95,150.41) and (418.57,150.44) .. (417.22,152.13) .. controls (415.82,153.81) and (413.67,153.81) .. (410.77,152.14) .. controls (409.32,150.45) and (407.82,150.43) .. (406.29,152.07) -- (404,152) ;
\draw (146,104.4) node [anchor=north west][inner sep=0.75pt]    {\footnotesize $\Gamma_{12}$};
\draw (217,105.4) node [anchor=north west][inner sep=0.75pt]    {\footnotesize $ \Gamma_{13}$};
\draw (265,104.4) node [anchor=north west][inner sep=0.75pt]    {\footnotesize $\Gamma_{12}$};
\draw (348,104.4) node [anchor=north west][inner sep=0.75pt]    {\footnotesize $\Gamma_{12}$};
\draw (419,105.4) node [anchor=north west][inner sep=0.75pt]    {\footnotesize $\Gamma_{13}$};
\draw (467,104.4) node [anchor=north west][inner sep=0.75pt]    {\footnotesize $\Gamma_{23}$};
\end{tikzpicture}
\caption{\small In both pictures we have two triple points connected by three regular arcs. On the left, the region delimited by the two arcs of $\Gamma_{12}$ is a loop delimiting a region where both $v_1$ and $v_2$ must vanish identically. This is a not possible. On the right, the arcs of $\Gamma_{12}$ and of $\Gamma_{23}$ delimit a region where necessarily $v_2 \equiv 0$, by harmonicity. On the other hand, the two arcs of $\Gamma_{12}$ and of $\Gamma_{13}$ delimit a region where $v_1 \equiv 0$, for the same reason. As a consequence, we find an open non-empty region where two components vanishes identically, which is not possible again.}\label{fig:3}
\end{figure}
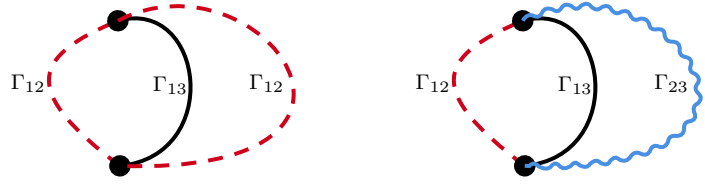

Lemma \ref{lem: loops and triple} allows not only to show that the number of loops is finite, but is also useful in proving the finiteness of the singular points. 

\begin{proof}[Proof of Theorem \ref{thm: fb dim 2 1}]
By Lemma \ref{lem: loops and triple} we have finitely many loops. Let now $x_0$ be a singular point with multiplicity $2$. By Lemma \ref{prop: diff arm}, there exists a neighborhood $B_{r_0}(x_0)$ of $x_0$ where one component, say $v_3$, is strictly positive, $v_1-v_2$ is harmonic in $B_{r_0}(x_0)$, and $x_0$ is the only singular point of $\{v_1-v_2=0\}$. Thus, $\Gamma_{\mf{v}} \cap B_{r_0}(x_0)$ is a finite even union of smooth arcs meeting at $x_0$ with equal angles. Locally, this implies that we find at least two open subdomains of $B_{r_0}(x_0)$ where $v_1>0$, and at least two where $v_2>0$. Since on the other hand $\{v_i>0\}$ is connected in $B_1$ for every $i$, it is necessary that $x_0$ is a self-intersection point for one between $\pa \omega_1$ and $\pa \omega_2$: namely $x_0$ is a double point delimiting (at least) a loop (i.e. a closed arc) of some $\pa \omega_i$ (see Figure \ref{fig:2}). The region contained in the loop is a region where $v_i=0$. Thus, by Lemma \ref{lem: loops and triple} again, if we have infinitely many singular points with multiplicity $2$, then we also have infinitely many triple points, a contradiction.
\end{proof}

\begin{figure}[h]
\centering

\tikzset{every picture/.style={line width=0.75pt}} 

\begin{tikzpicture}[x=0.75pt,y=0.75pt,yscale=-0.7,xscale=0.7]

\draw [line width=1.5]    (193.69,82.34) .. controls (221.93,87.4) and (239.88,173.17) .. (276.6,145.5) ;
\draw [line width=1.5]    (195,157.49) .. controls (227.64,129.82) and (224.38,102.15) .. (257.02,74.49) ;
\draw [line width=1.5]    (174.6,115.99) .. controls (211.32,113.22) and (248.04,122.44) .. (274.15,107.69) ;
\draw [line width=1.5]    (388.39,70.51) .. controls (427.24,76.84) and (451.95,179.48) .. (502.49,144.88) ;
\draw [line width=1.5]    (390.18,164.49) .. controls (435.11,129.89) and (430.61,95.29) .. (475.54,60.7) ;
\draw [line width=1.5]    (362.1,112.59) .. controls (412.64,109.13) and (463.18,120.66) .. (499.12,102.21) ;
\draw [line width=1.5]    (475.54,60.7) .. controls (513.72,34.17) and (523.83,88.37) .. (499.12,102.21) ;
\draw [line width=1.5]    (362.1,112.59) .. controls (302.58,119.51) and (348.63,63) .. (388.39,70.51) ;

\draw (190.96,95.03) node [anchor=north west][inner sep=0.75pt]    {$1$};
\draw (222.78,134.68) node [anchor=north west][inner sep=0.75pt]    {$1$};
\draw (245.63,91.34) node [anchor=north west][inner sep=0.75pt]    {$1$};
\draw (218.7,77.5) node [anchor=north west][inner sep=0.75pt]    {$2$};
\draw (192.59,124.54) node [anchor=north west][inner sep=0.75pt]    {$2$};
\draw (251.34,120.85) node [anchor=north west][inner sep=0.75pt]    {$2$};
\draw (386.88,88.28) node [anchor=north west][inner sep=0.75pt]    {$1$};
\draw (430.68,137.87) node [anchor=north west][inner sep=0.75pt]    {$1$};
\draw (462.13,83.67) node [anchor=north west][inner sep=0.75pt]    {$1$};
\draw (425.06,66.37) node [anchor=north west][inner sep=0.75pt]    {$2$};
\draw (389.12,125.19) node [anchor=north west][inner sep=0.75pt]    {$2$};
\draw (469.99,120.57) node [anchor=north west][inner sep=0.75pt]    {$2$};

\end{tikzpicture}
\caption{\small The local picture and the global picture of an admissible singular point with multiplicity $2$. The numbers $1$ and $2$ identify the regions where $v_1=0$ or $v_2=0$, respectively. The connectedness of the positivity sets $\{v_i>0\}$ forces the occurrence of loops.}\label{fig:2}
\end{figure}
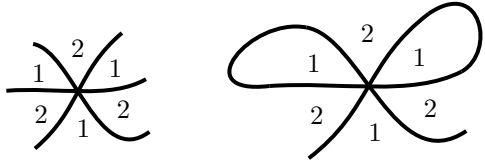


\section{Optimal regularity}\label{sec: mer}

In this section we prove Theorem \ref{prop: mer min}. We argue by contradiction and suppose that $\tilde{\mf{v}}$ is not a minimizer for problem \eqref{min fix traces intro 1} in $B_1$ with fixed traces given by \eqref{tr mer}:
\[
c_\infty:= \min\left\{ \sum_{j=1}^3 \int_{B_1} |\nabla u_j|^2\,dx\left|\begin{array}{l}  \mf{u}-\boldsymbol{\psi} \in H_0^1(B_1,\R^3) \\ u_1\,u_2\,u_3 \equiv 0 \ \text{in $B_1$}\end{array}\right.\right\}.
\]
We denote by 
\[
\tilde c:= J(\tilde{\mf{v}},B_1), \quad \text{where} \quad J(\mf{u},\Omega):=\sum_{i=1}^3 \int_{\Omega} |\nabla u_i|^2\,dx,
\]
and, in view of the fact that $\tilde{\mf{v}}$ is not a minimizer, we observe that $c_\infty<\tilde c$. Moreover, we recall that the value $c_\infty$ is achieved by a minimizer $\mf{v}$ (see \cite[Theorem 1.2]{ST24p1}).

\begin{remark}\label{rem: scale min}
For any $x_0 \in B_1$ and $r>0$ small, let
\[
\boldsymbol{\psi}_{x_0,r}(x) := r^{3/4}\boldsymbol{\psi}\left(\frac{x-x_0}{r}\right).
\]
A simple scaling argument shows that 
\[
\min\left\{J(\mf{u},B_r(x_0))\left|\begin{array}{l}  \mf{u}-\sigma \boldsymbol{\psi}_{x_0,r} \in H_0^1(B_r(x_0),\R^3) \\ u_1\,u_2\,u_3 \equiv 0 \ \text{in $B_r(x_0)$}\end{array}\right.\right\} = \sigma^2 c_\infty\, r^{3/2},
\]
for any $\sigma>0$. Furthermore $\tilde{\mf{v}}(x-x_0) = \boldsymbol{\psi}_{x_0,r}(x)$ on $S_r(x_0)$, and 
\[
J(\sigma \tilde{\mf{v}}(\cdot\,-x_0),B_r(x_0)) = \sigma^2 \tilde c \, r^{3/2} .
\]
\end{remark}

The results of Section \ref{sec: fb dim 2} applies in the present setting, since $\boldsymbol{\psi}$ defined by \eqref{tr mer} on $S_1$ (conveniently extended within $B_1$) is a simple boundary datum, according to Definition \ref{def: simple boundary}. Therefore, summarizing what we know from the results already proven, the set $\omega_i =\{v_i>0\}$ is connected, $\pa \omega_i$ has a finite number of loops for every $i$, and $\Gamma_{\mf{v}}$ has a finite number of singular points in $B_1$. The boundary $\pa \omega_i$ has one component, say $\gamma_i$, intersecting the boundary in the two points of $\pa \{\psi_i>0\} \cap \pa B_1$. We can fix an orientation on each $\gamma_i$, in such a way that $\omega_i$ stays on the right of $\gamma_i$, and we call $\tilde \omega_i$ such region on the right, which may contain $\omega_i$ with strict inclusion. Regarding the set $\pa \omega_i \setminus \gamma_i$, it consists of a finite number of connected components $\sigma_i^1, \dots, \sigma_i^{m_i}$, each of which consists of a finite number of loops connected through singular points; these loops enclose closed regions where $v_i=0$. Both $\gamma_i$ and $\sigma_i^j$ are piecewise smooth curves, at each point of $\pa\omega_i$ at least one component different from $v_i$ vanishes, and the set of simgular points (including $\Gamma_{\mf{v}}^3$) is finite. In particular, $\gamma_1$, $\gamma_2$ and $\gamma_3$ cannot have a common arc. Finally, we recall the definition of $\Gamma_{ij}$ given by \eqref{def Gamma ij}.

\begin{lemma}\label{lem: nod mer}
There exists a point $x_0 \in B_1$ with multiplicity $3$ for the function $\mf{v}$, and $r_0>0$ sufficently small, such that the nodal set $\Gamma_{\mf{v}} \cap B_{r_0}(x_0)$ consists in exactly three regular arcs, one of $\Gamma_{12}$, one of $\Gamma_{13}$, one of $\Gamma_{23}$, meeting at $x_0$; furthermore, each of these arcs intersect $\pa B_{r_0}(x_0)$ transversally.
\end{lemma}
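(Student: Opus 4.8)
The goal is to locate a triple point near which the nodal set has the simplest possible structure (a ``Y'' formed by one arc of each $\Gamma_{ij}$). The plan is to proceed in two stages: first, to show that $\Gamma_{\mf{v}}^3 \neq \emptyset$ for the minimizer $\mf{v}$ of \eqref{def cinfty'} with the specific boundary datum \eqref{tr mer}; and second, to pass to a suitable point of $\Gamma_{\mf{v}}^3$ at which the local picture is the desired one.

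For the first stage, I would argue by contradiction: suppose $\Gamma_{\mf{v}}^3=\emptyset$. Then, by Theorem \ref{thm: nodal set}, $\Gamma_{\mf{v}}=\mathcal{R}_{\mf{v}}\cup\Sigma_{\mf{v}}$ with $\Sigma_{\mf{v}}$ consisting only of double singular points, and by Theorem \ref{thm: fb dim 2 1} the sets $\omega_i$ are connected with finitely many loops and finitely many singular points. Near $\pa B_1$, the simplicity of $\boldsymbol{\psi}$ forces $\Gamma_{\mf{v}}$ to meet $\pa B_1$ in exactly the three prescribed points, so three arcs $\gamma_1,\gamma_2,\gamma_3$ emanate transversally from $\pa B_1$ (one separating $\omega_i$ from $\omega_j$ for each pair, with the matching pattern dictated by \eqref{tr mer}). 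Following each $\gamma_i$ inward (it is a finite concatenation of regular arcs through double singular points, and it cannot terminate because $\partial\{v_i>0\}$ has no endpoints in $\Omega$), I would derive a combinatorial contradiction with the connectedness of the $\omega_i$ and the pairwise-disjointness of $\mathrm{int}\{v_i=0\}$ — essentially the same topological/degree argument used in Lemma \ref{lem: trip non emp} and Lemma \ref{lem: loops and triple}, now adapted to the disk $B_1$ (which is simply connected, so the argument via \cite{CzKuLu} and winding numbers applies verbatim). Alternatively, and perhaps more cleanly, one uses the example $\tilde{\mf{v}}$: any competitor without triple points would, by the local harmonicity of $v_i-v_j$ at double points (Lemma \ref{prop: diff arm}) combined with the prescribed angle condition (iii) in Definition \ref{def: simple boundary}, be forced to have energy strictly larger than $\tilde c$ via the scaling identities of Remark \ref{rem: scale min}, contradicting $c_\infty<\tilde c$; but I expect the cleanest route is the purely topological one, since $c_\infty \le \tilde c$ always and we only know $c_\infty < \tilde c$ by hypothesis.

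For the second stage: having a triple point $y_0\in\Gamma_{\mf{v}}^3$, I would invoke Remark \ref{rem: on triple}: for every $r>0$, $B_r(y_0)$ meets each $\{v_i>0\}$ and each $\mathrm{int}\{v_i=0\}$. Take a blow-up at $y_0$ as in Proposition \ref{prop: conv blow-up}; the blow-up limit $\mf{w}$ is a nontrivial $\gamma$-homogeneous element of $\mathcal{L}_{\loc}(\R^2)$ with all components nontrivial and $\gamma\ge 3/4$ by Lemma \ref{lem: lbn3}. In dimension $2$, writing $\mf{w}(r,\theta)=r^\gamma \mf{g}(\theta)$ and running the elementary ODE analysis of Step 1 of Lemma \ref{lem: 3/4}, each $\{g_i>0\}$ is a single arc of opening $\pi/\gamma$, the three arcs arranged around $S^1$, with exactly three double points on $S^1$ separating them; hence $\Gamma_{\mf{w}}$ is precisely three rays, one in each $\overline{\Gamma_{ij}}$, meeting at the origin. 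Transferring this back: for $r_0$ small the nodal set $\Gamma_{\mf{v}}\cap B_{r_0}(y_0)$ is $C^{1}$-close to the three rays (using the uniform $C^{0,\alpha}$ convergence of the blow-up and the fact that along each $\Gamma_{ij}$-arc the difference $v_i-v_j$ is harmonic, so the arcs are real-analytic and their tangent directions at $y_0$ are determined by $\mf{w}$), so it consists of exactly three regular arcs, one from each $\Gamma_{ij}$, meeting at $y_0=:x_0$ and crossing $\pa B_{r_0}(x_0)$ transversally.

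The main obstacle, I expect, is the first stage — ruling out $\Gamma_{\mf{v}}^3=\emptyset$ cleanly. One must carefully track how the three arcs $\gamma_i$ starting at $\pa B_1$, together with the finitely many loops, can or cannot be completed into closed curves bounding connected $\omega_i$ with disjoint complements-interiors; the bookkeeping is delicate because loops attached at double points can share regular arcs. The cleanest implementation is probably to first show (as a preliminary reduction) that if there were no triple points then the three boundary arcs $\gamma_1,\gamma_2,\gamma_3$ would have to pairwise coincide in part or close up among themselves, and then to derive the contradiction exactly as in the proof of Step 3 of Lemma \ref{lem: trip non emp}. A second, minor technical point is upgrading the $C^{0,\alpha}$ blow-up convergence to enough regularity to conclude the arcs are genuinely regular and transversal; this is handled by the local harmonicity of $v_i-v_j$ near double points together with standard elliptic estimates, exactly as in the proof of Theorem \ref{thm: nodal set}(ii).
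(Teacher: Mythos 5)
Your plan diverges substantially from the paper's proof, and Stage 2 contains a genuine gap. You blow up at an arbitrary triple point $y_0$ and assert that each $\{g_i>0\}$ is a single arc of opening $\pi/\gamma$, so that the blow-up limit $\mf{w}$ is the three-ray ``Y''. This does not follow: the ODE analysis in Step 1 of Lemma \ref{lem: 3/4} only shows that a disconnected $\{g_i>0\}$ forces $\gamma\ge1$, and gives no reason to exclude this alternative. It can genuinely occur: if $y_0$ is a self-intersection point of one of the curves $\gamma_i$ (a case the paper explicitly contemplates), more than three arcs of $\Gamma_{\mf{v}}$ meet at $y_0$ and the blow-up has a higher frequency and more rays. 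Moreover, to obtain exactly three rays even when $\gamma<1$ you would need Lemma \ref{lem: tot emp int} applied to $\mf{w}$ to rule out overlapping zero-sets (which would produce six rays), and an appeal to Lemma \ref{lem: connect}, which requires a global H\"older exponent $<1$ that you have not established for a possibly $\gamma$-homogeneous $\mf{w}$ with $\gamma\ge 1$. Finally, even granting a three-ray blow-up, transferring it back to the actual nodal set near $y_0$ requires ruling out loops accumulating at $y_0$, which you do not invoke.

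The paper never blows up; it works directly with the curves $\gamma_1,\gamma_2,\gamma_3$ and Theorem \ref{thm: fb dim 2 1}. Its Stage 1 is also much cheaper than adapting Lemma \ref{lem: trip non emp}: one endpoint of $\gamma_1$ lies on $\gamma_2$ (the point $\{\psi_1=0=\psi_2\}\cap\pa B_1$) and the other on $\gamma_3$, and at every point of $\gamma_1$ at least one of $v_2,v_3$ vanishes; by connectedness the two closed subsets of $\gamma_1$ where $v_2=0$ and where $v_3=0$ must intersect, producing a triple point $x_0\in\gamma_1\cap\gamma_2\cap\gamma_3$. After noting that the curves $\gamma_i$ cannot cross (otherwise two components would vanish on an open set, violating Theorem \ref{thm: nodal set}), the paper uses the finiteness of loops and singular points to choose $r_0$ so small that $\Gamma_{\mf{v}}\cap B_{r_0}(x_0)=(\gamma_1\cup\gamma_2\cup\gamma_3)\cap B_{r_0}(x_0)$ with $x_0$ the only singular point therein, and then distinguishes two cases: if $x_0$ is simple for each $\gamma_i$, the three-arc picture holds and transversality follows from the piecewise regularity of the curves; if $x_0$ is a self-intersection of some $\gamma_i$, then $\gamma_i$ has a loop through $x_0$, and Lemma \ref{lem: 2 triple on loops} yields a fresh triple point $x_1$ on that loop, so one iterates, terminating because loops are finitely many. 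This ``walk to a simple triple point'' is precisely what is missing from your Stage 2.
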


\begin{proof}
Notice that there must be a triple point $x_0$ in $\gamma_1 \cap \gamma_2 \cap \gamma_3$: indeed, one of the endpoints of $\gamma_1$ belongs to $\gamma_{2}$, and the other to $\gamma_{3}$; if on the other hand there were no triple point, then by continuity either $\gamma_1 =\gamma_2$, or $\gamma_{1} =\gamma_3$, a contradiction.


In addition any component of $\pa \omega_2$ cannot ``cross" a component of $\pa \omega_1$, in the following sense: denoting by $A_i^j$ the region delimited by $\sigma_i^j$, we have that
\[
\gamma_2 \cap (B_1 \setminus \tilde \omega_1)= \emptyset, \quad \gamma_2 \cap A_1^j= \emptyset, \quad \sigma_2^j \cap A_1^k = \emptyset
\]
for every $j$ and $k$. Indeed, if this were not true, then we would find a non-empty open set where both $v_1$ and $v_2$ vanish identically (see Figure \ref{fig:7}), in contradiction with the fact that the set of points with multiplicity $2$ has dimension $1$ (Theorem \ref{thm: nodal set}). Clearly, the same holds if we consider other pair of indexes.

\begin{figure}[ht]

\tikzset{every picture/.style={line width=0.75pt}} 

\begin{tikzpicture}[x=0.75pt,y=0.75pt,yscale=-0.7,xscale=0.7]

\draw   (434,130.65) .. controls (434.16,93.29) and (464.58,63.14) .. (501.95,63.3) .. controls (539.31,63.46) and (569.46,93.88) .. (569.3,131.25) .. controls (569.14,168.61) and (538.72,198.76) .. (501.35,198.6) .. controls (463.99,198.44) and (433.84,168.02) .. (434,130.65) -- cycle ;
\draw   (256,130.95) .. controls (256,93.59) and (286.29,63.3) .. (323.65,63.3) .. controls (361.01,63.3) and (391.3,93.59) .. (391.3,130.95) .. controls (391.3,168.31) and (361.01,198.6) .. (323.65,198.6) .. controls (286.29,198.6) and (256,168.31) .. (256,130.95) -- cycle ;
\draw   (82,129.95) .. controls (82,92.59) and (112.29,62.3) .. (149.65,62.3) .. controls (187.01,62.3) and (217.3,92.59) .. (217.3,129.95) .. controls (217.3,167.31) and (187.01,197.6) .. (149.65,197.6) .. controls (112.29,197.6) and (82,167.31) .. (82,129.95) -- cycle ;
\draw [color={rgb, 255:red, 208; green, 2; blue, 27 }  ,draw opacity=1 ][line width=1.5]    (108.6,183.3) .. controls (131.6,119.3) and (136.6,127.3) .. (162.6,124.3) ;
\draw [color={rgb, 255:red, 208; green, 2; blue, 27 }  ,draw opacity=1 ][line width=1.5]    (282.6,184.3) .. controls (291.6,109.3) and (344.6,124.3) .. (391.3,130.95) ;
\draw [color={rgb, 255:red, 74; green, 144; blue, 226 }  ,draw opacity=1 ][line width=1.5]  [dash pattern={on 5.63pt off 4.5pt}]  (109.6,75.3) .. controls (137.6,194.3) and (144.6,143.3) .. (162.6,124.3) ;
\draw [color={rgb, 255:red, 189; green, 16; blue, 224 }  ,draw opacity=1 ][line width=1.5]    (162.6,124.3) .. controls (178.6,123.3) and (208.6,130.3) .. (217.3,129.95) ;
\draw  [color={rgb, 255:red, 74; green, 144; blue, 226 }  ,draw opacity=1 ][dash pattern={on 5.63pt off 4.5pt}][line width=1.5]  (321,123.15) .. controls (321,109.43) and (327.85,98.3) .. (336.3,98.3) .. controls (344.75,98.3) and (351.6,109.43) .. (351.6,123.15) .. controls (351.6,136.87) and (344.75,148) .. (336.3,148) .. controls (327.85,148) and (321,136.87) .. (321,123.15) -- cycle ;
\draw  [color={rgb, 255:red, 208; green, 2; blue, 27 }  ,draw opacity=1 ][line width=1.5]  (457.63,129.96) .. controls (457.63,115.78) and (463.43,104.29) .. (470.6,104.29) .. controls (477.77,104.29) and (483.58,115.78) .. (483.58,129.96) .. controls (483.58,144.14) and (477.77,155.64) .. (470.6,155.64) .. controls (463.43,155.64) and (457.63,144.14) .. (457.63,129.96) -- cycle ;
\draw  [color={rgb, 255:red, 74; green, 144; blue, 226 }  ,draw opacity=1 ][dash pattern={on 5.63pt off 4.5pt}][line width=1.5]  (446.6,129.96) .. controls (446.6,122.97) and (457.35,117.3) .. (470.6,117.3) .. controls (483.85,117.3) and (494.6,122.97) .. (494.6,129.96) .. controls (494.6,136.96) and (483.85,142.63) .. (470.6,142.63) .. controls (457.35,142.63) and (446.6,136.96) .. (446.6,129.96) -- cycle ;

\draw (119,160.4) node [anchor=north west][inner sep=0.75pt]   {\footnotesize $\gamma_1$};
\draw (122,78.4) node [anchor=north west][inner sep=0.75pt]    {\footnotesize $\gamma_2$};
\draw (294,157.4) node [anchor=north west][inner sep=0.75pt]    {\footnotesize $\gamma_1$};
\draw (300,97.4) node [anchor=north west][inner sep=0.75pt]    {\footnotesize $\sigma_2^j$};
\draw (482,146.4) node [anchor=north west][inner sep=0.75pt]    {\footnotesize $\sigma_1^k$};
\draw (500,118.4) node [anchor=north west][inner sep=0.75pt]    {\footnotesize $\sigma_2^j$};

\end{tikzpicture}
\caption{\small It is not possible that two components of $\pa \omega_i$ and $\pa \omega_j$ cross each other, otherwise we always find an open non-empty region where $v_i=0$ and $v_j=0$.}
\label{fig:7}
\end{figure}
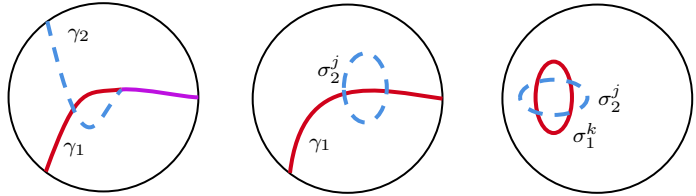

As a consequence, there exists a triple point $x_0$ where all the curves $\gamma_i$ intersect without crossing each other; moreover, since both the loops and the singular points of $\Gamma_{\mf{v}}$ are in finite number (Theorem \ref{thm: fb dim 2 1}), there exists a neighborhood $B_{r_0}(x_0)$ of $x_0$ such that $(\Gamma_{\mf{v}} \cap B_{r_0}(x_0))  = (\gamma_1 \cup \gamma_2 \cup \gamma_3) \cap B_{r_0}(x_0)$, and $x_0$ is the only singular point therein. There are two possibilities: either $x_0$ is a simple point for each curve $\gamma_i$ (Case 1), or $x_0$ is a self-intersection point for (at least) one of the curves (Case 2).

\smallskip

\noindent \underline{Case 1.} In this case, the thesis of the lemma holds: we have exactly $3$ arcs, one of $\Gamma_{12}$, one of $\Gamma_{13}$, one of $\Gamma_{23}$, meeting at $x_0$. Moreover, recalling that the curves are piecewise regular, provided that we suitably restrict the choice of $r_0$, we can assume that the transversality property of the statement is satisfied.

\smallskip

\noindent \underline{Case 2.}  Assume now that $x_0$ is a self-intersection point for one of the curves, say $\gamma_1$. This means that $\gamma_1$ contains a loop starting from $x_0$. By Lemma \ref{lem: 2 triple on loops}, there exists at least a second triple point $x_1$ on $\gamma_1$, and two relatively open arcs of $\gamma_1$ belonging to $\Gamma_{12}$ and $\Gamma_{13}$, respectively. Now, for $x_1$, we have the same alternatives already considered for $x_0$: either $x_1$ is a simple point for each curve $\gamma_i$, or $x_1$ is a self-intersection point for (at least) one of the curves. In the latter case, we have a loop for one curve $\gamma_i$ starting at $x_1$, and such that on the loop there is another triple point $x_2$. If $x_2=x_0$, then we find an open non-empty region where $2$ components vanish, which is not possible. Therefore, iterating the argument, we deduce that either after $k$ iteration we return to Case 1, and the proof is complete, or we have a sequence of mutually different loops in \( B_1 \). However, by Theorem \ref{thm: fb dim 2 1}, this is not possible.
\end{proof}

\begin{figure}[ht]
\centering

\tikzset{every picture/.style={line width=0.75pt}} 

\tikzset{every picture/.style={line width=0.75pt}} 

\begin{tikzpicture}[x=0.75pt,y=0.75pt,yscale=-0.7,xscale=0.7]

\draw [line width=1.5]  [dash pattern={on 5.63pt off 4.5pt}]  (192.45,192.46) .. controls (136.45,157.43) and (135.45,117.08) .. (193.45,105.4) ;
\draw [shift={(193.45,105.4)}, rotate = 348.61] [color={rgb, 255:red, 0; green, 0; blue, 0 }  ][fill={rgb, 255:red, 0; green, 0; blue, 0 }  ][line width=1.5]      (0, 0) circle [x radius= 4.36, y radius= 4.36]   ;
\draw [shift={(192.45,192.46)}, rotate = 212.03] [color={rgb, 255:red, 0; green, 0; blue, 0 }  ][fill={rgb, 255:red, 0; green, 0; blue, 0 }  ][line width=1.5]      (0, 0) circle [x radius= 4.36, y radius= 4.36]   ;
\draw [line width=1.5]  [dash pattern={on 5.63pt off 4.5pt}]  (121.45,217.95) .. controls (147.45,203.08) and (154.45,222.19) .. (192.45,192.46) ;
\draw [color={rgb, 255:red, 208; green, 2; blue, 27 }  ,draw opacity=1 ][line width=1.5]    (193.45,105.4) .. controls (280.45,118.14) and (223.45,177.6) .. (192.45,192.46) ;
\draw [color={rgb, 255:red, 208; green, 2; blue, 27 }  ,draw opacity=1 ][line width=1.5]    (192.45,192.46) .. controls (219.45,202.02) and (203.45,223.26) .. (267.45,223.26) ;
\draw [color={rgb, 255:red, 74; green, 146; blue, 226 }  ,draw opacity=1 ][line width=2.25]    (193.45,105.4) .. controls (193.23,103.17) and (194.23,101.78) .. (196.44,101.21) .. controls (198.56,100.03) and (198.93,98.42) .. (197.54,96.38) .. controls (195.9,94.83) and (195.85,93.19) .. (197.4,91.45) .. controls (198.8,89.42) and (198.52,87.73) .. (196.57,86.39) .. controls (194.58,85.1) and (194.22,83.53) .. (195.49,81.66) .. controls (196.67,79.51) and (196.26,77.88) .. (194.25,76.75) .. controls (192.24,75.59) and (191.83,73.91) .. (193.02,71.72) .. controls (194.31,69.81) and (193.99,68.22) .. (192.05,66.97) .. controls (190.15,65.62) and (189.92,63.92) .. (191.35,61.89) .. controls (192.92,60.25) and (192.86,58.68) .. (191.18,57.19) .. controls (189.62,55.26) and (189.8,53.52) .. (191.71,51.95) .. controls (193.67,50.92) and (194.14,49.34) .. (193.11,47.21) -- (193.8,45.69) ;
\draw [line width=1.5]  [dash pattern={on 5.63pt off 4.5pt}]  (406.45,202.25) .. controls (350.45,172.57) and (349.45,138.4) .. (407.45,128.51) ;
\draw [shift={(407.45,128.51)}, rotate = 350.32] [color={rgb, 255:red, 0; green, 0; blue, 0 }  ][fill={rgb, 255:red, 0; green, 0; blue, 0 }  ][line width=1.5]      (0, 0) circle [x radius= 4.36, y radius= 4.36]   ;
\draw [shift={(406.45,202.25)}, rotate = 207.92] [color={rgb, 255:red, 0; green, 0; blue, 0 }  ][fill={rgb, 255:red, 0; green, 0; blue, 0 }  ][line width=1.5]      (0, 0) circle [x radius= 4.36, y radius= 4.36]   ;
\draw [line width=1.5]  [dash pattern={on 5.63pt off 4.5pt}]  (335.45,223.83) .. controls (361.45,211.24) and (368.45,227.43) .. (406.45,202.25) ;
\draw [color={rgb, 255:red, 208; green, 2; blue, 27 }  ,draw opacity=1 ][line width=1.5]    (407.45,128.51) .. controls (494.45,139.3) and (437.45,189.66) .. (406.45,202.25) ;
\draw [color={rgb, 255:red, 208; green, 2; blue, 27 }  ,draw opacity=1 ][line width=1.5]    (406.45,202.25) .. controls (433.45,210.34) and (417.45,228.33) .. (481.45,228.33) ;
\draw [line width=1.5]  [dash pattern={on 5.63pt off 4.5pt}]  (401.8,68.57) .. controls (350.8,73.07) and (339.8,113.54) .. (407.45,128.51) ;
\draw [shift={(407.45,128.51)}, rotate = 12.48] [color={rgb, 255:red, 0; green, 0; blue, 0 }  ][fill={rgb, 255:red, 0; green, 0; blue, 0 }  ][line width=1.5]      (0, 0) circle [x radius= 4.36, y radius= 4.36]   ;
\draw [shift={(401.8,68.57)}, rotate = 174.96] [color={rgb, 255:red, 0; green, 0; blue, 0 }  ][fill={rgb, 255:red, 0; green, 0; blue, 0 }  ][line width=1.5]      (0, 0) circle [x radius= 4.36, y radius= 4.36]   ;
\draw [color={rgb, 255:red, 208; green, 2; blue, 27 }  ,draw opacity=1 ][line width=1.5]    (407.45,128.51) .. controls (458.8,113.54) and (471.8,72.17) .. (401.8,68.57) ;
\draw [line width=1.5]  [dash pattern={on 5.63pt off 4.5pt}]  (370.8,55.98) .. controls (380.8,61.38) and (378.8,65.88) .. (401.8,68.57) ;
\draw [color={rgb, 255:red, 208; green, 2; blue, 27 }  ,draw opacity=1 ][line width=1.5]    (401.8,68.57) .. controls (417.8,66.77) and (434.8,55.08) .. (442.8,49.69) ;

\draw (120,138.4) node [anchor=north west][inner sep=0.75pt]    {\footnotesize $\Gamma _{12}$};
\draw (246,138.4) node [anchor=north west][inner sep=0.75pt]    {\footnotesize$\Gamma _{13}$};
\draw (205,74.4) node [anchor=north west][inner sep=0.75pt]    {\footnotesize$\Gamma _{23}$};
\draw (339,123.4) node [anchor=north west][inner sep=0.75pt]    {\footnotesize $\Gamma _{12}$};
\draw (455,122.4) node [anchor=north west][inner sep=0.75pt]    {\footnotesize $\Gamma _{13}$};

\end{tikzpicture}

\caption{\small The two possible situations described by Case 2 in Lemma \ref{lem: nod mer}. We can have a finite ``tower of loops", but after finitely many loops the presence of a simple triple point is necessary.}
\label{fig:0}
\end{figure}
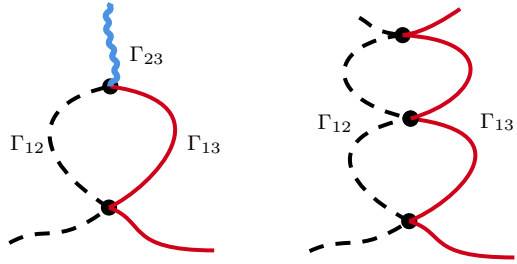


\begin{lemma}\label{lem: expansion}
Let $x_0, r_0$ be given by Lemma \ref{lem: nod mer}. Then, denoting by $(\rho,\theta)$ polar coordinates centered at $x_0$, we have that up to a rotation 
\begin{align*}
v_1(\rho,\theta) = \rho^{3/4}\sin\left(\frac34 \theta\right) + O(\rho) \quad &\text{in $\omega_1$}\\
v_2(\rho,\theta) = \rho^{3/4}\sin\left(\frac34 \theta - \frac23\pi\right) + O(\rho) \quad &\text{in $\omega_2$}\\
v_3(\rho,\theta)  = \rho^{3/4}\sin\left(\frac34 \theta - \frac43\pi\right) + O(\rho) \quad &\text{in $\omega_3$},
\end{align*}
and
\begin{align*}
\nabla v_1(\rho,\theta) = \frac{3}{4}\rho^{-1/4} \left(\sin\left(\frac{3}{4}\theta\right), \cos\left(\frac{3}{4}\theta\right) \right)  + O(1) \quad &\text{in $\omega_1$}\\
\nabla v_2(\rho,\theta) = \frac{3}{4}\rho^{-1/4} \left(\sin\left(\frac{3}{4}-\frac23\pi\theta\right), \cos\left(\frac{3}{4}-\frac23\pi\theta\right) \right)  + O(1)
\quad &\text{in $\omega_2$}\\
\nabla v_3(\rho,\theta)  = \frac{3}{4}\rho^{-1/4} \left(\sin\left(\frac{3}{4}-\frac43\pi\theta\right), \cos\left(\frac{3}{4}-\frac43\pi\theta\right) \right)  + O(1)
\quad &\text{in $\omega_3$}
\end{align*}
as $\rho \to 0^+$.
\end{lemma}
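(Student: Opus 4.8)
The plan is to combine the blow-up analysis of Section~\ref{sec: blow-up} with the asymptotic theory of harmonic functions in planar corner domains. Throughout, $(\rho,\theta)$ denote polar coordinates centred at $x_0$ and $\omega_i:=\{v_i>0\}$. \emph{Step 1: the frequency at $x_0$ equals $3/4$ and the blow-up profile is the ``mercedes'' configuration.} Since $m(x_0)=3$, Proposition~\ref{prop: conv blow-up} provides, along any $\rho_m\downarrow0$ and up to a subsequence, a blow-up limit $\mf{w}\in\mathcal{L}_{\loc}(\R^2)$, $\gamma$-homogeneous with $\gamma=N(\mf{v},x_0,0^+)\ge 3/4$ (Lemma~\ref{lem: lbn3}) and with all components non-trivial. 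By Proposition~\ref{prop: H dim sing} in dimension $2$ the set $\Gamma_{\mf{w}}^3$ is finite, hence, being a cone, $\Gamma_{\mf{w}}^3=\{0\}$. By Lemma~\ref{lem: nod mer}, $\Gamma_{\mf{v}}\cap B_{r_0}(x_0)$ consists of three regular arcs, one from each of $\Gamma_{12},\Gamma_{13},\Gamma_{23}$, meeting at $x_0$; passing to the blow-up (nodal sets are stable under local uniform convergence), $\Gamma_{\mf{w}}$ is a union of three rays through the origin, dividing $\R^2$ into three open sectors, and on each of them exactly one component $w_i$ vanishes identically — the one whose index is shared by the two bounding arcs — while the other two are positive there and harmonic where positive (this is forced by Remark~\ref{rem: part nodal} and Lemma~\ref{prop: diff arm} applied to the homogeneous $\mf{w}$). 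Hence each $w_i$ is a positive $\gamma$-homogeneous harmonic function on the double sector $\{w_i>0\}$ vanishing on its two bounding rays, so $\gamma\cdot(\text{opening of }\{w_i>0\})=\pi$. Denoting by $\alpha_j$ the opening of the sector where $w_j\equiv0$, we have $\alpha_1+\alpha_2+\alpha_3=2\pi$ and $\gamma(2\pi-\alpha_j)=\pi$ for $j=1,2,3$; these equations force $\alpha_j=2\pi-\pi/\gamma$ for every $j$, hence $\gamma=3/4$ and $\alpha_j=2\pi/3$. Therefore $N(\mf{v},x_0,0^+)=3/4$ and, up to a rotation, $\mf{w}=c\,\boldsymbol{\Phi}$ with
\[
\boldsymbol{\Phi}(\rho,\theta):=\rho^{3/4}\Big(\sin\tfrac34\theta,\ \sin\big(\tfrac34\theta-\tfrac23\pi\big),\ \sin\big(\tfrac34\theta-\tfrac43\pi\big)\Big)_{+},
\]
where $(\cdot)_{+}$ means that each component is set to $0$ outside its positivity sector and $c>0$; the constant $c$ is the same for the three components because $w_i$ and $w_j$ match in $C^1$ across $\Gamma_{ij}$ by Lemma~\ref{prop: diff arm}.

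\emph{Step 2: uniqueness of the blow-up, and the remainder is $O(\rho)$.} The three arcs composing $\Gamma_{\mf{v}}$ near $x_0$ are regular, hence possess tangent rays at $x_0$; by Step~1 these tangent rays are the three rays of $\Gamma_{\mf{w}}$, which are therefore independent of $\{\rho_m\}$. Consequently the rotation and the constant $c$ are uniquely determined, every blow-up sequence converges to the same $c\,\boldsymbol{\Phi}$, and $\mf{v}=c\,\boldsymbol{\Phi}+o(\rho^{3/4})$ as $\rho\to0^+$. Next I would upgrade the remainder: by Step~1, near $x_0$ the set $\omega_i$ is a corner domain with vertex $x_0$, opening $4\pi/3$, whose two sides are the regular arcs $\Gamma_{ij},\Gamma_{ik}$ (tangent at $x_0$ to the rays bounding that sector), and $v_i$ is harmonic in $\omega_i$ and vanishes on them. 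A short bootstrap improves the $o(\rho^{3/4})$ information to quantitative $C^{1,\beta}$-regularity of the sides at $x_0$ (from $v_i=c\rho^{3/4}\sin(\tfrac34(\theta-\theta_i))+o(\rho^{3/4})$ one first gets that $\{v_i=0\}$ has angular width $o(1)$ near $\theta=\theta_i$, hence the arcs are $C^1$ at $x_0$; one then iterates, using also that $\Gamma_{ij}=\{v_i-v_j=0\}$ is real-analytic away from $x_0$). Straightening the corner by a $C^{1,\beta}$-diffeomorphism onto the exact sector $\{0<\theta<4\pi/3\}$ turns $\Delta$ into a uniformly elliptic operator with $C^{0,\beta}$ coefficients equal at the vertex to those of $\Delta$, and the Kondratiev/Lehman expansion for corner problems then yields
\[
v_i(\rho,\theta)=c\,\rho^{3/4}\sin\!\Big(\tfrac34(\theta-\theta_i)\Big)+R_i(\rho,\theta),\qquad R_i=O(\rho^{3/2-\epsilon})\ \ \forall\,\epsilon>0,
\]
the exponent $3/2$ being the next admissible homogeneity for an opening of $4\pi/3$; in particular $R_i=O(\rho)$. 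With the rotation normalised so that $\theta_1=0,\ \theta_2=2\pi/3,\ \theta_3=4\pi/3$ (and $c$ normalised/absorbed), this is the first set of formulae.

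\emph{Step 3: the gradients.} Differentiating the expansion of Step~2, together with boundary Schauder estimates up to the sides $\Gamma_{ij},\Gamma_{ik}$ (equivalently, interior Cauchy estimates at scale $\rho$ for $v_i-c\rho^{3/4}\sin(\tfrac34(\theta-\theta_i))$ combined with its boundary decay), one gets $\nabla R_i=O(\rho^{1/2-\epsilon})=O(1)$. Since in the local polar frame $(\hat\rho,\hat\theta)$ one has $\nabla\big(\rho^{3/4}\sin(\tfrac34(\theta-\theta_i))\big)=\tfrac34\rho^{-1/4}\big(\sin(\tfrac34(\theta-\theta_i)),\cos(\tfrac34(\theta-\theta_i))\big)$, the second set of formulae follows.

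\emph{Where the difficulty lies.} The crux is the remainder estimate in Step~2: the regularity of the sides of $\omega_i$ at $x_0$ and the decay rate of $v_i-c\rho^{3/4}\sin(\tfrac34(\theta-\theta_i))$ are mutually dependent, so one must run a bootstrap and close it using the spectral gap between the homogeneity $3/4$ and the next one, $3/2$; carrying this out rigorously (e.g. via the weighted-Sobolev machinery for corner domains) is the main technical load. A related subtlety is the uniqueness of the blow-up: the homogeneous profiles form a full circle (parametrised by the rotation), so uniqueness does not follow from a frequency-gap argument alone, but relies on the $C^1$-regularity of the arcs — equivalently, on a \L ojasiewicz-type inequality — to select the correct rotation. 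Everything else (the monotonicity and doubling estimates, the classification of the link, the fact that near $x_0$ each $\omega_i$ is a $4\pi/3$-corner) is already available from the preceding sections.
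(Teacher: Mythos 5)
Your Step 2 contains a genuine gap that you yourself flag but do not close, and it is the crux rather than a technicality. To invoke a Kondratiev/Lehman corner expansion you need quantitative (say $C^{1,\beta}$) regularity of the sides $\Gamma_{ij}$ of the $4\pi/3$-corner at the vertex $x_0$. What your ``short bootstrap'' actually yields from the blow-up information $v_i=c\rho^{3/4}\sin(\tfrac34(\theta-\theta_i))+o(\rho^{3/4})$ is only tangency of the arcs at $x_0$, i.e.\ $C^1$ with no rate; upgrading this to a H\"older modulus is not a routine iteration, because the regularity of the side and the decay rate of the remainder each presuppose the other. The frequency machinery developed in the paper classifies homogeneous blow-up limits but furnishes no decay rate and no {\L}ojasiewicz-type inequality for the Almgren quotient; nor does it give uniqueness of the blow-up (there is a full circle of admissible rotations). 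Appealing to ``iterate'' and to a {\L}ojasiewicz inequality therefore assumes precisely what has to be proved.

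The paper's own proof sidesteps all of this with a covering-space trick that replaces the corner-domain problem by a removable-singularity problem for a single harmonic function. One pastes $\pm v_i$ cyclically across the three interfaces, obtaining a function $\hat w$ on the quadruple cover of $B_{r_0}(x_0)\setminus\{x_0\}$ which is harmonic across each $\Gamma_{ij}$ by Lemma~\ref{prop: diff arm} (the third component being bounded away from $0$ there) and harmonic inside each copy of $\omega_i$. The substitution $w(\rho,\theta)=\hat w(\rho^4,4\theta)$ then produces a bona fide harmonic function on a punctured disc, bounded and vanishing at the centre, hence harmonic through $x_0$; its Hartman--Wintner (Taylor) expansion has leading homogeneous harmonic polynomial of degree $d=3$, because $w$ has exactly six nodal rays, and the next term supplies the $O(\rho)$ and $O(1)$ remainders after undoing the substitution. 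In one step this yields the exact exponent $3/4$, the uniqueness of the rotation, and the quantitative remainders, with no corner analysis, no bootstrap, and no {\L}ojasiewicz input. Your Step~1 (classification of the blow-up as the $3/4$-homogeneous ``mercedes'' configuration) is essentially correct but, once the covering trick is in place, becomes unnecessary: the exponent $3/4$ and the rotation drop out of the Taylor expansion rather than having to be established in advance.
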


\begin{proof}
By Lemma \ref{lem: nod mer}, in $B_{r_0}(x_0)$ the nodal set of $\mf{v}$ consists of one arc of $\Gamma_{12}$, one of $\Gamma_{13}$, and one of $\Gamma_{23}$ meeting at $x_0$. Up to a rotation and a relabelling, we can suppose that the point of $\Gamma_{12} \cap \pa B_{r_0}(x_0)$ stays on the ray $\theta=0$, and that, moving on $\pa B_{r_0}(x_0)$ in counterclockwise sense, the first nodal point belongs to $\Gamma_{23}$. This way, $\omega_1 \cap B_{r_0}(x_0)$ is delimited by a portion of $\pa B_{r_0}(x_0)$ and by $\Gamma_{12} \cup \Gamma_{13} \cup \{x_0\}$, and $\Gamma_{23} \subset \omega_1 \cap B_{r_0}(x_0)$; $\omega_2 \cap B_{r_0}(x_0)$ is delimited by a portion of $\pa B_{r_0}(x_0)$ and by $\Gamma_{23} \cup \Gamma_{12} \cup \{x_0\}$, and $\Gamma_{13} \subset \omega_2 \cap B_{r_0}(x_0)$; and $\omega_3 \cap B_{r_0}(x_0)$ is delimited by a portion of $\pa B_{r_0}(x_0)$ and by $\Gamma_{13} \cup \Gamma_{23} \cup \{x_0\}$, and $\Gamma_{12} \subset \omega_1 \cap B_{r_0}(x_0)$; see the picture on the left in Figure \ref{fig:1}.

In polar coordinates, $\mf{v}$ is defined on $[0,r_0] \times [0,2\pi)$, and we can extend it by periodicity on the strip $[0,r_0] \times \R$. Each positivity set $\omega_1$, $\omega_2$ and $\omega_3$ is identified with a sequence of topological rectangles in the strip, and we notice that each component of $\omega_1$ is adjacent to $\omega_3$ on the right and to $\omega_2$ and the left; each component of $\omega_2$ is adjacent to $\omega_1$ on the right hand to $\omega_3$ on the left; and each component of $\omega_3$ is adjacent to $\omega_2$ on the right and to $\omega_1$ on the left. 

We can then define a function $\hat w$ as follows:\\
$\hat w:=v_1$ on a component $\omega_{1,1}$ of $\omega_1$; \\
$\hat w:=-v_3$ in the component $\omega_{3,1}$ of \( \omega_3 \) adjacent to \( \omega_{1,1} \) on the right; \\
$\hat w:=v_2$ in the component $\omega_{2,1}$ of \( \omega_2 \) adjacent to \( \omega_{3,1} \) on the right; \\
$\hat w:=-v_1$ in the component $\omega_{1,2}$ of \( \omega_1 \) adjacent to \( \omega_{2,1} \) on the right; \\
$\hat w:=v_3$ in the component $\omega_{3,2}$ of \( \omega_3 \) adjacent to \( \omega_{1,2} \) on the right; \\
$\hat w:=-v_2$ in the component $\omega_{2,2}$ of \( \omega_2 \) adjacent to \( \omega_{3,2} \) on the right.\\
We refer again to Figure \ref{fig:1} for a graphical representation.

By construction, recalling that $v_i -v_j$ is harmonic across $\Gamma_{ij}$ (Lemma \ref{prop: diff arm}), we have that $\hat w$ is a $8\pi$-periodic-in-$\theta$ harmonic function on the strip; namely it is a harmonic function on the quadruple covering of the punctured disc $B_{r_0}(x_0) \setminus \{0\}$, bounded at $0$ (in fact, $\hat w(x_0) = 0$). Thus, $w(\rho,\theta) = \hat w(\rho^4,4\theta)$ is a harmonic function on the punctured disc $B_{r_0}(x_0) \setminus \{0\}$, vanishing at $0$, and hence it is also harmonic in the full disc $B_{r_0}(x_0)$. By harmonicity, classical results \`a la Hartman-Winter \cite{HaWi} ensures that $w$ admits an expansion of type
\[
w(x) = P_d(x-x_0)+R_{d+1}(x-x_0), \quad \nabla w(x) = \nabla P_d(x-x_0)+\nabla R_{d+1}(x-x_0),
\]
where $P_d \not \equiv 0$ is a homogeneous harmonic polynomial of some degree $d \in \N$, and 
\[
R_{d+1}(x-x_0) = O(|x-x_0|^{d+1}), \quad \nabla R_{d+1}(x-x_0) = O(|x-x_0|^{d}).
\]
Since by construction $w$ has exactly $6$ nodal lines meeting at the origin, we deduce that $d=3$. Thus, up to a rotation $P_d(x-x_0) = \sigma \mathrm{Re}((z-z_0)^3)$ for some $\sigma>0$.
Passing to polar coordinates, and translating the above expansion in terms of the original function $\hat w$, the thesis follows.
\end{proof}

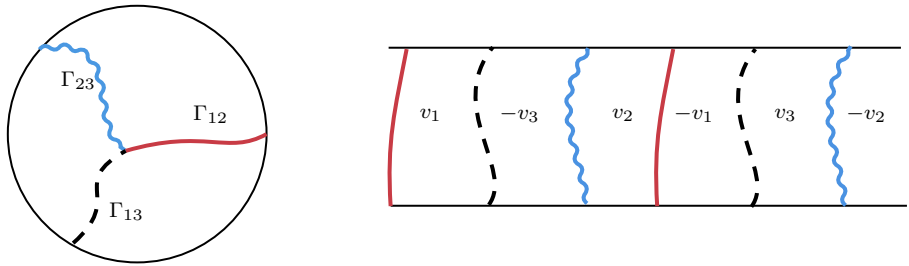
\begin{figure}[ht]
\begin{center}

\tikzset{every picture/.style={line width=0.75pt}} 

\begin{tikzpicture}[x=0.75pt,y=0.75pt,yscale=-1,xscale=1]

\draw   (100,114.45) .. controls (100,78.8) and (128.9,49.9) .. (164.55,49.9) .. controls (200.2,49.9) and (229.1,78.8) .. (229.1,114.45) .. controls (229.1,150.1) and (200.2,179) .. (164.55,179) .. controls (128.9,179) and (100,150.1) .. (100,114.45) -- cycle ;
\draw [color={rgb, 255:red, 74; green, 152; blue, 226 }  ,draw opacity=1 ][line width=1.5]    (115.8,71.9) .. controls (117.25,69.91) and (118.94,69.6) .. (120.88,70.97) .. controls (122.92,72.42) and (124.58,72.28) .. (125.87,70.55) .. controls (127.6,68.9) and (129.31,69.01) .. (131.01,70.86) .. controls (132.16,72.81) and (133.67,73.23) .. (135.54,72.12) .. controls (138.1,71.61) and (139.53,72.52) .. (139.83,74.87) .. controls (139.68,77.18) and (140.69,78.46) .. (142.86,78.73) .. controls (145.19,79.69) and (145.88,81.26) .. (144.93,83.44) .. controls (143.79,85.47) and (144.2,87.01) .. (146.15,88.04) .. controls (148.13,89.49) and (148.44,91.26) .. (147.09,93.33) .. controls (145.68,95.12) and (145.91,96.69) .. (147.78,98.02) .. controls (149.65,99.34) and (149.91,101.03) .. (148.55,103.1) .. controls (147.2,104.96) and (147.53,106.61) .. (149.55,108.04) .. controls (151.54,109.03) and (152,110.57) .. (150.95,112.68) .. controls (150.11,114.95) and (150.84,116.45) .. (153.13,117.17) .. controls (155.37,117.44) and (156.4,118.7) .. (156.22,120.94) -- (158.8,122.9) ;
\draw [color={rgb, 255:red, 200; green, 59; blue, 69 }  ,draw opacity=1 ][line width=1.5]    (158.8,122.9) .. controls (198.6,111.3) and (209.6,125.3) .. (229.1,114.45) ;
\draw [line width=1.5]  [dash pattern={on 5.63pt off 4.5pt}]  (132.6,169.3) .. controls (158.6,146.3) and (129.6,139.3) .. (158.8,122.9) ;
\draw    (290,71) -- (545.6,71) ;
\draw    (291,150.5) -- (548.6,150.5) ;
\draw [color={rgb, 255:red, 200; green, 59; blue, 69 }  ,draw opacity=1 ][line width=1.5]    (291,150.7) .. controls (289,117.7) and (293,102.7) .. (299,71.7) ;
\draw [line width=1.5]  [dash pattern={on 5.63pt off 4.5pt}]  (340,150) .. controls (353,132.3) and (318,101.3) .. (342,70.7) ;
\draw [color={rgb, 255:red, 74; green, 146; blue, 226 }  ,draw opacity=1 ][line width=1.5]    (389,149.3) .. controls (387.05,147.76) and (386.75,146.09) .. (388.11,144.3) .. controls (389.38,142.19) and (389.01,140.45) .. (387,139.08) .. controls (385.05,138.06) and (384.69,136.53) .. (385.92,134.5) .. controls (387.13,132.47) and (386.73,130.87) .. (384.71,129.68) .. controls (382.7,128.53) and (382.27,126.85) .. (383.43,124.64) .. controls (384.67,122.73) and (384.27,121.1) .. (382.24,119.75) .. controls (380.27,118.63) and (379.93,117.04) .. (381.21,114.98) .. controls (382.54,113.01) and (382.25,111.32) .. (380.35,109.91) .. controls (378.52,108.65) and (378.35,107.09) .. (379.86,105.24) .. controls (381.45,103.52) and (381.42,101.84) .. (379.75,100.19) .. controls (378.18,98.36) and (378.31,96.66) .. (380.15,95.08) .. controls (382.06,93.66) and (382.36,92.04) .. (381.04,90.23) .. controls (379.85,88.25) and (380.34,86.59) .. (382.52,85.25) .. controls (384.66,84.31) and (385.24,82.86) .. (384.27,80.91) .. controls (383.56,78.53) and (384.31,77.03) .. (386.52,76.41) .. controls (388.92,75.58) and (389.76,74.17) .. (389.04,72.17) -- (389.6,71.3) ;
\draw [color={rgb, 255:red, 200; green, 59; blue, 69 }  ,draw opacity=1 ][line width=1.5]    (424,150.7) .. controls (422,117.7) and (426,102.7) .. (432,71.7) ;
\draw [line width=1.5]  [dash pattern={on 5.63pt off 4.5pt}]  (471,151) .. controls (484,133.3) and (449,102.3) .. (473,71.7) ;
\draw [color={rgb, 255:red, 74; green, 146; blue, 226 }  ,draw opacity=1 ][line width=1.5]    (518,150.3) .. controls (516.05,148.75) and (515.75,147.07) .. (517.12,145.28) .. controls (518.41,143.18) and (518.1,141.57) .. (516.17,140.44) .. controls (514.18,139.08) and (513.81,137.38) .. (515.08,135.35) .. controls (516.34,133.33) and (515.97,131.68) .. (513.98,130.41) .. controls (512,129.16) and (511.65,127.56) .. (512.93,125.62) .. controls (514.24,123.71) and (513.89,122.02) .. (511.9,120.55) .. controls (509.98,119.3) and (509.71,117.77) .. (511.1,115.96) .. controls (512.49,113.97) and (512.26,112.21) .. (510.41,110.68) .. controls (508.62,109.3) and (508.5,107.68) .. (510.06,105.81) .. controls (511.7,104.06) and (511.7,102.44) .. (510.07,100.93) .. controls (508.52,99.03) and (508.66,97.4) .. (510.49,96.03) .. controls (512.41,94.55) and (512.71,92.89) .. (511.39,91.06) .. controls (510.25,88.79) and (510.72,87.1) .. (512.81,85.99) .. controls (514.94,85) and (515.49,83.54) .. (514.48,81.6) .. controls (513.71,79.26) and (514.41,77.76) .. (516.6,77.1) .. controls (518.97,76.25) and (519.84,74.7) .. (519.23,72.46) -- (520.6,70.3) ;

\draw (125,81.4) node [anchor=north west][inner sep=0.75pt]    {\footnotesize $\Gamma_{23}$};
\draw (149,147.4) node [anchor=north west][inner sep=0.75pt]    {\footnotesize $\Gamma_{13}$};
\draw (191,98.4) node [anchor=north west][inner sep=0.75pt]    {\footnotesize $\Gamma_{12}$};
\draw (304,100.4) node [anchor=north west][inner sep=0.75pt]    {\footnotesize $v_1$};
\draw (344,99) node [anchor=north west][inner sep=0.75pt]    {\footnotesize $-v_3$};
\draw (400,100.4) node [anchor=north west][inner sep=0.75pt]    {\footnotesize $v_2$};
\draw (431,99) node [anchor=north west][inner sep=0.75pt]    {\footnotesize $-v_1$};
\draw (481,100.4) node [anchor=north west][inner sep=0.75pt]    {\footnotesize $v_3$};
\draw (517,99) node [anchor=north west][inner sep=0.75pt]    {\footnotesize $-v_2$};

\end{tikzpicture}
\end{center}
\caption{\small On the left, the structure of the nodal set in $B_{r_0}(x_0)$. On the right, the quadruple covering on the punctured ball $B_{r_0}(x_0) \setminus \{x_0\}$, with the correct ordering of the nodal regions in order to define the function $\hat w$. Notice that we did not draw all the nodal lines in the second picture (for instance, there is a copy of $\Gamma_{23}$ in the first region where $\hat w=v_1$, but we did not draw it since it plays no role in the definition of $\hat w$).}
\label{fig:1}
\end{figure}

\begin{proof}[Conclusion of the proof of Theorem \ref{prop: mer min}]
We are supposing by contradiction that $\tilde{\mf{v}}$ is not a minimizer, namely $c_\infty<\tilde c $. Now, for $0<r < r_0$ given by Lemma \ref{lem: nod mer}, we consider the minimization problem
\[
c_r:= \min\left\{J(\mf{u},B_r(x_0)) \left|\begin{array}{l}  \mf{u}-\mf{v} \in H_0^1(B_r(x_0),\R^3) \\ u_1\,u_2\,u_3 \equiv 0 \ \text{in $B_r(x_0)$}\end{array}\right. \right\},
\]
where $\mf{v}$ is a minimizer for $c_\infty$. A fortiori, its restriction on $B_r(x_0)$ is a minimizer for $c_r$, and hence an estimate for $c_r$ can be easily obtained by using the expansion in Lemma \ref{lem: expansion} and what we observed in Remark \ref{rem: scale min}: we have that
\beq\label{stima en 0 mer}
c_r= \sigma^2  \tilde c \, r^{3/2} + O(r^{2}) \quad \text{as }r \to 0^+.
\eeq
In order to produce a competitor of $\mf{v}$ decreasing the energy, obtaining a contradiction, we consider at first a Lipschitz function $\eta$ with the properties that, for $\eps>0$ to be chosen later, $\eta \equiv 0$ in $B_{(1-\eps)r}(x_0)$, $\eta \equiv 1$ on $\pa B_{r}(x)$, $0 \le \eta \le 1$, $|\eta|=1/(\eps r)$ on the annulus $A_{\eps,r}:= B_r(x_0) \setminus \overline{B_{(1-\eps)r}(x_0)}$. Then we define
\[
\hat{\mf{v}}(x):= \begin{cases} \sigma r^{3/4} (1-\eps)^{3/4} \mf{v}\left(\frac{x-x_0}{(1-\eps)r}\right) & \text{in $B_{(1-\eps)r}(x_0)$} \\
\\
\eta(x) \mf{v}(x) + (1-\eta(x)) \sigma \tilde{\mf{v}}(x-x_0) & \text{in $A_{\eps,r}$}.
\end{cases}
\]
Namely, $\hat{\mf{v}}$ is a competitor obtained by pasting a scaled copy of the actual minimizer $\mf{v}$ in a neighborhood of the triple point, replacing the expansion given by Lemma \ref{lem: expansion}, which is not minimal for the problem on $B_1$. The definition of $\hat{\mf{v}}$ in the annulus is given in such a way to produce a $H^1$ function, since we stress that on any point $x \in \pa B_{(1-\eps)r}(x_0)$ 
\[
\sigma r^{3/4} (1-\eps)^{3/4} \mf{v}\left(\frac{x-x_0}{(1-\eps)r}\right) = \sigma r^{3/4} (1-\eps)^{3/4} \tilde{\mf{v}}\left(\frac{x-x_0}{(1-\eps)r}\right) = \sigma \tilde{\mf{v}}(x-x_0),
\]
being $\mf{v}-\tilde{\mf{v}} \in H_0^1(B_1,\R^3)$ and $\tilde{\mf{v}}$ $3/4$-homogeneous. 

We have to compute the energy of $\hat{\mf{v}}$:
\beq\label{stima en 1 mer}
\begin{split}
J(\hat{\mf{v}},B_r(x_0)) &= J(\hat{\mf{v}},B_{(1-\eps)r}(x_0)) + J(\hat{\mf{v}},A_{\eps,r})\\
& = \sigma^2  c_\infty \, (1-\eps)^{3/2} r^{3/2} + J(\hat{\mf{v}},A_{\eps,r}),
\end{split}
\eeq
where we used again the computations in Remark \ref{rem: scale min}. Concerning the energy on the annulus, Lemma \ref{lem: expansion} gives the estimate
\[
\begin{split}
|\nabla \hat{\mf{v}}(x)| &\le \sigma |\nabla \tilde{\mf{v}}(x-x_0)| + \frac{1}{\eps r} \left|\mf{v}(x)-\sigma \tilde{\mf{v}}(x-x_0)\right| +  |\nabla(\mf{v}(x)- \sigma \tilde{\mf{v}}(x-x_0)|\\
& \le C |x-x_0|^{-1/4} + \frac{C}{\eps r} |x-x_0| + C.
\end{split}
\]
Therefore, for $\eps$ and $r$ sufficiently small, 
\[
\begin{split}
J(\hat{\mf{v}},A_{\eps,r}) &\le C \int_{(1-\eps)r}^r \rho^{1/2}\,d\rho + C \frac{C}{\eps^2 r^2} \int_{(1-\eps)r}^r \rho^3\,d\rho + C \int_{(1-\eps)r}^r \rho \,d\rho \\
& \le C \left( \eps r^{3/2} + \frac{r^2}{\eps} + \eps r^2\right).
\end{split}
\]
By taking $\eps=r^{1/3}$, and coming back to \eqref{stima en 1 mer}, we deduce that
\[
J(\hat{\mf{v}},B_r(x_0)) =  \sigma^2  c_\infty \, (1-r^{1/3})^{3/2} r^{3/2} + O(r^{5/3}) = \sigma^2 c_\infty r^{3/2} + o(r^{3/2}).
\]
But then a comparison with \eqref{stima en 0 mer} gives
\[
\sigma^2  \tilde c \, r^{3/2} + o(r^{3/2}) \le \sigma^2 c_\infty r^{3/2} + o(r^{3/2}),
\]
finally implying that $\tilde c \le c_\infty$, which is the desired contradiction.
\end{proof}


\begin{thebibliography}{BTWW13}

\bibitem[Alp20]{Alp}
Onur Alper.
\newblock On the singular set of free interface in an optimal partition
  problem.
\newblock {\em Commun. Pure Appl. Math.}, 73(4):855--915, 2020.

\bibitem[BBF22]{BoBuFo}
Farid Bozorgnia, Martin Burger, and Morteza Fotouhi.
\newblock On a class of singularly perturbed elliptic systems with asymptotic
  phase segregation.
\newblock {\em Discrete Contin. Dyn. Syst.}, 42(7):3539--3556, 2022.

\bibitem[BTWW13]{BeTeWaWe}
Henri Berestycki, Susanna Terracini, Kelei Wang, and Juncheng Wei.
\newblock On entire solutions of an elliptic system modeling phase separations.
\newblock {\em Adv. Math.}, 243:102--126, 2013.

\bibitem[Che98]{Chen}
Xuyan Chen.
\newblock A strong unique continuation theorem for parabolic equations.
\newblock {\em Math. Ann.}, 311(4):603--630, 1998.

\bibitem[CKL09]{CKL}
Luis~A. Caffarelli, Aram~L. Karakhanyan, and Fang-Hua Lin.
\newblock The geometry of solutions to a segregation problem for nondivergence
  systems.
\newblock {\em J. Fixed Point Theory Appl.}, 5(2):319--351, 2009.

\bibitem[CKL11]{CzKuLu}
Andrzej Czarnecki, Marcin Kulczycki, and Wojciech Lubawski.
\newblock On the connectedness of boundary and complement for domains.
\newblock {\em Ann. Pol. Math.}, 103(2):189--191, 2011.

\bibitem[CL07]{CafLin06}
Luis~A. Caffarelli and Fang-Hua Lin.
\newblock An optimal partition problem for eigenvalues.
\newblock {\em J. Sci. Comput.}, 31(1-2):5--18, 2007.

\bibitem[CL08]{CL08}
Luis~A. Caffarelli and Fang-Hua Lin.
\newblock Singularly perturbed elliptic systems and multi-valued harmonic
  functions with free boundaries.
\newblock {\em J. Amer. Math. Soc.}, 21(3):847--862, 2008.

\bibitem[CPQ17]{CPQ}
Luis~A. Caffarelli, Stefania Patrizi, and Veronica Quitalo.
\newblock On a long range segregation model.
\newblock {\em J. Eur. Math. Soc. (JEMS)}, 19(12):3575--3628, 2017.

\bibitem[CR07]{CaRo}
Luis~A. Caffarelli and Jean-Michel Roquejoffre.
\newblock Uniform {H{\"o}lder} estimates in a class of elliptic systems and
  applications to singular limits in models for diffusion flames.
\newblock {\em Arch. Ration. Mech. Anal.}, 183(3):457--487, 2007.

\bibitem[CTV03]{CTV03}
Monica Conti, Susanna Terracini, and Gianmaria Verzini.
\newblock An optimal partition problem related to nonlinear eigenvalues.
\newblock {\em J. Funct. Anal.}, 198(1):160--196, 2003.

\bibitem[CTV05a]{CTV05}
Monica Conti, Susanna Terracini, and Gianmaria Verzini.
\newblock Asymptotic estimates for the spatial segregation of competitive
  systems.
\newblock {\em Adv. Math.}, 195(2):524--560, 2005.

\bibitem[CTV05b]{CTVind}
Monica Conti, Susanna Terracini, and Gianmaria Verzini.
\newblock A variational problem for the spatial segregation of
  reaction-diffusion systems.
\newblock {\em Indiana Univ. Math. J.}, 54(3):779--815, 2005.

\bibitem[DWZ12]{DWZjfa}
Edward~N. Dancer, Kelei Wang, and Zhitao Zhang.
\newblock The limit equation for the {G}ross-{P}itaevskii equations and {S}.
  {T}erracini's conjecture.
\newblock {\em J. Funct. Anal.}, 262(3):1087--1131, 2012.

\bibitem[GS92]{GrSc}
Mikhail Gromov and Richard Schoen.
\newblock Harmonic maps into singular spaces and {{\(p\)}}-adic superrigidity
  for lattices in groups of rank one.
\newblock {\em Publ. Math., Inst. Hautes {\'E}tud. Sci.}, 76:165--246, 1992.

\bibitem[HW55]{HaWi}
Philip Hartman and Aurel Wintner.
\newblock On the local behavior of solutions of non-parabolic partial
  differential equations. {III}. {Approximation} by spherical harmonics.
\newblock {\em Am. J. Math.}, 77:453--474, 1955.


\bibitem[LQZ24]{LQZ24}
Chengjie Luo, Yicheng Qiang, and David Zwicker.
\newblock Beyond pairwise: Higher-order physical interactions affect phase
  separation in multicomponent liquids.
\newblock {\em Phys. Rev. Res.}, 6:033002, Jul 2024.

\bibitem[NTTV10]{NTTV10}
Benedetta Noris, Hugo Tavares, Susanna Terracini, and Gianmaria Verzini.
\newblock Uniform {H}\"{o}lder bounds for nonlinear {S}chr\"{o}dinger systems
  with strong competition.
\newblock {\em Comm. Pure Appl. Math.}, 63(3):267--302, 2010.

\bibitem[Pet14]{Pet14}
Dmitry~S. Petrov.
\newblock Three-body interacting bosons in free space.
\newblock {\em Phys. Rev. Lett.}, 112:103201, Mar 2014.

\bibitem[Sim83]{SimBook}
Leon Simon.
\newblock {\em Lectures on geometric measure theory}, volume~3 of {\em Proc.
  Cent. Math. Anal. Aust. Natl. Univ.}
\newblock Australian National University, Centre for Mathematical Analysis,
  Canberra, 1983.

\bibitem[ST15]{ST15}
Nicola Soave and Susanna Terracini.
\newblock Liouville theorems and 1-dimensional symmetry for solutions of an
  elliptic system modelling phase separation.
\newblock {\em Adv. Math.}, 279:29--66, 2015.

\bibitem[ST23]{ST23}
Nicola Soave and Susanna Terracini.
\newblock An anisotropic monotonicity formula, with applications to some
  segregation problems.
\newblock {\em J. Eur. Math. Soc. (JEMS)}, 25(9):3727--3765, 2023.

\bibitem[ST24]{ST24p1}
Nicola Soave and Susanna Terracini.
\newblock On some singularly perturbed elliptic systems modeling partial
  segregation: uniform {H}\"older estimates and basic properties of the limits.
  {P}reprint ar{X}iv.
\newblock 2024.

\bibitem[STTZ16]{STTZ16}
Nicola Soave, Hugo Tavares, Susanna Terracini, and Alessandro Zilio.
\newblock H\"{o}lder bounds and regularity of emerging free boundaries for
  strongly competing {S}chr\"{o}dinger equations with nontrivial grouping.
\newblock {\em Nonlinear Anal.}, 138:388--427, 2016.

\bibitem[STTZ18]{STTZ18}
Nicola Soave, Hugo Tavares, Susanna Terracini, and Alessandro Zilio.
\newblock Variational problems with long-range interaction.
\newblock {\em Arch. Ration. Mech. Anal.}, 228(3):743--772, 2018.

\bibitem[STZ23]{STZ23}
Nicola Soave, Hugo Tavares, and Alessandro Zilio.
\newblock Free boundary problems with long-range interactions: uniform
  {Lipschitz} estimates in the radius.
\newblock {\em Math. Ann.}, 386(1-2):551--585, 2023.

\bibitem[SZ15]{SZ15}
Nicola Soave and Alessandro Zilio.
\newblock Uniform bounds for strongly competing systems: the optimal
  {L}ipschitz case.
\newblock {\em Arch. Ration. Mech. Anal.}, 218(2):647--697, 2015.

\bibitem[SZ17]{SZ16}
Nicola Soave and Alessandro Zilio.
\newblock On phase separation in systems of coupled elliptic equations:
  asymptotic analysis and geometric aspects.
\newblock {\em Ann. Inst. H. Poincar\'{e} Anal. Non Lin\'{e}aire},
  34(3):625--654, 2017.

\bibitem[TT12]{TT12}
Hugo Tavares and Susanna Terracini.
\newblock Regularity of the nodal set of segregated critical configurations
  under a weak reflection law.
\newblock {\em Calc. Var. Partial Differential Equations}, 45(3-4):273--317,
  2012.

\bibitem[TVZ14]{TVZ14}
Susanna Terracini, Gianmaria Verzini, and Alessandro Zilio.
\newblock Uniform {H}\"{o}lder regularity with small exponent in
  competition-fractional diffusion systems.
\newblock {\em Discrete Contin. Dyn. Syst.}, 34(6):2669--2691, 2014.

\bibitem[TVZ16]{TVZ16}
Susanna Terracini, Gianmaria Verzini, and Alessandro Zilio.
\newblock Uniform {H}\"{o}lder bounds for strongly competing systems involving
  the square root of the laplacian.
\newblock {\em J. Eur. Math. Soc. (JEMS)}, 18(12):2865--2924, 2016.

\bibitem[TZ20]{ToZi20}
Giorgio Tortone and Alessandro Zilio.
\newblock Regularity results for segregated configurations involving fractional
  {Laplacian}.
\newblock {\em Nonlinear Anal., Theory Methods Appl., Ser. A, Theory Methods},
  193:27, 2020.
\newblock Id/No 111532.

\bibitem[Wan14]{Wa}
Kelei Wang.
\newblock On the {De} {Giorgi} type conjecture for an elliptic system modeling
  phase separation.
\newblock {\em Commun. Partial Differ. Equations}, 39(4):696--739, 2014.

\end{thebibliography}

\end{document}